  \def\MR#1{}
\newtheorem{Theorem}{Theorem}[section]
\newtheorem{Lemma}[Theorem]{Lemma}
\newtheorem{Proposition}[Theorem]{Proposition}
\newtheorem{Remark}[Theorem]{Remark}
\numberwithin{equation}{section}
\def\be{\begin{equation}}
	\def\ee{\end{equation}}
\def\ben{\begin{eqnarray}}
	\def\een{\end{eqnarray}}
\newcommand{\ncom}{\newcommand}
\ncom{\N}{\mathbb N}
\ncom{\C}{\mathbb{C}} 
\ncom{\T}{\mathcal{T}}
\ncom{\A}{\mathcal{A}}
\ncom{\K}{\mathcal{K}}
\ncom{\D}{\mathcal{D}}
\ncom{\Ab}{\mathbb{A}}
\ncom{\Cb}{\mathbb{C}}
\ncom{\Nb}{\mathbb{N}}
\ncom{\Rb}{\mathbb{R}}
\ncom{\B}{\mathcal{B}}
\ncom{\Oc}{\mathcal{O}}
\ncom{\Af}{\mathbf{A}}
\ncom{\Bf}{\mathbf{B}}
\ncom{\Hf}{\mathbf{H}}
\ncom{\Gf}{\mathbf{G}}
\ncom{\Uf}{\mathbf{U}}
\ncom{\Yf}{\mathbf{Y}}
\ncom{\Pf}{\mathbf{P}}
\ncom{\Ff}{\mathbf{K}}
\ncom{\Lf}{\mathbf{L}}
\ncom{\Xf}{\mathbf{X}}
\ncom{\Vf}{\mathbf{V}}
\ncom{\Sf}{\mathbf{S}}
\ncom{\Tf}{\mathbf{T}}
\ncom{\Wf}{\mathbf{W}}
\ncom{\Lc}{\mathcal{L}}
\ncom{\Ac}{\mathcal{A}}
\ncom{\Bc}{\mathcal{B}}
\ncom{\Pc}{\mathcal{P}}
\ncom{\Acw}{\mathcal{A}_\omega}
\ncom{\Dc}{\mathcal{D}}
\ncom{\wt}{\widetilde}
\ncom{\Yt}{\widetilde{\textbf{Y}}}
\ncom{\ut}{\widetilde{u}}
\ncom{\wh}{\widehat}
\ncom{\nuh}{\widehat{\nu}}
\ncom{\Aw}{\mathbf{A}_\omega}
\ncom{\LL}{L^2(0,\infty;L^2(\Omega))}
\ncom{\LH}{L^2(0,\infty;H^1(\Omega))}
\ncom{\LHH}{L^2(0,\infty;H^2(\Omega))}
\ncom{\HL}{H^1(0,\infty;L^2(\Omega))}
\ncom{\HH}{H^1(0,\infty;H^2(\Omega))}
\ncom{\LU}{L^2(0,\infty;U)}
\ncom{\Lt}{L^2(\Omega)}
\ncom{\Hio}{H^1_0(\Omega)}
\ncom{\Lh}{L^2(0,\infty;\Hf)}
\ncom{\Brm}{{\bf\rm{B}}}
\ncom{\Yrm}{{\bf\rm{Y}}}
\ncom{\Zrm}{{\bf\rm{Z}}}
\ncom{\Arm}{{\bf\rm{A}}}
\ncom{\Mrm}{{\bf\rm{M}}}
\ncom{\Nrm}{{\bf\rm{N}}}
\ncom{\Krm}{{\bf\rm{K}}}
\ncom{\Prm}{{\bf\rm{P}}}
\ncom{\Mcl}{\mathcal{M}_h}
\ncom{\Ach}{\mathcal{A}_{\omega_h}}
\ncom{\Bch}{\mathcal{B}_h}
\ncom{\Yc}{\mathcal{Y}}
\ncom{\ym}{\Yt_h(t)}
\ncom{\um}{\ut_h(t)}
\ncom{\uc}{\ut(t)}
\ncom{\yc}{\Yt(t)}
\ncom{\ap}{\Aw{_{,\Pf}}}
\ncom{\ahp}{\Af{_{{\omega_h},\Pf}}}
\ncom{\aph}{\Aw{_{,\Pf_h}}}
\ncom{\ahph}{\Af{_{\omega_h,\Pf_h}}}
\ncom{\ymm}{\Yt_h(t)}
\ncom{\umm}{\ut_h(t)}
\ncom{\ucc}{\ut(t)}
\ncom{\ycc}{\Yt(t)}
\ncom{\n}{\normalfont}
\newcommand{\vertiii}[1]{{\left\vert\kern-0.25ex\left\vert\kern-0.25ex\left\vert #1 
		\right\vert\kern-0.25ex\right\vert\kern-0.25ex\right\vert}}
\long\def\/*#1*/{}
\title[Stabilization of Burgers equation around non-constant steady state]{Feedback Stabilization and Finite Element Error Analysis of Viscous Burgers Equation around Non-Constant Steady State}
\date{\today}
\author{WASIM AKRAM}\address{Wasim Akram \newline\indent Department of Mathematics, Indian Institute of Technology Bombay, \newline \indent 
Powai, Mumbai - 400076, India, Email- {\normalfont{wakram@math.iitb.ac.in, wakram2k11@gmail.com}} }
\begin{document}

	\begin{abstract}
In this article, we explore the feedback stabilization of a viscous Burgers equation around a non-constant steady state using localized interior controls and then develop error estimates for the stabilized system using finite element method. The system is not only feedback stabilizable but exhibits an exponential decay $-\omega<0$ for any $\omega>0$. The derivation of a stabilizing control in feedback form is achieved by solving a suitable algebraic Riccati equation posed for the linearized system. In the second part of the article, we utilize a conforming finite element method to discretize the continuous system, resulting in a finite-dimensional discrete system. This approximated system is also proven to be feedback stabilizable (uniformly) with exponential decay $-\omega+\epsilon$ for any $\epsilon>0$. The feedback control for this discrete system is obtained by solving a discrete algebraic Riccati equation. To validate the effectiveness of our approach, we provide error estimates for both the stabilized solutions and the stabilizing feedback controls. Numerical implementations are carried out to support and validate our theoretical results.
\end{abstract}
	\maketitle
\pagenumbering{arabic}

\noindent \textbf{Keywords.} Viscous Burgers equation, Stabilizability, Error estimates, Non-constant steady state, Algebraic Riccati equation.

\medskip
\noindent \textbf{MSC.} 93D15, 65M12, 65M60, 65M15

\section{Introduction}
\noindent The Burgers equation serves as a compelling model in applied mathematics and finds applications in numerous physical and natural phenomena, including shock flows, turbulence, weather patterns, mass transport, and more. Furthermore, it plays a pivotal role as a simplified representation of more intricate models like the Navier-Stokes equations, traffic flow models, and others.

 
 \subsection{Model problem}\noindent Let $\Omega$ be an open bounded set in $\mathbb{R}^d$ for $d\in \{1,2,3\}.$ Set $Q=\Omega\times (0,\infty),$ $\Sigma=\partial\Omega\times(0,\infty)$ and let $\textbf{v}=(v_1,\ldots,v_d)^T\in \mathbb{R}^d$ be a given fixed vector. Consider
\begin{equation} \label{eqVB-mainburger}
	\begin{aligned}
		& y_t(x,t) + y(x,t)\textbf{v}.\nabla y(x,t) -\eta \Delta y(x,t) +\nu_0 y(x,t)= f_s+u(x,t)\chi_{\mathcal{O}}(x) \text{ in } Q, \\
		& y(x,t)=0 \text{ on }\Sigma, \, y(x,0)=y_0(x) \text{ in }\Omega,
	\end{aligned}
\end{equation}
with the interior control $u$ acting on the open subset $\mathcal{O}\subset \Omega,$ where $\eta>0$ is given viscosity coefficient, $\Delta$ denotes the Laplace operator in space variable $x,$ and $f_s\in L^2(\Omega)$ is a given stationary data. Let $y_s(x)\in H^2(\Omega)\cap \Hio$ be a given stationary solution to the equation
\begin{equation} \label{eqVB-steadystate}
	\begin{aligned}
		-\eta \Delta y_s(x)+y_s(x) \textbf{v}.\nabla y_s(x) +\nu_0 y_s(x) & = f_s \text{ in }\Omega, 
		& y_s =0 \text{ on }\partial\Omega.
	\end{aligned}
\end{equation}
Our aim is to discuss local stabilization of \eqref{eqVB-mainburger} around the stationary solution $y_s,$ with a prescribed exponential decay rate $-\omega,$ for any $\omega>0.$  In particular, the aim is to find a $u$ in feedback form such that $y-y_s$ is exponentially stable with decay rate $-\omega.$ It is convenient to write the non-linear system satisfied by $z=y-y_s:$ 
\begin{equation} \label{eqVB-linarnd-ys}
	\begin{aligned}
		& z_t +z\textbf{v}\cdot \nabla z -\eta \Delta z + y_s \textbf{v} \cdot \nabla z+\textbf{v}\cdot \nabla y_s z +\nu_0 z = u\chi_{\mathcal{O}} \text{ in } Q, \\
		& z=0 \text{ on }\Sigma, \, z(x,0)=y_0(x)-y_s(x)=:z_0(x) \text{ in }\Omega.
	\end{aligned}
\end{equation}
If we observe that \eqref{eqVB-linarnd-ys} exhibits exponential stability with decay rate $-\omega>0$ for some $\omega>0$, it follows that \eqref{eqVB-mainburger} is also exponentially stable around $y_s$ with the same decay rate. Our primary objective is to investigate the feedback stabilizability of the associated linear system of \eqref{eqVB-linarnd-ys} and subsequently analyze the feedback stabilizability of the non-linear system \eqref{eqVB-linarnd-ys}.

\medskip
\noindent The linear equation corresponds to \eqref{eqVB-linarnd-ys} is 
\begin{equation} \label{eqVB-linarnd-ys-ln}
	\begin{aligned}
		& z_t-\eta \Delta z + y_s \textbf{v} \cdot \nabla z+\textbf{v}\cdot \nabla y_s z +\nu_0 z= u\chi_{\mathcal{O}}\text{ in } Q, \\
		& z=0 \text{ on }\Sigma, \, z(x,0)=z_0(x) \text{ in }\Omega.
	\end{aligned}
\end{equation}


\noindent The unbounded operator $(\Ac,D(\Ac))$ on $\Lt$ associated to \eqref{eqVB-linarnd-ys} and \eqref{eqVB-linarnd-ys-ln} is defined by 
\begin{equation} \label{eqdefVB-A}
	\begin{aligned}
		\Ac z=\eta \Delta z-y_s \textbf{v}\cdot \nabla z-\textbf{v}\cdot \nabla y_s z -\nu_0 z \text{ with } D(\Ac)= H^2(\Omega)\cap H^1_0(\Omega),
	\end{aligned}
\end{equation}
and we define the control operator $\Bc\in \Lc(\Lt)$ by
\begin{equation} \label{eqVB-B}
	\Bc u=u\chi_{\mathcal{O}} \text{ for all } u\in \Lt.
\end{equation}  
\noindent Then the systems \eqref{eqVB-linarnd-ys} and \eqref{eqVB-linarnd-ys-ln} are written on the space $\Lt$ as 
\begin{equation} \label{eqVB-MainOptForm}
	\begin{aligned}
		z'(t)=\Ac z(t) + \kappa F(z(t)) +\Bc u(t)\, \text{ for all } t>0,\quad z(0)=z_0,
	\end{aligned}
\end{equation} 
with $\kappa=1$ for \eqref{eqVB-linarnd-ys} and $\kappa=0$ for \eqref{eqVB-linarnd-ys-ln}, $F(z)=-z\textbf{v}\cdot \nabla z.$

\noindent By $(\Ac^*,D(\Ac^*))$ and  $\Bc^*,$ we denote the adjoint operators corresponding to $(\Ac,D(\Ac))$ and $\Bc,$ respectively, on $\Lt$. 

\medskip 
\noindent To study the exponential stabilizability of \eqref{eqVB-MainOptForm} with decay $-\omega<0,$ for any given $\omega>0,$ set $\wt z(t):=e^{\omega t}z(t)$ and $\wt u(t):=e^{\omega t}u(t).$ Then $(\wt z(t), \wt u(t))$ satisfies 
\begin{equation} \label{eqVB-shiftedSysComb}
	\wt z'(t)=\Acw \wt z(t) +\kappa e^{-\omega t} F(\wt z(t)) +\Bc \wt u(t) \text{ for all }t>0, \wt z(0)=z_0,
\end{equation}
where
\begin{align}\label{eqVB-Acw-Acw*}
	\Acw=\Ac+\omega I \text{ with } D(\Acw)=D(\Ac),
\end{align}
and $I:\Lt\rightarrow \Lt$ is the identity operator. Assuming that the exponential stabilizability of \eqref{eqVB-shiftedSysComb} by a feedback control $\widetilde{u}(t) = K\widetilde{z}(t)$, we can establish that \eqref{eqVB-MainOptForm} is also exponentially stabilizable with decay rate $-\omega<0$ by employing the control $u(t) = e^{-\omega t}\widetilde{u}(t)$. Consequently, our focus can be narrowed down to investigating the stabilizability of \eqref{eqVB-shiftedSysComb} rather than \eqref{eqVB-MainOptForm}. To facilitate this analysis, we introduce the adjoint operator $\Acw^*$ corresponding to $\Acw$, defined as follows:
\begin{align}\label{eqVB-Acw}
\Acw^* := \Ac^* + \omega I \text{ with } D(\Acw^*) = D(\Ac^*).
\end{align}

\subsection{Literature survey}
 \noindent Feedback stabilization of evolution equations via Riccati is well-known and well-studied; for example, see \cite{WKRPCE,WKR,VBarBook, BDDM, Lasiecka1, Lasieckabook,Burns91,BreKun17} and references therein. Feedback stabilization by solving a finite-dimensional Riccati equation has been developed extensively; for example, see   \cite{WKR, BarLasTri06, BarTri04, JPR2010, Badra-T} and references therein.

\noindent The authors in \cite{BYR98,Burns91} discuss the following one-dimensional Burgers equation 
\begin{equation} \label{eqInt-1DBurgs}
	\begin{aligned}
		& y_t(x,t)-\eta y_{xx}(x,t) +y(x,t)y_x(x,t)=0 \text{ in } (0,l),\\
		& y(0,t)=y(l,t)= 0 \text{ for all }t>0, \, y(x,0)=y_0(x) \text{ for all } x\in (0,l),
	\end{aligned}	
\end{equation}
with $\eta>0$ as the viscosity constant and establish local stabilization results by using (finite-dimensional) interior control. Under a smallness assumption on initial condition $y_0,$ the stabilizability of the non-linear equation is obtained.
Here, the feedback control is obtained by solving the Riccati equation posed for a linearized system, and then a local stabilization is shown for the non-linear system. In \cite{LyMT97}, the smallness assumption on the initial condition (in \cite{Burns91}) is relaxed and the stabilization results in one dimension are established. The Neumann boundary control problem is also analyzed in \cite{LyMT97}. The paper \cite{Thev-JPR} considers a two dimensional Burgers equation
 \begin{equation*}
 	\begin{aligned}
 		& y_t - \eta \Delta y + y y_{x_1}+yy_{x_2}=f_s \text{ in }\Omega \times (0,\infty), \\
 		& \eta \frac{\partial y}{\partial n} = g+m u \text{ on }\partial \Omega \times (0,\infty), \quad y(0)=y_0 \text{ in }\Omega,
 	\end{aligned}
 \end{equation*}
 where $m$ is a function that localizes the control in a part of boundary $\partial\Omega$ and establish local stabilization results around given stationary solution $w$ that satisfies
  \begin{align*}
  	-\eta \Delta w +ww_{x_1}+ww_{x_2}=f_s \text{ in }\Omega, \quad \frac{\partial w}{\partial n}=g \text{ on }\partial\Omega.
  \end{align*}
 The Hamilton-Jacobi-Bellman (HJB) equation given by
    \begin{align*}
    	\langle Ay+F(y), \mathcal{G}(y)\rangle - \frac{1}{2}\|B^*\mathcal{G}(y)\|_U^2+\frac{1}{2}\|y\|^2=0 \text{ for all } y\in D(A),
    \end{align*}
where $(A, B)$ are associated linear operators and $F(y)$ denotes the non-linear term, is solved for $\mathcal{G}$ using Taylor series expansion to obtain a non-linear feedback control
\begin{align*}
	u=-B^*Py+B^*(A-BB^*P)^{-*}PF(y).
\end{align*}
Here, $P$ is solution of an algebraic Riccati equation and $(A-BB^*P)^{-*}$ is the inverse of $(A-BB^*P)^*.$ Local stabilization results for two dimensional Burgers equation by using linear feedback boundary control is investigated in \cite{JPR1}.

\medskip
\noindent In 1999, Krstic \cite{Krstic} studies the global stabilizability of a one-dimensional viscous Burgers equation around a constant steady state with Neumann and Dirichlet boundary control by using the Lyapunov technique. In 2000, by introducing a cubic Neumann boundary feedback control, Balogh and Krstic \cite{KrsBal00} prove global asymptotic stability. In \cite{Kundu18, Kundu19, Kundu20}, global stabilization results by using Lyapunov functional are established for Burgers equation, and BBM Burgers equation
\begin{align*}
	& y_t - \eta_1 y_{xxt} - \eta_2 y_{xx}+y_x+yy_x=0 \text{ for all } (x,t)\in (0,1)\times (0,\infty), \\
	& y_x(0,t)=u_0(t) \text{  for all } t\in (0,\infty), \quad y_x(1,t)=u_1(t) \text{  for all } t\in (0,\infty),\\
	& y(x,0)=y_0(x) \text{ for all } x\in \Omega,
\end{align*}
 where $\eta_1, \eta_2>0$ are given constants, $u_0(t)$ and $u_1(t)$ are controls. In these articles, the authors also study error estimates for the stabilized system and verify their results by providing numerical examples. In 1999, S. Volkwein \cite{Volk01} studies the optimal control problem for viscous Burgers equation with distributed control.

\noindent  This Riccati-based technique is used extensively to study the stabilization of parabolic equations, such as incompressible Navier-Stokes equations \cite{JPR, JPR2010}
\begin{align*}
	& y_t- \eta \Delta y +(y\cdot \nabla )y +\nabla q =f , \quad  \text{div }y=0\text{ in } \Omega\times (0,\infty), \\
	& y=g \text{ on }\partial \Omega\times (0,\infty), \quad y(0)=y_0 \text{ in } \omega,
\end{align*}
around solution to stationary Navier-Stokes equation
\begin{align*}
	-\eta \Delta w + (w\cdot \nabla)w +\nabla p =f , \quad \text{div }w=0 \text{ in }\Omega, w= g \text{ on }\partial\Omega,
\end{align*}
 coupled parabolic-ODE systems \cite{WKR}, and other few models in \cite{BreKun17,Badra-ths}. In \cite{BreKun14, BreKun17}, the Fitzhugh-Nagumo model, a nonlinear reaction-diffusion coupled system, has been considered. A characterization of the stabilization of parabolic systems is obtained in \cite{BadTak14}.
 
 \medskip
\noindent The exponential decay may be determined by the spectrum of the linear operator associated with the system, provided the operator satisfies some suitable assumptions. The fact that the exponential decay for the feedback stabilizability of a linear system associated with a linear operator $A$ and a control operator $B$ is bounded by the accumulation point of the spectrum of $A$ provided $A$ and $B$ satisfy some suitable assumptions, has been studied in the literature (for example see \cite[Proposition 5.1 and Corollary 5.1]{Trig75}). The open loop stabilizability for a linear system can be obtained by checking the Hautus condition under some suitable conditions, and then a feedback control can be constructed by solving an optimal control problem in an infinite time horizon. The feedback stabilization using an appropriate Riccati equation posed on an infinite dimensional Hilbert space has been obtained in \cite{BreKun14}.

\medskip
\noindent Due to the wide applications of Burgers equations, various numerical methods are proposed to solve it. The finite element method is used extensively to solve Burgers equations, and we refer to \cite{Cald81, PanyNN07, AKPNN08, Dogan04, YChen19} and references therein. Error estimates for one dimensional Burgers equation are established and verified by a numerical experiment in \cite{PanyNN07}. The paper \cite{AKPNN08} obtains an optimal error estimate for one dimensional Burgers equation with the numerical investigation. Using a weak Galerkin finite element method, \cite{YChen19} establishes optimal order error estimates for one dimensional Burgers equation. Semi-discrete and fully discrete systems are considered and verified here with numerical implementation.`

\medskip
\noindent In \cite{Lasiecka1}, the numerical theory as the counterpart of the known continuous theory for feedback stabilization has been developed for abstract parabolic systems of the form \eqref{eqVB-MainOptForm} with $\kappa=0$ under certain hypotheses. This book provides a numerical approximation theory of continuous dynamics and algebraic Riccati equations. In finite-dimensional space $H_h\subset H$ with $h$ as discretization parameter, a family of approximate system  
\begin{align}\label{eqInt:LinConDis-1}
	y_h'(t)=A_h y_h(t)+B_h u_h(t) \text{ for all }t>0, \quad y_h(0)=y_{0,h},
\end{align} 
of \eqref{eqVB-MainOptForm} with $\kappa=0$ is constructed, where $A_h$ and $B_h$ denote the approximated operators corresponding to $A$ and $B,$ respectively, and $y_{0,h}\in H_h$ is an approximation of $y_0.$ Under the assumptions that $(A,D(A))$ generates analytic semigroup on $H$, $(A,B)$ is stabilizable, and $\|A^{-1} - A_h^{-1}\pi_h\|_{\Lc(H)} \le Ch^s,$ where $\pi_h:H\rightarrow H_h$ is projection, it is established that the discrete system \eqref{eqInt:LinConDis-1} is uniformly stabilizable. A feedback stabilizing control of the form $u_h(t)=-B_hB_h^*P_h y_h(t),$ where $P_h \in \Lc(H_h)$ is solution of the discrete algebraic Riccati equation 
\begin{align}
	A_h^*P_h+P_hA_h-P_hB_hB_h^*P_h+I_h=0, \quad P_h=P_h^*\ge 0 \text{ on } H_h,
\end{align}
is obtained. The error estimates for the trajectories and feedback controls have been obtained with the `optimal rate' of convergence in this setup. The application of this theory and related works can be found in \cite{Las-Tr-91, Ls-Tr-87-I,Ls-Tr-87-II} and references therein. 

\medskip
\noindent
In \cite{KK91}, the authors consider linear quadratic control problems for parabolic equations with variable coefficients. They provide the approximation of the Riccati equation and obtain the convergence rate for the optimal controls and trajectories.  

\medskip
\noindent Burns and Kang \cite{Burns91} compute a feedback control using the linearized system to show the stabilizability of one dimensional Burgers equation \eqref{eqInt-1DBurgs}. Further, they study the convergence analysis for the linear system and implement some numerical examples. Using the Lyapunov technique, the stabilizability of one dimensional Burgers equation by non-linear feedback boundary (Neumann) control is established in  \cite{KrsBal00}. They implement an example where the uncontrolled solution converges to non-zero equilibrium, and after applying the feedback Neumann boundary control, the solution converges to zero. In  \cite{Thev-JPR}, stabilization results on the two dimensional viscous Burgers equation around a non-constant steady state by boundary (Dirichlet and Neumann) control are established, and numerical implementations validate theoretical results. Using a finite element method, \cite{JPR1} computes feedback control for a Burgers equation (in two dimension) by solving an algebraic Riccati equation. They numerically show that the non-linear system is stabilizable by the computed feedback control. Global stabilization results by using Lyapunov functional are established for Burgers equation, and BBM Burgers equation with Neumann boundary control in \cite{Kundu18, Kundu19, Kundu20}. In these articles, the error estimates for the stabilized system are established and verified by providing some numerical examples.

\subsection{Contributions}

\noindent First, we show that the linear system \eqref{eqVB-linarnd-ys-ln} is feedback stabilizable with exponential decay $-\omega<0$, for any $\omega>0$, and study associated numerical analysis. This stabilization process is centered around a non-constant steady state, resulting in the associated linear unbounded operator $\Ac$ having variable coefficients. We show that the operator $(\Ac, D(\Ac))$ generates an analytic semigroup $\{e^{t\Ac}\}_{t\ge 0}$ by showing that the spectrum of $\Ac$  is contained in a sector $\Sigma(-\nuh;\theta_0) := \{-\nuh + re^{\pm i\theta}\, \mid \, r > 0 ; \theta \in (-\pi,\pi]; |\theta|\ge \theta_0\}$ for some $-\nuh>0$ and $\frac{\pi}{2}<\theta_0<\pi$ and outside this sector a resolvent estimate holds. Due to the variable coefficients in $\Ac$, exact expressions of its eigenvalues and corresponding eigenfunctions are unavailable. Therefore, we cannot proceed to verify the Hautus condition analogously as in \cite{WKRPCE, WKR} to study the stabilizability of the linear system. 
To address this challenge, we establish the compactness of the resolvent operator $R(-\nuh, \Ac)$ for some $\nuh>0$ and ascertain that the spectrum of $\Ac$ is discrete. Next, using a unique continuation principle, we verify the Hautus condition and establish that the linear system is open-loop stabilizable (see Proposition \ref{ppsVB-stabOpenLoop}).Subsequently, employing standard techniques involving the construction of an optimal control problem, we derive a feedback control that stabilizes the linear system \eqref{eqVB-linarnd-ys-ln} with exponential decay $-\omega<0$ for any $\omega>0$ (refer to Theorem \ref{thVB-mainstab}). We investigate the stabilizability of the nonlinear system by utilizing the control obtained for the linear system, as elucidated in Theorem \ref{thVBMain-NonLin}, through the application of the Banach fixed-point theorem.

\medskip
\noindent Next, we study the associated numerical analysis for both the linear and non-linear system. Employing the finite element method, we derive an approximation of the system within a discretized space. It is shown that the approximated operator $\Ac_h$ corresponding to $\Ac$ generates an uniformly analytic semigroup on the discretized space $V_h.$ Furthermore, we demonstrate the uniform stabilizability of the approximated linear system, along with deriving error estimates for the linear stabilized system and its stabilizing control. These results are achieved by establishing crucial properties (denoted as (a) - (d)) outlined in Property $(\mathcal{A}_1)$ on Page 13. These properties closely mirror those discussed in Sections 4.3 - 4.4 of \cite{WKRPCE}. Utilizing these crucial results, an analogous analysis done in Sections 4.5 - 4.7 (of \cite{WKRPCE}) leads to the desired results on the linear system and hence is skipped for this part. The main results on the linearized system (Theorems \ref{thVB:dro} - \ref{thVB:main-conv-new}) are analogous to the results obtained in the case of the parabolic coupled system studied in \cite{WKRPCE}.
Here, we obtain a quadratic rate of convergence for the stabilized solution and stabilizing control. To validate these results, we conduct illustrative examples implemented for the linear system.

\medskip
\noindent We extend the numerical analysis done for the linear system to the nonlinear system. The feedback discrete control obtained for the linear approximated system is used to obtain a nonlinear discrete closed-loop system and prove that it is stable by utilizing some energy argument (see Theorem \ref{thVB-DisMainStabNL}). The main difficulty is to derive an error estimate for the nonlinear stabilized system, and it doesn't follow immediately from any available result. However, using a certain elliptic projection and in a better regular space, using energy argument, we establish an error estimate for the nonlinear closed-loop system (see Theorem \ref{thVB-mainErrNL}). Here, we obtain a quadratic rate of convergence for the stabilized solutions and stabilizing control. We implement an example to show that the nonlinear system is stabilizable and compute convergence orders to verify theoretical results. To implement, we explicitly provide the algorithm used to show the stabilizability and to obtain the rate of convergence.

\medskip
\noindent The main novelty of this work is that we establish an error estimate for the stabilization problem of a nonlinear parabolic system where the feedback stabilization (around a constant or non-constant steady state) is obtained by solving an algebraic Riccati equation. This is new in the literature, and the technique used here can be useful for many important nonlinear parabolic systems. In \cite{WKRPCE}, we investigated the stabilization of the viscous Burgers equation with memory around a zero steady state. Given the significance of understanding stabilizability around a non-constant steady state and deriving an error estimate, it is meaningful to explore the stabilizability of \eqref{eqVB-mainburger} around such a state. Furthermore, developing an error estimate for the stabilizing system along with an algorithm for implementation is crucial.

\subsection{Organization} The remaining part of this article is organized as follows. A few results and notations that are repeatedly used throughout this article are listed in Section \ref{sec:preli}. All the results on linear system are established in Section \ref{secVB-LinSys}. In Subsections \ref{subsecVB-ASGP-SA} and \ref{subsecVB-StabCont}, the analytic semigroup with spectral analysis of $\Ac$ and stabilizability are discussed, respectively. The uniform stabilizability, error estimates for stabilized solutions, and stabilizing control are developed in Subsection \ref{secVB-FEM}, and implementation on the linear system is presented in Subsection \ref{subVB-NI-Lin}. Section \ref{secVB-NLS} focuses on the nonlinear system, where stabilizability of the nonlinear system is discussed in Subsection \ref{secVB-stabNL}, and error estimates are analyzed in Subsection \ref{secVB-ErrEst-NL}. Finally, an algorithm on implementation of the nonlinear system and a numerical example are discussed in Section \ref{secVB-impl-NL}.




\section{Preliminaries} \label{sec:preli}
\noindent In this section, we mention some preliminary results and define some notations we frequently use in this article.  

\noindent \textbf{Young's inequality.} Let $a,b,$ be any non-negative real numbers. Then for any $\epsilon>0,$ the following inequality holds:
\begin{align} \label{eqPR-YoungIneq}
	ab \le \frac{\epsilon a^2}{2} + \frac{b^2}{2\epsilon}.
\end{align}

\begin{Proposition}[Generalized H\"older's inequality] \label{ppsPR-GenHoldIneq}
	Let $f\in L^p(\Omega),$ $g\in L^q(\Omega),$ and $h\in L^r(\Omega),$ where $1\le p, q ,r<\infty$ are such that $\frac{1}{p}+\frac{1}{q}+\frac{1}{r}=1.$ Then $fgh\in L^1(\Omega)$ and 
	\begin{align*}
		\|fgh\|_{L^1(\Omega)} \le \|f\|_{L^p(\Omega)} \|g\|_{L^q(\Omega)} \|h\|_{L^r(\Omega)}.
	\end{align*} 
\end{Proposition}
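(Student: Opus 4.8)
The plan is to bootstrap from the classical two-function Hölder inequality (two functions with conjugate exponents), which we treat as known. A preliminary observation: since $q,r<\infty$ we have $\tfrac1q,\tfrac1r>0$, so the constraint $\tfrac1p+\tfrac1q+\tfrac1r=1$ forces $\tfrac1p<1$; by symmetry $p,q,r\in(1,\infty)$, and the conjugate exponent $p':=\tfrac{p}{p-1}$ is finite with $\tfrac1{p'}=1-\tfrac1p=\tfrac1q+\tfrac1r$. If any one of $\|f\|_{L^p(\Omega)}$, $\|g\|_{L^q(\Omega)}$, $\|h\|_{L^r(\Omega)}$ is zero, the corresponding function vanishes a.e., hence so does $fgh$, and the inequality holds trivially; so we may assume all three norms are positive (and finite, by hypothesis).

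The first step is to apply the ordinary Hölder inequality on $\Omega$ with the conjugate pair $(p,p')$ to the functions $|f|$ and $|gh|$, obtaining
\begin{align*}
\|fgh\|_{L^1(\Omega)}=\int_\Omega|f|\,|gh|\,dx\le \|f\|_{L^p(\Omega)}\,\big\|\,|gh|\,\big\|_{L^{p'}(\Omega)}.
\end{align*}
The second step estimates $\big\|\,|gh|\,\big\|_{L^{p'}(\Omega)}$. Because $\tfrac{p'}{q}+\tfrac{p'}{r}=p'\big(\tfrac1q+\tfrac1r\big)=1$, the pair $\big(\tfrac{q}{p'},\tfrac{r}{p'}\big)$ is conjugate, and applying the ordinary Hölder inequality to $|g|^{p'}\in L^{q/p'}(\Omega)$ and $|h|^{p'}\in L^{r/p'}(\Omega)$ gives
\begin{align*}
\big\|\,|gh|\,\big\|_{L^{p'}(\Omega)}^{p'}=\int_\Omega|g|^{p'}|h|^{p'}\,dx\le\Big(\int_\Omega|g|^{q}\,dx\Big)^{p'/q}\Big(\int_\Omega|h|^{r}\,dx\Big)^{p'/r}=\|g\|_{L^q(\Omega)}^{p'}\,\|h\|_{L^r(\Omega)}^{p'}.
\end{align*}
Taking $p'$-th roots and inserting this into the first estimate shows $fgh\in L^1(\Omega)$ and yields the claimed bound.

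I do not expect any genuine obstacle: the statement is a two-step iteration of the binary Hölder inequality, and the only points needing a word of care are checking that the auxiliary exponents $\big(\tfrac{q}{p'},\tfrac{r}{p'}\big)$ form a conjugate pair and disposing of the degenerate case of a vanishing factor. An entirely parallel induction would extend the same argument to any finite number of factors.
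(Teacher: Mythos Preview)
Your proof is correct and is the standard textbook argument: iterate the two-function H\"older inequality, first splitting off $f$ with the conjugate pair $(p,p')$ and then splitting $|gh|^{p'}$ with the conjugate pair $(q/p',r/p')$. The paper does not supply its own proof of this proposition; it is stated in the Preliminaries section as a known result and used as a tool throughout. There is nothing to compare against, and your argument would be an appropriate justification if one were required.
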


\noindent In the following lemma, we recall Sobolev embedding \cite[Theorem 2.4.4]{Kes} in our context.
\begin{Lemma}[Sobolev embedding] \label{lemPR:SobEmb}
	Let $\Omega$ be an open bounded domain in $\mathbb{R}^d$ of class $C^1$ with smooth boundary with $d\in \Nb.$ Then, we have the following continuous inclusion with constant $s_0:$
	\begin{itemize}
		\item[$(a)$] $H^1(\Omega)\hookrightarrow L^p(\Omega)$ for $d>2,$ where $p=\frac{2d}{d-2},$
		\item[$(b)$] $H^1(\Omega) \hookrightarrow L^q(\Omega)$ for all $d=2\le q<\infty,$ and
		 \item[$(c)$] $H^1(\Omega) \hookrightarrow L^\infty(\Omega)$ for $d=1.$
	\end{itemize}
\end{Lemma}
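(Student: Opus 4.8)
The plan is to deduce all three inclusions from the classical Sobolev embedding theorem on $\mathbb{R}^d$ together with an extension argument, since $\Omega$ is bounded and of class $C^1$. First I would invoke the existence of a bounded linear extension operator $E\colon H^1(\Omega)\to H^1(\mathbb{R}^d)$ with $(Eu)|_\Omega=u$, $\operatorname{supp}(Eu)$ contained in a fixed bounded ball $B\supset\overline\Omega$, and $\|Eu\|_{H^1(\mathbb{R}^d)}\le c\,\|u\|_{H^1(\Omega)}$; this is the step where the $C^1$ regularity of $\partial\Omega$ is used. It then suffices to prove the corresponding embeddings for functions in $H^1(\mathbb{R}^d)$ supported in $B$ and to restrict back to $\Omega$, the restriction being trivially continuous.

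For part $(a)$, with $d>2$ (so $d=3$ in our setting) and $p=\frac{2d}{d-2}=2^{*}$, the key step is the Gagliardo--Nirenberg--Sobolev inequality $\|v\|_{L^{2^{*}}(\mathbb{R}^d)}\le C\,\|\nabla v\|_{L^2(\mathbb{R}^d)}$, valid for $v\in C_c^\infty(\mathbb{R}^d)$ and extended to $H^1(\mathbb{R}^d)$ by density; combined with $\|v\|_{L^2}\le\|v\|_{H^1}$ and the extension bound this yields $\|u\|_{L^{2^{*}}(\Omega)}\le s_0\|u\|_{H^1(\Omega)}$. For part $(b)$, the borderline case $d=2$, I would fix $q\in[2,\infty)$ and set $p=\frac{2q}{q+2}\in[1,2)$, for which the Sobolev conjugate exponent is $p^{*}=q$; then $\|v\|_{L^q(\mathbb{R}^2)}\le C\|\nabla v\|_{L^p(\mathbb{R}^2)}$, and since $v=Eu$ is supported in the bounded ball $B$, Hölder's inequality gives $\|\nabla v\|_{L^p(B)}\le|B|^{1/p-1/2}\|\nabla v\|_{L^2(B)}$, so that $\|u\|_{L^q(\Omega)}\le s_0\|u\|_{H^1(\Omega)}$. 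For part $(c)$, $d=1$, I would use Morrey's inequality (the case $p=2>d=1$): every $v\in H^1(\mathbb{R})$ has a continuous representative satisfying $\|v\|_{C^{0,1/2}(\mathbb{R})}\le C\|v\|_{H^1(\mathbb{R})}$, in particular $\|v\|_{L^\infty}\le C\|v\|_{H^1}$; restricting to $\Omega$ gives $H^1(\Omega)\hookrightarrow L^\infty(\Omega)$.

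Since the statement is quoted verbatim from \cite[Theorem 2.4.4]{Kes}, the cleanest route in the paper is simply to cite that reference and record the value of the embedding constant as $s_0$; the sketch above indicates the ingredients (an extension operator, plus the Gagliardo--Nirenberg--Sobolev and Morrey inequalities on $\mathbb{R}^d$) that underlie it. The only point requiring real care --- and hence the main obstacle if one insists on a self-contained proof --- is the construction of the extension operator for a general bounded $C^1$ domain, which requires localization by a partition of unity, local flattening of the boundary, and reflection across it; once that is granted, the remainder is a density argument together with the scale-invariant inequalities on the whole space.
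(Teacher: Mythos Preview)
Your proposal is correct, and you have already anticipated the paper's own treatment: the paper does not prove this lemma at all but simply introduces it with the sentence ``In the following lemma, we recall Sobolev embedding \cite[Theorem 2.4.4]{Kes} in our context'' and provides no argument. Your sketch (extension operator from a $C^1$ domain, Gagliardo--Nirenberg--Sobolev for $d>2$, the $p<2$ trick with compact support for $d=2$, Morrey for $d=1$) is the standard route and is sound; your closing paragraph correctly identifies that citing the reference is exactly what the paper does.
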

\noindent In particular, the following Sobolev embedding that follows from the above lemma is being used frequently in this thesis:
\begin{align} \label{eqPR:SobEmb}
	H^1(\Omega) \hookrightarrow L^4(\Omega), \text{ with } s_0 \text{ as embedding constant, }
\end{align}
where $\Omega\subset \mathbb{R}^d$ bounded domain for $d\in \{1,2,3\}.$

\begin{Lemma}[Agmon's inequality {\n\cite{Agmon10}}] \label{lemVB:AgmonIE}
	Let $\Omega$ be an open bounded domain $\mathbb{R}^d,\, d\in \{1,2,3\},$  and let $z \in H^2(\Omega)\cap \Hio.$ Then there exists a positive constant $C_a=C_a(\Omega)$ such that
	\begin{equation} \label{eqVB-AgmonIE}
		\|z\|_{L^\infty(\Omega)} \le C_a \|z\|_{H^1(\Omega)}^{1/2} \|z\|_{H^2(\Omega)}^{1/2}.
	\end{equation}
\end{Lemma}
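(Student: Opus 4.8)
The plan is to transplant the inequality to the whole space $\mathbb{R}^d$ and prove it there by a Fourier frequency decomposition. Since $\Omega$ is a bounded domain of class $C^1$, there is a bounded linear extension operator $E$ (Stein's extension operator) which is simultaneously bounded from $H^k(\Omega)$ to $H^k(\mathbb{R}^d)$ for $k=0,1,2$, with $Ez|_{\Omega}=z$ and $\|Ez\|_{H^k(\mathbb{R}^d)}\le C_E\|z\|_{H^k(\Omega)}$; note that the hypothesis $z\in\Hio$ is not actually needed for \eqref{eqVB-AgmonIE}. Putting $f:=Ez$ and using $\|z\|_{L^\infty(\Omega)}\le\|f\|_{L^\infty(\mathbb{R}^d)}$, it suffices to show $\|f\|_{L^\infty(\mathbb{R}^d)}\le C\|f\|_{H^1(\mathbb{R}^d)}^{1/2}\|f\|_{H^2(\mathbb{R}^d)}^{1/2}$.

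On $\mathbb{R}^d$ I would start from the elementary bound $\|f\|_{L^\infty(\mathbb{R}^d)}\le C_d\|\widehat f\|_{L^1(\mathbb{R}^d)}$ (Fourier inversion, first for Schwartz functions and then by density), and split the frequency integral at a radius $R>0$ to be chosen:
\[
\|\widehat f\|_{L^1}=\int_{|\xi|\le R}|\widehat f(\xi)|\,d\xi+\int_{|\xi|>R}|\widehat f(\xi)|\,d\xi .
\]
For the high-frequency part, Cauchy--Schwarz against the weight $(1+|\xi|^2)^{2}$ gives $\int_{|\xi|>R}|\widehat f|\le\|f\|_{H^2}\big(\int_{|\xi|>R}(1+|\xi|^2)^{-2}\,d\xi\big)^{1/2}\le C\|f\|_{H^2}R^{(d-4)/2}$, the radial integral being finite for $d\le 3$. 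For the low-frequency part, Cauchy--Schwarz against the weight $(1+|\xi|^2)$ (for $d=3$) resp.\ against the weight $1$, i.e.\ using $\|f\|_{L^2}$ (for $d=1,2$), gives $\int_{|\xi|\le R}|\widehat f|\le C\|f\|_{H^1}R^{1/2}$ when $d=3$. Hence $\|\widehat f\|_{L^1}\le C(\|f\|_{H^1}R^{1/2}+\|f\|_{H^2}R^{-1/2})$ for $d=3$, and choosing $R=\|f\|_{H^2}/\|f\|_{H^1}$ (the case $f\equiv 0$ being trivial) yields the claimed bound. For $d=1,2$ the same optimization, now in the form $C(\|f\|_{L^2}R^{\alpha}+\|f\|_{H^2}R^{-\beta})$ with suitable $\alpha,\beta>0$ (and for $d=1$ no splitting is needed, since $\int_{\mathbb{R}}(1+\xi^2)^{-1}d\xi<\infty$ already gives $\|\widehat f\|_{L^1}\le C\|f\|_{H^1}$), yields $\|f\|_{L^\infty}\le C\|f\|_{L^2}^{1/2}\|f\|_{H^2}^{1/2}$, after which one absorbs $\|f\|_{L^2}\le\|f\|_{H^1}$ and, if needed, $\|f\|_{H^1}\le\|f\|_{H^2}$ to reach exactly the exponents in \eqref{eqVB-AgmonIE}. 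Combining with the extension estimate then gives the inequality on $\Omega$ with $C_a=C\,C_E$ depending only on $\Omega$ and $d$.

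The step I expect to be the only genuine obstacle is the first one: producing a single extension operator bounded on $H^0$, $H^1$ and $H^2$ at once, with constant controlled by $\Omega$. For bounded $C^1$ (or Lipschitz) domains this is classical, so in practice the proof reduces to invoking that theorem and then being mildly careful with the dimension-dependent radial integrals in the frequency split. A slicker alternative for $d=3$ combines the Gagliardo--Nirenberg bound $\|f\|_{L^\infty(\mathbb{R}^3)}\le C\|f\|_{L^2}^{1/4}\|f\|_{H^2}^{3/4}$ with the interpolation inequality $\|f\|_{H^1}^2\le\|f\|_{L^2}\|f\|_{H^2}$; or one may bypass the estimate entirely and simply cite \cite{Agmon10} as in the statement of the lemma.
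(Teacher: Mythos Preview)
The paper does not actually prove this lemma: it is stated in the preliminaries and attributed to \cite{Agmon10}, with no argument given. Your main proof --- Stein extension to $\mathbb{R}^d$, then a low/high frequency split of $\|\widehat f\|_{L^1}$ optimized in the cutoff radius $R$ --- is a correct and standard way to obtain Agmon's inequality, and the exponent bookkeeping you give for $d=3$ (and the remarks for $d=1,2$) is right. So where the paper simply cites the result, you supply a genuine proof.

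One small caveat: your closing ``slicker alternative'' for $d=3$ does not work as written. From the interpolation inequality $\|f\|_{H^1}^2\le C\|f\|_{L^2}\|f\|_{H^2}$ you get $\|f\|_{L^2}\ge c\,\|f\|_{H^1}^2/\|f\|_{H^2}$, which combined with the Gagliardo--Nirenberg bound $\|f\|_{L^\infty}\le C\|f\|_{L^2}^{1/4}\|f\|_{H^2}^{3/4}$ yields $\|f\|_{L^\infty}\ge c\,\|f\|_{H^1}^{1/2}\|f\|_{H^2}^{1/2}$, the wrong direction. This side remark is not needed anyway, since your Fourier argument already gives exactly the exponents in \eqref{eqVB-AgmonIE}.
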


\noindent \textbf{Notations.} Throughout this article, we denote the inner product and norm in $\Lt$ by $\langle \phi \cdot \psi \rangle :=\int_\Omega \phi\overline{\psi} dx $ and $\|\phi\|:= \left(\int_\Omega |\phi|^2 \, dx\right)^{1/2},$ respectively. For any integer $1\le m< \infty,$ $H^m(\Omega)$ denotes the Sobolev space
\begin{align*}
	H^m(\Omega):= \left\lbrace f\in L^2(\Omega) \, |\, D^\alpha f \in L^2(\Omega), \, |\alpha|\le m\right\rbrace,
\end{align*}
with the norm $\|f\|_{H^m(\Omega)}:=\displaystyle \left(\sum_{|\alpha|\le m} \|D^\alpha f\|_{L^2(\Omega)}^2 \right)^{1/2},$
where $\alpha$ is multi-index and the derivatives are in the sense of distributions.  For $a\in \mathbb{R}$ and $\theta_0 \in  (\frac{\pi}{2}, \pi),$ we denote
$\Sigma(a; \theta_0)$ as the sector $\Sigma(a; \theta_0) := \{a + re^{\pm i\theta}\, | \, r > 0 ; \theta \in (-\pi,\pi]; |\theta|\ge \theta_0\}$ in the complex
plane and $\Sigma^c(a; \theta_0)$ denotes its complement in $\mathbb{C}.$ Depending on the context, the absolute value of a real number or the modulus of a complex number are denoted by $|\cdot|.$ The positive constant $C$ is generic and
independent of the discretization parameter $h.$

\section{Linear system} \label{secVB-LinSys}
\noindent This section focuses on the stabilizability and associated numerical analysis of the linear system  
\begin{align} \label{eqVB-shifsysLin}
	\wt z'(t)=\Acw\wt z(t)+\Bc \wt u(t) \text{ for all }t>0, \, \wt z(0)=z_0.
\end{align}



\subsection{Analytic semigroup and spectral analysis} \label{subsecVB-ASGP-SA}
The aim of this subsection is to study the well-posedness of \eqref{eqVB-MainOptForm} with $\kappa=0$ by showing $(\Ac,D(\Ac))$ generates an analytic semigroup on $\Lt$. Also, we discuss some spectral properties of the operator $\Ac$ and hence of $\Acw$. The weak formulation corresponding to \eqref{eqVB-linarnd-ys-ln} seeks $z(\cdot)\in \Hio$ such that 
\begin{align*}
	& \left\langle\frac{d}{dt}z(t),\phi\right\rangle +a(z(t),\phi)  =\langle u(t)\chi_{\mathcal{O}},\phi\rangle  \, \text{ for all }t>0,\\
	& \langle z(0),\phi\rangle = \langle z_0 ,\phi\rangle,
\end{align*}
for all $\phi\in \Hio,$ where the sesquilinear form $a(\cdot,\cdot):\Hio\times\Hio \longrightarrow \Cb$ is defined by 
\begin{align} \label{eqVB-Sesq}
	a(z,\phi)=\eta \langle  \nabla z,\nabla \phi\rangle+\langle y_s\textbf{v}\cdot \nabla z, \phi\rangle+\langle \textbf{v}\cdot \nabla y_sz, \phi\rangle+\nu_0\langle z,\phi\rangle \text{ for all }z,\,\phi \in \Hio.
\end{align}
Utilizing Proposition \ref{ppsPR-GenHoldIneq} with $p=4,q=4,r=2,$ Agmon's inequality \eqref{eqVB-AgmonIE}, and Lemma \ref{lemPR:SobEmb}, the sesquilinear form defined in \eqref{eqVB-Sesq} satisfies
\begin{equation} \label{eqVB-BddBil}
	\begin{aligned}
		|a(z,\phi)| & \le \eta \|\nabla z\|\|\nabla\phi\|+|\textbf{v}|\|y_s\|_{L^\infty(\Omega)} \|\nabla z\|\,\|\phi\| + |\textbf{v}|\|\nabla y_s\|_{L^4(\Omega)} \| z\|_{L^4(\Omega)}\|\phi\| +|\nu_0|\| z\|\,\|\phi\| \\
		& \le \alpha_1 \|\nabla z\|\,\|\nabla\phi\| \text{ for all }z,\, \phi\in \Hio,
	\end{aligned}
\end{equation}
for some $\alpha_1=\alpha_1(\textbf{v},s_0, y_s,C_p)>0.$  Therefore, $a(\cdot,\cdot)$ defined in \eqref{eqVB-Sesq} is bounded.

\noindent We assume that the given vector $\textbf{v}\in \Rb^d,$ $y_s\in H^2(\Omega)\cap \Hio,$ and $\nu_0$ are such that 
	\begin{align} \label{eqdefVB-nu0}
		\nuh:=\nu_0 - \frac{|\textbf{v}|^2}{\eta}(C_a^2+s_0^4) \| y_s\|_{H^2(\Omega)}^2>0,
	\end{align}
where $s_0$ and $C_a$ are as in Lemma \ref{lemPR:SobEmb} and \eqref{eqVB-AgmonIE}, respectively.

\noindent Now, choose $z=\phi\in \Hio$ in \eqref{eqVB-Sesq} to obtain
\begin{align*}
	\eta \|\nabla \phi\|^2 +\nu_0\|\phi\|^2 & = a(\phi,\phi)-\langle y_s\textbf{v}\cdot \nabla \phi, \phi\rangle - \langle  \textbf{v} \cdot\nabla y_s\phi,\phi\rangle.
\end{align*}
Using Proposition \ref{ppsPR-GenHoldIneq} with $p=4,q=4,r=2,$  \eqref{eqVB-AgmonIE}, Lemma \ref{lemPR:SobEmb}, and \eqref{eqPR-YoungIneq} in the above equality, we have 
\begin{equation} \label{eqVB-est term ys}
\begin{aligned}
	\eta \|\nabla \phi\|^2 +\nu_0\|\phi\|^2 &  \le \Re\left( a(\phi,\phi)\right) +|\textbf{v}|\|y_s\|_{L^\infty(\Omega)} \|\nabla \phi\|\,\|\phi\| + |\textbf{v}|\|\nabla y_s\|_{L^4(\Omega)} \| \phi\|_{L^4(\Omega)}\|\phi\| \\
	& \le \Re\left( a(\phi,\phi)\right) + \frac{\eta}{2}\|\nabla \phi\|^2 + \frac{|\textbf{v}|^2}{\eta} (C_a^2+s_0^4)\|y_s\|_{H^2(\Omega)}^2\|\phi\|^2.
\end{aligned}
\end{equation}
The choice of $\nuh$ as in \eqref{eqdefVB-nu0}, respectively lead to
\begin{align} \label{eqVB-coerc}
	\Re\left( a(\phi,\phi)\right) - \nuh \|\phi\|^2 \ge \frac{\eta}{2}\|\nabla \phi\|^2 \text{ for all }\phi\in\Hio,
\end{align}
and therefore $a(\cdot,\cdot)$ defined in \eqref{eqVB-Sesq} is coercive.
\noindent Further, note that $(\Ac,D(\Ac))$ defined in \eqref{eqdefVB-A} satisfies
\begin{equation} \label{eqVB-RelBilAc}
	\begin{aligned}
		& D(\Ac)=\{ z\in \Hio\,|\, \phi\longrightarrow a(z,\phi) \text{ is continuous in } \Lt\}, \\
		& \langle -\Ac z,\phi\rangle =a(z,\phi), \, z\in D(\Ac),\, \phi\in \Hio.
	\end{aligned}
\end{equation}

\begin{Remark}
The condition \eqref{eqdefVB-nu0} is not restrictive. We assume this to obtain $\nuh>0$ such that \eqref{eqVB-coerc} holds and hence the corresponding operator $\Ac$ is stable. If \eqref{eqdefVB-nu0} is not satisfied, then we can proceed as mentioned in {\n\cite[\textit{Remark 3.2}]{WKRPCE}}.
\end{Remark}

\noindent In the next theorem, we show that $(\Ac,D(\Ac))$ generates an analytic semigroup $\{e^{t\Ac}\}_{t\ge 0}$ on $\Lt$ by showing that it satisfies a certain resolvent estimate (see \cite[Definition 3.3]{WKRPCE}).

\begin{Theorem}[analytic semigroup] \label{thVB-anasgp}
Let $(\Ac,D(\Ac))$ as defined in \eqref{eqdefVB-A}, $y_s\in H^2(\Omega)\cap \Hio$ be solution of \eqref{eqVB-steadystate} and $\nuh$ be as defined in \eqref{eqdefVB-nu0}. Then there exists $C>0$ independent of $\mu$ such that the following statements hold.
\begin{itemize}
\item[(a)] There exists $\frac{\pi}{2}<\theta_0<\pi$ such that $\Sigma^c(-\nuh;\theta_0) \subset \rho(\Ac)$ and 
\begin{align*}
\|R(\mu,\Ac)\|_{\Lc(\Lt)} \le \frac{C}{|\mu+\nuh|}\text{ for all }\mu \in \Sigma^c(-\nuh;\theta_0),\, \mu\neq -\nuh.
\end{align*}
\item[(b)] The operator $(\Ac,D(\Ac))$ generates an analytic semigroup $\{e^{t\Ac}\}_{t\ge 0}$ on $\Lt$ with the representation
\begin{align*}
e^{t\Ac}=\frac{1}{2\pi i} \int_\Gamma e^{\mu t} R(\mu,\Ac) d\mu \text{ for all } t>0,
\end{align*}
where $\Gamma$ is any curve from $-\infty$ to $\infty$ and is entirely in $\n\Sigma^c(-\nuh;\theta_0).$
\item[(c)] Furthermore, the semigroup $\{e^{t\Ac}\}_{t\ge 0}$ satisfies $\|e^{t\Ac}\|_{\Lc(\Lt)} \le C e^{-\nuh t}$ for all $t>0.$ 
\end{itemize}
\end{Theorem}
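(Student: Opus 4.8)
The plan is to prove the three statements in sequence, deriving (b) and (c) as consequences of the resolvent estimate in (a). The heart of the matter is (a): establishing that the sesquilinear form $a(\cdot,\cdot)$, being bounded (by \eqref{eqVB-BddBil}) and coercive in the shifted sense (by \eqref{eqVB-coerc}), gives rise to an operator $-\Ac$ whose shift $-\Ac - \nuh I$ is sectorial. First I would rewrite things in terms of the shifted form $a_{\nuh}(z,\phi) := a(z,\phi) - \nuh\langle z,\phi\rangle$, which by \eqref{eqVB-coerc} satisfies $\Re\, a_{\nuh}(\phi,\phi) \ge \frac{\eta}{2}\|\nabla\phi\|^2 \ge 0$ and by \eqref{eqVB-BddBil} remains bounded on $\Hio\times\Hio$. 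The standard route is then to show that $a_{\nuh}$ is a \emph{sectorial form}: there exists $\theta_1 \in (0,\tfrac{\pi}{2})$ such that the numerical range $\{a_{\nuh}(\phi,\phi) : \phi\in\Hio,\ \|\phi\|=1\}$ lies in the sector $\{re^{\pm i\alpha} : r\ge 0,\ |\alpha|\le \theta_1\}$. This follows because $|\Im\, a_{\nuh}(\phi,\phi)| \le |\textbf{v}|\|y_s\|_{L^\infty(\Omega)}\|\nabla\phi\|\,\|\phi\| + |\textbf{v}|\|\nabla y_s\|_{L^4(\Omega)}\|\phi\|_{L^4(\Omega)}\|\phi\|$, which by Agmon's inequality, Lemma \ref{lemPR:SobEmb}, and Young's inequality is bounded by $C_1\|\nabla\phi\|^2 + C_1\|\phi\|^2$ for a constant $C_1$ depending on $\textbf{v}, y_s, s_0, C_a$; since also $\|\phi\|^2 \le \frac{2}{\eta}\Re\, a_{\nuh}(\phi,\phi)$ is false in general (coercivity gives control of $\|\nabla\phi\|^2$, hence of $\|\phi\|^2$ via Poincaré), we get $|\Im\, a_{\nuh}(\phi,\phi)| \le C_2 \Re\, a_{\nuh}(\phi,\phi)$, which is exactly the sectoriality statement with $\tan\theta_1 = C_2$.

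Once sectoriality of the form is in hand, I would invoke the classical representation theorem for sectorial forms (Kato): the operator $-\Ac_{\nuh}$ associated with $a_{\nuh}$ via \eqref{eqVB-RelBilAc} is m-sectorial with numerical range in the closed sector of half-angle $\theta_1$ around the positive real axis, and consequently for every $\mu$ in the complementary sector $\Sigma^c(0;\theta_0)$ with $\theta_0 := \pi - \theta_1 \in (\tfrac{\pi}{2},\pi)$, the resolvent $(\mu I + \Ac_{\nuh})^{-1} = R(\mu, \Ac_{\nuh})$ exists and satisfies $\|R(\mu,\Ac_{\nuh})\|_{\Lc(\Lt)} \le \frac{C}{|\mu|}$ (or more precisely $\le \frac{C}{\mathrm{dist}(\mu,\text{sector})}$, which on a slightly narrowed sector is $\le C/|\mu|$). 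If I prefer a self-contained argument rather than citing Kato, I would instead argue directly: for $\mu\notin$ sector, solve $(\mu + \Ac_{\nuh})z = g$ variationally via Lax--Milgram applied to the form $\mu\langle z,\phi\rangle + a_{\nuh}(z,\phi)$, whose coercivity constant (after the sectoriality geometry) is bounded below by $c\,|\mu|$ when $\mu$ is far from the sector, yielding $\|z\| \le \frac{C}{|\mu|}\|g\|$ directly. Translating back via $\Ac_{\nuh} = -\Ac - \nuh I$, i.e. $\mu + \Ac_{\nuh} = (\mu - \nuh) - (-\Ac) \ldots$ — more cleanly, $\lambda \in \rho(\Ac)$ iff $-(\lambda+\nuh) \in \rho(\Ac_{\nuh})$ shifted appropriately, so setting $\mu = \lambda$ the sector around $-\nuh$ appears and part (a) follows with the stated estimate $\|R(\mu,\Ac)\|_{\Lc(\Lt)} \le \frac{C}{|\mu+\nuh|}$.

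For (b), the resolvent bound in (a) is precisely the defining property of the generator of an analytic semigroup (see the cited \cite[Definition 3.3]{WKRPCE}); the Dunford--Taylor integral $e^{t\Ac} = \frac{1}{2\pi i}\int_\Gamma e^{\mu t}R(\mu,\Ac)\,d\mu$ over a sectorial contour $\Gamma \subset \Sigma^c(-\nuh;\theta_0)$ converges absolutely by the decay $|e^{\mu t}| \le e^{\Re\mu\, t}$ along the rays of $\Gamma$ combined with the $1/|\mu+\nuh|$ bound, and standard contour-deformation / resolvent-identity manipulations show the semigroup and analyticity properties. For (c), I would deform $\Gamma$ to pass through the point $-\nuh$ — i.e. take the vertex of the contour at $-\nuh$ rather than to its right — so that every point on $\Gamma$ has real part $\le -\nuh + \delta$ along the finite portion and real part tending to $-\infty$ on the tails; parametrizing $\mu = -\nuh + re^{\pm i\theta_0}$ and estimating $\|e^{t\Ac}\| \le \frac{1}{2\pi}\int_\Gamma |e^{\mu t}|\,\|R(\mu,\Ac)\|\,|d\mu| \le \frac{C}{2\pi}\int_0^\infty e^{(-\nuh + r\cos\theta_0)t}\,\frac{dr}{r}\cdots$ — with the usual care near $r=0$ handled by a small circular arc of radius $1/t$ — yields $\|e^{t\Ac}\|_{\Lc(\Lt)} \le Ce^{-\nuh t}$.

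The main obstacle is the sectoriality estimate for the form in part (a): one must extract a bound of the form $|\Im\, a_{\nuh}(\phi,\phi)| \le C\,\Re\, a_{\nuh}(\phi,\phi)$ uniformly over $\phi\in\Hio$, and this requires carefully absorbing the first-order term $\langle y_s\textbf{v}\cdot\nabla\phi,\phi\rangle$ and the zeroth-order term $\langle \textbf{v}\cdot\nabla y_s\,\phi,\phi\rangle$ into the coercive part $\frac{\eta}{2}\|\nabla\phi\|^2 + \nuh\|\phi\|^2$, using Agmon's inequality \eqref{eqVB-AgmonIE} and the Sobolev embedding \eqref{eqPR:SobEmb} to control the $L^\infty$ and $L^4$ norms of $y_s$ and $\nabla y_s$ by $\|y_s\|_{H^2(\Omega)}$ — exactly the combination that motivated the definition \eqref{eqdefVB-nu0} of $\nuh$. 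Everything downstream (the Dunford integral, the exponential decay) is routine once this geometric estimate on the numerical range is secured.
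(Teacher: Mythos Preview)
Your proposal is correct and leads to the same conclusion, but the route is genuinely different from the paper's. You argue via the \emph{sectorial form} framework: show that the shifted form $a_{\nuh}$ has its numerical range in a sector of half-angle $\theta_1<\pi/2$ by bounding $|\Im\,a_{\nuh}(\phi,\phi)|$ against $\Re\,a_{\nuh}(\phi,\phi)$ (using Poincar\'e to pass from $\|\nabla\phi\|^2$ to $\|\phi\|^2$), and then invoke Kato's representation theorem for m-sectorial operators to get the resolvent estimate in the complementary sector. The paper instead works directly with the resolvent equation and uses the \emph{rotation trick}: in the weak formulation $a(z,\phi)+\mu\langle z,\phi\rangle=\langle g,\phi\rangle$ it substitutes the test function $\phi=e^{i\theta/2}z$ (where $\mu=-\nuh+\rho e^{i\theta}$), which converts the complex coefficient $\mu$ into a real multiple $\rho\cos(\theta/2)$ of $\|z\|^2$ and yields the bound $\|z\|\le C|\mu+\nuh|^{-1}\|g\|$ by elementary manipulation; the extension to $\Re\mu<-\nuh$ is then handled separately. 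Your approach is more conceptual and packages the geometry into a single citation, while the paper's approach is self-contained and makes the dependence of $\theta_0$ on the continuity and coercivity constants explicit (they set $\theta_0=\pi-\tan^{-1}(2\alpha_1/\eta)$). For parts (b) and (c) the two treatments coincide: both appeal to the standard Dunford integral representation and contour-deformation argument.
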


\begin{figure}[ht!]
\begin{center}
\begin{tikzpicture}
\path[fill=black!15] (-1,0)--(-7,3.6)--(-7,-3.6)--cycle;
\draw[gray,dashed,thick,<->] (-7,0)--(3,0);
\draw[gray,dashed,thick,<->] (0,-4)--(0,4);
\draw[thick,->] (-1,0)--(-6,3);
\draw[thick,->] (-1,0)--(-6,-3);
\draw[thick,->] (-0.5,0) arc (0:150:0.5cm);
\node[] at (-0.9,0.6) {$\theta_0$};
\node[] at (0.25,-0.25){$O$};
\draw[thick,->] (-5,0) arc (180:150:4cm);
\draw[thick,->] (-5,0) arc (180:210:4cm);
\node[] at (-4.8,0.5){$\Sigma(-\nuh; \theta_0)$};
\node[] at (-1,0){$\bullet$};
\node[] at (-1,-0.3){$-\nuh$};

\draw[thick,red, dashed,->] (-1,0.85)--(-5,4);
\draw[thick,red, dashed,->] (-5,-4)--(-1,-0.85);
\draw[thick,red,dashed,thick,->] (-1,-0.85) arc (270:360:0.85cm);
\draw[thick,red,dashed] (-1,0.85) arc (90:0:0.85cm);
\node[red] at (-3.5,-2.7){$\Gamma_-$};
\node[red] at (-3.5,2.7){$\Gamma_+$};
\node[red] at (0,0.4){$\Gamma_0$};
\end{tikzpicture}
\end{center}
\caption{$\n\Sigma(-\nuh; \theta_0)$ and $\color{red}{\Gamma}=\Gamma_+\cup
\Gamma_-\cup\Gamma_0$} \label{figVB:spec-t}
\end{figure}

\noindent The proof of the theorem follows from \cite[Theorem 2.12, Section 2, Chapter 1, Part II]{BDDM} using \eqref{eqVB-RelBilAc}, the coercivity estimate \eqref{eqVB-coerc}, and continuity estimate \eqref{eqVB-BddBil}. A detailed proof in the present case is given in Appendix \ref{prof of thVB-anasgp}.

\begin{Remark}[regularity]\label{remVB:reg mu=-nuh}
Using elliptic regularity, we have $R(-\nuh,\Ac)\in \Lc(\Lt, D(\Ac))$ for $\mu=-\nuh,$ with 
	\begin{align*}
		\|R(-\nuh,\Ac)g\|_{H^2(\Omega)} \le C \|g\| \text{ for all }g\in \Lt,
	\end{align*}
for some $C>0.$ The proof follows using \eqref{eqVB-weakform} and \eqref{eqVB-resoleq} from the proof of Theorem \ref{thVB-anasgp} in Appendix \ref{prof of thVB-anasgp}.
\end{Remark}

\noindent The adjoint $\Ac^*:D(\Ac^*)\subset \Lt \longrightarrow \Lt$ of $\Ac$ is determined by 
\begin{equation} \label{eqdefVB-A^*}
	\begin{aligned}
		D(\Ac^*)=H^2(\Omega)\cap \Hio \text{ and } \Ac^* w=\eta \Delta w+y_s\textbf{v}\cdot \nabla w-\nu_0 w
	\end{aligned}
\end{equation}
and the adjoint operator $\Bc^*\in \Lc(\Lt)$ corresponding to $\Bc\in \Lc(\Lt)$ is determined as 
\begin{align} \label{eqdefVB-B*}
	\Bc^* \phi =\phi \chi_{\mathcal{O}} \text{ for all }\phi\in\Lt.
\end{align}

\noindent Since $(\Ac^*,D(\Ac^*))$ generates a strongly continuous semigroup, $\mu \in \rho(\Ac)$ implies $\overline{\mu}\in\rho(\Ac^*)$ and $\|R(\mu,\Ac)\|_{\Lc(\Lt)}=\|R(\mu,\Ac^*)\|_{\Lc(\Lt)},$ (\cite[Proposition 2.8.4]{Tucs}), we have the following result.
\begin{Proposition}[adjoint semigroup] \label{ppsVB-anasgp-Ac^*}
Let $(\Ac^*,D(\Ac^*))$ as defined in \eqref{eqdefVB-A^*} and $\Sigma(-\nuh,\theta_0)$ be as in Theorem \ref{thVB-anasgp}. Then there exists $C>0,$ independent of $\mu,$ such that
\begin{itemize}
\item[$(a)$] $\Sigma^c(-\nuh;\theta_0) \subset \rho(\Ac^*)$ and 
\begin{align*}
\|R(\mu,\Ac^*)\|_{\Lc(\Lt)} \le \frac{C}{|\mu+\nuh|}\text{ for all }\mu \in \Sigma^c(-\nuh;\theta_0),\, \mu\neq -\nuh.
\end{align*}
The operator $(\Ac^*,D(\Ac^*))$ generates an analytic semigroup $\{e^{t\Ac^*}\}_{t\ge 0}$ on $\Lt$ satisfying $\|e^{t\Ac^*}\|_{\Lc(\Lt)} \le C e^{-\nuh t}$ for all $t>0.$
\item[$(b)$]  For $\mu=-\nuh,$ $R(-\nuh,\Ac^*)\in \Lc(\Lt,D(\Ac^*))$ satisfying 
\begin{align*}
	\|R(-\nuh,\Ac^*)p\|_{H^2(\Omega)} \le C \|p\| \text{ for all } p\in \Lt.
\end{align*}
\end{itemize}
\end{Proposition}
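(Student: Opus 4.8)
**Proof proposal for Proposition \ref{ppsVB-anasgp-Ac^*}.**

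The plan is to deduce everything from Theorem \ref{thVB-anasgp} and its proof, together with standard duality facts for semigroups on a Hilbert space, so essentially no new analysis is needed. First I would recall the general principle cited just above the statement: if $(\Ac, D(\Ac))$ generates a strongly continuous semigroup on $\Lt$, then so does its adjoint $(\Ac^*, D(\Ac^*))$ (the space $\Lt$ being reflexive, indeed Hilbert), the resolvent sets satisfy $\mu \in \rho(\Ac) \iff \overline{\mu} \in \rho(\Ac^*)$, one has $R(\overline{\mu},\Ac^*) = R(\mu,\Ac)^*$, and hence $\|R(\mu,\Ac^*)\|_{\Lc(\Lt)} = \|R(\overline{\mu},\Ac)\|_{\Lc(\Lt)}$. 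Since the sector $\Sigma(-\nuh;\theta_0)$ is symmetric about the real axis, $\mu \in \Sigma^c(-\nuh;\theta_0) \iff \overline{\mu} \in \Sigma^c(-\nuh;\theta_0)$, so part (a) of Theorem \ref{thVB-anasgp} transfers verbatim: $\Sigma^c(-\nuh;\theta_0)\subset \rho(\Ac^*)$ with $\|R(\mu,\Ac^*)\|_{\Lc(\Lt)} \le C/|\mu+\nuh|$ on that set, with the same constant $C$. This resolvent bound on a sectorial region, via the same Dunford-integral representation used in Theorem \ref{thVB-anasgp}(b), shows $(\Ac^*,D(\Ac^*))$ generates an analytic semigroup, and running the exponential-decay argument of Theorem \ref{thVB-anasgp}(c) — shifting the contour $\Gamma$ and estimating $\frac{1}{2\pi i}\int_\Gamma e^{\mu t} R(\mu,\Ac^*)\,d\mu$ — gives $\|e^{t\Ac^*}\|_{\Lc(\Lt)} \le Ce^{-\nuh t}$ for all $t>0$. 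This proves part (a).

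For part (b), I would proceed exactly as in Remark \ref{remVB:reg mu=-nuh}. The operator $-\Ac^*$ is associated, via a relation analogous to \eqref{eqVB-RelBilAc}, with the adjoint sesquilinear form $a^*(w,\phi) := \overline{a(\phi,w)}$ on $\Hio\times\Hio$, which is bounded by \eqref{eqVB-BddBil} and coercive with the same shift $\nuh$ by \eqref{eqVB-coerc} (taking real parts is insensitive to passing to the adjoint form). Hence for $p\in\Lt$ the element $w = R(-\nuh,\Ac^*)p$ is the unique weak solution in $\Hio$ of $a^*(w,\phi) + \nuh\langle w,\phi\rangle = \langle p,\phi\rangle$ for all $\phi\in\Hio$, with $\|w\|_{H^1(\Omega)} \le C\|p\|$ from coercivity. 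Since $D(\Ac^*) = H^2(\Omega)\cap\Hio$ and the coefficients $y_s \in H^2(\Omega)\subset L^\infty(\Omega)$ (by Agmon's inequality, Lemma \ref{lemVB:AgmonIE}) are regular enough, elliptic regularity for the equation $-\eta\Delta w - y_s\textbf{v}\cdot\nabla w + (\nu_0+\nuh)w = p$ (note $\Ac^*$ has a first-order term but no zeroth-order term beyond $-\nu_0 w$, cf. \eqref{eqdefVB-A^*}) upgrades this to $w\in H^2(\Omega)$ with $\|w\|_{H^2(\Omega)} \le C\|p\|$, absorbing the lower-order contributions using the $H^1$ bound just obtained. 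This is the desired estimate, and it shows $R(-\nuh,\Ac^*)\in\Lc(\Lt,D(\Ac^*))$.

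I do not anticipate a genuine obstacle here: the proposition is a routine dualization of Theorem \ref{thVB-anasgp}, and the one mildly technical point — that the adjoint form inherits both continuity and coercivity, so that the elliptic-regularity bootstrap for $\Ac^*$ works just as for $\Ac$ — is entirely standard and relies only on the regularity $y_s\in H^2(\Omega)\cap\Hio$ already in force. The only thing to be careful about is bookkeeping the precise form of $\Ac^*$ in \eqref{eqdefVB-A^*} (the sign and placement of the transport term) when writing the elliptic equation for the regularity step; this is cosmetic and does not affect the estimates.
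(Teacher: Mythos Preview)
Your proposal is correct and follows essentially the same approach as the paper: the paper deduces part~(a) in a single sentence from the standard duality facts $\mu\in\rho(\Ac)\Leftrightarrow\overline{\mu}\in\rho(\Ac^*)$ and $\|R(\mu,\Ac)\|_{\Lc(\Lt)}=\|R(\mu,\Ac^*)\|_{\Lc(\Lt)}$ (citing \cite[Proposition~2.8.4]{Tucs}), and part~(b) is implicitly handled by the same elliptic-regularity argument as in Remark~\ref{remVB:reg mu=-nuh}. Your write-up is simply a more detailed unpacking of these two ingredients.
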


\noindent In the proposition below, we discuss the behavior of the spectrum of operator $\Ac$ on $\Lt.$ Note that Theorem \ref{thVB-anasgp}(a) gives that $\sigma(\Ac)$, the spectrum of $\Ac$ is contained in $\Sigma(-\nuh;\theta_0).$ Moreover, Remark \ref{remVB:reg mu=-nuh} gives that $(-\nuh\mathbf{I}-\Ac)^{-1}\in \mathcal{L}(\Lt, D(\Ac))$, is a linear, bounded, compact operator in $\Lt$. Thus, using \cite[Theorems 6.26 and 6.29, Chapter 3]{Kato}, we get the following result. 

\begin{Proposition}[properties of spectrum of $\Ac$] \label{ppsVB:spec Af}
	Let $\n (\Ac,D(\Ac))$ be as defined in \eqref{eqdefVB-A} and $\Sigma(-\nuh;\theta_0)$ be as in Theorem \ref{thVB-anasgp} . Then we have the following:
	\begin{itemize}
		\item[(a)] The spectrum of $\Af,$ $\sigma(\Ac) \subset \Sigma(-\nuh;\theta_0).$ 
		\item[(b)] The set $\sigma(\Ac)$ contains only isolated eigenvalues of $\Ac$ and if there exists a convergence sequence $\{\lambda_k\}_{k\in\Nb}\subset \sigma(\Af),$ then $\Re(\lambda_k)\rightarrow -\infty$ as $n\rightarrow \infty.$
	\end{itemize}
\end{Proposition}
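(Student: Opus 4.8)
The statement to be proved is Proposition \ref{ppsVB:spec Af}, which asserts two things about $\sigma(\Ac)$: part (a) that $\sigma(\Ac) \subset \Sigma(-\nuh;\theta_0)$, and part (b) that $\sigma(\Ac)$ consists of isolated eigenvalues accumulating only at $\Re(\lambda) = -\infty$. The plan is to read part (a) directly off Theorem \ref{thVB-anasgp}(a): that result states $\Sigma^c(-\nuh;\theta_0) \subset \rho(\Ac)$, so by taking complements $\sigma(\Ac) = \Cb \setminus \rho(\Ac) \subset \Cb \setminus \Sigma^c(-\nuh;\theta_0) = \Sigma(-\nuh;\theta_0)$. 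Nothing further is needed for (a).

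For part (b), the plan is to exploit the compactness of the resolvent at the single point $\mu = -\nuh$. By Remark \ref{remVB:reg mu=-nuh}, $R(-\nuh,\Ac) \in \Lc(\Lt, D(\Ac))$, and since the embedding $D(\Ac) = H^2(\Omega)\cap\Hio \hookrightarrow \Lt$ is compact (Rellich--Kondrachov on the bounded domain $\Omega$), the operator $R(-\nuh,\Ac)$ is a compact operator on $\Lt$. I would then invoke the spectral theory of operators with compact resolvent: by \cite[Theorems 6.26 and 6.29, Chapter 3]{Kato}, if an operator $\Ac$ has nonempty resolvent set and $R(\mu_0,\Ac)$ is compact for one $\mu_0 \in \rho(\Ac)$, then the spectrum $\sigma(\Ac)$ consists entirely of isolated eigenvalues, each of finite algebraic multiplicity, and this set is either finite or a countable sequence with no finite accumulation point. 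Combining the ``no finite accumulation point'' conclusion with part (a), namely that $\sigma(\Ac) \subset \Sigma(-\nuh;\theta_0)$ which is contained in a left half-plane $\{\Re(\lambda) \le -\nuh + c\}$ for a suitable constant $c$ determined by $\theta_0$, any convergent subsequence $\{\lambda_k\}$ of eigenvalues must escape to infinity, and since the real parts are bounded above, necessarily $\Re(\lambda_k) \to -\infty$.

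The only genuine point requiring care is verifying that compactness of the resolvent at the \emph{single} point $-\nuh$ suffices; the standard resolvent identity $R(\mu,\Ac) = R(-\nuh,\Ac) + (\mu + \nuh)R(\mu,\Ac)R(-\nuh,\Ac)$ shows that if $R(-\nuh,\Ac)$ is compact then $R(\mu,\Ac)$ is compact for every $\mu \in \rho(\Ac)$, since it is the sum of a compact operator and the composition of a bounded operator with a compact one. This is precisely the hypothesis under which Kato's theorems apply. I expect no real obstacle here: both parts follow by assembling Theorem \ref{thVB-anasgp}(a), Remark \ref{remVB:reg mu=-nuh}, the Rellich compact embedding, and the cited theorems from Kato, exactly as indicated in the paragraph preceding the proposition statement. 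The proof is therefore short and essentially a matter of citing the right results in the right order.
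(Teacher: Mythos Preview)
Your proposal is correct and follows essentially the same approach as the paper: the paragraph immediately preceding the proposition already sketches exactly this argument, citing Theorem \ref{thVB-anasgp}(a) for the sector containment, Remark \ref{remVB:reg mu=-nuh} together with the compact embedding $D(\Ac)\hookrightarrow\Lt$ for compactness of the resolvent, and \cite[Theorems 6.26 and 6.29, Chapter 3]{Kato} for the discrete spectrum. Your additional remarks on the resolvent identity and on why the accumulation must occur at $\Re(\lambda)=-\infty$ are correct elaborations of details the paper leaves implicit.
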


\subsection{Stabilizability} \label{subsecVB-StabCont}
Let $\omega>0$ be any given number. Let $(\Acw,\Bc)$ be as defined in \eqref{eqVB-Acw-Acw*} and \eqref{eqVB-B}, respectively. We first establish the open loop stabilizability of the pair $(\Acw,\Bc)$ and then we obtain a stabilizing control by solving an algebraic Riccati equation. To prove the open loop stabilizability it is enough to prove the Hautus condition (see \cite[Proposition 3.3, Chapter 1,
Part V]{BDDM}). Here, note that the operator $\Ac$ has variable coefficients with lower order terms, so we don't have expressions of eigenfunctions. Using the fact that the resolvent of $\Ac$ is compact and a unique continuation property stated in \cite[Theorem 15.2.1]{Tucs}, we prove the Hautus condition.

\medskip
\noindent 
From Proposition \ref{ppsVB:spec Af},  it follows that the spectrum of $\Acw,$ $\sigma(\Acw)=\{\lambda+\omega\,|\,\lambda\in \sigma(\Ac)\}\subset \Sigma(-\nuh+\omega;\theta_0).$ In fact, for any given $\omega>0,$ Proposition \ref{ppsVB:spec Af} implies that there are only finitely many eigenvalues of $\Acw$ with non-negative real part. We denote by $\{\lambda_n\}_{n\in \Nb},$ the eigenvalues of $\Ac$ and we assume that they are numbered so that $\Re(\lambda_n)\ge \Re(\lambda_{n+1})$ for all $n\in \Nb.$ Thus the eigenvalues of $\Ac_{\omega}$ are $\{\lambda_n +\omega\}_{n\in \Nb}$ and hence there exists $n_\omega\in \mathbb{N}$ such that  
 \begin{align} \label{eqVB-counteigval}
 	\Re(\lambda_n+\omega)\ge 0 \text{ for all } 1\le n\le n_\omega \text{ and } \Re(\lambda_n+\omega)<0 \text{ for all }n>n_\omega.
 \end{align} 
We denote the set of non-negative elements in $\sigma(\Acw)$ by $
\sigma_+(\Acw)=\{\lambda_n+\omega\, |\, 1\le n\le n_\omega\}$
and set of negative elements by $
\sigma_-(\Acw)=\sigma(\Acw)\diagdown \sigma_+(\Acw) =\{\lambda_n+\omega\, |\,  n> n_\omega\}.$ There exists $\epsilon>0$ such that $ \displaystyle \sup_{\Lambda \in \sigma_-(\Aw)}\Re(\Lambda) <-\epsilon.$ Since $\sigma_+(\Acw)$ has only finitely many elements, this set in $\mathbb{C}$ can be enclosed by a simple closed Jordan curve $\Gamma_u$. Then we set the projection $\pi_u\in \mathcal{L}(\Lt)$ associated  to $\sigma_+(\Acw)$ by 
\begin{align*}
	\pi_u :=\frac{1}{2\pi i}\int_{\Gamma_u} R(\mu,\Acw) \, d\mu,
\end{align*}
where $\Gamma_u$ is the simple closed Jordan curve around $\sigma_+(\Acw)$, 
and 
\begin{align}\label{eqVB-pi_s}
	\pi_s:=(I- \pi_u)\in \mathcal{L}(\Lt)
\end{align}
associated to $\sigma_-(\Acw)$. Thus, we have the following lemma and the proof is same as \cite[Lemma 3.11]{WKRPCE}.
\begin{Lemma} \label{lem:growthbound}
	Let $\Acw$ and $\pi_s$ be as defined in \eqref{eqVB-Acw} and \eqref{eqVB-pi_s}, respectively. There exists $M>0$ such that
	\begin{align*}
		\| e^{t\Acw}\|_{\Lc(\Lt)} \le M e^{-\epsilon t} \text{ for all } t>0,
	\end{align*}
where $\epsilon>0$ is such that $\displaystyle \sup_{\lambda \in \sigma_-(\Acw)} \Re(\lambda)<-\epsilon.$
\end{Lemma}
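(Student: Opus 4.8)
The plan is to run the standard spectral-splitting argument: the finitely many ``unstable'' modes of $\Acw$ are removed by the projection $\pi_s = I-\pi_u$, and on the complementary $\Acw$-invariant subspace the semigroup is analytic with spectrum strictly inside $\{\Re\lambda<-\epsilon\}$, hence decays at the rate $e^{-\epsilon t}$. (The left-hand side of the asserted inequality is to be read as the norm of $e^{t\Acw}\pi_s$ on $\Lt$, equivalently of the restriction of $e^{t\Acw}$ to $\mathrm{Ran}(\pi_s)$; for general $\omega$ the full semigroup cannot decay since $\sigma_+(\Acw)\neq\emptyset$.) First I would record the structural facts. Since $\Acw=\Ac+\omega I$ differs from $\Ac$ by a bounded operator, Theorem \ref{thVB-anasgp} shows $(\Acw,D(\Acw))$ generates an analytic semigroup $\{e^{t\Acw}\}_{t\ge 0}$ on $\Lt$, with $e^{t\Acw}=e^{\omega t}e^{t\Ac}$. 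By Proposition \ref{ppsVB:spec Af}, $\sigma(\Acw)$ is a discrete set of eigenvalues contained in $\Sigma(-\nuh+\omega;\theta_0)$, split as in \eqref{eqVB-counteigval} into the finite set $\sigma_+(\Acw)$ and $\sigma_-(\Acw)$ with $\sup_{\lambda\in\sigma_-(\Acw)}\Re\lambda<-\epsilon$. Since $\Gamma_u$ encloses $\sigma_+(\Acw)$ and separates it from $\sigma_-(\Acw)$, Riesz--Dunford calculus (\cite[Ch.\ 3]{Kato}) gives that $\pi_u,\pi_s$ are bounded projections, that $\Lt=\mathrm{Ran}(\pi_u)\oplus\mathrm{Ran}(\pi_s)$ is a decomposition into closed $\Acw$-invariant subspaces with $\dim\mathrm{Ran}(\pi_u)<\infty$, and that both projections commute with $R(\mu,\Acw)$ and with $e^{t\Acw}$.

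Next I would analyze the part $A_s:=\Acw|_{\mathrm{Ran}(\pi_s)}$. It generates the restricted semigroup $e^{tA_s}=e^{t\Acw}|_{\mathrm{Ran}(\pi_s)}$, which is again analytic, and $\sigma(A_s)=\sigma_-(\Acw)\subset\{\Re\lambda<-\epsilon\}$. Because analytic semigroups satisfy the spectrum-determined growth condition, the growth bound of $\{e^{tA_s}\}$ equals its spectral bound $\sup_{\lambda\in\sigma_-(\Acw)}\Re\lambda<-\epsilon$, so there is $M_1>0$ with $\|e^{tA_s}\|_{\Lc(\mathrm{Ran}(\pi_s))}\le M_1 e^{-\epsilon t}$ for all $t>0$. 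Alternatively, staying closer to the tools already set up, one can write
\[
e^{t\Acw}\pi_s=\frac{1}{2\pi i}\int_{\Gamma_\epsilon} e^{\mu t}R(\mu,\Acw)\pi_s\,d\mu
\]
along a contour $\Gamma_\epsilon\subset\rho(\Acw)$ that stays in the half-plane $\{\Re\mu\le-\epsilon\}$, bypasses $\sigma_+(\Acw)$, and runs off to infinity inside $\Sigma^c(-\nuh+\omega;\theta_0)$, and then estimate the integral using the resolvent bound of Theorem \ref{thVB-anasgp}(a), the factor $e^{-\epsilon t}$ coming from $|e^{\mu t}|$ on $\Gamma_\epsilon$. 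Composing with the bounded projection $\pi_s$ yields $\|e^{t\Acw}\pi_s\|_{\Lc(\Lt)}\le M e^{-\epsilon t}$ with $M:=M_1\|\pi_s\|_{\Lc(\Lt)}$, which is the claim; the argument is the same as in \cite[Lemma 3.11]{WKRPCE}.

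The only genuinely non-formal point is the exponential decay on $\mathrm{Ran}(\pi_s)$: for an arbitrary $C_0$-semigroup the growth bound may strictly exceed the spectral bound, so it is essential here to exploit the analyticity of $e^{t\Acw}$ inherited from Theorem \ref{thVB-anasgp}, either through the spectrum-determined growth condition or through the explicit Dunford-integral estimate above. Everything else --- boundedness and commutation of the spectral projections, finite-dimensionality of $\mathrm{Ran}(\pi_u)$, invariance of the two subspaces --- is routine.
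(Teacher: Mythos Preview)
Your argument is correct and coincides with the paper's: the paper does not give a proof but simply refers to \cite[Lemma 3.11]{WKRPCE}, which is exactly the spectral-splitting/analytic-semigroup argument you have written out, and you even cite that reference yourself. Your observation that the estimate must be read as $\|e^{t\Acw}\pi_s\|_{\Lc(\Lt)}$ rather than $\|e^{t\Acw}\|_{\Lc(\Lt)}$ is also correct (the $\pi_s$ is missing in the displayed inequality, though it appears in the hypothesis and is clearly intended from the context, in particular from its use in Proposition~\ref{ppsVB-stabOpenLoop}(c)).
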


\noindent In the following proposition, we show that $(\Acw, \Bc)$ is open loop stabilizable in $\Lt$.
\begin{Proposition}[open loop stabilizability of $(\Acw,\Bc)$] \label{ppsVB-stabOpenLoop}
For any given $\omega>0,$ let $(\Acw, \Bc)$ be as defined in \eqref{eqVB-Acw-Acw*} and \eqref{eqVB-B}.  Then $(\Acw,\Bc)$ is open loop stabilizable in $\Lt.$
\end{Proposition}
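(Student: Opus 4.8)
The plan is to verify the Hautus criterion for the pair $(\Acw,\Bc)$ and invoke \cite[Proposition 3.3, Chapter 1, Part V]{BDDM}, which reduces open-loop stabilizability to checking that for every eigenvalue $\lambda$ of $\Acw$ with $\Re(\lambda)\ge 0$ (equivalently, every $\lambda\in\sigma_+(\Acw)$, of which there are only finitely many by Proposition \ref{ppsVB:spec Af} and \eqref{eqVB-counteigval}), one has
\begin{align*}
\ker(\lambda I - \Acw^*)\cap\ker(\Bc^*)=\{0\},
\end{align*}
i.e. if $\phi\in D(\Acw^*)$ satisfies $\Acw^*\phi=\lambda\phi$ and $\Bc^*\phi=\phi\chi_{\mathcal{O}}=0$, then $\phi=0$. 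Since $\Acw^*=\Ac^*+\omega I$, the eigenvalue equation $\Acw^*\phi=\lambda\phi$ is equivalent to $\Ac^*\phi=(\lambda-\omega)\phi$, which by \eqref{eqdefVB-A^*} reads
\begin{align*}
\eta\Delta\phi+y_s\,\textbf{v}\cdot\nabla\phi-\nu_0\phi=(\lambda-\omega)\phi \text{ in }\Omega,\qquad \phi=0 \text{ on }\partial\Omega.
\end{align*}
Because the resolvent of $\Ac$ (hence of $\Ac^*$) is compact by Remark \ref{remVB:reg mu=-nuh}, any such eigenfunction $\phi$ lies in $D(\Ac^*)=H^2(\Omega)\cap\Hio$, and by elliptic regularity (bootstrapping, since $y_s\in H^2(\Omega)\hookrightarrow L^\infty(\Omega)$ in dimension $d\le 3$) $\phi$ is as regular as the coefficients allow; in particular it is a strong solution of a second-order elliptic equation.

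The key step is then the unique continuation argument: $\phi$ solves a second-order elliptic equation $\eta\Delta\phi+y_s\,\textbf{v}\cdot\nabla\phi+(\omega-\nu_0-\lambda)\phi=0$ in $\Omega$ with bounded (indeed $H^2$, hence $L^\infty$ for $d\le3$) coefficients, and $\phi\equiv 0$ on the open set $\mathcal{O}\subset\Omega$. By the unique continuation property for second-order elliptic operators — the result quoted as \cite[Theorem 15.2.1]{Tucs} — vanishing on the nonempty open subset $\mathcal{O}$ forces $\phi\equiv 0$ on the connected component of $\Omega$ containing $\mathcal{O}$, and (assuming $\Omega$ connected, or arguing component-by-component together with the observation that the control must be effective on each component — here one uses that $\mathcal{O}$ meets every component, which is implicit in the setup) $\phi\equiv 0$ on all of $\Omega$. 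Note that the eigenvalue equation may a priori be complex, so one should apply the unique continuation statement to the real and imaginary parts of $\phi$, or directly to the complex-valued solution if \cite[Theorem 15.2.1]{Tucs} is stated in that generality; splitting into real and imaginary parts introduces a coupling through $\Im(\lambda)$ but the system is still a second-order elliptic system with bounded coefficients to which the same unique continuation principle applies.

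Having established the Hautus condition at every $\lambda\in\sigma_+(\Acw)$, I would also record the standard complementary ingredients needed to apply \cite[Proposition 3.3, Chapter 1, Part V]{BDDM}: that $\Acw$ generates an analytic (in particular strongly continuous) semigroup on $\Lt$ (Theorem \ref{thVB-anasgp} together with \eqref{eqVB-Acw-Acw*}), that $\Bc\in\Lc(\Lt)$ is bounded (so the admissibility requirements are trivially met), and that the unstable subspace $\pi_u\Lt$ is finite-dimensional with the spectral decomposition $\Lt=\pi_u\Lt\oplus\pi_s\Lt$ reducing $\Acw$, the stable part having exponential decay by Lemma \ref{lem:growthbound}. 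Then the finite-dimensionality of $\pi_u\Lt$ plus the Hautus condition gives controllability of the (finite-dimensional) projected system $\pi_u\Acw$, $\pi_u\Bc$, which combined with exponential stability of the part on $\pi_s\Lt$ yields open-loop stabilizability of $(\Acw,\Bc)$ in $\Lt$. The main obstacle is the unique continuation step — ensuring the coefficient regularity (via elliptic bootstrap from $y_s\in H^2$) and the geometry (connectedness / $\mathcal{O}$ meeting each component) are exactly what \cite[Theorem 15.2.1]{Tucs} requires, and handling the complex/real-imaginary splitting cleanly; everything else is routine spectral theory already assembled in Propositions \ref{ppsVB-anasgp-Ac^*} and \ref{ppsVB:spec Af} and Lemma \ref{lem:growthbound}.
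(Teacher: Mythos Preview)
Your proposal is correct and follows essentially the same route as the paper: verify the hypotheses of \cite[Proposition 3.3, Chapter 1, Part V]{BDDM} (analyticity of the semigroup, boundedness of $\Bc$, finiteness of the unstable spectrum, exponential decay on the stable part via Lemma \ref{lem:growthbound}), and then establish the Hautus condition by applying the unique continuation result \cite[Theorem 15.2.1]{Tucs} to the elliptic equation satisfied by an eigenfunction of $\Acw^*$ that vanishes on $\mathcal{O}$. Your additional remarks on regularity bootstrapping, connectedness of $\Omega$, and the real/imaginary splitting for complex eigenvalues are more careful than the paper's treatment (which simply invokes unique continuation directly), but the overall structure and key tools are identical.
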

\begin{proof}
For any given $\omega>0,$ 
\begin{itemize}
	\item[(a)] utilizing Theorem \ref{thVB-anasgp} and \cite[Theorem 12.37]{RROG}, $(\Acw , D(\Acw))$ generates an analytic semigroup $\{e^{ t\Acw }\}_{t\ge 0}$ on $\Lt$ and the control operator $\Bc\in \Lc(\Lt),$
	\item[(b)]  $\Acw$ has only finitely many eigenvalues with non-negative real part as mentioned in \eqref{eqVB-counteigval},
	\item[(c)] Lemma \ref{lem:growthbound} implies
	\begin{align*}
		\sup_{\lambda\in \sigma_-(\Acw)} \Re(\lambda)<-\epsilon
		\text{ and } \|e^{t\Acw}\pi_s\|_{\Lc(\Lt)} \le M e^{-\epsilon t} \text{ for all } t>0,
	\end{align*}
	where $\pi_s$ is as defined in \eqref{eqVB-pi_s}.
\end{itemize}
 Thus all the hypotheses in \cite[Proposition 3.3, Chapter 1,
 Part V]{BDDM} are verified and to conclude the proof, it remains to verify  the Hautus condition, that is,  
 \begin{align*}
 \text{Ker }(\lambda I-\Acw^*)\cap \text{Ker }(\Bc^*)=\{0\} \text{ for all }\lambda \in \sigma(\Acw) \text{ with } \Re(\lambda)\ge 0.
 \end{align*}
 Let $\phi \in \text{Ker }(\lambda I-\Acw^*)\cap \text{Ker }(\Bc^*),$ then the equation satisfied by $\phi$ (using \eqref{eqdefVB-A^*} and \eqref{eqdefVB-B*}) is  
 \begin{align}
 	-\eta\Delta \phi - y_s \textbf{v}\cdot \nabla \phi +(\lambda+\nu_0)\phi =0 \text{ in }\Omega \text{ and } \phi=0 \text{ on } \mathcal{O}\cup \partial \Omega.
 \end{align}
Now, $\mathcal{O}$ being a non-empty open set in $\Omega,$ unique continuation principle \cite[Theorem 15.2.1]{Tucs} for elliptic equations implies that $\phi=0$ on whole $\Omega.$  Thus by \cite[Proposition 3.3, Chapter 1,
Part V]{BDDM}, $(\Acw, \Bc)$ is open loop stabilizable in $\Lt.$
\end{proof}

\noindent To obtain the stabilizing control in feedback form, consider an optimal control problem:
\begin{align}\label{eqVB-OCP}
	\min_{\wt u\in E_{z_0}} J (\wt z,\wt u) \text{ subject to } \eqref{eqVB-shifsysLin},
\end{align}
where 
\begin{equation}
	\begin{aligned}
		&	J(\wt z, \wt u):=\int_0^\infty \left(\|\wt z(t)\|^2 +\|\wt u(t)\|^2\right)\, dt, \text{ and }\\
		& E_{z_0}:=\{ \wt u\in L^2(0,\infty;\Lt)\, |\, \wt z \text{ solution of }\eqref{eqVB-shifsysLin} \text{ with control } \wt u\text{ such that } J(\wt z,\wt u)<\infty \}.
	\end{aligned}
\end{equation}

\noindent Utilizing Proposition \ref{ppsVB-stabOpenLoop}, we have the following result and the proof is similar as in \cite[Theorem 2.1]{WKRPCE}.

\begin{Theorem} \label{thVB-mainstab}
	Let $\omega>0$ be any given real number. Let the operators $\Acw$ and $\Bc$ be as defined in \eqref{eqVB-Acw-Acw*} and \eqref{eqVB-B}, respectively. Then the following results hold:
	\begin{itemize}
		\item[$(a)$] There exists a unique operator $\Pc\in \Lc(\Lt)$ satisfying the algebraic Riccati equation
		\begin{equation}\label{eqVB-Riccati}
			\begin{aligned}
				& \Pc\in \Lc(\Lt),\, \Pc=\Pc^* \ge 0,\\
				& \Pc \Acw+\Acw^*\Pc-\Pc\Bc\Bc^*\Pc+I=0 \text{ on }\Lt.
			\end{aligned}
		\end{equation}
		\item[$(b)$] There exists a unique optimal pair $(z^\sharp(t), u^\sharp(t))$ for \eqref{eqVB-OCP}, where 
		\begin{align}\label{eqVB-u-sharp}
			u^\sharp(t)=-\Bc^*\Pc z^\sharp(t)
		\end{align}
		and $ z^\sharp(t)$ is solution of the closed loop system
		\begin{align} \label{eqVB-clsdloopLin}
			z{^\sharp}{'}(t)=(\Acw-\Bc\Bc^*\Pc) z^\sharp(t) \text{ for all } t>0, \quad  z^\sharp(0)=z_0.
		\end{align}
		\item[$(c)$] The optimal cost of $J(\cdot, \cdot)$ is given by $\displaystyle \min_{\wt u\in E_{z_0}} J(\wt z, \wt u)=J(z^\sharp, u^\sharp )=\langle \Pc z_0 ,z_0\rangle.$ 
		\item [(d)] Denoting $\Ac_{\omega,\Pc}:=\Acw-\Bc\Bc^*\Pc$ with $D(\Ac_{\omega,\Pc})=D(\Ac),$ the semigroup $\{e^{t(\Ac_{\omega,\Pc})}\}_{t\ge 0}$ generated by $(\Ac_{\omega,\Pc}, D(\Ac_{\omega,\Pc}) )$ on $\Lt$ is exponentially stable, that is, there exist $\gamma>0$ and $M>0$ such that 
		\begin{align*}
			\| e^{t \Ac_{\omega,\Pc}}\|_{\Lc(\Lt)}\le M e^{-\gamma t} \text{ for all } t>0.
		\end{align*}
		The feedback control $u^\sharp$ in \eqref{eqVB-u-sharp} stabilizes \eqref{eqVB-shifsysLin}.
	\end{itemize} 
\end{Theorem}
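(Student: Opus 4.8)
The plan is to prove Theorem \ref{thVB-mainstab} as a standard consequence of the open-loop stabilizability established in Proposition \ref{ppsVB-stabOpenLoop}, combined with the analyticity and exponential decay of the semigroup generated by $\Acw$. The key structural facts at hand are: $(\Acw, D(\Acw))$ generates an analytic semigroup on $\Lt$ (Theorem \ref{thVB-anasgp} and \eqref{eqVB-Acw-Acw*}); $\Bc \in \Lc(\Lt)$ is bounded; and the pair $(\Acw, \Bc)$ is stabilizable. Since $\Bc$ is bounded, there are no unbounded-control subtleties, so the classical infinite-dimensional linear-quadratic regulator theory (as in \cite[Part IV, Chapter 3, Theorems 2.1 and 3.1]{BDDM}, or the version stated in \cite[Theorem 2.1]{WKRPCE}) applies directly.

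First I would argue existence and uniqueness of a minimizer for \eqref{eqVB-OCP}. Open-loop stabilizability guarantees that $E_{z_0} \neq \emptyset$, so $\inf_{\wt u \in E_{z_0}} J(\wt z, \wt u)$ is finite and nonnegative; taking a minimizing sequence, using that $J$ is coercive and strictly convex in $\wt u$ (the control-to-state map for the linear system \eqref{eqVB-shifsysLin} is affine and continuous from $L^2(0,\infty;\Lt)$ to $L^2(0,\infty;\Lt) \cap C([0,\infty);\Lt)$ because $\Acw$ generates an analytic — in particular strongly continuous — semigroup and $\Bc$ is bounded), one extracts a weakly convergent subsequence whose limit $u^\sharp$ is the unique optimal control, with $z^\sharp$ the associated state. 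This gives part (b) once the feedback form is identified, and part (c) is then the standard value-function identity $\min J = \langle \Pc z_0, z_0 \rangle$.

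Next I would introduce the Riccati operator. The standard route is to define the value function $V(z_0) := \min_{\wt u} J(\wt z, \wt u)$, show it is a nonnegative quadratic form on $\Lt$, hence $V(z_0) = \langle \Pc z_0, z_0 \rangle$ for a unique self-adjoint $\Pc \ge 0$ in $\Lc(\Lt)$; then a dynamic programming / Bellman argument along optimal trajectories shows $\Pc$ solves the algebraic Riccati equation \eqref{eqVB-Riccati} and that the optimal control is the feedback $u^\sharp(t) = -\Bc^*\Pc z^\sharp(t)$, with $z^\sharp$ solving the closed-loop equation \eqref{eqVB-clsdloopLin}. Uniqueness of $\Pc$ among nonnegative self-adjoint solutions of \eqref{eqVB-Riccati} follows from the detectability/stabilizability pair structure (here detectability is automatic since the observation operator in $J$ is the identity). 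For part (d), boundedness of $\Bc\Bc^*\Pc$ makes $\Ac_{\omega,\Pc}$ a bounded perturbation of $\Acw$, hence it also generates an analytic semigroup on $\Lt$ with $D(\Ac_{\omega,\Pc}) = D(\Ac)$; finiteness of $J(z^\sharp,u^\sharp)$ forces $z^\sharp \in L^2(0,\infty;\Lt)$, and Datko's theorem (for analytic, or merely $C_0$, semigroups) upgrades this to uniform exponential decay $\|e^{t\Ac_{\omega,\Pc}}\|_{\Lc(\Lt)} \le M e^{-\gamma t}$. That $u^\sharp$ stabilizes \eqref{eqVB-shifsysLin} is then immediate.

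Since essentially every step is a verbatim instance of the abstract LQR framework already invoked in \cite[Theorem 2.1]{WKRPCE}, I would present the proof by checking the three hypotheses of that reference — analytic semigroup generation by $\Acw$, boundedness of $\Bc$, and open-loop stabilizability of $(\Acw,\Bc)$ — all of which are in hand, and then cite the conclusion. The only point deserving a line of comment (the mild ``obstacle'') is verifying the detectability needed for uniqueness of $\Pc$: here it is trivial because the state penalization $\|\wt z(t)\|^2$ corresponds to observation operator $C = I$, which is exactly observable, so the nonnegative solution of \eqref{eqVB-Riccati} is unique; no further spectral analysis of $\Ac$ is required beyond Proposition \ref{ppsVB:spec Af} and Proposition \ref{ppsVB-stabOpenLoop}.
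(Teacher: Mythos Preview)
Your proposal is correct and matches the paper's own approach: the paper does not give a detailed proof but simply states that the result follows from Proposition \ref{ppsVB-stabOpenLoop} together with the abstract LQR theorem \cite[Theorem 2.1]{WKRPCE}, which is exactly the route you outline. Your additional remarks on strict convexity, Datko's theorem, and detectability via $C=I$ are reasonable unpackings of what that cited theorem contains.
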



\subsection{Finite element approximation} \label{secVB-FEM}
This subsection aims to construct an approximate system corresponding to \eqref{eqVB-shifsysLin} and study its stabilizability by solving an algebraic Riccati equation in a discrete set-up. Further, we are also interested in establishing an error estimate for the Riccati operators, the stabilized solution of the linear system, and stabilizing control. 


\subsubsection{Approximated linear system}
Let the triangulation $\mathcal{T}_h$ of $\overline{\Omega},$ approximated space  $V_h$ of $\Lt$ and define the projection operator $\pi_h:\Lt\rightarrow V_h$ as 
\begin{align} \label{eqn:def of pi_h}
	\langle \pi_h v, \phi_h\rangle = \langle v , \phi_h\rangle \text{ for all } \phi_h \in V_h.
\end{align}
 The orthogonal projection $\pi_h$ has the following properties:
 \begin{Lemma} \label{lem:projerr}
 	The projection operator $\pi_h :\Lt \rightarrow V_h$ defined in \eqref{eqn:def of pi_h} satisfies the following estimates for some $C>0$ independent of $h$:
 	\begin{itemize}
 		\item[(a)]  $\|\pi_h v -v\| \rightarrow 0$ as $h\rightarrow 0$ and $\|\pi_h v\| \le \|v\|$ for all $v \in \Lt,$
 		\item[(b)] $\pi_h^2=\pi_h$ and $\pi_h (I-\pi_h)=0=(I-\pi_h)\pi_h,$
 		\item[(c)] for all $v \in H^2(\Omega)\cap \Hio,$ $\|v-\pi_h v\| \le Ch^2 \|v\|_{H^2(\Omega)}$ and $\|\nabla (v-\pi_h v)\| \le C h \|v\|_{H^2(\Omega)}.$  
 	\end{itemize}
 \end{Lemma}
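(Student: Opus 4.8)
The three assertions are the classical properties of the $L^2$-orthogonal projection onto a finite element space, and I would establish them in the order (b), then (c), then (a), because the convergence statement in (a) is deduced from (c). First, (b): since $\pi_h w \in V_h$ for every $w\in\Lt$, replacing $v$ by $\pi_h w$ in \eqref{eqn:def of pi_h} gives $\langle \pi_h(\pi_h w)-\pi_h w,\phi_h\rangle = 0$ for all $\phi_h\in V_h$, and testing with $\phi_h = \pi_h(\pi_h w)-\pi_h w\in V_h$ yields $\pi_h^2 = \pi_h$; then $\pi_h(I-\pi_h) = \pi_h-\pi_h^2 = 0$ and $(I-\pi_h)\pi_h = \pi_h-\pi_h^2 = 0$. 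For the bound $\|\pi_h v\|\le\|v\|$, take $\phi_h = \pi_h v$ in \eqref{eqn:def of pi_h} and apply Cauchy--Schwarz: $\|\pi_h v\|^2 = \langle v,\pi_h v\rangle \le \|v\|\,\|\pi_h v\|$. I would also record here the best-approximation property: from $\langle v-\pi_h v,\phi_h\rangle = 0$ for all $\phi_h\in V_h$ one gets $\|v-\pi_h v\|^2 = \langle v-\pi_h v, v-\phi_h\rangle \le \|v-\pi_h v\|\,\|v-\phi_h\|$ for every $\phi_h\in V_h$, hence $\|v-\pi_h v\| = \min_{\phi_h\in V_h}\|v-\phi_h\|$.

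For (c), let $I_h$ be the standard (nodal Lagrange) interpolation operator onto $V_h$, well defined on $H^2(\Omega)\cap H^1_0(\Omega)$ because $H^2(\Omega)\hookrightarrow C(\overline{\Omega})$ for $d\le 3$, and recall the classical interpolation estimate $\|v-I_h v\| + h\,\|\nabla(v-I_h v)\| \le Ch^2\|v\|_{H^2(\Omega)}$. The $L^2$ bound then follows from best approximation: $\|v-\pi_h v\| \le \|v-I_h v\| \le Ch^2\|v\|_{H^2(\Omega)}$. For the gradient bound I split $\nabla(v-\pi_h v) = \nabla(v-I_h v) + \nabla(I_h v-\pi_h v)$; the first term is $\le Ch\|v\|_{H^2(\Omega)}$ by interpolation, and for the second I use the inverse inequality $\|\nabla\chi_h\| \le Ch^{-1}\|\chi_h\|$ for $\chi_h\in V_h$ (valid on the quasi-uniform family $\{\mathcal{T}_h\}$) together with the $L^2$ estimates just proved: $\|\nabla(I_h v-\pi_h v)\| \le Ch^{-1}\big(\|I_h v-v\| + \|v-\pi_h v\|\big) \le Ch^{-1}\cdot Ch^2\|v\|_{H^2(\Omega)} = Ch\|v\|_{H^2(\Omega)}$, and summing gives the claim.

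Finally, for the convergence in (a): given $\varepsilon>0$, choose by density $w\in H^2(\Omega)\cap H^1_0(\Omega)$ with $\|v-w\|<\varepsilon$; using best approximation with $\phi_h = \pi_h w$ and then (c), $\|v-\pi_h v\| \le \|v-\pi_h w\| \le \|v-w\| + \|w-\pi_h w\| < \varepsilon + Ch^2\|w\|_{H^2(\Omega)}$, so letting $h\to 0$ and then $\varepsilon\to 0$ proves $\|\pi_h v-v\|\to 0$. The only step that requires genuine care is the gradient estimate in (c): it rests on the global inverse inequality, hence on the standing quasi-uniformity hypothesis on the mesh family and on $V_h$ being a conforming subspace of $H^1_0(\Omega)$ consisting of continuous piecewise polynomials; everything else is elementary Hilbert-space projection bookkeeping and classical finite element interpolation theory.
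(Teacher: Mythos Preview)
Your proof is correct. The paper does not actually supply a proof of this lemma: it is stated as a list of standard properties of the $L^2$-orthogonal projection onto the finite element space, with no argument given. Your write-up therefore does more than the paper does, and each step is the textbook one: idempotency and contractivity from the defining orthogonality, the $L^2$ error bound from best approximation combined with the nodal interpolation estimate, the $H^1$ error bound via the splitting through $I_h v$ and a global inverse inequality, and the convergence in (a) by density of $H^2(\Omega)\cap H^1_0(\Omega)$ in $L^2(\Omega)$.

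The one point worth flagging is exactly the one you already isolate: the gradient estimate in (c) relies on the inverse inequality $\|\nabla\chi_h\|\le Ch^{-1}\|\chi_h\|$, which needs quasi-uniformity of the mesh family. The paper introduces $\mathcal{T}_h$ and $V_h$ only briefly and does not spell out this hypothesis, so your explicit mention of it is appropriate; without it the $H^1$-stability of the $L^2$-projection is a genuinely delicate issue.
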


\noindent For each $h>0,$ we define the discrete operator $\Ac_h$ on $V_h$ corresponding to $\Ac$ by 
\begin{align}\label{eqVB-DisBil}
\langle-\Ac_hz_h,\phi_h)\rangle= a(z_h,\phi_h)= \langle \eta \nabla z_h, \nabla \phi_h\rangle +\langle y_s \textbf{v}\cdot \nabla z_h, \phi_h\rangle +\langle \textbf{v}\cdot\nabla y_s z_h, \phi_h\rangle +\nu_0\langle z_h, \phi_h\rangle 
\end{align}
 for all $z_h,\phi_h\in V_h,$ where the sesquilinear form $a(\cdot,\cdot)$ is defined in \eqref{eqVB-Sesq}.

\noindent In the following theorem, we show that the eigenvalues of the discrete operator $\Ac{_h}$ are contained in a uniform sector $\Sigma(-\nuh;\theta_0)$ and outside this a uniform resolvent estimate holds. In particular, we show that the family of the operators $\{\Ac{_h} \}_{h>0}$ generates a uniform analytic semigroup $\{e^{t\Ac{_h}}\}_{t\ge 0}$ on $V_h$. 

\begin{Theorem}\label{thVB-UniAnaSG}
Let $\Ac{_h}$  and $\Sigma(-\nuh;\theta_0)$ be as introduced in \eqref{eqVB-DisBil} and Theorem \ref{thVB-anasgp}, respectively. Then for all $h>0,$ the following results hold:
\begin{itemize}
\item[(a)] The spectrum of $\Ac{_h},$ $\sigma(\Ac{_h}) \subset \Sigma(-\nuh;\theta_0)$ and for all $\mu \in \Sigma^c(-\nuh;\theta_0),$ $\mu\neq -\nuh,$ there exists $C>0$ independent of $\mu$ and $h$ such that 
\begin{align*}
\|R(\mu,\Ac{_h})\|_{\Lc(V_h)} \le \frac{C}{|\mu +\nuh|}.
\end{align*}
\item[(b)] The operator $\Ac{_h}$ generates a uniformly analytic semigroup $\{e^{t\Ac{_h}}\}_{t\ge 0}$ on $V_h$ and the operator $e^{t\Ac{_h}}\in \Lc(V_h)$ can be represented by 
\begin{align*}
e^{t\Ac{_h} }=\frac{1}{2\pi i} \int_\Gamma e^{\mu t} R(\mu,\Ac{_h}) d\mu, \, t>0,
\end{align*}
where $\Gamma$ is any curve from $-\infty$ to $\infty$ and is entirely in $\n\Sigma^c(-\nuh;\theta_0).$
\item[(c)] Furthermore, the semigroup $\{e^{t\Ac{_h} }\}_{t\ge 0}$ satisfies $\|e^{t\Ac{_h} }\|_{\Lc(V_h)} \le C e^{-\nuh t},$ for all $t>0,$ for some $C>0$ independent of $h.$ 
\end{itemize}
\end{Theorem}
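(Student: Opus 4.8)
The plan is to mimic the continuous-level argument of Theorem~\ref{thVB-anasgp} at the discrete level, taking care that every constant produced is independent of the mesh parameter $h$. The starting point is the observation that $\Ac_h$ is defined through the \emph{same} sesquilinear form $a(\cdot,\cdot)$ as $\Ac$, now restricted to the finite-dimensional subspace $V_h\subset\Hio$. Consequently the boundedness estimate \eqref{eqVB-BddBil} and, crucially, the coercivity-type estimate \eqref{eqVB-coerc} are inherited verbatim by the restriction of $a(\cdot,\cdot)$ to $V_h\times V_h$, with the \emph{same} constants $\alpha_1$ and $\nuh$, because those inequalities were proved for arbitrary elements of $\Hio$ and involve only $\|\cdot\|$ and $\|\nabla\cdot\|$ (no inverse inequalities, no mesh-dependent quantities). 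This $h$-uniformity of the form constants is exactly what will propagate to the resolvent bound.

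First I would fix $\mu\in\Sigma^c(-\nuh;\theta_0)$ with $\mu\neq-\nuh$ and, given $g_h\in V_h$, consider the discrete resolvent equation: find $z_h\in V_h$ with
\begin{align*}
\mu\langle z_h,\phi_h\rangle - \langle \Ac_h z_h,\phi_h\rangle = \langle g_h,\phi_h\rangle\quad\text{for all }\phi_h\in V_h,
\end{align*}
i.e.\ $(\mu+\nuh)\langle z_h,\phi_h\rangle + \big(a(z_h,\phi_h)-\nuh\langle z_h,\phi_h\rangle\big)=\langle g_h,\phi_h\rangle$. The sesquilinear form $b_\mu(z_h,\phi_h):=(\mu+\nuh)\langle z_h,\phi_h\rangle + a(z_h,\phi_h)-\nuh\langle z_h,\phi_h\rangle$ on $V_h\times V_h$ is, by \eqref{eqVB-coerc}, sectorial with vertex at $-\nuh$ and half-angle controlled by $\theta_0$, uniformly in $h$; the Lax--Milgram lemma (finite-dimensional, so existence is automatic once injectivity is shown) then yields a unique $z_h$, which defines $R(\mu,\Ac_h)g_h=z_h$ and shows $\Sigma^c(-\nuh;\theta_0)\subset\rho(\Ac_h)$. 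For the norm bound I would test with $\phi_h=z_h$: the real part of $b_\mu(z_h,z_h)$ is bounded below by $\Re(\mu+\nuh)\|z_h\|^2 + \tfrac{\eta}{2}\|\nabla z_h\|^2$ and the imaginary part is controlled using \eqref{eqVB-BddBil}; combining these two scalar inequalities exactly as in the continuous proof (the standard sectorial-form argument, distinguishing the sectors $|\arg(\mu+\nuh)|\le\pi/2$ and $>\pi/2$) gives $|\mu+\nuh|\,\|z_h\|\le C\|g_h\|$ with $C=C(\eta,\alpha_1,\theta_0)$ independent of $h$. This proves part~(a). Parts~(b) and~(c) are then formal consequences: the contour integral representation of $e^{t\Ac_h}$ over a curve $\Gamma\subset\Sigma^c(-\nuh;\theta_0)$ converges by the uniform bound in (a), it defines a semigroup by the usual residue/resolvent-identity computation on the finite-dimensional space $V_h$, and shifting $\Gamma$ appropriately yields $\|e^{t\Ac_h}\|_{\Lc(V_h)}\le Ce^{-\nuh t}$ with $C$ independent of $h$, in complete analogy with \cite[Theorem 2.12, Section 2, Chapter 1, Part II]{BDDM} and the proof of Theorem~\ref{thVB-anasgp} given in Appendix~\ref{prof of thVB-anasgp}.

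The one point that genuinely requires care --- and which I expect to be the main (if modest) obstacle --- is making fully explicit that \emph{no} inverse inequality is needed anywhere, so that the constant $C$ in (a)--(c) really is $h$-independent. The temptation, when working in $V_h$, is to pass between $L^2$ and $H^1$ norms via $\|\nabla\phi_h\|\le Ch^{-1}\|\phi_h\|$; here that must be avoided, and it \emph{can} be avoided precisely because the coercivity \eqref{eqVB-coerc} already delivers the gradient term with the correct sign and because the boundedness \eqref{eqVB-BddBil} is stated in the $\|\nabla\cdot\|$ norm on both arguments. A second, purely bookkeeping, subtlety is that $\Ac_h$ maps $V_h$ into $V_h$ and the identity on $V_h$ is $\pi_h$-related to the ambient identity; since all the form identities are tested only against $\phi_h\in V_h$, this causes no difficulty, but one should state the resolvent equation in its variational form as above rather than operator form. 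With those two observations in place the proof is a line-by-line transcription of the continuous argument, so I would present it briefly and refer to Appendix~\ref{prof of thVB-anasgp} for the details of the sectorial-form estimate.
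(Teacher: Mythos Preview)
Your proposal is correct and follows essentially the same approach as the paper: observe that the sesquilinear form $a(\cdot,\cdot)$ restricted to $V_h\times V_h$ inherits the boundedness \eqref{eqVB-BddBil} and coercivity \eqref{eqVB-coerc} with the same ($h$-independent) constants, then repeat the sectorial-form argument of Theorem~\ref{thVB-anasgp} verbatim. The paper's own proof is a terse two-sentence version of exactly this; your additional remarks on avoiding inverse inequalities and on the variational formulation of the discrete resolvent are sound and helpful, but not logically required beyond what the paper already uses.
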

\begin{proof}
First, for all $\mu\in \Cb$ with $\Re(\mu)\ge -\nuh$ and for any given $g_h\in V_h,$ aim is to find a unique $z_h\in V_h$ such that 
\begin{align*}
	\left\langle (\mu I_h-\Ac_h) z_h ,\phi_h\right\rangle= a(z_h,\phi_h) +\mu\left\langle z_h, \phi_h\right\rangle = \left\langle g_h , \phi_h\right\rangle \text{ for all }\phi_h \in V_h,
\end{align*}
where $a(\cdot,\cdot)$ is as introduced in \eqref{eqVB-Sesq}. For all $h>0,$ \eqref{eqVB-BddBil} and \eqref{eqVB-coerc} imply the boundedness and coercivity of $a(\cdot,\cdot)$ on the space $V_h$ with constants $\alpha_1$ and $\frac{\eta}{2},$ independent of $h.$ Therefore, as in Theorem \ref{thVB-anasgp}, for all $\sigma(\Ac_h)\subset \Sigma(-\nuh;\theta_0)$ and there exists a unique $z_h\in V_h$ such that the last displayed equality holds. Now, proceeding as in Theorem \ref{thVB-anasgp}, we obtain (a). As the constant appearing in (a) is independent of $h,$ for all $h>0,$ a similar arguments as in Theorem \ref{thVB-anasgp}(b)-(c) conclude the proof of (b) and (c). 
\end{proof}

\noindent Next, the aim is to establish an error estimate for the system without control, which is crucial to obtain the error estimate for the stabilized system. Consider the dynamics,
\begin{align*}
	z'(t)=\Ac z(t) \text{ for all }t>0, \, z(0)=z_0\in \Lt,
\end{align*}
and its approximation 
\begin{align*}
	z_h'(t)=\Ac_h z_h(t) \text{ for all } t>0, \, z_h(0)=\pi_h z_0,
\end{align*}
whose solutions can be represented by 
\begin{align*}
	z(t)=e^{t\Ac}z_0 \text{ and } z_h(t)= e^{t\Ac_h}\pi_h z_0 \text{ for all } t>0.
\end{align*}
Now, to study the error estimate of the system without control, we need a result stated in the lemma below.

\begin{Lemma} \label{lemVB:resolErrorAAh}
Let $\Ac$ and $\Ac{_h}$ be as introduced in \eqref{eqdefVB-A} and \eqref{eqVB-DisBil}, respectively. Let $\Sigma(-\nuh;\theta_0)$ be as in Theorem \ref{thVB-anasgp}. Then there exists a constant $C>0$ independent of $\mu$ and $h$ such that the resolvent operator satisfies
\begin{itemize}
	\item[(a)]  $\|R(-\nuh,\Ac)-R(-\nuh,\Ac{_h})\pi_h\|_{\Lc(\Lt)} \le C h^2$ and
	\item[(b)]  $\displaystyle \sup_{\mu \in \Sigma^c(-\nuh;\theta_0)}\|R(\mu,\Ac)-R(\mu,\Ac{_h})\pi_h\|_{\Lc(\Lt)} \le C h^2.$
\end{itemize}
\end{Lemma}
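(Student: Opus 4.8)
The plan is to reduce both statements to a single elliptic finite element error estimate and then to propagate it uniformly over the resolvent sector. First I would treat part (a), which is the genuine analytic content. Fix $g\in\Lt$ and set $w=R(-\nuh,\Ac)g$, $w_h=R(-\nuh,\Ac_h)\pi_h g$, so that $w\in D(\Ac)=H^2(\Omega)\cap\Hio$ solves $a(w,\phi)-\nuh\langle w,\phi\rangle=\langle g,\phi\rangle$ for all $\phi\in\Hio$ (using \eqref{eqVB-RelBilAc}), while $w_h\in V_h$ solves $a(w_h,\phi_h)-\nuh\langle w_h,\phi_h\rangle=\langle g,\phi_h\rangle$ for all $\phi_h\in V_h$ (using the definition of $\pi_h$ in \eqref{eqn:def of pi_h} together with \eqref{eqVB-DisBil}). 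Since the two forms agree on $V_h\subset\Hio$, Galerkin orthogonality holds: the bilinear form $b(\cdot,\cdot):=a(\cdot,\cdot)-\nuh\langle\cdot,\cdot\rangle$ satisfies $b(w-w_h,\phi_h)=0$ for all $\phi_h\in V_h$. By \eqref{eqVB-BddBil} $b$ is bounded on $\Hio$, and by \eqref{eqVB-coerc} it is coercive with constant $\eta/2$; so C\'ea's lemma gives $\|\nabla(w-w_h)\|\le C\inf_{\phi_h\in V_h}\|\nabla(w-\phi_h)\|\le C\|\nabla(w-\pi_h w)\|\le Ch\|w\|_{H^2(\Omega)}$ by Lemma \ref{lem:projerr}(c). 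To upgrade the $H^1$ estimate to an $O(h^2)$ $L^2$ estimate I would run the standard Aubin--Nitsche duality argument: for $\psi\in\Lt$ let $\varphi$ solve the adjoint problem $b(\phi,\varphi)=\langle\phi,\psi\rangle$ for all $\phi\in\Hio$, which by Remark \ref{remVB:reg mu=-nuh} / Proposition \ref{ppsVB-anasgp-Ac^*}(b) (elliptic regularity for $\Ac^*$) satisfies $\|\varphi\|_{H^2(\Omega)}\le C\|\psi\|$; then $\langle w-w_h,\psi\rangle=b(w-w_h,\varphi)=b(w-w_h,\varphi-\pi_h\varphi)\le C\|\nabla(w-w_h)\|\,\|\nabla(\varphi-\pi_h\varphi)\|\le Ch\cdot\|w\|_{H^2(\Omega)}\cdot Ch\|\varphi\|_{H^2(\Omega)}\le Ch^2\|w\|_{H^2(\Omega)}\|\psi\|$. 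Taking the supremum over $\|\psi\|=1$ and using $\|w\|_{H^2(\Omega)}\le C\|g\|$ from Remark \ref{remVB:reg mu=-nuh} yields $\|w-w_h\|\le Ch^2\|g\|$, which is exactly (a).

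Next I would derive (b) from (a) by a resolvent identity that isolates the single point $\mu=-\nuh$. The key algebraic fact is that for $\mu\in\Sigma^c(-\nuh;\theta_0)$ one can write $R(\mu,\Ac)$ in terms of $R(-\nuh,\Ac)$: from $(\mu I-\Ac)=(\mu+\nuh)I-(\Ac+\nuh I)$ one gets the standard identity $R(\mu,\Ac)=R(-\nuh,\Ac)\big(I-(\mu+\nuh)R(-\nuh,\Ac)\big)^{-1}$ whenever the inverse exists, and similarly at the discrete level with $\pi_h$ inserted. Writing $\Delta_\mu:=R(\mu,\Ac)-R(\mu,\Ac_h)\pi_h$ and $\Delta_0:=R(-\nuh,\Ac)-R(-\nuh,\Ac_h)\pi_h$, a short manipulation using the two identities (and $\pi_h^2=\pi_h$, together with the fact that $R(-\nuh,\Ac_h)\pi_h$ maps into $V_h$ where $\pi_h$ acts as the identity) expresses $\Delta_\mu$ as a product/sum of terms each containing one factor of $\Delta_0$, with the remaining factors being uniformly bounded: $R(-\nuh,\Ac)$ and $R(-\nuh,\Ac_h)\pi_h$ are bounded by a constant (Remark \ref{remVB:reg mu=-nuh} and the $h$-uniform version coming from Theorem \ref{thVB-UniAnaSG}(a) at $\mu$ near $-\nuh$), and the resolvent-type factors $\big(I-(\mu+\nuh)R(-\nuh,\Ac)\big)^{-1}=(\mu+\nuh)R(\mu,\Ac)$ up to sign/normalization, hence $\|(\mu+\nuh)R(\mu,\Ac)\|\le C$ by Theorem \ref{thVB-anasgp}(a) and $\|(\mu+\nuh)R(\mu,\Ac_h)\|\le C$ uniformly in $h$ by Theorem \ref{thVB-UniAnaSG}(a). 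Combining, $\|\Delta_\mu\|_{\Lc(\Lt)}\le C\|\Delta_0\|_{\Lc(\Lt)}\le Ch^2$ with $C$ independent of $\mu\in\Sigma^c(-\nuh;\theta_0)$ and of $h$, which is (b).

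I expect the main obstacle to be bookkeeping in the second step rather than any deep analytic difficulty: one must manipulate the mismatched compositions $R(\mu,\Ac_h)\pi_h$ versus $\pi_h R(\mu,\Ac)$ carefully, since $\pi_h$ and the resolvents do not commute, and make sure every bounded factor that appears is bounded \emph{uniformly in both $\mu$ and $h$} — this is where Theorem \ref{thVB-UniAnaSG}(a) is essential and is the whole point of having proved the uniform resolvent bound earlier. A secondary technical point is the elliptic regularity of the adjoint problem needed for Aubin--Nitsche: the adjoint form corresponds to $\Ac^*+\nuh I$, whose $H^2$-regularity is recorded in Proposition \ref{ppsVB-anasgp-Ac^*}(b) and Remark \ref{remVB:reg mu=-nuh}, so this is available off the shelf. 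Everything else — C\'ea, the duality trick, Lemma \ref{lem:projerr}(c) — is routine, so the proof is essentially a clean assembly of the preceding results.
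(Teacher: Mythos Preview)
Your proposal is correct and follows essentially the same route as the paper: for (a) you establish Galerkin orthogonality for the shifted form $b(\cdot,\cdot)=a(\cdot,\cdot)-\nuh\langle\cdot,\cdot\rangle$, obtain the $O(h)$ energy estimate via coercivity/boundedness and Lemma~\ref{lem:projerr}(c), and then run Aubin--Nitsche with the adjoint regularity of Proposition~\ref{ppsVB-anasgp-Ac^*}(b), exactly as the paper does; for (b) the paper simply cites \cite[Lemma~4.9(b)]{WKRPCE}, whose argument is precisely the resolvent-identity reduction you sketch (your identity should read $R(\mu,\Ac)=\big(I+(\mu+\nuh)R(-\nuh,\Ac)\big)^{-1}R(-\nuh,\Ac)$, but you flag the sign issue and it does not affect the bound). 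The only care needed is the observation $\big(I+(\mu+\nuh)R(-\nuh,\Ac)\big)^{-1}=I-(\mu+\nuh)R(\mu,\Ac)$ (and its discrete analogue), which combined with Theorem~\ref{thVB-anasgp}(a) and Theorem~\ref{thVB-UniAnaSG}(a) gives the required uniform bounds.
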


\begin{proof}
(a) From Theorems \ref{thVB-anasgp} and \ref{thVB-UniAnaSG}, note that, for all $h>0,$ $-\nuh$ defined in \eqref{eqdefVB-nu0} belongs to $\rho(\Ac)\cap \{\cap_{h>0}\rho(\Ac_h)\}.$ Also, from Remark \ref{remVB:reg mu=-nuh}, for any given $g\in \Lt,$ there exist $u\in D(\Ac)$ and $u_h\in V_h$ such that $R(-\nuh,\Ac)g=u$ and $R(-\nuh,\Ac_h)u_h=\pi_h g$ in the sense that 
\begin{align*}
	a(u,\phi)-\nuh(u,\phi)=\langle g,\phi\rangle \text{ and } a(u_h,\phi_h)-\nuh\langle u_h,\phi_h\rangle=\langle\pi_h g,\phi_h\rangle,
\end{align*}
for all $\phi\in \Hio $ and $\phi_h\in V_h.$
Subtracting the second equality from the first one above and utilizing \eqref{eqn:def of pi_h}, we obtain 
\begin{align} \label{eqVB-orthonormalBil}
	a(u-u_h,\phi_h) -\nuh \langle u-u_h,\phi_h \rangle  =\langle g-\pi_h g,\phi_h\rangle = 0 \text{ for all }\phi_h\in V_h.
\end{align}
A use of \eqref{eqVB-orthonormalBil} with $\phi_h=\pi_h u$ implies
\begin{align*}
	a(u-u_h, u-u_h)-\nuh\langle u-u_h, u-u_h\rangle = a(u- u_h, u-\pi_h u) - \nuh \langle u- u_h, u-\pi_h u\rangle. 
\end{align*}
Therefore, utilizing this, coercivity of $ a(\cdot,\cdot)$ as in \eqref{eqVB-coerc} and Lemma \ref{lem:projerr}(c) for $u\in D(\Ac)$, we have 
\begin{align*}
\frac{\eta}{2}\|\nabla(u-u_h)\|^2 \le &  \alpha_1 \|\nabla(u-u_h)\|\,\|\nabla(u-\pi_hu)\|  \le \alpha_1 C h\|\nabla(u-u_h)\|\, \|u\|_{H^2(\Omega)}.
\end{align*}
 Therefore, a application of Remark \ref{remVB:reg mu=-nuh} in the above inequality leads to 
\begin{align} \label{eqVB-EnrgNmEst}
\|\nabla (u-u_h)\|\le \frac{2\alpha_1 C}{\eta} h \|g\|.
\end{align}
Now, we employ a duality argument for the dual problem: for a given $p\in \Lt,$ solve the following equation 
\begin{align} \label{eqVB-dualArgPro}
(-\nuh I-\Ac)^*\Phi=p \text{ in }\Omega, \quad \Phi=0 \text{ on }\partial\Omega
\end{align}
for $\Phi.$ Proposition \ref{ppsVB-anasgp-Ac^*}(b) yields a unique solution $\Phi \in D(\Ac^*)=H^2(\Omega)\cap H^1_0(\Omega)$ of \eqref{eqVB-dualArgPro}.  Now, 
{\small
\begin{align*}
\langle u -u_h, p\rangle & =\langle u-u_h, (-\nuh I - \Ac)^*\Phi\rangle  = \langle u -u_h, -\eta\Delta  \Phi - y_s \textbf{v}\cdot \nabla \Phi+ \nu_0 \Phi - \nuh \Phi\rangle \\
& = \eta\langle\nabla(u-u_h), \nabla \Phi\rangle + \langle y_s \textbf{v}\cdot \nabla( u-u_h)+ \textbf{v}\cdot \nabla y_s (u-u_h), \Phi\rangle +(\nu_0-\nuh)\langle u -u_h, \Phi\rangle, 
\end{align*}
}and therefore utilizing \eqref{eqVB-Sesq} and \eqref{eqVB-orthonormalBil}, we have
\begin{align*}
	\langle u -u_h, p\rangle = a (u-u_h, \Phi) -\nuh\langle u-u_h, \Phi\rangle= a(u-u_h, \Phi-\pi_h\Phi)-\nuh\langle u-u_h, \Phi-\pi_h\Phi\rangle.
\end{align*}
The boundedness of $a(\cdot,\cdot)$ given in \eqref{eqVB-BddBil}, \eqref{eqVB-EnrgNmEst} and Lemma \ref{lem:projerr}(c) imply
\begin{align}
	\left\vert \langle u-u_h, p\rangle\right\vert \le \alpha_1 \|\nabla(u-u_h)\|\, \|\nabla(\Phi-\pi_h\Phi)\| \le \frac{2C \alpha_1^2}{\eta}h^2 \|g\|\, \|\Phi\|_{H^2(\Omega)}.
\end{align}
Now, choose $p=u-u_h$ in the above inequality and  estimate $\|\Phi\|_{H^2(\Omega)}\le C \|p\|=C\|u-u_h\|$ as in Proposition \ref{ppsVB-anasgp-Ac^*}(b) to obtain 
\begin{align*}
\|u-u_h\|^2 \le \frac{2C\alpha_1^2}{\eta} h^2 \|g\|\,\|u-u_h\|.
\end{align*}
Therefore, finally, we have
\begin{align*}
\|R(-\nuh,\Ac)g-R(-\nuh,\Ac{_h})\pi_h g\|=\|u-u_h\| \le  \frac{2\alpha_1^2 C}{\eta} h^2 \|g\| = C h^2 \|g\|,
\end{align*}
for some $C=C(\alpha_1,\eta)>0$ independent of $h.$ This concludes the proof.

\medskip
\noindent (b) The proof follows the same line as in the proof of \cite[Lemma 4.9(b)]{WKRPCE}.
\end{proof}

\noindent The error estimates for the system without control are established in the next two theorems. These two results are analogous to \cite[Theorems 4.10 - 4.11]{WKRPCE}, and the proof follows the same line. Hence, here we skip the proof.
\begin{Theorem}[error estimate for the system without control] \label{thVB-ErrWC}
	Let the operators $\Ac,$ $\Ac_h$ and $\pi_h$ be as defined in \eqref{eqdefVB-A}, \eqref{eqVB-DisBil} and \eqref{eqn:def of pi_h}, respectively. Then for any $z_0\in \Lt,$ the semigroups $\{e^{t\Ac}\}_{t\ge 0}$ and $\{e^{t\Ac_h}\}_{t\ge 0}$ generated by $\Ac$ and $\Ac_h,$ respectively, satisfy
	\begin{itemize}
		\item[(a)] $\left\| (e^{t\Ac}-e^{t\Ac_h}\pi_h)z_0\right\|\le C h^2 \frac{e^{-\nuh t}}{t} \|z_0\|$ for all $t>0,$
		\item[(b)] $\left\| (e^{t\Ac}-e^{t\Ac_h}\pi_h)z_0\right\|_{L^2(0,\infty;\Lt)} \le C_\theta h^{2\theta}\|z_0\|$ for any $0<\theta<\frac{1}{2},$
	\end{itemize}
for some $C>0$ and $C_\theta>0$ independent of $h.$ 
\end{Theorem}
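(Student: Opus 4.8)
The plan is to exploit the analytic-semigroup representations together with the uniform resolvent estimate of Lemma~\ref{lemVB:resolErrorAAh}(b) and the contour-integral formulas for $e^{t\Ac}$ and $e^{t\Ac_h}$ already provided in Theorems~\ref{thVB-anasgp}(b) and \ref{thVB-UniAnaSG}(b). First I would write, using a common contour $\Gamma$ lying in $\Sigma^c(-\nuh;\theta_0)$,
\begin{align*}
(e^{t\Ac}-e^{t\Ac_h}\pi_h)z_0 = \frac{1}{2\pi i}\int_\Gamma e^{\mu t}\bigl(R(\mu,\Ac)-R(\mu,\Ac_h)\pi_h\bigr)z_0\, d\mu,
\end{align*}
noting that $R(\mu,\Ac_h)\pi_h = \pi_h R(\mu,\Ac_h)\pi_h$ so the operator acts sensibly on all of $\Lt$. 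For part~(a) I would parametrise $\Gamma$ as the boundary of the sector shifted to have vertex near $-\nuh$, scale by $t$ via $\mu+\nuh = s/t$, and combine the exponential decay $|e^{\mu t}| \le C e^{-\nuh t} e^{-c|\mu+\nuh|t\cos\theta_0'}$ along the rays with the $h^2$-bound $\|R(\mu,\Ac)-R(\mu,\Ac_h)\pi_h\| \le C h^2$ from Lemma~\ref{lemVB:resolErrorAAh}(b); the integral $\int_\Gamma |e^{\mu t}|\,|d\mu|$ then contributes the factor $1/t$, yielding the claimed bound $C h^2 e^{-\nuh t} t^{-1}\|z_0\|$.

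For part~(b), the $L^2$-in-time estimate, the difficulty is that the pointwise bound from (a) behaves like $t^{-1}$ near $t=0$ and is therefore not square-integrable on $(0,\infty)$ by itself. The remedy, exactly as in \cite[Theorem~4.11]{WKRPCE}, is an interpolation/duality argument: on the one hand one has the crude uniform bound $\|(e^{t\Ac}-e^{t\Ac_h}\pi_h)z_0\| \le C e^{-\nuh t}\|z_0\|$ (from Theorems~\ref{thVB-anasgp}(c) and \ref{thVB-UniAnaSG}(c)), and on the other the $h^2 t^{-1}$ bound of part~(a); interpolating these two with exponent $2\theta$ and $1-2\theta$ gives an integrand controlled by $C_\theta h^{2\theta} e^{-\nuh t} t^{-2\theta}$, which for $0<\theta<\tfrac12$ is square-integrable at $t=0$ and decays exponentially at $t=\infty$, so its $L^2(0,\infty)$-norm is finite and bounded by $C_\theta h^{2\theta}\|z_0\|$. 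Alternatively one can argue by a direct energy/Laplace–transform (Plancherel) estimate on the resolvent difference along the imaginary axis, but the interpolation route is shorter and mirrors the cited reference.

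The main obstacle is the small-time singularity in (b): one must be careful that the interpolation is legitimate, i.e.\ that the two bounds being interpolated are genuine operator-norm bounds on $\Lt$ holding uniformly in $h$, and that the resulting power $t^{-2\theta}$ is integrable precisely when $\theta<\tfrac12$ (the endpoint $\theta=\tfrac12$ fails, which is why the statement restricts to the open interval). Everything else — deforming the contour, the scaling $\mu+\nuh=s/t$, and bounding the ray integrals — is routine once the uniform $h^2$ resolvent estimate of Lemma~\ref{lemVB:resolErrorAAh}(b) is in hand. Since the argument is line-by-line identical to \cite[Theorems~4.10--4.11]{WKRPCE} after substituting the present operators $\Ac$, $\Ac_h$ and the sector $\Sigma(-\nuh;\theta_0)$, I would simply invoke that reference for the details.
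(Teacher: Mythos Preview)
Your proposal is correct and matches the paper's own treatment exactly: the paper does not give an independent proof but simply states that the argument is line-by-line the same as \cite[Theorems~4.10--4.11]{WKRPCE}, which is precisely the contour-integral plus interpolation scheme you outline. One minor bookkeeping slip: the interpolation weights should be $\theta$ and $1-\theta$ (not $2\theta$ and $1-2\theta$), which gives the pointwise bound $C h^{2\theta}t^{-\theta}e^{-\nuh t}$ and hence an $L^2(0,\infty)$-integrand $t^{-2\theta}$ that is finite exactly for $\theta<\tfrac12$, matching the statement.
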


\noindent Theorem \ref{thVB-ErrWC} yields that the error estimates between the trajectories corresponding to the continuous and discrete system hold uniformly in $t$ for all $t\ge T$ for any $T>0.$ Next, based on Trotter-Katos's theorem, we have the following convergence result for short time $t\in [0,T].$

\begin{Theorem}
	Let $\{e^{t\Ac}\}_{t\ge 0}$ (resp. $\{e^{t\Ac_h}\}_{t\ge 0}$) be the semigroup generated by $\Ac$ (resp. $\Ac_h$) defined in \eqref{eqdefVB-A} (resp. \eqref{eqVB-DisBil}). Let $\pi_h$ be as defined in \eqref{eqn:def of pi_h}. Then for any $z_0\in \Lt$ and for any fixed $T>0,$ we have 
	\begin{align*}
		\sup_{t\in [0,T]} \left\| (e^{t\Ac}-e^{t\Ac_h}\pi_h)z_0\right\| \rightarrow 0 \text{ as } h\downarrow 0.
	\end{align*}
\end{Theorem}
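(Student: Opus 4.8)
The plan is to deduce this short-time convergence result from the Trotter--Kato approximation theorem, exactly as the statement already hints. First I would recall the precise form of Trotter--Kato suited to our situation: if $\{e^{t\Ac}\}_{t\ge 0}$ and $\{e^{t\Ac_h}\}_{t\ge 0}$ are $C_0$-semigroups which are \emph{uniformly bounded} on compact time intervals (here even uniformly exponentially bounded, $\|e^{t\Ac_h}\|_{\Lc(V_h)}\le Ce^{-\nuh t}$ by Theorem~\ref{thVB-UniAnaSG}(c), and $\|e^{t\Ac}\|_{\Lc(\Lt)}\le Ce^{-\nuh t}$ by Theorem~\ref{thVB-anasgp}(c)), and if for some (hence every) $\mu\in\rho(\Ac)\cap\bigcap_{h>0}\rho(\Ac_h)$ one has the resolvent convergence
\begin{align*}
\big\|\big(R(\mu,\Ac)-R(\mu,\Ac_h)\pi_h\big)z_0\big\|\to 0 \quad\text{as } h\downarrow 0 \text{ for every } z_0\in\Lt,
\end{align*}
then the semigroups converge strongly, uniformly on compact time intervals:
\begin{align*}
\sup_{t\in[0,T]}\big\|\big(e^{t\Ac}-e^{t\Ac_h}\pi_h\big)z_0\big\|\to 0 \quad\text{as } h\downarrow 0,
\end{align*}
for every fixed $T>0$ and every $z_0\in\Lt$. (One subtlety: $e^{t\Ac_h}\pi_h$ acts on all of $\Lt$ via the projection; the framework of approximating a semigroup on $\Lt$ by semigroups on the subspaces $V_h$ with connecting maps $\pi_h$ is the standard discrete Trotter--Kato setting, cf. the references used for Theorem~\ref{thVB-ErrWC} and \cite{Lasiecka1}.)

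Second I would verify the resolvent-convergence hypothesis, which is the only thing left to check. Take $\mu=-\nuh$, which by Theorems~\ref{thVB-anasgp} and \ref{thVB-UniAnaSG} lies in $\rho(\Ac)\cap\bigcap_{h>0}\rho(\Ac_h)$. Then Lemma~\ref{lemVB:resolErrorAAh}(a) gives the much stronger \emph{quantitative} bound
\begin{align*}
\big\|R(-\nuh,\Ac)-R(-\nuh,\Ac_h)\pi_h\big\|_{\Lc(\Lt)}\le Ch^2,
\end{align*}
so in particular $\big\|\big(R(-\nuh,\Ac)-R(-\nuh,\Ac_h)\pi_h\big)z_0\big\|\le Ch^2\|z_0\|\to 0$ for every $z_0\in\Lt$. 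That is exactly the consistency condition Trotter--Kato requires, and it holds in operator norm, which is more than enough.

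Third, I would assemble the two ingredients. The uniform (in $h$) bounds on $\{e^{t\Ac_h}\}$ from Theorem~\ref{thVB-UniAnaSG}(c) supply the stability hypothesis; Lemma~\ref{lemVB:resolErrorAAh}(a) supplies the consistency hypothesis; and the limit semigroup $\{e^{t\Ac}\}$ exists and is analytic by Theorem~\ref{thVB-anasgp}. Invoking the discrete Trotter--Kato theorem then yields precisely the claimed uniform-on-$[0,T]$ strong convergence. One may optionally note that, unlike Theorem~\ref{thVB-ErrWC} which breaks down as $t\downarrow 0$ (the bound there carries a factor $1/t$), the present statement is genuinely a \emph{short-time} result and makes no rate claim: near $t=0$ one cannot gain an $h^2$-rate for a general $z_0\in\Lt$, only qualitative convergence, which is why the theorem is phrased with ``$\to 0$'' rather than an explicit power of $h$. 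Combined with Theorem~\ref{thVB-ErrWC}(a), which controls the error uniformly for $t\ge T$, these two results together give control of the error for all $t>0$.

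The main obstacle, such as it is, is not analytical but bookkeeping: one must set up the discrete Trotter--Kato statement carefully so that the approximating operators $e^{t\Ac_h}$ are composed with $\pi_h$ in the right way and the ``uniform boundedness on $[0,T]$'' hypothesis is read off correctly from Theorem~\ref{thVB-UniAnaSG}(c) (uniformly in $h$, which is the whole point of having proved a \emph{uniformly} analytic semigroup rather than just analyticity for each fixed $h$). Once the hypotheses of Trotter--Kato are matched to Lemma~\ref{lemVB:resolErrorAAh}(a) and Theorem~\ref{thVB-UniAnaSG}, the conclusion is immediate, so I expect the proof to be genuinely short.
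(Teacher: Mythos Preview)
Your proposal is correct and matches the paper's approach exactly: the paper simply remarks that the result follows ``based on Trotter--Kato's theorem'' and gives no further details, so your verification of the stability hypothesis via Theorem~\ref{thVB-UniAnaSG}(c) and the consistency hypothesis via Lemma~\ref{lemVB:resolErrorAAh}(a) is precisely the intended argument, spelled out in full.
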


\noindent Next, the aim is to establish the uniform stabilizability of the corresponding approximate linear system (see \eqref{eqVB-approxSys} below) and error estimate. 

\noindent Let $\Bc_h$ be approximating operator corresponding to $\Bc$ defined by 
\begin{align}\label{eqdefVB-Bh}
	\Bc_h=\pi_h\Bc.
\end{align}
With this, consider the discrete system corresponding to \eqref{eqVB-MainOptForm} with $\kappa=0$ as 
\begin{equation} \label{eqVB-approxSys}
	\begin{aligned}
		z_h'(t)=\Ac{_h}z_h(t)+\Bc{_h} u_h(t) \text{ for all } t>0, \, z_h(0)= z_{0_h}. 
	\end{aligned}
\end{equation}
To obtain stabilizability of \eqref{eqVB-approxSys} with decay $-\omega<0,$ for any $\omega>0,$ define approximated operator $\Ac_{\omega_h}:V_h \rightarrow V_h$ corresponding to $\Acw$ by 
\begin{align}\label{eqdefVB-Awh}
	\Ac_{\omega_h}=\Ac_h+\omega I_h,
\end{align}
where $I_h:V_h\rightarrow V_h$ is the identity operator. Now, consider the discrete system that corresponds to \eqref{eqVB-shifsysLin} as
\begin{equation} \label{eqVB-approxShiftSys}
	\begin{aligned}
		\wt z_h'(t)=\Ac_{\omega_h}\wt z_h(t)+\Bc{_h}\wt u_h(t)  \text{ for all } t>0, \, \wt z_h(0)= z_{0_h}, 
	\end{aligned}
\end{equation}
where $z_{0_h}\in V_h$ is an approximation of $z_0.$

\medskip 
\noindent Now, we summarize the properties satisfied by the linear operators $\Ac_{\omega}, \Ac_{\omega_h}$ and control operators $\Bc, \Bc_h.$ These are required (see Property $(\mathcal{A}_1)$ in \cite[Section 5]{WKRPCE}) to establish the stabilizability of \eqref{eqVB-approxShiftSys} and error estimates.

\noindent Property \textbf{$(\mathcal{A}_1).$}
\begin{itemize}
	\item[(a)] For all $h>0,$ the control operators $\Bc$ and $\Bc_h$ given in \eqref{eqVB-B} and \eqref{eqdefVB-Bh} satisfy
	\begin{align*}
		\|\Bc\|_{\Lc(\Lt)} \le C_B \text{ and } \|\Bc_h\|_{\Lc(\Lt,V_h)} \le C_B, 
	\end{align*}
	for some positive constant $C_B$ independent of $h.$
	\item[(b)] In view of \cite[Theorem 12.37]{RROG} and Theorem \ref{thVB-anasgp}, the operator $(\Ac_{\omega}, D(\Ac_{\omega}))$ defined in \eqref{eqVB-Acw-Acw*} generates an analytic semigroup $\{e^{t\Ac_{\omega}}\}_{t\ge 0}$ on $\Lt$ with $\Sigma^c(-\nuh+\omega;\theta_0)\subset \rho(\Ac_{\omega})$ and 
	\begin{align*}
		\|R(\mu,\Ac_{\omega})\|_{\Lc(\Lt)} \le \frac{C_1}{|\mu+\nuh-\omega|} \text{ for all }\mu \in \Sigma^c(-\nuh+\omega;\theta_0), \, \mu\neq -\nuh+\omega,
	\end{align*}
	for some positive constant $C_1$ independent of $\mu.$
	\item[(c)]  Theorem \ref{thVB-UniAnaSG} and \cite[Theorem 12.37]{RROG} lead to the following. For all $h>0$, the operator $\Ac_{\omega_h}$ defined in \eqref{eqdefVB-Awh} generates a uniformly analytic semigroup $\{e^{t\Ac_{\omega_h}}\}_{t\ge 0}$ on $V_h$ with $\Sigma^c(-\nuh+\omega;\theta_0)\subset \rho(\Ac_{\omega_h})$ and 
	\begin{align*}
		\|R(\mu,\Ac_{\omega_h})\|_{\Lc(V_h)} \le \frac{C_1}{|\mu+\nuh-\omega|} \text{ for all }\mu \in \Sigma^c(-\nuh+\omega;\theta_0), \, \mu\neq -\nuh+\omega,
	\end{align*}
	for some positive constant $C_1$ independent of $\mu$ and $h.$
	\item[(d)] As a consequence of Lemma \ref{lemVB:resolErrorAAh}, the operators $(\Ac_{\omega}, D(\Ac_{\omega}))$ and $\Ac_{\omega_h}$ defined in \eqref{eqVB-Acw-Acw*} and  \eqref{eqdefVB-Awh} satisfy
	\begin{align*}
		\sup_{\mu \in \Sigma^c(-\nuh+\omega;\theta_0)} \| R(\mu, \Ac_{\omega}) - R(\mu, \Ac_{\omega_h})\pi_h\|_{\Lc(\Lt)} \le Ch^2, 
	\end{align*}
	for all $h>0$ and for some constant $C>0$ independent of $\mu$ and $h.$
\end{itemize}
 
\noindent The aforementioned properties are analogous to those outlined in \cite[Section 5]{WKRPCE} and play a pivotal role in establishing the current results. The analysis done in \cite[Sections 5-7]{WKRPCE} can be mimicked here to show the following results. The analyses done in \cite[Sections 5-7]{WKRPCE} remain applicable in the present context, given the fulfillment of Property $(\mathcal{A}_1)$.  Here, we omit the detailed repetition of those analyses here.

\medskip
\noindent To obtain a feedback stabilizing control for the system \eqref{eqVB-approxShiftSys}, consider the optimal control problem:
\begin{align} \label{eqVB-DisOCP}
	\min_{\wt u_h \in E_{hz_{0_h}}} J_h(\wt z_h, \wt u_h) \text{ subject to } \eqref{eqVB-approxShiftSys},
\end{align}
where 
\begin{align} \label{eqVB-DisFun}
	J_h(\wt z_h, \wt u_h):= \int_0^\infty \left( \|\wt z_h(t)\|^2 +  \|\wt u_h(t)\|^2\right) dt,
\end{align}
{\small$E_{hz_{0_h}}:=\{ \wt{u}_h\in L^2(0,\infty; L^2(\Omega))\mid J_h(\wt z_h,\wt u_h)<\infty,\,\wt z_h \text{ is solution of }\eqref{eqVB-approxShiftSys} \text{ with control }\wt u_h\}.$}

\noindent In the next theorem, we establish that for each $h>0,$ the optimal control problem \eqref{eqVB-DisOCP} has a unique minimizer and the minimizing control is obtained in feedback form by solving a discrete algebraic Riccati equation posed on $V_h.$ 

\noindent
\begin{Theorem}[uniform stabilizability and discrete Riccati operator]\label{thVB:dro}
	Let $\Ac_{\omega_h}$ and $\Bc_h$ be as defined in \eqref{eqdefVB-Awh} and \eqref{eqdefVB-Bh}, respectively. Then there exists $h_0>0$ such that  for all $0<h<h_0$, the results stated below hold:
	\begin{itemize}
		\item[$(a)$] There exists a unique, non-negative, self-adjoint  Riccati operator $\n\Pc_h\in \Lc(V_h)$ associated with \eqref{eqVB-approxShiftSys} that satisfies the discrete Riccati equation
		\begin{align} \label{eqVB-d-ARE}
			\Ac^*_{\omega_h}\Pc{_h}+\Pc{_h}\Ac_{\omega_h}-\Pc{_h}\Bc_h\Bc^*_h\Pc{_h}+I_h=0,\; \Pc{_h}=\Pc{_h}^*\geq 0 \text{ on }V_h.
		\end{align}
		\item[$(b)$] There exists a unique optimal pair $( z^\sharp_h, u^\sharp_h)$ for \eqref{eqVB-DisOCP},  
		where $ z^\sharp_h(t)$ is solution of corresponding closed loop system
		\begin{equation}\label{eqVB-d-cl-loop}
			{z^\sharp_h}{'}(t)=(\Ac_{\omega_h}-\Bc_h\Bc_h^*\Pc{_h}) z^\sharp_h(t) \text{ for all } t>0,\quad  z^\sharp{_h}(0)=z_{0_h},
		\end{equation}
		and $ u^\sharp_h(t)$ can be expressed in the feedback form as 
		\begin{align}\label{eqVB-uh-sharp}
			u^\sharp_h(t)=-\Bc_h^*\Pc_h z^\sharp_h(t).
		\end{align}
		The optimal cost of $\n J_h(\cdot,\cdot)$ is given by 
		\begin{align}
			\displaystyle\n\min_{\wt u_h\in E_{hz_{0_h}}}J_h(\wt z{_h},\wt u_h)=J_h(z^\sharp_h,u^\sharp_h)=\left\langle \Pc_h z_{0_h},z_{0_h}\right\rangle.
		\end{align} 
		\item[$(c)$] The operator $\Ac_{\omega_h,\Pc_h}:=\Ac_{\omega_h}-\Bc_h\Bc_h^*\Pc_h$ generates uniformly analytic semigroup $\{e^{t\Ac_{\omega_h,\Pc_h}}\}_{t\ge 0}$ on $V_h$ satisfying
		\begin{align*}
			\|e^{t\Ac_{{\omega_h,\Pc_h}}}\|_{\Lc(V_h)}\leq
			M_Pe^{-\omega_P t} \text{ for all }t>0, \text{ for all } 0<h<h_0,
		\end{align*}
		for some positive constants $\normalfont\omega_P$ and $\normalfont M_P$ independent of $h$. 
	\end{itemize}
\end{Theorem}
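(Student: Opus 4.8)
The plan is to follow the Banach–Nečas–Babuška / Lions–Stampacchia route combined with the uniform resolvent estimate from Property $(\mathcal{A}_1)$(d), exactly mirroring the argument of \cite[Sections 5--7]{WKRPCE}, so I only sketch the structure. First I would establish the \emph{uniform open-loop stabilizability} of the pair $(\Ac_{\omega_h},\Bc_h)$: since $(\Ac_\omega,\Bc)$ is open-loop stabilizable (Proposition \ref{ppsVB-stabOpenLoop}) and, by Property $(\mathcal{A}_1)$(b)--(d), the discrete operators converge to the continuous ones in the strong resolvent sense with rate $h^2$ uniformly on the sector $\Sigma^c(-\nuh+\omega;\theta_0)$, a perturbation argument (the discrete analogue of \cite[Section 5]{WKRPCE}) yields $h_0>0$, a constant $M_u$ and a growth rate $\omega_u>0$, all independent of $h$, together with uniformly bounded open-loop controls $w_h\in L^2(0,\infty;V_h)$ driving \eqref{eqVB-approxShiftSys} to $0$ with decay $e^{-\omega_u t}$. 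The uniform analyticity of $\{e^{t\Ac_{\omega_h}}\}_{t\ge0}$ (Theorem \ref{thVB-UniAnaSG} shifted by $\omega$, i.e.\ Property $(\mathcal{A}_1)$(c)) plus $\|\Bc_h\|\le C_B$ uniformly (Property $(\mathcal{A}_1)$(a)) are the standing hypotheses that make the abstract LQR theory applicable on each finite-dimensional space $V_h$.

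Next, for each fixed $0<h<h_0$ I would solve the finite-dimensional LQR problem \eqref{eqVB-DisOCP}. Because $V_h$ is finite-dimensional and $(\Ac_{\omega_h},\Bc_h)$ is stabilizable, the standard matrix Riccati theory (e.g.\ \cite[Part IV]{BDDM} or \cite{Lasiecka1}) gives existence and uniqueness of a self-adjoint nonnegative $\Pc_h\in\Lc(V_h)$ solving \eqref{eqVB-d-ARE}, a unique optimal pair $(z_h^\sharp,u_h^\sharp)$ with $u_h^\sharp=-\Bc_h^*\Pc_h z_h^\sharp$ in feedback form, the closed-loop equation \eqref{eqVB-d-cl-loop}, and the cost identity $J_h(z_h^\sharp,u_h^\sharp)=\langle\Pc_h z_{0_h},z_{0_h}\rangle$. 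Using the uniformly bounded suboptimal controls $w_h$ from the previous step one gets $\langle\Pc_h z_{0_h},z_{0_h}\rangle=J_h(z_h^\sharp,u_h^\sharp)\le J_h(z_h^{w},w_h)\le C\|z_{0_h}\|^2$ with $C$ independent of $h$, hence $\|\Pc_h\|_{\Lc(V_h)}\le C$ uniformly in $h$; this is precisely parts (a) and (b) of the theorem.

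For part (c) I would first note that $\Ac_{\omega_h,\Pc_h}=\Ac_{\omega_h}-\Bc_h\Bc_h^*\Pc_h$ is a bounded perturbation of $\Ac_{\omega_h}$ by the operator $-\Bc_h\Bc_h^*\Pc_h$ whose norm is bounded by $C_B^2\|\Pc_h\|\le C$ uniformly; combined with Property $(\mathcal{A}_1)$(c) this keeps $\{e^{t\Ac_{\omega_h,\Pc_h}}\}_{t\ge0}$ uniformly analytic on $V_h$ (the sector and resolvent bounds are perturbed by a uniformly controlled amount). The exponential decay $\|e^{t\Ac_{\omega_h,\Pc_h}}\|_{\Lc(V_h)}\le M_P e^{-\omega_P t}$ with $M_P,\omega_P$ independent of $h$ is obtained from the Lyapunov identity furnished by the Riccati equation: for $\xi_h(t)=e^{t\Ac_{\omega_h,\Pc_h}}\xi_0$ one computes $\frac{d}{dt}\langle\Pc_h\xi_h,\xi_h\rangle=-\|\xi_h\|^2-\|\Bc_h^*\Pc_h\xi_h\|^2\le-\|\xi_h\|^2$, so $\langle\Pc_h\xi_h(t),\xi_h(t)\rangle$ decays, and combining this with the uniform two-sided control $0\le\langle\Pc_h\cdot,\cdot\rangle\le C\|\cdot\|^2$ and a uniform lower bound on $\Pc_h$ (which follows again from \eqref{eqVB-d-ARE} since $\langle\Pc_h\xi_0,\xi_0\rangle\ge\int_0^\infty\|e^{t\Ac_{\omega_h,\Pc_h}}\xi_0\|^2\,dt$ and a uniform analytic-semigroup bound) yields the uniform exponential bound via the standard Datko/Pazy argument, with constants traced through to be $h$-independent because every ingredient is.

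The main obstacle is \textbf{uniformity in $h$} throughout: the naive finite-dimensional Riccati theory gives constants depending on $h$, so the crux is to show that the threshold $h_0$, the bound $\|\Pc_h\|\le C$, and the decay pair $(M_P,\omega_P)$ can all be chosen independently of $h$. This is exactly where Property $(\mathcal{A}_1)$(d)—the $O(h^2)$ uniform resolvent convergence on the whole sector—is essential: it allows the perturbation argument that transfers open-loop stabilizability (with uniform cost) from the continuous pair $(\Ac_\omega,\Bc)$ to the discrete pairs $(\Ac_{\omega_h},\Bc_h)$, which in turn feeds the uniform bound on $\Pc_h$ and hence the uniform closed-loop decay. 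Since all four parts of Property $(\mathcal{A}_1)$ are already verified in the excerpt, the proof reduces to invoking the $h$-uniform LQR machinery of \cite[Sections 5--7]{WKRPCE} verbatim, and I would simply point to that reference for the detailed estimates rather than reproduce them.
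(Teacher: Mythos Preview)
Your proposal is correct and follows essentially the same approach as the paper: the paper does not give a standalone proof but simply observes that Property $(\mathcal{A}_1)$(a)--(d) has been verified and then defers entirely to the abstract machinery of \cite[Sections 5--7]{WKRPCE}. Your sketch actually spells out more of what that machinery contains (uniform open-loop stabilizability via resolvent perturbation, uniform bound on $\Pc_h$ from suboptimal controls, uniform closed-loop decay via the Riccati Lyapunov identity and Datko/Pazy) than the paper does, but the route is identical.
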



\begin{Theorem}[error estimates for Riccati and cost functional]\label{thVB:main-conv-P}
	Let $\Pc,$ and $(z^\sharp,u^\sharp),$ for any $z_0\in \Lt,$ be as obtained in Theorem \ref{thVB-mainstab}. Let $h_0,$ $\Pc_h,$ and $(z_h^\sharp, u_h^\sharp)$, for $z_{0_h}=\pi_h z_0$ be as obtained in Theorem \ref{thVB:dro}.
	Then there exists $\wt h_0\in (0,h_0)$ such that for any given $0<\epsilon<1$ and for all $0<h<\wt h_0,$ the estimates below hold:
	\begin{itemize}
		\item[$(a)$]  $\;\|\Pc-\Pc_h\pi_h\|_{\Lc(\Lt)}\le C h^{2(1-\epsilon)} ,$  
		{ \n (b) } $\n \left\vert  J(z^\sharp,u^\sharp) - J_h(z_h^\sharp,u^\sharp_h)\right\vert\le C h^{2(1-\epsilon)}, $
		\item[$(c)$] $
		\|\Bc^*\Pc - \Bc_h^*\Pc_h\pi_h\|_{\Lc(\Lt)}   \le C h^{2(1-\epsilon)} $ and { $(d)$} $\|\Bc^*\Pc-\Bc_h^*\Pc_h\|_{\Lc(V_h,\Lt)} \le  Ch^{2(1-\epsilon)}.$
	\end{itemize}
	Here, the constant $C>0$ is independent of $h$ but depends on $\epsilon.$
\end{Theorem}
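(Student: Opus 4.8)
The plan is to mimic the abstract framework of \cite[Sections 5--7]{WKRPCE}, whose applicability is guaranteed once Property $(\mathcal{A}_1)$ is established. The central idea is the standard resolvent-equation bootstrap for algebraic Riccati operators: write the difference $\Pc - \Pc_h\pi_h$ in terms of the resolvents $R(\mu,\Ac_{\omega,\Pc})$, $R(\mu,\Ac_{\omega_h,\Pc_h})$ of the \emph{closed-loop} generators together with $\Pc$ and $\Pc_h$ themselves, and then close the estimate using the uniform exponential stability of the closed-loop semigroups from Theorem \ref{thVB:dro}(c) and Theorem \ref{thVB-mainstab}(d). First I would record the integral representations $\Pc = \int_0^\infty e^{t\Ac_{\omega,\Pc}^*}\,(I + \Pc\Bc\Bc^*\Pc)\,e^{t\Ac_{\omega,\Pc}}\,dt$ and its discrete analogue (these follow from the Riccati identity and the Lyapunov-type argument), which reduces everything to estimating $\|e^{t\Ac_{\omega,\Pc}} - e^{t\Ac_{\omega_h,\Pc_h}}\pi_h\|_{\Lc(\Lt)}$, i.e.\ an error estimate for the \emph{perturbed} semigroups.

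Next I would obtain that perturbed-semigroup error estimate. The key inputs are: (i) the unperturbed resolvent estimate $\sup_\mu \|R(\mu,\Ac_\omega) - R(\mu,\Ac_{\omega_h})\pi_h\|_{\Lc(\Lt)} \le Ch^2$ from Property $(\mathcal{A}_1)$(d) (equivalently Lemma \ref{lemVB:resolErrorAAh}); (ii) a bound $\|\Bc^*\Pc - \Bc_h^*\Pc_h\pi_h\|$, which is coupled to the quantity we are estimating and hence must be handled by a fixed-point/bootstrap on the $\Lc(\Lt)$ norm of $\Pc-\Pc_h\pi_h$; and (iii) the contour-integral (Dunford) representation of the semigroups over the curve $\Gamma$ in $\Sigma^c(-\nuh+\omega;\theta_0)$, which converts the resolvent error into a semigroup error with the characteristic loss $h^{2(1-\epsilon)}$ coming from the time-integrability exponent (one cannot integrate $e^{-\nuh t}/t$ near $t=0$ without sacrificing an $\epsilon$, exactly as in Theorem \ref{thVB-ErrWC}(b)). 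Feeding this back into the representation of $\Pc-\Pc_h\pi_h$ and absorbing the small-$h$ term (this is where $\wt h_0$ and the $h_0$ threshold of Theorem \ref{thVB:dro} are used) yields part (a). Part (b) then follows from $J(z^\sharp,u^\sharp) = \langle \Pc z_0, z_0\rangle$, $J_h(z_h^\sharp,u_h^\sharp) = \langle \Pc_h z_{0_h}, z_{0_h}\rangle$ with $z_{0_h}=\pi_h z_0$, writing $\langle \Pc z_0,z_0\rangle - \langle \Pc_h\pi_h z_0,\pi_h z_0\rangle = \langle(\Pc-\Pc_h\pi_h)z_0,z_0\rangle + \langle\Pc_h\pi_h z_0,(I-\pi_h)z_0\rangle$ and using $\|(I-\pi_h)z_0\|\to 0$ together with the uniform bound on $\|\Pc_h\|$; care is needed since $z_0\in\Lt$ only, so the second term is not $O(h^2)$ pointwise but is controlled in operator norm. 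Parts (c) and (d) follow from (a) by composing with the uniformly bounded operators $\Bc^*$, $\Bc_h^*$ and using $\Bc_h^* = \Bc^*\pi_h$, $\Bc_h=\pi_h\Bc$, plus Property $(\mathcal{A}_1)$(a).

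The main obstacle is the bootstrap coupling in step two: the error $\|\Bc^*\Pc - \Bc_h^*\Pc_h\pi_h\|$ (and more fundamentally $\|\Pc-\Pc_h\pi_h\|$) appears on both sides of the estimate, because the perturbed generators $\Ac_{\omega,\Pc}$ and $\Ac_{\omega_h,\Pc_h}$ differ not only through the unperturbed part $\Ac_\omega$ vs.\ $\Ac_{\omega_h}$ but also through the feedback terms $\Bc\Bc^*\Pc$ vs.\ $\Bc_h\Bc_h^*\Pc_h$. Closing this requires that the ``self-referential'' contribution come with a small prefactor, which it does precisely because the uniform exponential decay $M_P e^{-\omega_P t}$ in Theorem \ref{thVB:dro}(c) is uniform in $h$ and the resolvent error is $O(h^2)$; choosing $h<\wt h_0$ small makes the prefactor a strict contraction and the estimate absorbs. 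A secondary technical point is keeping all constants uniform in $\mu$ along the contour $\Gamma$ and ensuring the $\epsilon$-loss appears only once; this is exactly the bookkeeping carried out in \cite[Sections 6--7]{WKRPCE}, so I would invoke it rather than reproduce it. Since the statement explicitly says the analysis of \cite[Sections 5--7]{WKRPCE} transfers verbatim under Property $(\mathcal{A}_1)$, the proof reduces to checking that Property $(\mathcal{A}_1)$ holds --- which was done above --- and citing that analysis for the remaining estimates.
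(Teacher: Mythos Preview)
Your proposal is correct and follows exactly the paper's approach: the paper does not give an independent proof of this theorem but instead remarks that once Property $(\mathcal{A}_1)$ is verified (which it is, in items (a)--(d) just above the theorem), the analysis of \cite[Sections 5--7]{WKRPCE} transfers verbatim to yield the stated estimates. Your additional outline of what that cited analysis contains (the Lyapunov/integral representation of $\Pc$, the contour-integral semigroup error with the $\epsilon$-loss, and the bootstrap absorption for small $h$) is consistent with that framework and goes beyond what the paper itself records.
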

\begin{Theorem}[error estimates for stabilized solutions and stabilizing control]\label{thVB:main-conv-new}
	Let $\gamma,$ and $(z^\sharp,u^\sharp),$ for any $z_0\in \Lt,$ be as obtained in Theorem \ref{thVB-mainstab}. Let $h_0,$ $\omega_P,$ and $(z_h^\sharp, u_h^\sharp)$, for $z_{0_h}=\pi_h z_0$ be as obtained in Theorem \ref{thVB:dro}. For any $\wt\gamma$ satisfying $\wt\gamma <\min\{\gamma,\omega_P\},$  there exists $\wt h_0 \in (0,h_0)$ such that for any $0<\epsilon<1$ and for all $0<h<\wt h_0$, the following estimates hold: 
	\begin{itemize}
		\item[$(a)$] $\n\|z^\sharp(t) - z^\sharp_h(t)\|\le C h^{2(1-\epsilon)} \frac{e^{-\wt\gamma t}}{t} \|z_0\|  $ for all $t>0,$   
		\item[$ (b)$] $\n\|z^\sharp(\cdot) - z^\sharp_h(\cdot)\|_{L^2(0,\infty;\Lt)}\le C h^{1-\epsilon} \|z_0\| ,$
		\item[$(c)$] $\n\|u^\sharp(t) - u^\sharp_h(t)\|\le C h^{2(1-\epsilon)}  \frac{e^{-\wt\gamma t}}{t} \|z_0\| $ for all $t>0,$ and 
		\item[{ $(d)$}] $\n\|u^\sharp(\cdot) - u^\sharp_h(\cdot)\|_{L^2(0,\infty;\Lt)}\le C h^{1-\epsilon} \|z_0\|.$ 
	\end{itemize}
	Here, the constant $C>0$ is independent of $h$ but depends on $\epsilon,\gamma$ and $\omega_P.$
\end{Theorem}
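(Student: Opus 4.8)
The plan is to derive Theorem \ref{thVB:main-conv-new} as a consequence of the semigroup error estimate (Theorem \ref{thVB-ErrWC}), the exponential decay of the closed-loop semigroups (Theorem \ref{thVB-mainstab}(d) and Theorem \ref{thVB:dro}(c)), and the Riccati/feedback-operator error estimate (Theorem \ref{thVB:main-conv-P}(a),(c),(d)). First I would write the two stabilized trajectories via their closed-loop semigroups, $z^\sharp(t) = e^{t\Ac_{\omega,\Pc}} z_0$ and $z^\sharp_h(t) = e^{t\Ac_{\omega_h,\Pc_h}} z_{0_h}$ with $z_{0_h}=\pi_h z_0$, and split the difference as
\begin{align*}
z^\sharp(t) - z^\sharp_h(t) = \left(e^{t\Ac_{\omega,\Pc}} - e^{t\Ac_{\omega_h,\Pc_h}}\pi_h\right) z_0.
\end{align*}
The standard device is to compare the perturbed generators through the resolvent identity: $\Ac_{\omega_h,\Pc_h} = \Ac_{\omega_h} - \Bc_h\Bc_h^*\Pc_h$ is a bounded perturbation of $\Ac_{\omega_h}$ by an operator that is $O(1)$ in norm and whose distance from $\Bc\Bc^*\Pc$ (suitably composed with $\pi_h$) is $O(h^{2(1-\epsilon)})$ by Theorem \ref{thVB:main-conv-P}(c)--(d) together with Property $(\mathcal{A}_1)$(a). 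One then uses the variation-of-constants (Duhamel) formula relating $e^{t\Ac_{\omega,\Pc}}$ and $e^{t\Ac_{\omega}}$ (resp. the discrete analogue), inserts Theorem \ref{thVB-ErrWC} for the principal-part semigroup difference $e^{t\Ac_\omega}-e^{t\Ac_{\omega_h}}\pi_h$, and controls the feedback terms using the uniform exponential decay from Theorem \ref{thVB:dro}(c). This is precisely the scheme carried out in \cite[Sections 6--7]{WKRPCE}; since Property $(\mathcal{A}_1)$ is verified here, the argument transfers verbatim, and for (a) one obtains the pointwise bound with the singular factor $e^{-\wt\gamma t}/t$ (the $1/t$ coming from the analytic-semigroup smoothing in Theorem \ref{thVB-ErrWC}(a), and any $\wt\gamma<\min\{\gamma,\omega_P\}$ being admissible because the $O(h^{2(1-\epsilon)})$ perturbation of the decay rate can be absorbed).

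For part (b), I would integrate: write $\|z^\sharp-z^\sharp_h\|_{L^2(0,\infty;\Lt)}^2 = \int_0^\infty \|z^\sharp(t)-z^\sharp_h(t)\|^2\,dt$, split the integral at $t=1$ (say), use (a) on $[1,\infty)$ where $\int_1^\infty e^{-2\wt\gamma t}/t^2\,dt<\infty$, and on $[0,1]$ use the $L^2$-type estimate Theorem \ref{thVB-ErrWC}(b) (with exponent $2\theta$ for $\theta$ close to $1/2$) propagated through the Duhamel comparison; this yields the $h^{1-\epsilon}$ rate (note the loss of a factor of $2$ in the exponent compared with the pointwise estimate, which is exactly the $\theta<1/2$ restriction in Theorem \ref{thVB-ErrWC}(b) manifesting itself). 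For parts (c) and (d), I would write
\begin{align*}
u^\sharp(t) - u^\sharp_h(t) = -\Bc^*\Pc z^\sharp(t) + \Bc_h^*\Pc_h z^\sharp_h(t) = -\left(\Bc^*\Pc - \Bc_h^*\Pc_h\pi_h\right)z^\sharp(t) - \Bc_h^*\Pc_h\left(\pi_h z^\sharp(t) - z^\sharp_h(t)\right),
\end{align*}
and bound the first term by Theorem \ref{thVB:main-conv-P}(c) times $\|z^\sharp(t)\| \le Me^{-\gamma t}\|z_0\|$ and the second term using $\|\Bc_h^*\Pc_h\|_{\Lc(V_h)}\le C$ (uniform in $h$, from Theorem \ref{thVB:dro}(a) and Property $(\mathcal{A}_1)$(a)) together with the already-established estimates (a) and (b) applied to $\pi_h z^\sharp(t) - z^\sharp_h(t)$ (here one also needs $\|(\pi_h - I)z^\sharp(t)\|$, controlled via Lemma \ref{lem:projerr} and the regularity of $z^\sharp(t)$ for $t>0$ coming from analyticity). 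Collecting the pieces gives (c) pointwise and (d) after the same split-and-integrate argument as in (b).

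The main obstacle I anticipate is the bookkeeping in the Duhamel comparison between the two closed-loop semigroups: one must handle a composition of three different semigroups ($e^{t\Ac_\omega}$, $e^{t\Ac_{\omega_h}}$, and the feedback perturbations of each) while keeping all constants uniform in $h$ and tracking both the spatial rate $h^{2(1-\epsilon)}$ and the temporal weight, being careful that the integrable singularity $1/t$ near $t=0$ does not compound when iterated. The restriction $\wt\gamma<\min\{\gamma,\omega_P\}$ is needed precisely so that the exponential gain always dominates any polynomially-growing prefactor produced by the convolutions, and the $\epsilon$-loss in the rate is inherited directly from Theorem \ref{thVB:main-conv-P}. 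Since all of this is the nonlinear-free, purely linear-in-$z$ situation and Property $(\mathcal{A}_1)$ has been checked, no genuinely new analytic difficulty arises beyond what \cite[Sections 6--7]{WKRPCE} already resolves; accordingly I would state the theorem with a short proof that sets up the decomposition above and then refers to \cite{WKRPCE} for the remaining uniform estimates.
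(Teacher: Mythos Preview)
Your proposal is correct and matches the paper's approach: the paper does not give a self-contained proof of this theorem but simply notes that, since Property~$(\mathcal{A}_1)$ has been verified, the analysis of \cite[Sections~5--7]{WKRPCE} carries over verbatim, and omits the details. Your outline of the Duhamel comparison, the use of Theorem~\ref{thVB-ErrWC} for the principal part, Theorem~\ref{thVB:main-conv-P} for the feedback perturbation, and the split-and-integrate argument for the $L^2$-in-time bounds is exactly the content of those sections in \cite{WKRPCE}, so you are in fact spelling out more than the paper does.
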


%
%
%
%

\subsection{Implementation} \label{subVB-NI-Lin}

For any $h>0,$ let $n_h$ be the dimension of $V_h$ which is equal to the number of interior nodes for the triangulation $\mathcal{T}_h$ due to homogeneous Dirichlet boundary condition.  The semi-discrete formulation corresponding to \eqref{eqVB-approxShiftSys} seeks $\wt z_h(\cdot) \in V_h$ such that
{\small
	\begin{equation}\label{eqVB-semidisNI}
		\begin{aligned}
			& \langle \widetilde{z_h}'(t), \phi_h\rangle  = -\eta \langle\nabla \widetilde{z_h}(t),\nabla\phi_h\rangle -\langle y_s \textbf{v}\cdot \nabla \wt z_h, \phi_h\rangle -\langle \textbf{v}\cdot\nabla y_s \wt z_h, \phi_h\rangle  +(\omega-\nu_0)\langle \widetilde{z_h}(t),\phi_h\rangle+\langle \widetilde{u}_h, \phi_h\rangle, \\
			& \langle \wt z_h(0), \phi_h\rangle = \langle  z_0, \phi_h\rangle,
		\end{aligned}
	\end{equation}
}for all  $\phi_h\in V_h.$
Let  $\displaystyle \widetilde{z_h}(t):=\sum_{i=1}^{n_h}z_i(t)\phi_h^i$, where $\{\phi_h^i\}_{i=1}^{n_h}$ is
the canonical basis functions of $V_h.$ A substitution of this to \eqref{eqVB-semidisNI} leads to the system 
\begin{equation} \label{eqnVB:discrete}
	{\Mrm}_h {\Zrm}_h'(t)=({\Arm}_h+\omega {\Mrm}_h){\Zrm}_h(t)+\Brm_h\mathfrak{u}_h \text{ for all } t>0, \quad {\Mrm}_h{\Zrm}_h(0)=(\langle  z_0, \phi_h\rangle),
\end{equation}
where ${\Arm}_h=-\eta{\Krm}_h - {\Arm}^1_h - {\Arm}^2_h-\nu_0{\Mrm}_h$ with  ${\Krm}_h=  (\langle \nabla \phi_h^i,\nabla\phi_h^j\rangle)_{1\le i,j\le n_h},$  ${\Arm}^1_h=(\langle y_s \textbf{v}\cdot \nabla \phi_h^i, \phi_h^j\rangle)_{1\le i,j\le n_h},$  $ {\Arm}^2_h=(\langle \textbf{v}\cdot \nabla y_s  \phi_h^i, \phi_h^j\rangle)_{1\le i,j\le n_h} ,$  ${\Mrm}_{h}=(\langle \phi_h^i,\phi_h^j\rangle)_{1\le i,j\le n_h},$ ${\Zrm}_h:=(z_1,...,z_{n_h})^T$ and $\mathfrak{u}_h=(u_1,...,u_{n_h})^T\in\mathbb{R}^{n_h\times 1}$ is the control. Since, the Gram matrix ${\Mrm}_{h}$ is invertible, Picard's existence theorem imply that for each $h>0,$ \eqref{eqnVB:discrete} has a unique global solution.

\medskip
\noindent To implement an example, we follow the same procedure, and use a time solver as in \cite[Sections 8.1 and 8.2]{WKRPCE}. In fact, we obtain the computed stabilizing control $\mathfrak{u}_h(t)=-{\Mrm}_{h}^{-1} \Brm_h^T {\Mrm}_{h}^{-1} \Prm_h {\Zrm}_h(t),$ where $\Prm_h$ is the unique solution of

\begin{equation} \label{eq:ARE-mat}
	\Prm_h ({\Mrm}_{h}^{-1}  ({\Arm}_h+\omega {\Mrm}_h) ) + ({\Arm}_h+\omega {\Mrm}_h)^T  {\Mrm}_{h}^{-1}\Prm_h -  \Prm_h {\Mrm}_{h}^{-1} \Brm_h {\Mrm}_{h}^{-1} \Brm_h^T {\Mrm}_{h}^{-1} \Prm_h + {\Mrm}_{h}= 0, \quad \Prm_h^T=\Prm_h\ge 0,
\end{equation}
(see (8.5) in \cite{WKRPCE}) and ${\Zrm}_h(t)$ is solution of the closed loop system 
\begin{align*}
	{\Mrm}_h {\Zrm}_h'(t)= \left(  ({\Arm}_h+\omega {\Mrm}_h) - \Brm_h  \mathbb{S}_h \Prm_h \right) {\Zrm}_h(t)\text{ for all } t>0, \quad {\Mrm}_h{\Zrm}_h(0)=(\langle  z_0, \phi_h\rangle).
\end{align*}
Here,
\begin{align} \label{eqn-mathbb-Sh}
	{\Mrm}_{h} \mathbb{S}_h =  \mathbb{B}_h^T, \text{ and } \mathbb{B}_h \text{ solves } {\Mrm}_{h} \mathbb{B}_h = \Brm_h.
\end{align}

\noindent Let $\wt z_{h_i}$ and $\wt z_{h_{i+1}}$ be the computed solutions at $i-$th and $(i+1)-$th time levels, and let $\wt u_{h_i}$ and $\wt u_{h_{i+1}}$ be the computed stabilizing controls at $i-$th and $(i+1)-$th time levels, respectively, for $i=1,2,\ldots .$ The computed errors in different norms are 
\begin{align*}
	\text{err}_{L^2}(\wt z_{h_i})=\| \wt z_{h_{i+1}} - \wt z_{h_i}\|, \quad \text{err}_{H^1}(\wt z_{h_i})=\| \wt z_{h_{i+1}} - \wt z_{h_i}\|_{H^1(\Omega)} , \text{ and } \text{err}_{L^2}(\wt u_{h_i})=\| \wt u_{h_{i+1}} - \wt u_{h_i}\|.  
\end{align*}
The computational order of convergence $\alpha_{i+1}$ at $i-$th time level is computed using 
\begin{align*}
	\alpha_{i+1} \approx \log(e_{i+1}/e_i)/\log(h_{i+1}/h_i) \text{ for } i=1,2,3,\ldots, 
\end{align*}
where $e_i$ and $h_i$ are error and the discretization parameter at the $i-$th level, respectively.

\medskip
\noindent \textbf{Example 1.} Choose $\Omega=\mathcal{O}=(0,1)\times (0,1),$ $\eta=1,\, \nu_0=0,\,\omega=24,\, \textbf{v}=(1,1),\, y_s(x_1,x_2)=x_1x_2(1-x_1) (1-x_2)$ in \eqref{eqVB-semidisNI}. Also, we choose $y_0(x_1,x_2)=\sin(\pi x_1)\sin(\pi x_2)$ and hence the initial condition is $ z_0(x_1,x_2)= y_0(x_1,x_2)-y_s(x_1,x_2) = \sin(\pi x_1)\sin(\pi x_2) - x_1x_2(1-x_1)(1-x_2) .$

\noindent Initially, we calculate the solution of the system \eqref{eqnVB:discrete} with $\mathfrak{u}_h=0$, and present the corresponding data in Figure \ref{figVB:Ex-1 sol WC}. Figure \ref{figVB:Ex-1 sol WC}(a) reveals the instability of the solution without control by showing a continuous growth in energy over time $t$. Figure \ref{figVB:Ex-1 sol WC}(b) represents the evolution of the solution on log scale. To further analyze the results, we represent the computed errors on a log-log scale against $h$ in Figure \ref{figVB:Ex-1 sol WC}(c).

\begin{figure}[ht!]
	\includegraphics[width=.33\textwidth]{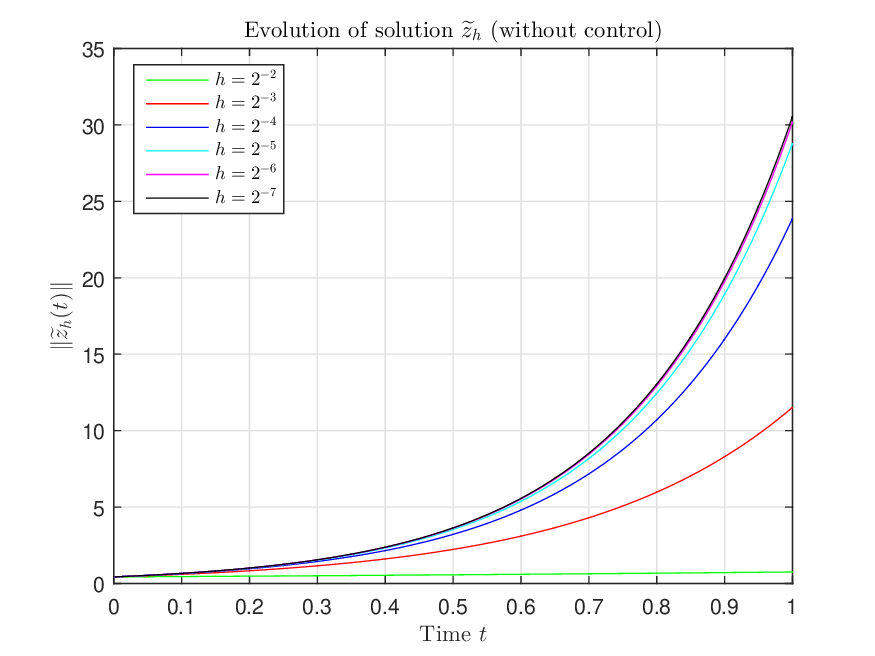}\hfill
	\includegraphics[width=.33\textwidth]{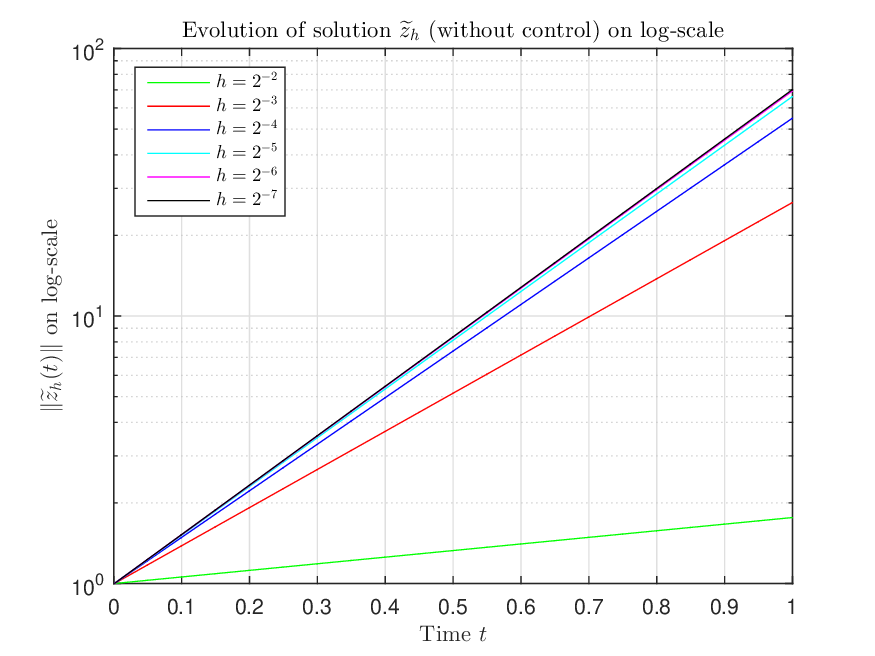}\hfill
	\includegraphics[width=.33\textwidth]{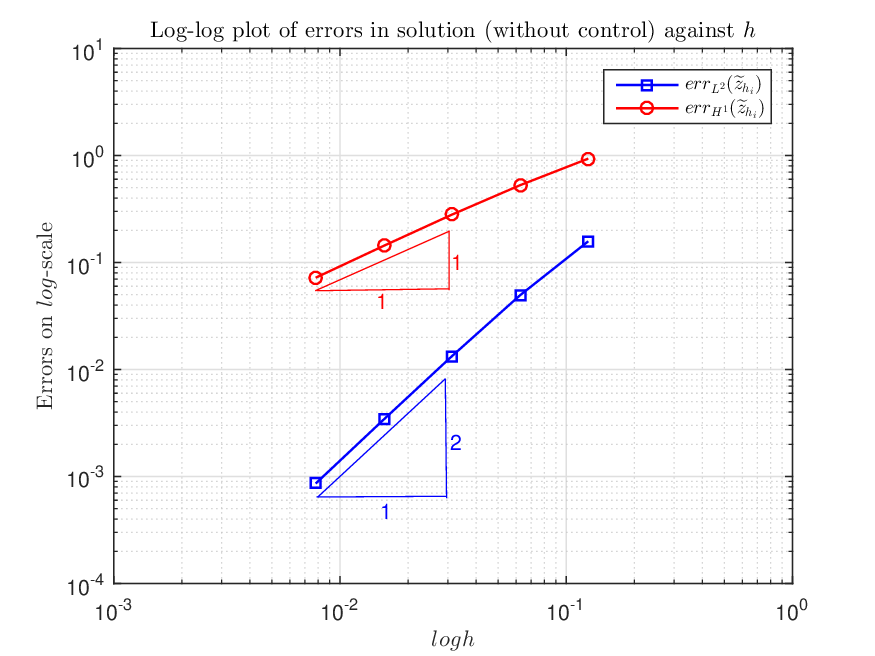}
	\caption{(Example 1.) (a) Evolution of computed solution $y_h$ in $L^2$-norm with time $t,$ (b) on $\log$-scale, and (c) $\log-\log$ plot of errors with discretization parameter $h$.} \label{figVB:Ex-1 sol WC}   
\end{figure}

\medskip
\noindent We compute the feedback control to obtain a stabilized solution and the corresponding data are presented in Figure \ref{figVB:Ex-1 sol Stab}. The evolution of the stabilized solution can be observed in Figure \ref{figVB:Ex-1 sol Stab}(a) as the energy decreases with time. Evolution of stabilized solution on log scale, stabilizing control and errors on log-log scale are plotted in Figure \ref{figVB:Ex-1 sol Stab}(b)-(d). Additionally, the errors and orders of convergence for both the stabilized solution and stabilizing control are summarized in Table \ref{tabVB-Ex1-EO_stab}.

\begin{figure}[ht!]
	\includegraphics[width=.24\textwidth]{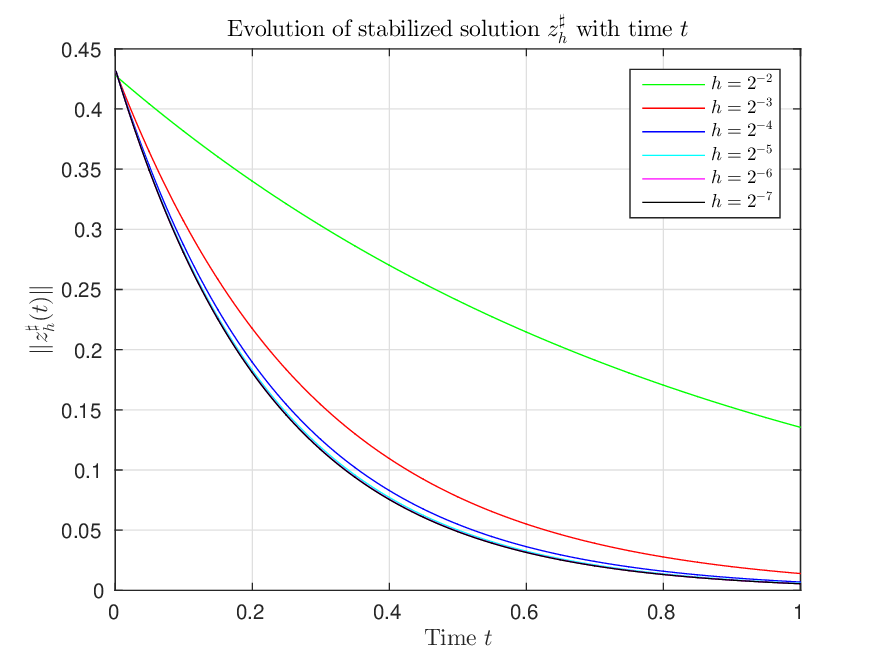}\hfill
	\includegraphics[width=.24\textwidth]{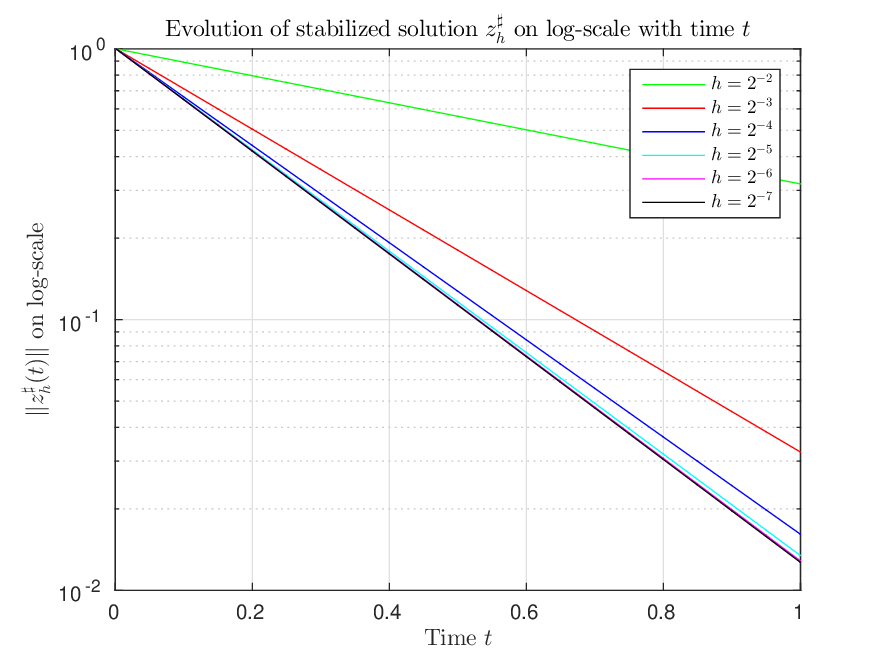}\hfill
	\includegraphics[width=.24\textwidth]{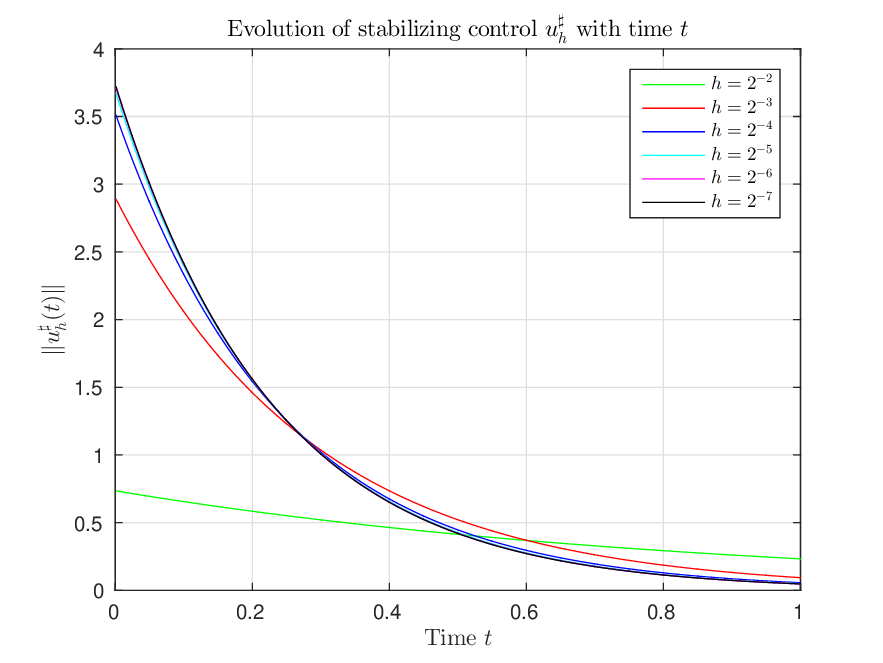}
	\hfill
	\includegraphics[width=.24\textwidth]{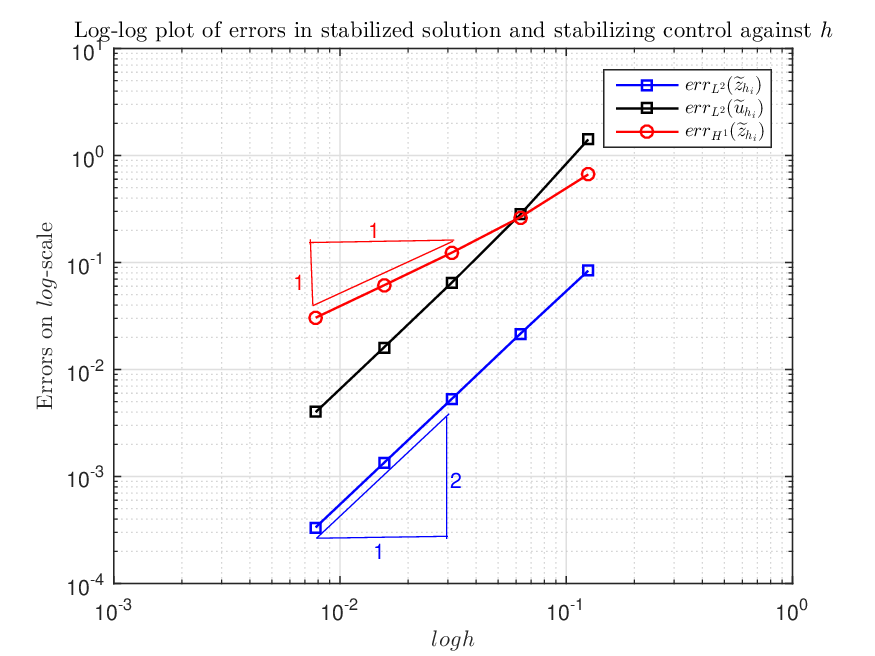}
	\caption{(Example 1.) (a) Evolution of \textbf{stabilized solution}, (b) on $\log$-scale, (c) evolution of stabilizing control and (d) error plot in $\log-\log$ scale. } \label{figVB:Ex-1 sol Stab}      
\end{figure}

\begin{table}[ht!]
	\centering
	\begin{tabular}{|c||c|c||c|c||c|c||}
		\hline
		$h$ & $\text{err}_{L^2}(\wt z_h)$ & Order & $\text{err}_{H^1}(\wt z_h)$ & Order & $\text{err}_{L^2}(\wt u_h)$ & Order\\
		\hline \hline
		$1/2^2$ & 8.368279e-02  & ---      & 6.648702e-01  & --- & 1.408594 & --- \\
		\hline
		$1/2^3$ & 2.145333e-02 &  1.963728    & 2.646716e-01  & 1.328869 & 2.798470e-01 &  2.331545 \\
		\hline
		$1/2^4$ & 5.328601e-03 &   2.009373   & 1.237391e-01  & 1.096901 & 6.506024e-02 &   2.104790 \\
		\hline
		$1/2^5$ & 1.337335e-03 &   1.994395  & 6.113389e-02 & 1.017257 & 1.603811e-02 &   2.020271    \\
		\hline
		$1/2^6$ & 3.351877e-04 &   1.996320  & 3.051795e-02 & 1.002313 & 4.000597e-03 &   2.003217  \\
		\hline
		
	\end{tabular}
	\caption{(Example 1.) Errors and convergence orders of \textbf{stabilized} solution and \textbf{stabilizing} control evaluated at $T=0.1$.} \label{tabVB-Ex1-EO_stab}
\end{table}

\section{Non-linear system}\label{secVB-NLS}

\noindent In this section, we study the stabilization of the non-linear system \eqref{eqVB-mainburger} around steady state $y_s$ with decay $-\omega<0,$ for any given $\omega>0$ by the control obtained in Theorem \ref{thVB-mainstab}. Here we show that \eqref{eqVB-linarnd-ys} is locally stabilizable with exponential decay $-\omega<0,$ for any $\omega>0.$ We establish a stability result for the discrete non-linear closed-loop system (see \eqref{eqVB-disClNlOpForm}). We implement an example to validate the theoretical results of this section, and compute errors and convergence orders.

\medskip
\noindent In Theorem \ref{thVB:dro}, we saw that the discrete linear system is uniformly exponentially stabilizable by feedback control obtained by solving a discrete Riccati equation. Also, the operator $\Ac_{\omega_h, \Pc_h}:=\Ac_{\omega_h}-\Bc_h\Bc_h^*\Pc_h$ generates uniformly analytic semigroup for all $0<h<h_0.$ Thus, from Theorem \ref{thVB:dro} and \cite[Proposition 2.9, Chapter I, Part II]{BDDM}, we have
\begin{align*}
	\Re(\lambda) \le -\omega_P \text{ for all } \lambda \in 	\sigma(\Ac_{\omega_h}-\Bc_h\Bc_h^*\Pc_h) \text{ for all } 0<h<h_0.
\end{align*}

\noindent For the stabilization of non-linear discrete system (see Theorem \ref{thVB-DisMainStabNL}), we assume the following:\\
\textbf{Assumption.}
\begin{itemize}
	\item[$(\mathcal{A}_2)$] $\quad \Re\left(\langle (\Ac_{\omega_h} -\Bc_h \Bc_h^* \Pc_h) \phi_h, \phi_h \rangle\right) \le -\omega_P \|\phi_h\|^2$ for all $\phi_h \in V_h.$
\end{itemize}

\noindent To establish error estimate for the non-linear stabilized system (see Theorem \ref{thVB-mainErrNL}), we assume the following:\\
 \textbf{Assumption.}
\begin{align} \label{eqVB-assumpt-omegaP-alpha}
	(\mathcal{A}_3) \qquad 	\Re\left(\left\langle (\Ac_{\omega_h}-\Bc_h\Bc_h^*\Pc_h )\phi_h, \phi_h \right\rangle\right)  \le -\omega_P \|\phi_h\|^2 -\alpha \|\nabla \phi_h \|^2 \text{ for all }\phi_h \in V_h
\end{align}
holds for some $\alpha>0,$ where $\omega_P$ is as in Theorem \ref{thVB:dro}. Note that Assumption $(\mathcal{A}_3)$ implies $(\mathcal{A}_3).$
\subsection{Stabilizability of non-linear system} \label{secVB-stabNL}
We start with a regularity result. 
Consider the closed loop system
\begin{equation} \label{eqVB-clslp-opninhm}
\begin{aligned}
\wt z'(t)=(\Acw-\Bc\Bc^* \Pc)\wt z(t) + g(\cdot,t) \text{ for all }\,\, t>0, \quad \wt z(0)=z_0, 
\end{aligned}
\end{equation}
where $g\in L^2(0,\infty; \Lt)$ is given. Since, $\{e^{t(\Acw-\Bc\Bc^*\Pc)}\}_{t\ge 0}$ is exponentially stable analytic semigroup on $\Lt$ (see Theorem \ref{thVB-mainstab}), we have the following result.
\begin{Lemma} \label{lemVB:nl-1}
For any $g\in L^2(0,\infty; \Lt)$ and $z_0\in \Lt$, \eqref{eqVB-clslp-opninhm} admits a unique solution $\wt z \in C([0,\infty);\Lt)\cap L^2(0,\infty;\Lt)$ satisfying
\begin{align*}
\|\wt z\|_{L^\infty(0,\infty;\Lt)} + \| \wt z\|_{L^2(0,\infty; \Lt)} \le C \left( \|z_0\| + \|g\|_{ L^2(0,\infty; \Lt)} \right),
\end{align*}
for some constant $C>0.$
\end{Lemma}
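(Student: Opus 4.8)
The plan is to write the solution via the variation of constants formula and then estimate each term using the exponential decay of the semigroup $\{e^{t\Ac_{\omega,\Pc}}\}_{t\ge 0}$ recorded in Theorem \ref{thVB-mainstab}(d), together with Young's inequality for convolutions. Since $(\Ac_{\omega,\Pc},D(\Ac))$ generates a $C_0$-semigroup on $\Lt$ and $g\in L^2(0,\infty;\Lt)\subset L^1_{loc}(0,\infty;\Lt)$, the (mild) solution of \eqref{eqVB-clslp-opninhm} is
\[
\wt z(t)=e^{t\Ac_{\omega,\Pc}}z_0+\int_0^t e^{(t-s)\Ac_{\omega,\Pc}}g(s)\,ds,\qquad t\ge 0,
\]
and uniqueness is immediate from linearity: the difference of two solutions solves the homogeneous problem with zero initial datum, hence vanishes by uniqueness for the abstract Cauchy problem.

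For the $L^\infty$ estimate, let $M,\gamma>0$ be such that $\|e^{t\Ac_{\omega,\Pc}}\|_{\Lc(\Lt)}\le Me^{-\gamma t}$ for all $t>0$ (Theorem \ref{thVB-mainstab}(d)). Then $\|e^{t\Ac_{\omega,\Pc}}z_0\|\le M\|z_0\|$, and by the Cauchy--Schwarz inequality
\[
\Big\|\int_0^t e^{(t-s)\Ac_{\omega,\Pc}}g(s)\,ds\Big\|\le M\Big(\int_0^t e^{-2\gamma(t-s)}\,ds\Big)^{1/2}\|g\|_{L^2(0,\infty;\Lt)}\le \frac{M}{\sqrt{2\gamma}}\,\|g\|_{L^2(0,\infty;\Lt)},
\]
so $\wt z\in L^\infty(0,\infty;\Lt)$ with $\|\wt z\|_{L^\infty(0,\infty;\Lt)}\le C(\|z_0\|+\|g\|_{L^2(0,\infty;\Lt)})$. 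Strong continuity of $\{e^{t\Ac_{\omega,\Pc}}\}_{t\ge 0}$ handles the first term, and an absolute-continuity/dominated-convergence argument applied to the convolution term gives $\wt z\in C([0,\infty);\Lt)$ in the standard way.

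For the $L^2$ estimate, set $k(t):=Me^{-\gamma t}$ on $[0,\infty)$, so $k\in L^1(0,\infty)$ with $\|k\|_{L^1(0,\infty)}=M/\gamma$, and $\psi(t):=\|g(t)\|\in L^2(0,\infty)$. From the representation formula, $\|\wt z(t)\|\le k(t)\|z_0\|+(k*\psi)(t)$ for a.e. $t$. Young's convolution inequality gives $\|k*\psi\|_{L^2(0,\infty)}\le (M/\gamma)\|g\|_{L^2(0,\infty;\Lt)}$, while $\|k\|_{L^2(0,\infty)}=M/\sqrt{2\gamma}$; adding these yields $\wt z\in L^2(0,\infty;\Lt)$ with $\|\wt z\|_{L^2(0,\infty;\Lt)}\le C(\|z_0\|+\|g\|_{L^2(0,\infty;\Lt)})$. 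Combining with the previous step gives the asserted estimate.

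The argument is essentially routine, and there is no serious obstacle; the only care required is the bookkeeping in the convolution/Cauchy--Schwarz estimates and the verification of continuity of the Duhamel term. It is worth noting that analyticity of $\{e^{t\Ac_{\omega,\Pc}}\}_{t\ge 0}$ is not needed here: exponential stability of the $C_0$-semigroup already suffices for the stated conclusion. (If additional time regularity were wanted, maximal $L^2$-regularity for the analytic generator $\Ac_{\omega,\Pc}$ would upgrade the solution to $H^1(0,\infty;\Lt)\cap L^2(0,\infty;D(\Ac))$, but this is not required for Lemma \ref{lemVB:nl-1}.)
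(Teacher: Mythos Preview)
Your proof is correct and follows essentially the same approach as the paper: represent the mild solution via the Duhamel formula, use the exponential decay of $\{e^{t\Ac_{\omega,\Pc}}\}_{t\ge 0}$ from Theorem \ref{thVB-mainstab}(d), and then apply Cauchy--Schwarz/Young's convolution inequality to obtain the $L^\infty$ and $L^2$ bounds. Your write-up is simply more detailed than the paper's version, which is quite terse and defers the $C([0,\infty);\Lt)$ regularity to a reference.
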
 

\begin{proof}
Since $\Acw-\Bc\Bc^* \Pc$ is exponentially stable in $\Lt,$ there exists $C>0,$ $\gamma>0$ such that $\|e^{t (\Acw-\Bc\Bc^* \Pc)}z_0\|\le C e^{-\gamma t}\|z_0\|.$ As $g\in L^2(0,\infty; \Lt),$ \cite[Proposition 3.1, Chapter 1, Part II]{BDDM} yields a unique solution $\wt z \in C([0,\infty); \Lt)$ of \eqref{eqVB-clslp-opninhm} satisfying 
\begin{align*}
\wt z(t)= e^{t(\Acw-\Bc\Bc^* \Pc)} z_0+\int_0^t e^{(t-s) (\Acw-\Bc\Bc^* \Pc)} g(\cdot, s) ds \text{ for all } t>0.
\end{align*}
Now, $\|\wt z(t)\| \le C e^{-\gamma t}\|z_0\| + \int_0^t e^{-\gamma (t-s)}\|g(s)\| ds.$ Utilize Young's inequality \eqref{eqPR-YoungIneq} to conclude the lemma.
\end{proof}

\noindent Now, we state the following regularity result for the non-homogeneous system
\begin{equation} \label{eqVB-clslpnonhm}
\begin{aligned}
& \wt z_t(\cdot, t)-\eta \Delta \wt z(\cdot,t) +y_s\textbf{v}\cdot\nabla \wt z(\cdot,t)+\textbf{v}\cdot\nabla y_s \wt z(\cdot,t)\\
& \hspace{5cm}+(\nu_0-\omega )\wt z(\cdot, t)=-\Bc \Bc^*\Pc \wt z(\cdot,t)+g(\cdot,t) \text{ in } Q,\\
& \wt z(x,t)=0 \text{ on }\Sigma, \quad \wt z(x,0)=z_0(x) \text{ in }\Omega.
\end{aligned}
\end{equation}

\begin{Proposition}\label{ppsVB-nonhm-2}
Let $\omega>0$ be any given real number and $g\in L^2(0,\infty;\Lt).$ Then for any $z_0\in \Hio,$ the closed loop system \eqref{eqVB-clslpnonhm} has a unique solution $\wt z \in L^2(0,\infty; H^2(\Omega)\cap \Hio) \cap H^1(0,\infty;\Lt) \cap C_b([0,\infty);\Hio)$ satisfying
\begin{align*}
\|\wt z\|_{L^2(0,\infty; H^2(\Omega))}+ \|\wt z\|_{H^1(0,\infty;\Lt)}+\| \wt z\|_{L^\infty (0,\infty; \Hio) } \le C_1 \left( \|z_0\|_{\Hio} +\|g\|_{L^2(0,\infty;\Lt)}\right),
\end{align*}
for some positive constant $C_1.$
\end{Proposition}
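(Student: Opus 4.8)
The plan is to treat \eqref{eqVB-clslpnonhm} as a linear non-autonomous perturbation of the analytic, exponentially stable semigroup generated by $\Acw-\Bc\Bc^*\Pc$ (Theorem \ref{thVB-mainstab}), and to upgrade the $\Lt$-valued regularity from Lemma \ref{lemVB:nl-1} to the $H^1_0$-valued, parabolic-smoothing regularity asserted. First I would recall that $\Acw-\Bc\Bc^*\Pc$ is the generator of an analytic semigroup on $\Lt$ which is exponentially stable, and observe that, since $\Bc\Bc^*\Pc\in\Lc(\Lt)$, the operator $\Acw-\Bc\Bc^*\Pc$ has the same domain $D(\Ac)=H^2(\Omega)\cap\Hio$ and the same graph norm as $\Ac$ up to equivalence. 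Consequently the interpolation space $D((-(\Acw-\Bc\Bc^*\Pc))^{1/2})$ coincides with $\Hio$, with equivalent norms. This is the structural fact that makes the claimed space $L^2(0,\infty;H^2\cap\Hio)\cap H^1(0,\infty;\Lt)\cap C_b([0,\infty);\Hio)$ the natural maximal-regularity space for an $L^2(0,\infty;\Lt)$ right-hand side together with an $\Hio$ initial datum.

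The main step is a maximal $L^2$-regularity estimate. I would invoke the standard parabolic maximal-regularity theory for generators of analytic semigroups on Hilbert spaces (e.g. the de Simon/Dore--Venni type result, or the version in \cite{BDDM}): since $\Acw-\Bc\Bc^*\Pc$ generates a bounded analytic semigroup and $0\in\rho(\Acw-\Bc\Bc^*\Pc)$ (it is exponentially stable), for every $g\in L^2(0,\infty;\Lt)$ and $z_0\in D((-(\Acw-\Bc\Bc^*\Pc))^{1/2})=\Hio$ the mild solution
\[
\wt z(t)=e^{t(\Acw-\Bc\Bc^*\Pc)}z_0+\int_0^t e^{(t-s)(\Acw-\Bc\Bc^*\Pc)}g(\cdot,s)\,ds
\]
lies in $L^2(0,\infty;D(\Acw-\Bc\Bc^*\Pc))\cap H^1(0,\infty;\Lt)$ with
\[
\|\wt z\|_{L^2(0,\infty;H^2(\Omega))}+\|\wt z'\|_{L^2(0,\infty;\Lt)}\le C\big(\|z_0\|_{\Hio}+\|g\|_{L^2(0,\infty;\Lt)}\big).
\]
Here exponential stability is what lets the time integrals run to $+\infty$ with a finite constant; the homogeneous part $e^{t(\Acw-\Bc\Bc^*\Pc)}z_0$ is handled by the analytic estimate $\|(-(\Acw-\Bc\Bc^*\Pc))e^{t(\Acw-\Bc\Bc^*\Pc)}z_0\|\le Ct^{-1/2}e^{-\gamma t}\|z_0\|_{\Hio}$ which is square-integrable on $(0,\infty)$. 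The $C_b([0,\infty);\Hio)$ bound then follows from the embedding $L^2(0,\infty;H^2\cap\Hio)\cap H^1(0,\infty;\Lt)\hookrightarrow C_b([0,\infty);\Hio)$ (a Lions--Magenes trace/interpolation embedding), together with the fact that this embedding constant is uniform on $[0,\infty)$ because of the exponential decay built into the norms. Uniqueness is immediate from Lemma \ref{lemVB:nl-1} (the solution in the smaller class is a fortiori the unique $C([0,\infty);\Lt)$ mild solution).

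An alternative, more self-contained route — which I would actually prefer to present since it parallels the energy arguments used later in the non-linear section — is to derive the estimate directly by energy methods: test \eqref{eqVB-clslpnonhm} with $\wt z$ to get the $L^\infty(\Lt)\cap L^2(\Hio)$ bound using the coercivity \eqref{eqVB-coerc} and boundedness of $\Bc\Bc^*\Pc$, then test with $-\Delta\wt z$ (equivalently, differentiate in the $\Hio$ inner product, or test with $\wt z'$) to obtain the $L^\infty(\Hio)\cap L^2(H^2)\cap H^1(\Lt)$ bound, absorbing the first-order terms $y_s\textbf{v}\cdot\nabla\wt z$ and $\textbf{v}\cdot\nabla y_s\,\wt z$ via Agmon's inequality \eqref{eqVB-AgmonIE}, the generalized Hölder inequality (Proposition \ref{ppsPR-GenHoldIneq}), the Sobolev embedding \eqref{eqPR:SobEmb}, and Young's inequality \eqref{eqPR-YoungIneq}, exactly as in \eqref{eqVB-BddBil}--\eqref{eqVB-coerc}; elliptic regularity (Remark \ref{remVB:reg mu=-nuh}) then upgrades the pointwise-in-time $H^1$ control to $H^2$ control. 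The decay factor $e^{-\omega t}$ hidden in the shift (or the exponential stability of the closed-loop generator) is what keeps all time integrals finite on $(0,\infty)$.

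The main obstacle is the global-in-time control: unlike a finite-time interval, the estimates must come with constants independent of the horizon, so one cannot merely cite local parabolic regularity and Grönwall. Resolving this requires genuinely using the exponential stability of $\{e^{t(\Acw-\Bc\Bc^*\Pc)}\}_{t\ge0}$ (Theorem \ref{thVB-mainstab}(d)) — either through the convolution estimate with an exponentially decaying kernel as in Lemma \ref{lemVB:nl-1}, or by ensuring the energy identities produce a strictly negative coefficient on the zeroth-order term (which is the content of Assumptions $(\mathcal{A}_2)$--$(\mathcal{A}_3)$ at the discrete level, and of \eqref{eqVB-coerc} together with the spectral gap at the continuous level). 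A secondary technical point is verifying the trace embedding into $C_b([0,\infty);\Hio)$ with a uniform constant; this follows from writing $\|\wt z(t)\|_{\Hio}^2$ as an integral of $\tfrac{d}{dt}\|\wt z\|_{\Hio}^2=2\langle\wt z',\wt z\rangle_{\Hio}$ over $(t,\infty)$ (legitimate since $\wt z(t)\to0$) and applying Cauchy--Schwarz to the two square-integrable factors.
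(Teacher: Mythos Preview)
Your proposal is correct and your first route is essentially the paper's own argument: the paper simply cites \cite[Propositions 3.3 and 3.13, Chapter 1, Part II]{BDDM}, which is exactly the abstract maximal-regularity result for analytic, exponentially stable semigroups that you invoke, applied with the identification $D((-(\Acw-\Bc\Bc^*\Pc))^{1/2})=\Hio$. Your elaboration of why the infinite-horizon constant is finite and why the trace embedding into $C_b([0,\infty);\Hio)$ holds is more detailed than what the paper records, but the underlying mechanism is the same.
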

\noindent The proof of the proposition follows from \cite[Propositions 3.3 and 3.13, Chapter 1, Part II]{BDDM}.

\medskip
\noindent Next, we study the stability of the following non-linear equation:
\begin{equation} \label{eqVB-closedloopshifted}
	\begin{aligned}
	& \wt z_t(\cdot, t)-\eta \Delta \wt z(\cdot,t) +y_s\textbf{v}\cdot\nabla \wt z(\cdot,t)+\textbf{v}\cdot\nabla y_s \wt z(\cdot,t)+(\nu_0-\omega )\wt z(\cdot, t) \\
	& \hspace{5.5cm}+e^{-\omega t}\wt z(\cdot,t)\textbf{v}\cdot \nabla \wt z(\cdot,t)=-\Bc \Bc^*\Pc \wt z(\cdot,t) \text{ in } Q,\\
	& \wt z(x,t)=0 \text{ on }\Sigma, \quad \wt z(x,0)=z_0(x) \text{ in }\Omega,
	\end{aligned}
\end{equation}
where $\Pc\in \Lc(\Lt)$ is solution of \eqref{eqVB-Riccati}. We prove the stability of the above equation using the Banach fixed point theorem. Define
\begin{equation}\label{eqVB:space1}
	\begin{array}{l}
		D=\Big\lbrace \widetilde{z}\in L^2(0,\infty; (H^1_0(\Omega)\cap H^2(\Omega))\cap C([0,\infty); H^1_0(\Omega))\cap H^1(0,\infty;L^2(\Omega)), \text{ equipped} \\[2.mm]
		\text{with the norm } 
		\|\widetilde{z}\|^2_D= \|\widetilde{z}\|^2_{L^2(0,\infty; H^2(\Omega))}+\|\widetilde{z}\|^2_{L^\infty(0,\infty; H^1_0(\Omega))}
		+\|\widetilde{z}\|^2_{H^1(0,\infty;L^2(\Omega))} \Big \rbrace.
	\end{array}
\end{equation}
and for any $\rho>0$, set
\begin{equation}\label{eqVB:space2}
	\begin{array}{l}
		D_\rho=\{\widetilde{z}\in D\mid \|\widetilde{z}\|_D \le \rho\}.
	\end{array}
\end{equation}

\noindent To proceed further, we need following Lemma and the proof can be established mimicking the proof of \cite[Lemma 7.1]{WKR}. 
\begin{Lemma} \label{lemVB:liptzest}
	Let $\omega>0$ be any given real number and let 
	\begin{equation} \label{eqVB:expNonhomTerm-g}
		g(\psi)(x,t)=-e^{-\omega t} \psi(x,t) \textbf{v}\cdot \nabla \psi(x,t) \text{ for all }(x,t)\in Q.
	\end{equation}
Then for any $\psi^1, \psi^2\in D,$ we have
\begin{equation}
	\begin{aligned}
	&	\|g(\psi^1)\|_{L^2(0,\infty;\Lt)} \le C_2 \|\psi^1\|_D^2, \\
	&  \| g(\psi^1) - g(\psi^2)\|_{L^2(0,\infty;\Lt)} \le C_2 (\|\psi^1\|_D + \|\psi^2\|_D)\|\psi^1 - \psi^2\|_D,
	\end{aligned}
\end{equation}
for some positive $C_2.$
\end{Lemma}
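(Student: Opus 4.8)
The plan is to estimate the quadratic nonlinearity $g(\psi)(x,t)=-e^{-\omega t}\psi\,\mathbf v\cdot\nabla\psi$ in the norm of $L^2(0,\infty;\Lt)$ by exploiting the regularity built into the norm $\|\cdot\|_D$ defined in \eqref{eqVB:space1}: control of $\psi$ in $L^\infty(0,\infty;\Hio)$ together with control in $L^2(0,\infty;H^2(\Omega))$. First I would bound $e^{-\omega t}\le 1$ pointwise so it can be dropped. For the first estimate, for fixed $t$ write $\|\psi\,\mathbf v\cdot\nabla\psi\|_{\Lt}\le |\mathbf v|\,\|\psi\|_{L^\infty(\Omega)}\|\nabla\psi\|_{\Lt}$, then apply Agmon's inequality (Lemma \ref{lemVB:AgmonIE}) to get $\|\psi\|_{L^\infty(\Omega)}\le C_a\|\psi\|_{H^1(\Omega)}^{1/2}\|\psi\|_{H^2(\Omega)}^{1/2}$. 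Hence, pointwise in $t$,
\begin{align*}
\|g(\psi)(\cdot,t)\|_{\Lt}\le C\,\|\psi(t)\|_{H^1(\Omega)}^{3/2}\,\|\psi(t)\|_{H^2(\Omega)}^{1/2}.
\end{align*}
Integrating in $t$ and using Hölder with exponents $4$ and $4/3$,
\begin{align*}
\|g(\psi)\|_{L^2(0,\infty;\Lt)}^2\le C\,\|\psi\|_{L^\infty(0,\infty;\Hio)}^{3}\int_0^\infty\|\psi(t)\|_{H^2(\Omega)}\,dt
\end{align*}
is not quite right as stated; instead I would write $\|g(\psi)(\cdot,t)\|_{\Lt}^2\le C\|\psi(t)\|_{H^1}^3\|\psi(t)\|_{H^2}$ and bound three factors of $\|\psi(t)\|_{H^1}$ by $\|\psi\|_{L^\infty(0,\infty;\Hio)}$, leaving $\int_0^\infty\|\psi(t)\|_{H^1}\|\psi(t)\|_{H^2}\,dt\le\|\psi\|_{L^2(0,\infty;\Hio)}\|\psi\|_{L^2(0,\infty;H^2)}$ by Cauchy--Schwarz. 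Each of these is dominated by $\|\psi\|_D$, so $\|g(\psi)\|_{L^2(0,\infty;\Lt)}^2\le C\|\psi\|_D^4$, giving the first inequality with $C_2=C^{1/2}$.

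For the Lipschitz-type estimate, I would write the standard bilinear splitting
\begin{align*}
g(\psi^1)-g(\psi^2)=-e^{-\omega t}\big[(\psi^1-\psi^2)\,\mathbf v\cdot\nabla\psi^1+\psi^2\,\mathbf v\cdot\nabla(\psi^1-\psi^2)\big],
\end{align*}
and estimate each of the two terms exactly as above: for the first, $\|(\psi^1-\psi^2)\mathbf v\cdot\nabla\psi^1\|_{\Lt}\le|\mathbf v|\,\|\psi^1-\psi^2\|_{L^\infty(\Omega)}\|\nabla\psi^1\|_{\Lt}$ and symmetrically using Agmon on the difference; for the second, put the $L^\infty$ on $\psi^2$ and the gradient on the difference. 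After applying Agmon and integrating in $t$ with the same Hölder bookkeeping, each term is bounded by $C\|\psi^i\|_D\|\psi^1-\psi^2\|_D$, and summing yields $\|g(\psi^1)-g(\psi^2)\|_{L^2(0,\infty;\Lt)}\le C_2(\|\psi^1\|_D+\|\psi^2\|_D)\|\psi^1-\psi^2\|_D$.

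The only mild obstacle is the bookkeeping of the time-integrability exponents: one must be careful to distribute the four half-powers of Sobolev norms coming from two applications of Agmon (each giving $H^1$ to the power $1/2$ and $H^2$ to the power $1/2$) so that exactly two $H^2$-half-powers remain under the time integral — these combine to $\|\psi\|_{L^2(0,\infty;H^2)}$ or a product $\|\psi^1\|_{L^2(0,\infty;H^2)}^{1/2}\|\psi^2\|_{L^2(0,\infty;H^2)}^{1/2}$ handled by Cauchy--Schwarz — while the remaining $H^1$-powers are absorbed into the $L^\infty$-in-time part of $\|\cdot\|_D$. Everything else is a direct computation, and the argument is essentially identical to the proof of \cite[Lemma 7.1]{WKR}, so I would simply indicate that reference and carry out the displayed estimates. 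For $d=1$ one can alternatively invoke Lemma \ref{lemPR:SobEmb}(c) in place of Agmon, but Agmon works uniformly for $d\in\{1,2,3\}$, which is the range of interest here.
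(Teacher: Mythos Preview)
Your approach is correct in outline and reaches the right conclusion, but two points are worth flagging.

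First, a bookkeeping slip: from $\|g(\psi)(\cdot,t)\|^2\le C\|\psi(t)\|_{H^1}^3\|\psi(t)\|_{H^2}$ you say to ``bound three factors of $\|\psi(t)\|_{H^1}$'' by the sup and be left with $\int_0^\infty\|\psi(t)\|_{H^1}\|\psi(t)\|_{H^2}\,dt$. That accounting is inconsistent; you mean to pull out \emph{two} factors, leaving one $H^1$-factor under the integral to pair with the $H^2$-factor via Cauchy--Schwarz in $t$ (and then $\|\psi\|_{L^2(0,\infty;H^1)}\le\|\psi\|_{L^2(0,\infty;H^2)}\le\|\psi\|_D$). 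With that correction your argument closes.

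Second, your route via Agmon's inequality differs from the one the paper has in mind. The sentence following the lemma states that $C_2$ depends on $\mathbf v$, $\Omega$, and the embedding constant $s_0$ for $H^1(\Omega)\hookrightarrow L^4(\Omega)$; this signals the intended estimate is the $L^4\times L^4$ H\"older split
\[
\|\psi\,\mathbf v\cdot\nabla\psi\|_{\Lt}\le |\mathbf v|\,\|\psi\|_{L^4(\Omega)}\|\nabla\psi\|_{L^4(\Omega)}\le s_0^2|\mathbf v|\,\|\psi(t)\|_{H^1(\Omega)}\|\psi(t)\|_{H^2(\Omega)},
\]
after which $\int_0^\infty\|\psi\|_{H^1}^2\|\psi\|_{H^2}^2\,dt\le\|\psi\|_{L^\infty(0,\infty;\Hio)}^2\|\psi\|_{L^2(0,\infty;H^2)}^2\le\|\psi\|_D^4$ with no further juggling. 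The Lipschitz estimate follows by the same bilinear splitting you wrote, with the $L^4$ route giving each piece directly as $C\|\psi^i\|_D\|\psi^1-\psi^2\|_D$. Your Agmon approach works and is dimension-uniform as you note, but the $L^4$ route is shorter and avoids the half-power bookkeeping that tripped you up.
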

\noindent The constant $C_2$ in the above lemma depends on  $\textbf{v},\Omega$ and embedding constant $s_0$ for $H^1(\Omega)\hookrightarrow L^4(\Omega)$ as stated in Lemma \ref{lemPR:SobEmb}. 

\begin{Proposition} \label{ppsVB-stabNL-H10}
	For any given real number $\omega>0$, there exist $\rho_0>0$ and $M>0$ such that for all $\rho\in (0,\rho_0]$ and for all $z_0\in \Hio$ with $\|z_0\|_{\Hio}\le M \rho,$ the closed loop system \eqref{eqVB-closedloopshifted} admits a unique solution $\wt z\in D_\rho$ satisfying 
	\begin{equation} \label{eqVB-LinfH10est}
		\|\wt z(\cdot,t)\|_{\Hio} \le M_1 \|z_0\|_{\Hio} \text{ for all }t>0,
	\end{equation}
for some positive constant $M_1$ independent of $z_0$ and $t$. 
\end{Proposition}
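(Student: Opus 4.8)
The plan is to apply the Banach fixed-point theorem in the closed ball $D_\rho$ equipped with the norm $\|\cdot\|_D$. First I would define the solution map $\mathcal{N}:D_\rho\to D$ as follows: given $\psi\in D_\rho$, let $\mathcal{N}(\psi)=\wt z$ be the unique solution of the \emph{linear} non-homogeneous closed-loop system \eqref{eqVB-clslpnonhm} with right-hand side $g=g(\psi)$ as in \eqref{eqVB:expNonhomTerm-g}, i.e.
\begin{equation*}
\wt z_t-\eta\Delta\wt z+y_s\textbf{v}\cdot\nabla\wt z+\textbf{v}\cdot\nabla y_s\wt z+(\nu_0-\omega)\wt z=-\Bc\Bc^*\Pc\wt z+g(\psi),\quad \wt z(0)=z_0.
\end{equation*}
Since $\psi\in D$, Lemma \ref{lemVB:liptzest} gives $g(\psi)\in L^2(0,\infty;\Lt)$ with $\|g(\psi)\|_{L^2(0,\infty;\Lt)}\le C_2\|\psi\|_D^2\le C_2\rho^2$, so Proposition \ref{ppsVB-nonhm-2} applies and yields $\wt z=\mathcal{N}(\psi)\in D$ with
\begin{equation*}
\|\mathcal{N}(\psi)\|_D\le C_1\bigl(\|z_0\|_{\Hio}+\|g(\psi)\|_{L^2(0,\infty;\Lt)}\bigr)\le C_1\|z_0\|_{\Hio}+C_1C_2\rho^2.
\end{equation*}
A fixed point of $\mathcal{N}$ is precisely a solution of \eqref{eqVB-closedloopshifted}.

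Next I would verify the self-map and contraction properties. For the self-map: choosing $\|z_0\|_{\Hio}\le M\rho$ with $M$ to be fixed, the bound above gives $\|\mathcal{N}(\psi)\|_D\le C_1M\rho+C_1C_2\rho^2=\rho\,(C_1M+C_1C_2\rho)$, so $\mathcal{N}$ maps $D_\rho$ into itself provided $C_1M+C_1C_2\rho\le 1$; this holds if we first pick $M$ with $C_1M\le 1/2$ and then $\rho_0$ with $C_1C_2\rho_0\le 1/2$, and restrict to $\rho\le\rho_0$. For the contraction: given $\psi^1,\psi^2\in D_\rho$, the difference $\mathcal{N}(\psi^1)-\mathcal{N}(\psi^2)$ solves the same linear system with zero initial data and right-hand side $g(\psi^1)-g(\psi^2)$, so Proposition \ref{ppsVB-nonhm-2} together with the second estimate of Lemma \ref{lemVB:liptzest} gives
\begin{equation*}
\|\mathcal{N}(\psi^1)-\mathcal{N}(\psi^2)\|_D\le C_1\|g(\psi^1)-g(\psi^2)\|_{L^2(0,\infty;\Lt)}\le C_1C_2(\|\psi^1\|_D+\|\psi^2\|_D)\|\psi^1-\psi^2\|_D\le 2C_1C_2\rho\,\|\psi^1-\psi^2\|_D.
\end{equation*}
Shrinking $\rho_0$ further so that $2C_1C_2\rho_0<1$ makes $\mathcal{N}$ a contraction on $D_\rho$ for every $\rho\le\rho_0$. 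The Banach fixed-point theorem then produces the unique $\wt z\in D_\rho$ solving \eqref{eqVB-closedloopshifted}.

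Finally I would derive the pointwise bound \eqref{eqVB-LinfH10est}. Since the fixed point satisfies $\wt z=\mathcal{N}(\wt z)$, Proposition \ref{ppsVB-nonhm-2} and Lemma \ref{lemVB:liptzest} give $\|\wt z\|_D\le C_1\|z_0\|_{\Hio}+C_1C_2\|\wt z\|_D^2$; since $\|\wt z\|_D\le\rho\le\rho_0$ and $C_1C_2\rho_0\le 1/2$, absorbing the quadratic term yields $\|\wt z\|_D\le 2C_1\|z_0\|_{\Hio}$. Because $D\hookrightarrow L^\infty(0,\infty;\Hio)$ is continuous by the very definition of the $D$-norm in \eqref{eqVB:space1}, we get $\|\wt z(\cdot,t)\|_{\Hio}\le\|\wt z\|_{L^\infty(0,\infty;\Hio)}\le\|\wt z\|_D\le 2C_1\|z_0\|_{\Hio}$ for all $t>0$, which is \eqref{eqVB-LinfH10est} with $M_1=2C_1$.

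The main obstacle is not the fixed-point mechanics, which are standard, but making sure the constants line up in the right order: the radius $\rho_0$ must be chosen \emph{after} $M$ (so that the self-map condition $C_1M+C_1C_2\rho\le1$ can be met) and small enough to simultaneously guarantee the contraction constant $2C_1C_2\rho_0<1$ and the absorption of the quadratic term in the final estimate. Also one must check that the quadratic estimate in Lemma \ref{lemVB:liptzest} genuinely needs the full strength of the $D$-norm — in particular the $L^2(0,\infty;H^2)\cap L^\infty(0,\infty;\Hio)$ regularity, via Agmon's inequality \eqref{eqVB-AgmonIE} and the embedding $H^1(\Omega)\hookrightarrow L^4(\Omega)$ — so that Proposition \ref{ppsVB-nonhm-2} (which is the only regularity statement producing a solution in $D$) is exactly the tool that closes the loop.
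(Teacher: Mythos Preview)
Your proposal is correct and follows essentially the same approach as the paper: define the solution map via the linear non-homogeneous closed-loop problem (Proposition \ref{ppsVB-nonhm-2}), use Lemma \ref{lemVB:liptzest} to bound the nonlinearity, verify self-map and contraction on $D_\rho$ for suitable $M$ and $\rho_0$, and then absorb the quadratic term at the fixed point to extract the $L^\infty(0,\infty;\Hio)$ bound. The paper chooses the slightly tighter constants $M=\tfrac{1}{3C_1}$ and $\rho_0=\tfrac{1}{3C_1C_2}$ so that the contraction constant $2C_1C_2\rho_0\le\tfrac{2}{3}$ falls out automatically without the extra shrinking step you mention, but this is purely cosmetic.
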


\begin{proof}
	We prove the proposition by using Banach fixed point theorem. 
	
	\noindent  For a given $\psi \in D_\rho$ and $z_0\in \Hio,$ let $\wt z^\psi$ satisfies 
\begin{equation} \label{eqVB-closedloop with non-hom gen}
	\begin{aligned}
		& \wt z^\psi_t(\cdot, t)-\eta \Delta \wt z^\psi(\cdot,t) +y_s\textbf{v}\cdot\nabla \wt z^\psi(\cdot,t)+\textbf{v}\cdot\nabla y_s \wt z^\psi(\cdot,t)+(\nu_0-\omega )\wt z^\psi(\cdot, t) \\
		& \hspace{7.5cm}=-\Bc \Bc^*\Pc \wt z^\psi(\cdot,t)+g(\psi)(\cdot,t) \text{ in } Q,\\
		& \wt z^\psi(x,t)=0 \text{ on }\Sigma, \quad \wt z^\psi(x,0)=z_0(x) \text{ in }\Omega,
	\end{aligned}
\end{equation}
where $g$ is as defined in \eqref{eqVB:expNonhomTerm-g}. Define the mapping 
\begin{align*}
	S : D_\rho \rightarrow D_\rho
\end{align*}
by $S(\psi)=\wt z^\psi,$ where $\wt z^\psi$ satisfies \eqref{eqVB-closedloop with non-hom gen} for a given $\psi \in D_\rho.$ Aim is to show that $S$ is self-map and contraction.

\noindent Under some smallness assumption on initial data $z_0$, we want to show that $\wt z^\psi \in D_\rho.$ Due to $\psi\in D_\rho$ and Lemma  \ref{lemVB:liptzest}, $g(\psi)\in L^2(0,\infty;\Lt).$ Hence Proposition \ref{ppsVB-nonhm-2} and Lemma  \ref{lemVB:liptzest} lead to $\wt z^\psi\in D$ with 
\begin{equation} \label{eqVB-nonlinExisEst}
\begin{aligned}
\|\wt z^\psi\|_{L^2(0,\infty; H^2(\Omega))}+ \|\wt z^\psi\|_{H^1(0,\infty;\Lt)}+\| \wt z^\psi\|_{L^\infty (0,\infty; \Hio) } & \le C_1 \left( \|z_0\|_{\Hio} +\|g\|_{L^2(0,\infty;\Lt)}\right),\\
& \le  C _1\|z_0\|_{\Hio}+ C_1C_2 \|\psi\|_D^2.
\end{aligned}
\end{equation}
Now, choosing 
\begin{equation} \label{eqVB: choice rho I}
\|z_0\|_{\Hio} \le \frac{\rho}{3C_1} \text{ and } \rho \le \frac{1}{3C_1C_2},
\end{equation}
we have 
\begin{align*}
	\|\wt z^\psi\|_D \le \frac{\rho}{3}+C_1C_2 \rho^2 \le \frac{\rho}{3}+\frac{\rho}{3}\le \rho
\end{align*}
and therefore $\wt z^\psi \in D_\rho.$ Thus $S$ is a self map for all $0<\rho \le \frac{1}{3C_1C_2}.$

\noindent Now, to prove $S$ is contraction, let $\psi^1, \psi^2\in D_\rho$ with $\wt z^{\psi^1}$ and $\wt z^{\psi^2}$ as the corresponding solution of \eqref{eqVB-closedloop with non-hom gen}. Set $\mathfrak{Z}:=\wt z^{\psi^1}-\wt z^{\psi^2},$ then $\mathfrak{Z}$ satisfies 
\begin{align*}
	& \mathfrak{Z}_t(\cdot, t)-\eta \Delta \mathfrak{Z}(\cdot,t) +y_s\textbf{v}\cdot\nabla \mathfrak{Z}(\cdot,t)+\textbf{v}\cdot\nabla y_s \mathfrak{Z}(\cdot,t)+(\nu_0-\omega )\mathfrak{Z}(\cdot, t) \\
& \hspace{6.5cm}=-\Bc \Bc^*\Pc \mathfrak{Z}(\cdot,t)+g(\psi^1)(\cdot,t) - g(\psi^2)(\cdot,t) \text{ in } Q,\\
& \mathfrak{Z}(x,t)=0 \text{ on }\Sigma, \quad \mathfrak{Z}(x,0)=0 \text{ in }\Omega.
\end{align*}
Proposition \ref{ppsVB-nonhm-2} followed by Lemma \ref{lemVB:liptzest} implies 
\begin{align*}
	\|\mathfrak{Z}\|_{D} \le C_1 \| g(\psi^1)-g(\psi^2)\|_{L^2(0,\infty;\Lt)} & \le C_1C_2 (\|\psi^1\|_D+\|\psi^2\|_D)\|\psi^1-\psi^2\|_D \\
	& \le 2C_1C_2 \rho \|\psi^1-\psi^2\|_D.
\end{align*}
Now, for the choice of $\rho$ as in \eqref{eqVB: choice rho I} imply 
\begin{align*}
	\|\mathfrak{Z}\|_D=\|\wt z^{\psi^1} - \wt z^{\psi^2}\|_D \le \frac{2}{3}\|\psi^1-\psi^2\|_D.
\end{align*}
Now, set $M=\frac{1}{3C_1}$ and $\rho_0=\frac{1}{3C_1C_2},$ where $C_1$ is as in Proposition \ref{ppsVB-nonhm-2} and $C_2$ as in Lemma \ref{lemVB:liptzest}. Then for all $0<\rho \le \rho_0$ and $z_0\in \Hio$ with $\|z_0\|\le M\rho,$ the map $S:D_\rho \rightarrow D_\rho$ is self and contraction map. And, hence using a Banach fixed point theorem $S$ has a fixed point $\wt z\in D_\rho$ and thus \eqref{eqVB-closedloopshifted} admits a unique solution $\wt z \in D_\rho.$\\
Now, choose $\psi=\wt z$ in \eqref{eqVB-nonlinExisEst} and use the fact that $\wt z\in D_\rho,$ to obtain
\begin{align*}
	\frac{2}{3} \|\wt z (\cdot, t)\|_{\Hio} \le C_1 \|z_0\|_{\Hio} \text{ for all } t>0,
\end{align*}
and hence \eqref{eqVB-LinfH10est} holds.
\end{proof}

\medskip
\noindent  Setting $z(x,t)=e^{-\omega t}\wt z(x,t)$ for any $\omega>0,$ we conclude the following main result on stabilizability of the non-linear system:
\begin{equation} \label{eqVB-NLClsdLoop}
	\begin{aligned}
		& z_t-\eta \Delta z+z\textbf{v}\cdot \nabla z+y_s \textbf{v} \cdot \nabla z+\textbf{v}\cdot \nabla y_s z +\nu_0 z = -\chi_{\mathcal{O}}\Bc^*\Pc z \text{ in } Q, \\
		& z=0 \text{ on }\Sigma, \quad z(x,0)=z_0(x) \text{ in }\Omega.
	\end{aligned}
\end{equation}

\begin{Theorem} \label{thVBMain-NonLin}
	Let $\omega>0$ be any given real number. Then there exist positive constants $\rho_0,M$ depending on $\omega,\eta,\Omega$ such that for all $0<\rho\le \rho_0$ and for all $z_0\in \Hio$ with $\|z_0\|_{\Hio}\le M\rho,$ the non-linear closed loop system \eqref{eqVB-NLClsdLoop} admits a unique solution $z\in L^2(0,\infty;\Hio\cap H^2(\Omega))\cap H^1(0,\infty;\Lt) \cap C_b([0,\infty);\Hio)$ satisfying
	\begin{align*}
		\|e^{\omega(\cdot)}z\|^2_{L^2(0,\infty;H^2(\Omega))} + \|e^{\omega(\cdot)}z\|^2_{L^\infty(0,\infty;H^1_0(\Omega))} + \|e^{\omega(\cdot)}z\|^2_{H^1(0,\infty;L^2(\Omega))} \le \rho^2.
	\end{align*}
	Moreover, $\|z(\cdot,t)\|_{\Hio} \le M_1 e^{-\omega t}\|z_0\|_{\Hio}$ for all $t>0,$ and for some $M_1>0$ independent of $z_0$ and time $t.$
\end{Theorem}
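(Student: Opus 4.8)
The plan is to obtain Theorem~\ref{thVBMain-NonLin} as a direct corollary of Proposition~\ref{ppsVB-stabNL-H10} by undoing the exponential shift $\wt z(x,t)=e^{\omega t}z(x,t)$. Proposition~\ref{ppsVB-stabNL-H10} supplies constants $\rho_0,M,M_1>0$ (depending, through the constants $C_1$ of Proposition~\ref{ppsVB-nonhm-2} and $C_2$ of Lemma~\ref{lemVB:liptzest}, only on $\omega,\eta,\Omega$) such that for every $0<\rho\le\rho_0$ and every $z_0\in\Hio$ with $\|z_0\|_{\Hio}\le M\rho$, the shifted closed-loop system \eqref{eqVB-closedloopshifted} has a unique solution $\wt z\in D_\rho$, and moreover $\|\wt z(\cdot,t)\|_{\Hio}\le M_1\|z_0\|_{\Hio}$ for all $t>0$ by \eqref{eqVB-LinfH10est}. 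I would take precisely these $\rho_0,M,M_1$ in the statement of the theorem.

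The first step is to set $z(x,t):=e^{-\omega t}\wt z(x,t)$ and verify that $z$ solves \eqref{eqVB-NLClsdLoop}. Substituting $\wt z=e^{\omega(\cdot)}z$ into \eqref{eqVB-closedloopshifted} and using $\wt z_t=e^{\omega t}\bigl(z_t+\omega z\bigr)$, the linearity of $\Bc\Bc^*\Pc$ (so that $\Bc\Bc^*\Pc\wt z=e^{\omega t}\Bc\Bc^*\Pc z$, with $\Bc\Bc^*\Pc z=\chi_{\mathcal O}\Bc^*\Pc z$), and the key cancellation $e^{-\omega t}\wt z\,\textbf{v}\cdot\nabla\wt z=e^{\omega t}\,z\,\textbf{v}\cdot\nabla z$, one finds after dividing through by $e^{\omega t}$ that the terms $\omega z$ and $-\omega z$ drop out and what remains is exactly \eqref{eqVB-NLClsdLoop}; the initial condition is preserved since $z(\cdot,0)=\wt z(\cdot,0)=z_0$.

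The second step is to transport the estimates. Since $e^{\omega(\cdot)}z=\wt z$, we get $\|e^{\omega(\cdot)}z\|_D^2=\|\wt z\|_D^2\le\rho^2$, which is the asserted bound once one unfolds the definition of $\|\cdot\|_D$ in \eqref{eqVB:space1}. Membership of $z$ in $L^2(0,\infty;\Hio\cap H^2(\Omega))\cap H^1(0,\infty;\Lt)\cap C_b([0,\infty);\Hio)$ follows because multiplication by the bounded smooth factor $e^{-\omega t}$ maps $D$ into this space; in particular $z_t=e^{-\omega t}\bigl(\wt z_t-\omega\wt z\bigr)\in L^2(0,\infty;\Lt)$ as both $\wt z_t$ and $\wt z$ lie in $L^2(0,\infty;\Lt)$, and $z\in C_b([0,\infty);\Hio)$ since $\wt z\in C([0,\infty);\Hio)\cap L^\infty(0,\infty;\Hio)$. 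The pointwise decay is immediate: $\|z(\cdot,t)\|_{\Hio}=e^{-\omega t}\|\wt z(\cdot,t)\|_{\Hio}\le M_1 e^{-\omega t}\|z_0\|_{\Hio}$ by \eqref{eqVB-LinfH10est}. Uniqueness in the stated class follows from uniqueness in $D_\rho$: any solution $z$ of \eqref{eqVB-NLClsdLoop} obeying the displayed norm inequality yields $\wt z=e^{\omega(\cdot)}z\in D_\rho$ solving \eqref{eqVB-closedloopshifted}, hence coincides with the fixed point of Proposition~\ref{ppsVB-stabNL-H10}.

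There is no genuine analytic obstacle: the result is purely a change-of-variables consequence of Proposition~\ref{ppsVB-stabNL-H10}. The only point demanding care is the bookkeeping in that change of variables—confirming that the $e^{-\omega t}$ weight attached to the quadratic term in \eqref{eqVB-closedloopshifted} is exactly what is needed to restore the unweighted Burgers nonlinearity $z\,\textbf{v}\cdot\nabla z$ in \eqref{eqVB-NLClsdLoop}, and that the weighted norms $\|e^{\omega(\cdot)}z\|_{L^2(0,\infty;H^2)}$, $\|e^{\omega(\cdot)}z\|_{L^\infty(0,\infty;\Hio)}$, $\|e^{\omega(\cdot)}z\|_{H^1(0,\infty;\Lt)}$ coincide with the corresponding pieces of $\|\wt z\|_D$.
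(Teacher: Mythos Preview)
Your proposal is correct and follows exactly the paper's approach: the paper simply states, immediately before the theorem, that setting $z(x,t)=e^{-\omega t}\wt z(x,t)$ yields the result from Proposition~\ref{ppsVB-stabNL-H10}, and you have carefully filled in the change-of-variables bookkeeping that the paper leaves implicit.
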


\noindent Next, we establish an result for the stabilization of the non-linear discrete system. In particular, we show the stability of the semi-discrete closed loop system 
\begin{equation} \label{eqVB-disClNlOpForm}
	\begin{aligned}
		\wt z_h'(t) = (\Ac_{\omega_h}-\Bc_h\Bc_h^*\Pc_h) \wt z_h(t) -e^{-\omega t}z_h \textbf{v}\cdot \nabla z_h\text{ for all }t>0, \quad \wt z_h(0)=\pi_h z_0,
	\end{aligned}
\end{equation}
where $\Pc_h$ is solution of \eqref{eqVB-d-ARE}.

\begin{Theorem}[uniform stability for non-linear dicrete closed loop system] \label{thVB-DisMainStabNL}
	Let  $(\mathcal{A}_2)$ hold and $\omega>0$ be any given real number. Let $\omega_P>0$ and $h_0$ be as in Theorem \ref{thVB:dro}, and $z_0\in \Lt.$ Then the solution $\wt z_h$ of \eqref{eqVB-disClNlOpForm} satisfies
	\begin{align*}
		\|\wt z_h(t)\| \le C e^{-\omega_P t}\|z_0\| \text{ for all } t\ge 0, \text{ for all } 0<h<h_0,
	\end{align*}
	and for some $C>0$ independent of $h$
\end{Theorem}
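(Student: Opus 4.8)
The plan is to bypass any Duhamel/semigroup estimate and instead run a direct $\Lt$-energy estimate on the finite-dimensional system \eqref{eqVB-disClNlOpForm}, exploiting the fact that the discrete convective term is skew-symmetric and therefore invisible when tested against the solution itself; the exponential decay then follows at once from Assumption $(\mathcal{A}_2)$. First I would settle well-posedness: since $V_h$ is finite dimensional and the map $(t,\phi_h)\mapsto(\Ac_{\omega_h}-\Bc_h\Bc_h^*\Pc_h)\phi_h-e^{-\omega t}\pi_h(\phi_h\,\textbf{v}\cdot\nabla\phi_h)$ is continuous in $t$ and locally Lipschitz in $\phi_h$ (all norms on $V_h$ are equivalent, so $\|\nabla\phi_h\|\le C_h\|\phi_h\|$), Picard's theorem yields a unique maximal solution $\wt z_h$, which is real-valued because the data are; the global a priori bound obtained below precludes finite-time blow-up, so $\wt z_h$ is defined on $[0,\infty)$.

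Next, I take the $\Lt$-inner product of \eqref{eqVB-disClNlOpForm} with $\wt z_h(t)\in V_h$. Written in weak form, the nonlinear contribution equals $-e^{-\omega t}\langle \wt z_h\,\textbf{v}\cdot\nabla\wt z_h,\wt z_h\rangle$ (whether or not a projection $\pi_h$ is present is immaterial, since $\langle\pi_h w,\wt z_h\rangle=\langle w,\wt z_h\rangle$ for $\wt z_h\in V_h$), and I claim this vanishes identically. Indeed, $\textbf{v}$ is a constant vector, hence divergence-free, so $\textbf{v}\cdot\nabla(\wt z_h^{\,3})=\mathrm{div}(\textbf{v}\,\wt z_h^{\,3})$; summing the elementwise integration by parts over the triangulation, the interelement fluxes cancel by continuity of the functions in $V_h$ and the boundary flux vanishes because $\wt z_h=0$ on $\partial\Omega$, whence
\begin{align*}
\langle \wt z_h\,\textbf{v}\cdot\nabla\wt z_h,\wt z_h\rangle=\int_\Omega \wt z_h^{\,2}\,(\textbf{v}\cdot\nabla\wt z_h)\,dx=\tfrac13\int_\Omega \textbf{v}\cdot\nabla(\wt z_h^{\,3})\,dx=0.
\end{align*}
Using Assumption $(\mathcal{A}_2)$ for the linear part, this leaves the differential inequality
\begin{align*}
\tfrac12\,\tfrac{d}{dt}\|\wt z_h(t)\|^2=\Re\big\langle(\Ac_{\omega_h}-\Bc_h\Bc_h^*\Pc_h)\wt z_h(t),\wt z_h(t)\big\rangle\le-\omega_P\|\wt z_h(t)\|^2\quad\text{for all }t>0.
\end{align*}

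Finally, Gronwall's lemma gives $\|\wt z_h(t)\|^2\le e^{-2\omega_P t}\|\wt z_h(0)\|^2=e^{-2\omega_P t}\|\pi_h z_0\|^2$, and Lemma \ref{lem:projerr}(a), namely $\|\pi_h z_0\|\le\|z_0\|$, then produces $\|\wt z_h(t)\|\le e^{-\omega_P t}\|z_0\|$ for all $t\ge0$ and all $0<h<h_0$, i.e.\ the assertion with $C=1$. The argument is otherwise routine; the one step that genuinely requires care is the exact cancellation of the trilinear term on the finite element space, which hinges on the inter-element continuity of $V_h$ functions together with the homogeneous Dirichlet condition (and on $\textbf{v}$ being constant, hence divergence-free), and, to a lesser extent, the bootstrap from local to global existence via the a priori estimate. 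Note also that $(\mathcal{A}_2)$ is truly needed here: the bare semigroup decay of Theorem \ref{thVB:dro}(c) would force a Duhamel estimate in which the convective term cannot be absorbed, whereas the dissipativity bound $(\mathcal{A}_2)$ makes the nonlinearity disappear entirely.
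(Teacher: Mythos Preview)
Your proof is correct and follows essentially the same approach as the paper: test the discrete closed-loop system against $\wt z_h$, use the skew-symmetry $\langle \wt z_h\,\textbf{v}\cdot\nabla\wt z_h,\wt z_h\rangle=0$ to kill the nonlinearity, apply Assumption $(\mathcal{A}_2)$ to obtain $\tfrac12\tfrac{d}{dt}\|\wt z_h\|^2\le-\omega_P\|\wt z_h\|^2$, and integrate using $\|\pi_h z_0\|\le\|z_0\|$ to conclude with $C=1$. Your version is more explicit about well-posedness and the elementwise integration-by-parts justification of the trilinear cancellation, both of which the paper leaves implicit.
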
 

\begin{proof}
The weak formulation of \eqref{eqVB-disClNlOpForm} seeks $\wt z_h \in V_h$ such that 
{\small
\begin{equation} \label{eqVB-err cls dwk}
	\begin{aligned}
		& \langle \wt z_{h_t},\phi_h\rangle +\eta \langle\nabla \wt z_h ,\nabla\phi_h\rangle + e^{-\omega t} \langle \wt z_h\textbf{v}\cdot \nabla \wt z_h , \phi_h\rangle + \langle y_s \textbf{v} \cdot \nabla \wt z_h+\textbf{v}\cdot \nabla y_s \wt z_h, \phi_h\rangle +(\nu_0-\omega)\langle \wt z_h,\phi_h\rangle \\ 
		& \quad \qquad\qquad = \langle -\Bc_h\Bc_h^*\Pc_h\wt z_h, \phi_h\rangle, \\
		& \langle \wt z_h(0),\phi_h\rangle = \langle \pi_h z_0,\phi_h\rangle \text{ for all }\phi_h\in V_h.
	\end{aligned}
\end{equation}
}The definition of $\Ac_h$ in \eqref{eqVB-DisBil} and $\Ac_{\omega_h}$ in \eqref{eqdefVB-Awh} lead to 
\begin{equation*}
	\begin{aligned}
		\langle \wt z_{h_t},\phi_h\rangle - \langle \Ac_{\omega_h} \wt z_h ,\phi_h \rangle + e^{-\omega t} \langle \wt z_h\textbf{v}\cdot \nabla \wt z_h , \phi_h\rangle = \langle -\Bc_h\Bc_h^*\Pc_h\wt z_h, \phi_h\rangle, \text{ for all } \phi_h \in V_h,
	\end{aligned}
\end{equation*}
that is, 
\begin{equation} \label{eqVB-unistabNLOP}
	\begin{aligned}
		\langle \wt z_{h_t},\phi_h\rangle + e^{-\omega t} \langle \wt z_h\textbf{v}\cdot \nabla \wt z_h , \phi_h\rangle = \langle (\Ac_{\omega_h} -\Bc_h\Bc_h^*\Pc_h)\wt z_h, \phi_h\rangle, \text{ for all } \phi_h \in V_h.
	\end{aligned}
\end{equation}
 Utilizing these and $(\mathcal{A}_2)$ after substituting $\phi_h=\wt z_h$ in \eqref{eqVB-unistabNLOP}, we have
\begin{align*}
	\frac{1}{2}\frac{d}{dt}\|\wt z_h(t)\|^2 \le -\omega_P \|\wt z_h(t)\|^2 \text{ for all }t >0 \text{ and for all }0<h<h_0.
\end{align*}
Now, integrating the above inequality with the fact that $\|\wt z_h(0)\|=\|\pi_h z_0\|\le \|z_0\|,$ we obtain
\begin{align*}
	\|\wt z_h(t)\| \le  e^{-\omega_P t}\|z_0\| \text{ for all }t>0\text{ and for all }0<h<h_0.
\end{align*}
The proof is complete.
\end{proof}

\subsection{Error estimates} \label{secVB-ErrEst-NL}
In this subsection, we derive an error estimate for the stabilized non-linear system.

\subsubsection{Auxiliary results}
Here, we establish certain auxiliary results necessary for obtaining the error estimates in the subsequent subsection. Let $\Pc \in \Lc(\Lt)$ denote the solution of \eqref{eqVB-Riccati}, as specified in Theorem \ref{thVB-mainstab}. Consider the following non-homogeneous equation 
\begin{equation} \label{eqVB-nonhomCl}
	\begin{aligned}
		& \wt z'(t) =(\Ac_{\omega} - \Bc \Bc^*\Pc)\wt z(t) + g(t) \text{ for all }t>0, \\\
		& \wt z(0)=z_0,
	\end{aligned}
\end{equation} 
where $g(\cdot)$ is given in some suitable space.

\begin{Lemma} \label{lemVB-ImpRegStab-1}
	Let $z_0 \in D(\Ac)$ and $g \in L^2(0,\infty; D(\Ac)).$ Then \eqref{eqVB-nonhomCl} has a unique solution $\wt z \in H^1(0,\infty; \Lt)\cap C([0,\infty); D(\Ac)).$ 
\end{Lemma}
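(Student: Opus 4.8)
The plan is to view \eqref{eqVB-nonhomCl} as a linear evolution equation governed by the generator $\Ac_{\omega,\Pc} := \Ac_\omega - \Bc\Bc^*\Pc$ and to invoke the standard maximal-regularity / semigroup theory for analytic semigroups, exploiting that $D(\Ac_{\omega,\Pc}) = D(\Ac)$ (as recorded in Theorem \ref{thVB-mainstab}(d)) and that $\{e^{t\Ac_{\omega,\Pc}}\}_{t\ge 0}$ is an exponentially stable analytic semigroup on $\Lt$. The key observation is that since $\Bc, \Bc^*, \Pc \in \Lc(\Lt)$, the perturbation $-\Bc\Bc^*\Pc$ is bounded, so $\Ac_{\omega,\Pc}$ is itself the generator of an analytic semigroup with the same domain $D(\Ac)$, and moreover $\Ac$ maps $D(\Ac^2)$-type spaces compatibly; in particular $\Bc\Bc^*\Pc$ leaves $D(\Ac)$ invariant only up to bounded operators, which is exactly what is needed for the argument below.

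The steps, in order, are as follows. First I would note that for $g \in L^2(0,\infty;\Lt)$ and $z_0 \in \Lt$, Lemma \ref{lemVB:nl-1} already gives a unique mild solution $\wt z \in C([0,\infty);\Lt)\cap L^2(0,\infty;\Lt)$ with the variation-of-constants representation
\begin{align*}
\wt z(t) = e^{t\Ac_{\omega,\Pc}}z_0 + \int_0^t e^{(t-s)\Ac_{\omega,\Pc}} g(s)\, ds.
\end{align*}
Second, I would upgrade the regularity: since $\Ac_{\omega,\Pc}$ generates an exponentially stable analytic semigroup on $\Lt$ and $g \in L^2(0,\infty; D(\Ac))$, I would apply the maximal $L^2$-regularity result for analytic semigroups (e.g. \cite[Propositions 3.1 and 3.3, Chapter 1, Part II]{BDDM}, the same tools used for Proposition \ref{ppsVB-nonhm-2}) \emph{in the Hilbert space $D(\Ac)$}, equipped with the graph norm, on which $\Ac_{\omega,\Pc}$ restricts to the generator of an analytic semigroup as well (the restriction is well-defined because $\Ac_{\omega,\Pc}$ commutes with its resolvent, and $D(\Ac_{\omega,\Pc}^2)$ is dense in $D(\Ac)$). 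This yields $\wt z \in L^2(0,\infty; D(\Ac^2)) \cap H^1(0,\infty; D(\Ac))$ whenever $z_0 \in D(\Ac)$, and in particular $\wt z \in C([0,\infty); D(\Ac))$ by the trace/interpolation embedding $L^2(0,\infty;D(\Ac^2))\cap H^1(0,\infty;D(\Ac)) \hookrightarrow C([0,\infty);D(\Ac))$. Third, reading the equation \eqref{eqVB-nonhomCl} itself, $\wt z'(t) = \Ac_{\omega,\Pc}\wt z(t) + g(t)$ with $\wt z(t) \in D(\Ac)$ a.e. and $\Ac_{\omega,\Pc}\wt z \in L^2(0,\infty;\Lt)$, $g \in L^2(0,\infty;\Lt)$, gives $\wt z \in H^1(0,\infty;\Lt)$; combined with $\wt z \in C([0,\infty);D(\Ac))$ from the previous step, this is exactly the claimed regularity class. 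Uniqueness is inherited from Lemma \ref{lemVB:nl-1} (or from the semigroup representation, which is forced).

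The main obstacle I anticipate is the technical point of justifying that $\Ac_{\omega,\Pc}$, restricted to the Banach space $D(\Ac)$ with graph norm, again generates an analytic semigroup there — equivalently, that the resolvent estimates for $\Ac_{\omega,\Pc}$ transfer to $\Lc(D(\Ac))$. For the \emph{unperturbed} operator $\Ac_\omega$ this is standard ($D(\Ac)$ is invariant and $\Ac_\omega$ commutes with $R(\mu,\Ac_\omega)$), and for the bounded perturbation $-\Bc\Bc^*\Pc$ one needs that it maps $D(\Ac)$ into $D(\Ac)$, or at least that a bounded-perturbation argument on $D(\Ac)$ applies; since $\Bc u = u\chi_{\Oc}$ need not preserve $H^2\cap\Hio$, one instead argues with the mild formulation directly — estimating $\|\Ac e^{(t-s)\Ac_{\omega,\Pc}}\|$ via analyticity and using $g(s)\in D(\Ac)$ so that $\|\Ac_{\omega,\Pc} e^{(t-s)\Ac_{\omega,\Pc}} g(s)\| = \|e^{(t-s)\Ac_{\omega,\Pc}}\Ac_{\omega,\Pc} g(s)\| \le M e^{-\gamma(t-s)}\|\Ac_{\omega,\Pc}g(s)\|$, which is integrable in $L^2$ by Young's inequality \eqref{eqPR-YoungIneq}. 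This circumvents the domain-invariance issue and delivers $\Ac_{\omega,\Pc}\wt z \in L^2(0,\infty;\Lt)$ and the continuity $\wt z \in C([0,\infty);D(\Ac))$ directly. The rest is routine bookkeeping analogous to the proof of Lemma \ref{lemVB:nl-1} and Proposition \ref{ppsVB-nonhm-2}.
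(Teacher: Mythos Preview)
Your proposal is correct and follows essentially the same route as the paper: the paper's proof is a one-line citation of \cite[Proposition 3.3, Chapter 1, Part II]{BDDM}, using only that $\Ac_{\omega,\Pc}=\Ac_\omega-\Bc\Bc^*\Pc$ generates an analytic semigroup of negative type on $\Lt$. Your additional care about whether $\Bc\Bc^*\Pc$ preserves $D(\Ac)$, and your resolution via the mild formula together with the commutation $\Ac_{\omega,\Pc}e^{t\Ac_{\omega,\Pc}}=e^{t\Ac_{\omega,\Pc}}\Ac_{\omega,\Pc}$ on $D(\Ac_{\omega,\Pc})=D(\Ac)$, are sound refinements that the paper's brief citation does not spell out.
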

\begin{proof}
	The proof follows from \cite[Proposition 3.3, Chapter 1, part II]{BDDM} using with the fact that $\Ac_{\omega}-\Bc\Bc^*\Pc$ generates an analytic semigroup $\{e^{t(\Ac_{\omega}-\Bc\Bc^*\Pc)}\}_{t\ge 0}$ on $\Lt$ of negative type.  
\end{proof}

\begin{Lemma} \label{lemVB-ImpRegStab-2}
	Let $(\Ac, D(\Ac))$ be as defined in \eqref{eqdefVB-A}. Let $g \in L^2(0,\infty; D(\Ac))\cap H^1(0,\infty; \Lt)$ and $z_0 \in H^3(\Omega)\cap \Hio.$  Also, assume the compatibility condition $g(0) + \Ac z_0 \in \Hio.$ Then the solution $\wt z $ of \eqref{eqVB-nonhomCl} belongs to $L^2(0,\infty; H^4(\Omega)\cap \Hio) \cap C([0,\infty);H^2(\Omega)\cap \Hio) \cap H^1(0,\infty; H^2(\Omega)\cap \Hio)$ satisfying
	\begin{align*}
		\|\wt z\|_{L^2(0,\infty; H^4(\Omega))} +\|\wt z\|_{L^\infty(0,\infty; H^2(\Omega))} +\|\wt z\|_{H^1(0,\infty; H^2(\Omega))} \le M_1 \big( \|z_0\|_{H^3(\Omega)} & +\|g\|_{L^2(0,\infty; H^2(\Omega))} \\
		& +\|g'\|_{L^2(0,\infty; L^2(\Omega))} \big),
	\end{align*}
	for some $M_1=M_1(\Omega)>0.$
\end{Lemma}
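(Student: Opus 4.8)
The plan is to bootstrap the regularity obtained in Lemma \ref{lemVB-ImpRegStab-1} by differentiating the equation in time and then reading off spatial regularity from an elliptic estimate. Write $\Ac_{\omega,\Pc} := \Ac_{\omega}-\Bc\Bc^*\Pc$, which by Theorem \ref{thVB-mainstab}(d) generates an exponentially stable analytic semigroup on $\Lt$ with $D(\Ac_{\omega,\Pc}) = D(\Ac)$. First I would set $w(t) := \wt z'(t)$. Formally $w$ solves $w'(t) = \Ac_{\omega,\Pc} w(t) + g'(t)$ with $w(0) = \Ac_{\omega,\Pc} z_0 + g(0) = \Ac z_0 + \omega z_0 - \Bc\Bc^*\Pc z_0 + g(0)$. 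The compatibility hypothesis $g(0)+\Ac z_0 \in \Hio$, together with $z_0 \in H^3(\Omega)\cap\Hio \subset D(\Ac)$ and $\Bc\Bc^*\Pc z_0 \in \Lt$ — actually $\Bc^*\Pc z_0 \in \Lt$ so $\Bc\Bc^*\Pc z_0 \in \Lt$, and since $\Pc z_0$ need not be in $D(\Ac)$ one must be a little careful here; but $w(0) \in \Lt$ certainly holds, and in fact $w(0) \in \Hio$ if one additionally checks $\Bc\Bc^*\Pc z_0 \in \Hio$, which follows because $\Bc$ is multiplication by $\chi_{\Oc}$ and $\Pc z_0 \in \Lt$... this is the delicate point, see below. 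Granting $w(0) \in \Hio = D((-\Ac)^{1/2})$-type space and $g' \in L^2(0,\infty;\Lt)$, apply Lemma \ref{lemVB-ImpRegStab-1} (or rather the maximal-regularity statement \cite[Proposition 3.3, Chapter 1, Part II]{BDDM}) to conclude $w = \wt z' \in L^2(0,\infty; D(\Ac)) \cap C([0,\infty); \Hio) \cap H^1(0,\infty;\Lt)$, i.e.
\begin{align*}
\|\wt z'\|_{L^2(0,\infty;H^2(\Omega))} + \|\wt z'\|_{L^\infty(0,\infty;H^1(\Omega))} + \|\wt z''\|_{L^2(0,\infty;L^2(\Omega))} \le C\big(\|w(0)\|_{\Hio} + \|g'\|_{L^2(0,\infty;\Lt)}\big).
\end{align*}

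Next I would recover the two extra spatial derivatives on $\wt z$ itself from the equation, viewed pointwise in $t$ as an elliptic problem:
\begin{align*}
-\eta\Delta \wt z(t) = -\wt z'(t) - y_s\textbf{v}\cdot\nabla\wt z(t) - \textbf{v}\cdot\nabla y_s\,\wt z(t) - (\nu_0-\omega)\wt z(t) - \Bc\Bc^*\Pc\wt z(t) + g(t),
\end{align*}
with $\wt z(t) \in \Hio$. Elliptic regularity ($H^2$-regularity for the Dirichlet Laplacian on $\Omega$) gives $\|\wt z(t)\|_{H^2} \le C\|\text{RHS}(t)\|$; but now every term on the right except the lower-order ones already has $H^2$-regularity in $x$: $\wt z'(t) \in H^2$ from the previous step, $g(t) \in H^2$ by hypothesis, and for the first-order terms one uses that $y_s \in H^2(\Omega) \subset L^\infty(\Omega)$ (Agmon, Lemma \ref{lemVB:AgmonIE}), $\nabla y_s \in H^1 \hookrightarrow L^4$, combined with the already-established $\wt z(t) \in \Hio$ to bound them in $L^2$, hence $\|\wt z(t)\|_{H^2} \le C(\ldots)$; then bootstrapping once more, with $\wt z(t) \in H^2$ now in hand, the RHS lies in $H^2$ (the products $y_s \textbf{v}\cdot\nabla\wt z$ need $\nabla\wt z \in H^2$, i.e. $\wt z \in H^3$ — so this is a two-step bootstrap: first get $\wt z \in H^3$ from RHS $\in H^1$, then $\wt z \in H^4$ from RHS $\in H^2$), giving $\wt z(t) \in H^4(\Omega)\cap\Hio$ with the stated norm bound after integrating the square over $t \in (0,\infty)$ and taking sup. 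The $H^1(0,\infty;H^2)$ bound on $\wt z$ follows by differentiating the elliptic identity in $t$ and repeating with $\wt z'$ in place of $\wt z$ (using $\wt z' \in H^2$, $\wt z'' \in L^2$, $g' \in L^2(H^0)$... here one only claims $H^2$-regularity of $\wt z'$ in $t$, consistent with $g \in L^2(H^2)$), and the $C([0,\infty);H^2)$ bound from the continuous embedding of $H^1(0,\infty;\Lt)\cap L^2(0,\infty;H^4)$-type interpolation spaces, or directly from $\wt z \in C([0,\infty);D(\Ac))$ upgraded using the elliptic identity evaluated at each fixed $t$.

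The main obstacle I anticipate is the rigorous treatment of the term $\Bc\Bc^*\Pc\wt z$ and of the initial value $w(0)$ when differentiating in time. Since $\Bc\Bc^* = \chi_{\Oc}$ (multiplication) and $\Pc \in \Lc(\Lt)$ only, the composition $\Bc\Bc^*\Pc$ is merely bounded on $\Lt$; it does not automatically map $H^k$ into $H^k$ for $k \ge 1$ because $\Pc$ need not preserve Sobolev regularity, and multiplication by the characteristic function $\chi_{\Oc}$ destroys regularity across $\partial\Oc$. Consequently, the elliptic bootstrap cannot simply ask for $\Bc\Bc^*\Pc\wt z(t) \in H^2$; one must instead keep this term on the ``data'' side only in $L^2$ and argue more carefully — e.g. note that $\Pc$ does improve regularity \emph{when composed appropriately}, since from the Riccati equation \eqref{eqVB-Riccati} one has $\Acw^*\Pc = -\Pc\Acw + \Pc\Bc\Bc^*\Pc - I$, so $\Pc$ maps $D(\Acw)$ into $D(\Acw^*) = D(\Ac^*)$, and by iterating (using that $\wt z(t) \in D(\Ac)$ from Lemma \ref{lemVB-ImpRegStab-1} once we know $g \in L^2(D(\Ac))$) one gets $\Pc\wt z(t) \in D(\Ac^*) = H^2(\Omega)\cap\Hio$; but $\chi_{\Oc}\Pc\wt z(t)$ is then only in $\Lt$ in general, not $H^1$. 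The correct fix is that the full term $\Bc\Bc^*\Pc\wt z$ should be treated as part of the generator, not as data: the closed-loop generator $\Ac_{\omega,\Pc}$ has domain $D(\Ac) = H^2\cap\Hio$, and its \emph{powers} have domains $D(\Ac_{\omega,\Pc}^k)$; the real content of the lemma is that under the compatibility condition, $z_0 \in D(\Ac_{\omega,\Pc}^{3/2})$-type regularity and $g \in D(\Ac_{\omega,\Pc})$-valued forces $\wt z \in D(\Ac_{\omega,\Pc}^2)$-valued, and one must then identify $D(\Ac_{\omega,\Pc}^2)$ with $H^4\cap(\text{boundary conditions})$ — which requires knowing that $\Ac_{\omega,\Pc}^2 z = \eta^2\Delta^2 z + (\text{lower order})$ has the same principal part as $\Delta^2$, so $H^4$ elliptic regularity applies. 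I would organize the proof around this operator-theoretic bootstrap (powers of the analytic generator via \cite[Proposition 3.3, Chapter 1, Part II]{BDDM}) and relegate the identification $D(\Ac_{\omega,\Pc}^2) \hookrightarrow H^4(\Omega)$ to elliptic regularity for the bi-Laplacian-type operator, checking the boundary/compatibility conditions match.
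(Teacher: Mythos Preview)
Your route differs from the paper's, which is considerably more direct. Rather than differentiate in time or work with powers of the closed-loop generator, the paper rewrites \eqref{eqVB-nonhomCl} as $\wt z'(t) - \Ac\wt z(t) = F(t)$ with $F(t) := \omega\wt z(t) - \Bc\Bc^*\Pc\wt z(t) + g(t)$, keeping the \emph{unperturbed} $\Ac$ as the generator and pushing the feedback into the forcing. Lemma~\ref{lemVB-ImpRegStab-1} already gives $\wt z \in C([0,\infty);D(\Ac))\cap H^1(0,\infty;\Lt)$; the Riccati equation \eqref{eqVB-Riccati} yields $\Pc\wt z(t)\in D(\Ac^*)=D(\Ac)$; the paper then asserts $F\in L^2(0,\infty;D(\Ac))$ and $F'\in L^2(0,\infty;\Lt)$, and invokes the higher parabolic regularity theorems in Evans (Chapter~7, Theorems~5--6, with $m=k=1$) together with the fact that $\Ac$ generates an analytic semigroup of negative type. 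That is one application of a black-box regularity result, in place of your multi-step elliptic bootstrap.

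You have, however, put your finger on exactly the point the paper's argument glosses over: even granting $\Pc\wt z(t)\in H^2(\Omega)\cap\Hio$, the assertion $\Bc\Bc^*\Pc\wt z(t)=\chi_{\Oc}\Pc\wt z(t)\in D(\Ac)$ is not innocent, since multiplication by $\chi_{\Oc}$ does not preserve $H^2$ (or even $H^1$) across $\partial\Oc$. Your alternative --- identifying $D(\Ac_{\omega,\Pc}^2)$ with a subspace of $H^4(\Omega)$ --- runs into the same obstruction for the same reason. Neither the paper's route nor yours closes this gap without an additional hypothesis on the localizer (e.g.\ $\Oc=\Omega$, or a smooth cut-off in place of $\chi_{\Oc}$); the paper simply does not comment on it. If you are content to proceed at the paper's level of rigor on this point, its argument is both shorter and avoids the compatibility bookkeeping for $w(0)$ that you flagged.
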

\begin{proof}
	We re-write \eqref{eqVB-nonhomCl} as
	\begin{align*}
		\wt z'(t) - \Ac \wt z=\omega \wt z(t) - \Bc\Bc^*\Pc \wt z(t) + g(t)=:F(t) \text{ for all } t>0 , \quad \wt z(0)=z_0,
	\end{align*}
	where $F(t)=\omega \wt z(t) - \Bc\Bc^*\Pc \wt z(t) + g(t).$ From Lemma \ref{lemVB-ImpRegStab-1}, note that $\wt z \in H^1(0,\infty; \Lt)\cap C([0,\infty); D(\Ac)).$ Also, we have $\Pc \wt z\in D(\Ac^*)=D(\Ac) = H^2(\Omega)\cap \Hio.$ Therefore, we have $F\in L^2(0,\infty; D(\Ac))$ and $F'(t)=\omega \wt z'(t)-\Bc \Bc^*\Pc \wt z'(t)+g'(t) \in L^2(0,\infty;\Lt).$ Now, utilizing \cite[Theorems 5 and 6, Chapter 7]{Eva} with $m=1,k=1$ and the fact that $(\Ac, D(\Ac))$ generates an analytic semigroup of negative type, we conclude the proof.
\end{proof}

\noindent To proceed further, we first define 
{\small
	\begin{align*}
		\Dc=\left\lbrace \wt z\, |\, \wt z\in L^2(0,\infty; H^4(\Omega)\cap \Hio)  \cap L^\infty(0,\infty; H^2(\Omega)\cap \Hio) \cap H^1(0,\infty;  H^2(\Omega)\cap \Hio)\right\rbrace,
	\end{align*}
}equipped with the norm $\|\wt z\|^2_D:= \|\wt z\|_{L^2(0,\infty; H^4(\Omega))}^2 +\|\wt z\|_{L^\infty(0,\infty; H^2(\Omega))}^2 +\|\wt z\|^2_{ H^1(0,\infty;  H^2(\Omega))}.$ Also, for any $\rho>0,$ define
\begin{equation*}
	\begin{aligned}
		\Dc_\rho:=\{\wt z \in \Dc \, |\, \|\wt z\|_{\Dc} \le \rho\}.
	\end{aligned}
\end{equation*}

\begin{Lemma} \label{lemVB-ImpRegStab}
	For any $\psi \in L^2(0,\infty; H^4(\Omega)\cap \Hio) \cap L^\infty(0,\infty; H^2(\Omega)\cap \Hio) \cap H^1(0,\infty;H^2(\Omega)$ $\cap\Hio)$ and for any $\omega>0,$  let
	\begin{align*}
		g(\psi)= e^{-\omega t}\psi \textbf{v}\cdot \nabla \psi. 
		\textbf{}	\end{align*}
	Then for any $\psi^1, \psi^2 \in \Dc,$ \\
	$(a)$ $g(\psi^1) \in L^2(0,\infty; D(\Ac)) \cap H^1(0,\infty; \Lt)$ and
	$$\|g(\psi^1)\|_{L^2(0,\infty; H^2(\Omega))} +\left\|\frac{\partial }{\partial t}g(\psi^1)\right\|_{L^2(0,\infty; \Lt)} \le M_2 \|\psi\|_{\Dc}^2,$$ \\
	$(b)$ $\left\| g(\psi^1) -g(\psi^2)\right\|_{L^2(0,\infty;H^2(\Omega))} + \left\| \frac{\partial }{\partial t} (g(\psi^1) - g(\psi^2))\right\|_{L^2(0,\infty;\Lt)} \le M_2 \left( \|\psi^1\|_\Dc +\|\psi^2\|_\Dc \right) \|\psi^1 - \psi^2\|_\Dc,$ for some $M_2>0.$
\end{Lemma}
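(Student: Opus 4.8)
The strategy is to estimate the nonlinear term $g(\psi) = e^{-\omega t}\psi\,\textbf{v}\cdot\nabla\psi$ and its time derivative in the relevant norms, exploiting that $\psi \in \Dc$ carries $H^2$-in-space control uniformly in time together with $H^4$-in-space and $H^1$-in-time integrability. Since $H^2(\Omega)$ is a Banach algebra for $d\le 3$ and is embedded in $L^\infty(\Omega)$ and $W^{1,\infty}(\Omega)$-type spaces (via the Sobolev embeddings in Lemma \ref{lemPR:SobEmb} and Agmon's inequality \eqref{eqVB-AgmonIE}), the pointwise-in-time bound $\|\psi(t)\,\textbf{v}\cdot\nabla\psi(t)\|_{H^2(\Omega)} \le C|\textbf{v}|\,\|\psi(t)\|_{H^2(\Omega)}\|\psi(t)\|_{H^3(\Omega)}$ holds, where I keep one factor in $H^2$ (the $L^\infty$-in-time norm) and one factor in $H^3$ (interpolated between $H^2$ and $H^4$). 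The plan for part $(a)$ is: (i) bound $\|g(\psi^1)\|_{L^2(0,\infty;H^2(\Omega))}$ by pulling out $\|\psi^1\|_{L^\infty(0,\infty;H^2(\Omega))}$ and absorbing the remaining $\|\psi^1(t)\|_{H^3(\Omega)}$ into an $L^2(0,\infty)$ norm, which is finite since $H^3 \subset [H^2,H^4]_{1/2}$ and both endpoints are controlled (indeed $\|\psi^1(t)\|_{H^3} \le \|\psi^1(t)\|_{H^2}^{1/2}\|\psi^1(t)\|_{H^4}^{1/2}$, and the product lies in $L^2_t$ by Cauchy–Schwarz against $L^\infty_t$ and $L^2_t$); (ii) compute $\partial_t g(\psi^1) = -\omega e^{-\omega t}\psi^1\,\textbf{v}\cdot\nabla\psi^1 + e^{-\omega t}\psi^1_t\,\textbf{v}\cdot\nabla\psi^1 + e^{-\omega t}\psi^1\,\textbf{v}\cdot\nabla\psi^1_t$ and estimate each term in $L^2(0,\infty;\Lt)$: the first by the bound just obtained (with $H^2\hookrightarrow L^2$), the second and third by putting $\psi^1$ (or $\nabla\psi^1$) in $L^\infty_x$ via $H^2\hookrightarrow L^\infty$ uniformly in time and $\psi^1_t$ (resp. $\nabla\psi^1_t$) in $L^2_x$, then integrating the $\|\psi^1_t(t)\|_{H^1(\Omega)}\|\psi^1(t)\|_{H^2(\Omega)}$-type products over time using $\psi^1_t \in L^2(0,\infty;H^2(\Omega))$ and $\psi^1 \in L^\infty(0,\infty;H^2(\Omega))$.

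For part $(b)$, the difference $g(\psi^1) - g(\psi^2) = e^{-\omega t}\big[(\psi^1-\psi^2)\,\textbf{v}\cdot\nabla\psi^1 + \psi^2\,\textbf{v}\cdot\nabla(\psi^1-\psi^2)\big]$ splits the estimate into two bilinear pieces, each of which is handled exactly as in part $(a)$: one factor is the difference $\psi^1-\psi^2$ (or its gradient), measured in the $\Dc$-norm, and the other factor is $\psi^1$ or $\psi^2$, also measured in the $\Dc$-norm; the product structure then yields the factor $(\|\psi^1\|_\Dc + \|\psi^2\|_\Dc)\|\psi^1-\psi^2\|_\Dc$. The same differentiation-in-time identity, now applied to the difference, produces the $\partial_t$-estimate by the Leibniz rule, distributing one time- or space-derivative across the bilinear form and using the $L^\infty_t H^2_x$, $L^2_t H^4_x$, and $L^2_t H^2_x$ (for $\partial_t$) components of $\|\cdot\|_\Dc$ as above. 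All constants $M_2$ depend only on $\textbf{v}$, $\Omega$, and the embedding constants $s_0$, $C_a$.

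The main obstacle is bookkeeping of the derivative distribution in the $H^2$-norm estimate of a product: $\|fg\|_{H^2}$ requires controlling $\|D^2(fg)\|_{\Lt}$, which by Leibniz expands into terms $\|D^2 f\cdot g\|$, $\|Df\cdot Dg\|$, $\|f\cdot D^2 g\|$, and the cross term $\|Df\cdot Dg\|$ is the delicate one — here one needs $Df, Dg \in L^4(\Omega)$, which follows from $f,g\in H^2(\Omega)$ since $H^2(\Omega)\hookrightarrow W^{1,4}(\Omega)$ in dimensions $d\le 3$ (again by Lemma \ref{lemPR:SobEmb} applied to $\nabla f$). Once this algebra estimate $\|fg\|_{H^2(\Omega)} \le C\|f\|_{H^2(\Omega)}\|g\|_{H^2(\Omega)}$ is in place (and similarly the bilinear-form version with one factor in $H^2$ and one in $H^3$ for sharper time integrability), the time integration is routine Hölder in $t$. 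I would also take care that for the $\partial_t$-estimate the term $\|f\,\textbf{v}\cdot\nabla g_t\|_{\Lt}$ only needs $f\in L^\infty_x$ and $\nabla g_t \in L^2_x$, so $H^2$-in-time control of $g_t$ (which $\|\cdot\|_\Dc$ provides) is exactly enough and no $H^4$-in-time regularity is required — this is why $\Dc$ is defined with $H^1(0,\infty;H^2(\Omega))$ rather than something stronger.
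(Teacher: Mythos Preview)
Your proposal is correct and follows essentially the same approach as the paper: the paper expands $\Delta g(\psi^1)$ by Leibniz, estimates the three resulting terms via $H^2\hookrightarrow L^\infty$ (Agmon) and $H^2\hookrightarrow W^{1,4}$ (Sobolev) to get $\|g(\psi^1)\|_{L^2_tH^2_x}\le C\|\psi^1\|_{L^\infty_tH^2_x}\|\psi^1\|_{L^2_tH^3_x}$, then handles $\partial_t g$ by the same three-term Leibniz decomposition you wrote down, and treats part~(b) by the bilinear splitting you describe. The only cosmetic differences are that the paper works with $\Delta g$ and invokes the equivalence $\|g\|_{H^2}\cong\|\Delta g\|$ on $H^2\cap H^1_0$ rather than phrasing it as the $H^2$-algebra property, and that the paper leaves the bound in terms of $\|\psi^1\|_{L^2(0,\infty;H^3)}$ without spelling out the interpolation step $\|\psi^1\|_{H^3}^2\le\|\psi^1\|_{H^2}\|\psi^1\|_{H^4}$ that you make explicit.
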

\begin{proof}
	From Lemma \ref{lemVB:liptzest}, $g(\psi^1)\in L^2(0,\infty;\Lt).$ 
	Next, we show $\Delta g(\psi^1)\in L^2(0,\infty; \Lt).$ Note that 
	\begin{align*}
		\Delta g(\psi^1)  = \nabla \cdot \nabla \left( e^{-\omega t} \psi^1 \textbf{v}\cdot \nabla \psi^1 \right)  = e^{-\omega t} \left( \Delta \psi^1 \textbf{v}\cdot \nabla \psi^1 +\psi^1 \Delta (\textbf{v}\cdot \nabla \psi^1) +2 \nabla \psi^1 \cdot \nabla (\textbf{v}\cdot \nabla \psi^1) \right).
	\end{align*}
	Therefore, a use of Lemmas \ref{lemPR:SobEmb} and \ref{lemVB:AgmonIE} lead to
	\begin{align*}
		\| \Delta g(\psi^1) \|_{L^2(0,\infty; \Lt)}^2 & \le \int_0^\infty \|\Delta \psi^1 \textbf{v}\cdot \nabla \psi^1\|^2 \, dt + \int_0^\infty \| \psi^1 \Delta (\textbf{v}\cdot \nabla \psi^1)\|^2 \, dt \\
		& \qquad + 2 \int_0^\infty \| \nabla \psi^1 \cdot \nabla (\textbf{v}\cdot \nabla \psi^1) \|^2 \, dt \\
		& \le s_0^4 |\textbf{v}|^2 \int_0^\infty \|\psi^1\|_{H^3(\Omega)}^2 \|\psi^1\|_{H^2(\Omega)}^2 \, dt + |\textbf{v}|^2 \int_0^\infty \|\psi^1\|_{L^\infty(\Omega)}^2 \|  \psi^1 \|_{H^3(\Omega)}^2 \, dt \\
		& \qquad + 2 s_0^4 |\textbf{v}|^2 \int_0^\infty \|\psi^1\|_{H^3(\Omega)}^2 \|\psi^1\|_{H^2(\Omega)}^2 \, dt \\
		& \le C \|\psi^1\|_{L^\infty(0,\infty; H^2(\Omega))}^2 \|\psi^1\|^2_{L^2(0,\infty; H^3(\Omega))},
	\end{align*}
	for some $C=C(C_a,s_0, \textbf{v})>0.$ Furthermore, using a Young's inequality, we can write 
	\begin{align} \label{eqVB-ImpRegNonTermEst-2}
		\| \Delta g(\psi^1) \|_{L^2(0,\infty; \Lt)} \le C \left( \|\psi^1\|_{L^\infty(0,\infty; H^2(\Omega))}^2 +  \|\psi^1\|^2_{L^2(0,\infty; H^3(\Omega))}\right).
	\end{align}
	Note that $g(\psi^1)\in H^2(\Omega)\cap \Hio,$ therefore, $\|g(\psi^1)\|_{H^2(\Omega)} \cong \|\Delta g(\psi^1)\|$ (see \cite[problem 2.17]{Kes}). Therefore, $g(\psi^1) \in L^2(0,\infty;D(\Ac)).$
	
	\noindent Note that $\frac{\partial}{\partial t} g(\psi^1) = e^{-\omega t} \left(-\omega  \psi^1 \textbf{v}\cdot \nabla \psi^1 + \psi^1_t \textbf{v}\cdot \nabla \psi^1 + \psi^1 \textbf{v}\cdot \nabla \psi^1_t\right) \in L^2(0,\infty;\Lt).$  Therefore,
	\begin{align*}
		\left\|\frac{\partial}{\partial t} g(\psi^1) \right\|^2_{L^2(0,\infty;\Lt)} & \le  \omega^2\int_0^\infty \|\psi^1 \textbf{v}\cdot \nabla \psi^1\|^2 \, dt  \int_0^\infty \|\psi^1_t \textbf{v}\cdot \nabla \psi^1 \|^2\,dt + \int_0^\infty \|\psi^1 \textbf{v}\cdot \nabla \psi^1_t\|^2 \, dt \\
		& \le |\textbf{v}|^2 \omega^2 \int_0^\infty \|\psi^1\|^2_{L^\infty(\Omega)} \|\psi^1\|_{H^1(\Omega)}^2 dt +  |\textbf{v}|^2\int_0^\infty \|\psi^1_t\|_{L^4(\Omega)}^2 \|\nabla \psi^1\|_{L^4(\Omega)}^2 \, dt \\
		& \qquad  + |\textbf{v}|^2\int_0^\infty \|\psi^1\|_{L^\infty(\Omega)}^2 \|\nabla \psi^1_t\|^2 \, dt.
	\end{align*}
	Now, use Sobolev embedding and Agmon's inequality (in Lemma \ref{lemVB:AgmonIE}) to obtain
	\begin{align*}
		\left\|\frac{\partial}{\partial t} g(\psi^1) \right\|^2_{L^2(0,\infty;\Lt)} &    \le |\textbf{v}|^2 \omega^2 C_a^2 \int_0^\infty \|\psi^1\|^3_{H^1(\Omega)} \|\psi^1\|_{H^2(\Omega)} dt+ |\textbf{v}|s_0^4\int_0^\infty \| \psi^1_t\|_{H^1(\Omega)}^2 \| \psi^1\|_{H^2(\Omega)}^2 \, dt \\
		& \qquad \quad  + |\textbf{v}|C_a^2\int_0^\infty \|\psi^1\|_{H^1(\Omega)} \|\psi^1\|_{H^2(\Omega)}\| \psi^1_t\|_{H^1(\Omega)}^2 \, dt \\
		& \le C \|\psi^1\|^2_{L^\infty(0,\infty; H^2(\Omega))} \|\psi^1\|^2_{H^1(0,\infty;\Hio)},
	\end{align*}
	for some $C=C(\textbf{v}, s_0, \omega, C_a)>0.$ Therefore, using a Young's inequality, we obtain
	\begin{align} \label{eqVB-ImpRegNonTermEst-3}
		\left\|\frac{\partial}{\partial t} g(\psi^1) \right\|_{L^2(0,\infty;\Lt)} \le C \left( \|\psi^1\|^2_{L^\infty(0,\infty; H^2(\Omega))} + \|\psi^1\|^2_{H^1(0,\infty;\Hio)} \right)
	\end{align}
	Using \eqref{eqVB-ImpRegNonTermEst-2} and \eqref{eqVB-ImpRegNonTermEst-3}, we conclude (a). 
	
	\medskip
	\noindent (b) Let $\psi^1, \psi^2 \in D.$ Observe that
	{\small
		\begin{align*}
			\Delta \left( g(\psi^1) - g(\psi^2) \right) & = e^{-\omega t} \Big( \Delta (\psi^1 -\psi^2) \textbf{v}\cdot \nabla \psi^1 +\Delta \psi^2 \textbf{v}\cdot \nabla (\psi^1 -\psi^2) + 2 \nabla (\psi^1 -\psi^2) \cdot \nabla \left(\textbf{v}\cdot \nabla \psi^1 \right) \\
			& \quad   + 2 \nabla \psi^2 \cdot \nabla  \left( \textbf{v}\cdot \nabla (\psi^1 - \psi^2) \right) + (\psi^1 -\psi^2) \Delta (\textbf{v}\cdot \nabla \psi^1) +\psi^2 \Delta \left( \textbf{v}\cdot \nabla(\psi^1-\psi^2) \right)\Big)
		\end{align*}
	}and 
	{\small
		\begin{align*}
			\frac{\partial}{\partial t} \left( g(\psi^1) -  g(\psi^2) \right) &  = e^{-\omega t} \Big( -\omega \left( (\psi^1-\psi^2)\textbf{v}\cdot \nabla \psi^1 + \psi^2 \textbf{v}\cdot \nabla(\psi^1-\psi^2)\right)  + (\psi^1-\psi^2)_t \textbf{v}\cdot \nabla \psi^1  \\
			& \qquad + \psi^2_t \textbf{v}\cdot \nabla(\psi^1-\psi^2) + (\psi^1-\psi^2)\textbf{v}\cdot \nabla \psi^1_t + \psi^2 \textbf{v}\cdot \nabla (\psi^1-\psi^2)_t \Big).
		\end{align*}
	}Therefore, proceeding as in (a), we obtain (b).
\end{proof}

\noindent Now, consider the closed loop system 
\begin{align} \label{eqVB-closedloopImprRegStab}
	\wt z'(t) =(\Ac_{\omega}-\Bc\Bc^*\Pc)\wt z(t) - e^{-\omega t} \wt z \textbf{v}\cdot \nabla \wt z \, \text{ for all } t>0, \quad \wt z(0) =z_0.
\end{align}

\begin{Theorem} \label{thVB-existImReg}
	There exist $\rho_0,M>0$ such that for all $0<\rho\le \rho_0$ and for all $z_0 \in H^3(\Omega)\cap \Hio$ with $\Ac z_0 \in \Hio$ and $\|z_0\|_{H^3(\Omega)} \le M \rho,$ the closed loop system \eqref{eqVB-closedloopImprRegStab} admits a unique solution $\wt z\in \Dc_\rho.$ 
\end{Theorem}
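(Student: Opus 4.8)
The plan is to run a Banach fixed-point argument in the complete metric space $\Dc_\rho$, exactly parallel to the proof of Proposition \ref{ppsVB-stabNL-H10} but one derivative higher, using the linear regularity estimate of Lemma \ref{lemVB-ImpRegStab-2} in place of Proposition \ref{ppsVB-nonhm-2} and the nonlinear estimates of Lemma \ref{lemVB-ImpRegStab} in place of Lemma \ref{lemVB:liptzest}. For $\psi \in \Dc_\rho$ set $g(\psi) = e^{-\omega t}\psi\,\textbf{v}\cdot\nabla\psi$ and define $S(\psi) := \wt z^\psi$, where $\wt z^\psi$ is the solution of the linear closed-loop system \eqref{eqVB-nonhomCl} with right-hand side $g(\psi)$ and initial datum $z_0$; a fixed point of $S$ in $\Dc_\rho$ is precisely a solution of \eqref{eqVB-closedloopImprRegStab} in $\Dc_\rho$.

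First I would check that $S$ is well-defined as a map into $\Dc$. By Lemma \ref{lemVB-ImpRegStab}(a) we have $g(\psi)\in L^2(0,\infty;D(\Ac))\cap H^1(0,\infty;\Lt)$, so the only nontrivial hypothesis of Lemma \ref{lemVB-ImpRegStab-2} to verify is the compatibility condition $g(\psi)(0)+\Ac z_0\in\Hio$. Since $\psi\in H^1(0,\infty;H^2(\Omega)\cap\Hio)\hookrightarrow C_b([0,\infty);H^2(\Omega)\cap\Hio)$, the trace $\psi(0)$ is well-defined in $H^2(\Omega)\cap\Hio$; then $g(\psi)(0)=\psi(0)\,\textbf{v}\cdot\nabla\psi(0)\in H^1(\Omega)$ by the Sobolev embeddings of Lemma \ref{lemPR:SobEmb} and Agmon's inequality (Lemma \ref{lemVB:AgmonIE}) valid for $d\le 3$, and $g(\psi)(0)$ vanishes on $\partial\Omega$ because $\psi(0)$ does; together with the hypothesis $\Ac z_0\in\Hio$ this gives $g(\psi)(0)+\Ac z_0\in\Hio$. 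Hence Lemma \ref{lemVB-ImpRegStab-2} applies and $S(\psi)=\wt z^\psi\in\Dc$, with
\[
\|S(\psi)\|_\Dc \le M_1\bigl(\|z_0\|_{H^3(\Omega)} + \|g(\psi)\|_{L^2(0,\infty;H^2(\Omega))} + \|\partial_t g(\psi)\|_{L^2(0,\infty;\Lt)}\bigr) \le M_1\|z_0\|_{H^3(\Omega)} + M_1 M_2 \rho^2,
\]
using Lemma \ref{lemVB-ImpRegStab}(a) in the last step. Choosing $M$ and $\rho_0$ small (e.g. $M=\tfrac{1}{3M_1}$ and $\rho_0=\tfrac{1}{4M_1M_2}$) forces $\|S(\psi)\|_\Dc\le\rho$ whenever $0<\rho\le\rho_0$ and $\|z_0\|_{H^3(\Omega)}\le M\rho$, so $S$ maps $\Dc_\rho$ into itself.

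For the contraction property, take $\psi^1,\psi^2\in\Dc_\rho$; then $S(\psi^1)-S(\psi^2)$ solves \eqref{eqVB-nonhomCl} with zero initial datum and right-hand side $g(\psi^1)-g(\psi^2)$, and the compatibility condition is trivial here since $z_0=0$ and $(g(\psi^1)-g(\psi^2))(0)\in\Hio$ by the same argument as above. Lemma \ref{lemVB-ImpRegStab-2} combined with Lemma \ref{lemVB-ImpRegStab}(b) gives
\[
\|S(\psi^1)-S(\psi^2)\|_\Dc \le M_1 M_2\bigl(\|\psi^1\|_\Dc+\|\psi^2\|_\Dc\bigr)\|\psi^1-\psi^2\|_\Dc \le 2 M_1 M_2 \rho\,\|\psi^1-\psi^2\|_\Dc,
\]
so with $\rho_0$ chosen as above $S$ is a $\tfrac12$-contraction on the closed (hence complete) subset $\Dc_\rho$ of the Banach space $\Dc$. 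The Banach fixed-point theorem then yields a unique $\wt z\in\Dc_\rho$ with $S(\wt z)=\wt z$, which is the asserted unique solution of \eqref{eqVB-closedloopImprRegStab} in $\Dc_\rho$. The one genuinely delicate point — and the reason the stronger space $\Dc$ and the hypotheses $z_0\in H^3(\Omega)\cap\Hio$, $\Ac z_0\in\Hio$ are imposed — is the verification that the quadratic nonlinearity evaluated at $t=0$ lands in $\Hio$, i.e. the compatibility condition needed to invoke Lemma \ref{lemVB-ImpRegStab-2}; once that is in place, the remainder is a direct transcription of the contraction scheme of Proposition \ref{ppsVB-stabNL-H10}.
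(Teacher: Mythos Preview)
Your proof is correct and follows essentially the same route as the paper: a Banach fixed-point argument on $\Dc_\rho$, with Lemma \ref{lemVB-ImpRegStab-2} supplying the linear estimate and Lemma \ref{lemVB-ImpRegStab} the quadratic bounds on $g(\psi)$, exactly mirroring the lower-regularity scheme of Proposition \ref{ppsVB-stabNL-H10}. Your explicit verification of the compatibility condition $g(\psi)(0)+\Ac z_0\in\Hio$ (via $\psi(0)\in H^2(\Omega)\cap\Hio$ and the vanishing trace of the product) is a detail the paper leaves implicit, so your argument is in fact slightly more complete on that point.
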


\begin{proof}
	We proof the theorem by using Banach fixed point theorem. For a given $\psi \in \Dc_\rho$ and $z_0\in H^3(\Omega)\cap \Hio,$ let $\wt z^\psi$ satisfy 
	\begin{align*}
		{\wt z^\psi}{'}(t)=(\Ac_{\omega}-\Bc\Bc^*\Pc)\wt z^\psi(t) + g(\psi) \text{ for all }t>0, \quad \wt z^\psi (0)=z_0.
	\end{align*}
	We prove that $\wt z^\psi \in \Dc_\rho.$ From Lemmas \ref{lemVB-ImpRegStab-2} and \ref{lemVB-ImpRegStab}(a), we have 
	\begin{align*}
		\|\wt z\|_{\Dc} &=	\|\wt z^\psi\|_{L^2(0,\infty;H^4(\Omega))}+	\|\wt z^\psi\|_{L^\infty(0,\infty;H^2(\Omega))} + \|\wt z^\psi\|_{H^1(0,\infty; H^2(\Omega))} \\
		& \le M_1 \left( \|g(\psi)\|_{L^2(0,\infty; H^2(\Omega))} + \left\|\frac{\partial g}{\partial t}(\psi)\right\|_{L^2(0,\infty; \Lt)} +\|z_0\|_{H^3(\Omega)}\right)\\
		& \le M_1M_2\|\psi\|_\Dc^2 + M_1 \|z_0\|_{H^3(\Omega)}.
	\end{align*}
	Choosing $\|z_0\|_{H^3(\Omega)} \le \frac{\rho}{3M_1},$ $\rho \le \frac{1}{3M_1M_2},$ we have $\|\wt z^\psi\|_\Dc \le \rho$ and hence $\wt z\in \Dc_\rho.$ The rest of the proof follows as in the proof of Proposition \ref{ppsVB-stabNL-H10} using Lemmas \ref{lemVB-ImpRegStab-2} and \ref{lemVB-ImpRegStab}. 
\end{proof}

\noindent Recall the closed loop system \eqref{eqVB-closedloopshifted}:
\begin{equation} \label{eqVB-CLSforErr}
	\begin{aligned}
		& \wt z_t(\cdot, t)-\eta \Delta \wt z(\cdot,t) +y_s\textbf{v}\cdot\nabla \wt z(\cdot,t)+\textbf{v}\cdot\nabla y_s \wt z(\cdot,t)+(\nu_0-\omega )\wt z(\cdot, t) \\
		& \hspace{5.5cm}+e^{-\omega t}\wt z(\cdot,t)\textbf{v}\cdot \nabla \wt z(\cdot,t)=-\Bc \Bc^*\Pc \wt z(\cdot,t) \text{ in } Q,\\
		& \wt z(x,t)=0 \text{ on }\Sigma, \quad \wt z(x,0)=z_0(x) \text{ in }\Omega,
	\end{aligned}
\end{equation}

\noindent Theorem \ref{thVB-existImReg} leads to the following result on stability of \eqref{eqVB-CLSforErr}.
\begin{Theorem} \label{thVBMain-ImpRegStab}
	Let $\omega>0$ be any given real number. Then there exist positive constants $\rho_0,M$ depending on $\omega,\eta,\Omega$ such that for all $z_0 \in H^3(\Omega)\cap \Hio$ with $\Ac z_0 \in \Hio$ and $\|z_0\|_{H^3(\Omega)} \le M \rho,$ the non-linear closed loop system \eqref{eqVB-CLSforErr} admits a unique solution $\wt z\in L^2(0,\infty;\Hio\cap H^4(\Omega)) \cap L^\infty(0,\infty;\Hio\cap H^2(\Omega))\cap H^1(0,\infty;H^2(\Omega)\cap \Hio)$ satisfying
	\begin{align*}
		\|\wt z\|^2_{L^2(0,\infty;H^4(\Omega))} + \|\wt z\|^2_{L^\infty(0,\infty;H^2(\Omega))} + \|\wt z\|^2_{H^1(0,\infty;H^2(\omega))} \le \rho^2.
	\end{align*}
\end{Theorem}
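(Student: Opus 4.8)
The plan is to observe that Theorem~\ref{thVBMain-ImpRegStab} is, up to bookkeeping, a restatement in PDE and regularity language of the fixed-point result already established in Theorem~\ref{thVB-existImReg}. First I would check that a function $\wt z$ solves the boundary value problem \eqref{eqVB-CLSforErr} if and only if it solves the abstract closed loop equation \eqref{eqVB-closedloopImprRegStab}. Indeed, by the definition \eqref{eqdefVB-A} of $\Ac$ and \eqref{eqVB-Acw-Acw*} giving $\Acw=\Ac+\omega I$, one has $\Acw\wt z=\eta\Delta\wt z-y_s\textbf{v}\cdot\nabla\wt z-\textbf{v}\cdot\nabla y_s\,\wt z-(\nu_0-\omega)\wt z$; substituting this into \eqref{eqVB-closedloopImprRegStab} and moving the elliptic and zeroth-order linear terms to the left-hand side produces exactly \eqref{eqVB-CLSforErr} (with the right-hand side $-\Bc\Bc^*\Pc\wt z$ and the nonlinear term $e^{-\omega t}\wt z\,\textbf{v}\cdot\nabla\wt z$ matching term by term). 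Hence the two problems have the same solutions in every reasonable function class, in particular in $\Dc$.

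Next I would verify that the hypotheses of Theorem~\ref{thVB-existImReg} are met for the data considered here, the only nontrivial point being the compatibility condition. Theorem~\ref{thVB-existImReg}, through Lemma~\ref{lemVB-ImpRegStab-2}, requires $g(\psi)(0)+\Ac z_0\in\Hio$ along the fixed-point iteration, where $g(\psi)=e^{-\omega t}\psi\,\textbf{v}\cdot\nabla\psi$. For $\psi\in\Dc$ we have $\psi(0)\in H^2(\Omega)\cap\Hio$, so by the Sobolev embedding $H^2(\Omega)\hookrightarrow C(\overline{\Omega})$ in dimension $d\le 3$ together with $\psi(0)=0$ on $\partial\Omega$, the product $\psi(0)\,\textbf{v}\cdot\nabla\psi(0)$ lies in $\Hio$; thus $g(\psi)(0)\in\Hio$ automatically and the compatibility condition reduces to the stated assumption $\Ac z_0\in\Hio$. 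With $z_0\in H^3(\Omega)\cap\Hio$, $\Ac z_0\in\Hio$, and $\|z_0\|_{H^3(\Omega)}\le M\rho$ for $0<\rho\le\rho_0$ (taking $\rho_0,M$ to be the constants furnished by Theorem~\ref{thVB-existImReg}), that theorem yields a unique $\wt z\in\Dc_\rho$ solving \eqref{eqVB-closedloopImprRegStab}, and hence, by the equivalence above, solving \eqref{eqVB-CLSforErr}.

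Finally, I would translate the conclusion. Membership $\wt z\in\Dc$ is, by the definition of $\Dc$, precisely the regularity $\wt z\in L^2(0,\infty;\Hio\cap H^4(\Omega))\cap L^\infty(0,\infty;\Hio\cap H^2(\Omega))\cap H^1(0,\infty;H^2(\Omega)\cap\Hio)$, and the bound $\|\wt z\|_{\Dc}\le\rho$ unwinds, via $\|\wt z\|_{\Dc}^2=\|\wt z\|_{L^2(0,\infty;H^4(\Omega))}^2+\|\wt z\|_{L^\infty(0,\infty;H^2(\Omega))}^2+\|\wt z\|_{H^1(0,\infty;H^2(\Omega))}^2$, into the asserted inequality. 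One remaining item is to promote uniqueness from $\Dc_\rho$ to the full class $\Dc$: if $\wt z_1,\wt z_2\in\Dc$ both solved \eqref{eqVB-CLSforErr}, their difference satisfies the linear closed loop equation with zero initial datum and right-hand side $-(g(\wt z_1)-g(\wt z_2))$, which is Lipschitz in $\Dc$ by Lemma~\ref{lemVB-ImpRegStab}; combining the regularity estimate of Proposition~\ref{ppsVB-nonhm-2} (as used in Theorem~\ref{thVB-existImReg}) with the smallness of $\|z_0\|_{H^3(\Omega)}$ on a short interval and a standard continuation argument forces $\wt z_1=\wt z_2$. I expect no genuine obstacle here: the substantive content is entirely carried by Theorem~\ref{thVB-existImReg}, and the only mildly delicate steps are the abstract-versus-PDE identification and the observation that the compatibility hypothesis is automatically satisfied.
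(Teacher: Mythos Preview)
Your proposal is correct and follows exactly the paper's approach: the paper simply records that Theorem~\ref{thVB-existImReg} leads to Theorem~\ref{thVBMain-ImpRegStab}, and you have spelled out the identification between the abstract equation \eqref{eqVB-closedloopImprRegStab} and the PDE form \eqref{eqVB-CLSforErr}, the verification of the compatibility condition, and the translation of the $\Dc_\rho$ bound, in more detail than the paper itself provides.
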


\subsubsection{Main result on error estimates}
Now, our focus is on establishing an error estimate for the stabilized solution and stabilizing control, that is, to estimate $\wt z - \wt z_h,$ where $\wt z$ and $\wt z_h$ are the solutions of the closed loop systems \eqref{eqVB-CLSforErr} and \eqref{eqVB-disClNlOpForm}, respectively. Here, we use their weak formulation. The weak formulation of the closed loop system \eqref{eqVB-CLSforErr} is
{\small
\begin{equation} \label{eqVB-err cls wk}
	\begin{aligned}
		&  \langle \wt z_t,\phi\rangle +\eta \langle\nabla \wt z ,\nabla\phi\rangle + e^{-\omega t} \langle \wt z\textbf{v}\cdot \nabla \wt z , \phi\rangle + \langle y_s \textbf{v} \cdot \nabla \wt z+\textbf{v}\cdot \nabla y_s \wt z, \phi\rangle + (\nu_0-\omega)\langle \wt z,\phi\rangle =\langle -\Bc\Bc^*\Pc\wt z, \phi\rangle, \\
		& \langle \wt z(0),\phi\rangle = \langle z_0,\phi\rangle \text{ for all } \phi\in \Hio.
	\end{aligned}
\end{equation}}
and recall \eqref{eqVB-err cls dwk} as the weak formulation of the closed loop system \eqref{eqVB-disClNlOpForm}.


\noindent Define a projection 
$\wt \pi_h: \Hio \rightarrow V_h$ defined by 
\begin{align} \label{eqVB-proj 2 elpt}
	\eta\langle \nabla (\wt z - \wt\pi_h \wt z), \nabla \phi_h \rangle +\nu_0 \langle (\wt z-\wt \pi_h \wt z) , \phi_h\rangle = 0 \text{ for all } \wt z\in \Hio,\phi_h \in V_h.
\end{align}
In the lemma below, we state some properties satisfied by the projection $\wt \pi_h.$
\begin{Lemma}{\n\cite{Kundu20,Thomee}} \label{lemVB-ErrInEPr}
	Let $\wt \pi_h$ be as defined in \eqref{eqVB-proj 2 elpt}. Then the following estimates hold:
	\begin{itemize}
		\item[$(a)$] $\|\wt z -\wt \pi_h \wt z\|_{H^j(\Omega)} \le C h^{m-j}\|\wt z\|_{H^m(\Omega)}$ for all $j=0,1,$ and $m=1,2,$ for all $\wt z\in H^m(\Omega),$
		\item[$(b)$] $\|\wt z_t - \wt \pi_h \wt z_t\| \le Ch^2 \|\wt z_t\|_{H^2(\Omega)},$
		\item[$(c)$] $\|\wt \pi_h \wt z\|_{L^\infty(\Omega)} \le C \|\wt z\|_{H^2(\Omega)}$ and
		\item[$(d)$] $\|\nabla(\wt \pi_h \wt z)\|_{L^4(\Omega)} \le C \|\wt z\|_{H^2(\Omega)}.$
	\end{itemize}
\end{Lemma}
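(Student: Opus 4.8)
The plan is to recognise $\wt\pi_h$ as the standard Ritz (elliptic) projection associated with the symmetric bilinear form $b(u,v):=\eta\langle\nabla u,\nabla v\rangle+\nu_0\langle u,v\rangle$ on $\Hio\times\Hio$, and then to reduce each item to classical finite element arguments. First I would record that $b$ is bounded on $\Hio\times\Hio$ and coercive on $\Hio$ (immediate from Poincar\'e's inequality since $\nu_0\ge0$ in the setting considered; if $\nu_0<0$ one instead obtains coercivity for $h$ small via a G\r{a}rding estimate as in \eqref{eqVB-coerc}), so that $\wt\pi_h\wt z$ in \eqref{eqVB-proj 2 elpt} is well-defined and satisfies the Galerkin orthogonality $b(\wt z-\wt\pi_h\wt z,\phi_h)=0$ for all $\phi_h\in V_h$. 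Parts $(a)$ and $(b)$ will follow from this orthogonality alone; parts $(c)$ and $(d)$ will additionally require inverse estimates on $V_h$ — hence the quasi-uniformity of $\{\T_h\}_{h>0}$ assumed throughout — together with a quasi-interpolation operator.

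For $(a)$ with $j=1$, I would invoke C\'ea's lemma, $\|\nabla(\wt z-\wt\pi_h\wt z)\|\le C\inf_{\phi_h\in V_h}\|\nabla(\wt z-\phi_h)\|$, and the approximation property of $V_h$ (testing with a standard interpolant of $\wt z$), giving $\|\nabla(\wt z-\wt\pi_h\wt z)\|\le Ch^{m-1}\|\wt z\|_{H^m(\Omega)}$ for $m=1,2$ (for $m=1$ this is just the stability bound). For $(a)$ with $j=0$ I would run the Aubin--Nitsche duality argument: with $e:=\wt z-\wt\pi_h\wt z$, let $w\in H^2(\Omega)\cap\Hio$ solve $-\eta\Delta w+\nu_0 w=e$ in $\Omega$, $w=0$ on $\partial\Omega$, for which elliptic regularity gives $\|w\|_{H^2(\Omega)}\le C\|e\|$; then Galerkin orthogonality and the $m=2$ case applied to $w$ yield
\[
\|e\|^2=b(e,w-\wt\pi_h w)\le C\|\nabla e\|\,\|\nabla(w-\wt\pi_h w)\|\le Ch\|\nabla e\|\,\|w\|_{H^2(\Omega)}\le Ch\|\nabla e\|\,\|e\|,
\]
hence $\|e\|\le Ch\|\nabla e\|\le Ch^{m}\|\wt z\|_{H^m(\Omega)}$ for $m=1,2$, which is $(a)$. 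Part $(b)$ is then just $(a)$ with $j=0$, $m=2$, applied to $\wt z_t$, since $\wt\pi_h$ is linear and $\wt z_t-\wt\pi_h\wt z_t$ is the projection error of $\wt z_t\in H^2(\Omega)$.

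For $(c)$ and $(d)$ I would fix a quasi-interpolant $\Pi_h:\Hio\to V_h$ (Scott--Zhang or Cl\'ement) that is stable in $L^\infty(\Omega)$ and $W^{1,4}(\Omega)$ and has the local approximation bounds $\|v-\Pi_h v\|\le Ch^2\|v\|_{H^2(\Omega)}$, $\|\nabla(v-\Pi_h v)\|\le Ch\|v\|_{H^2(\Omega)}$, and split $\wt\pi_h\wt z=(\wt\pi_h\wt z-\Pi_h\wt z)+\Pi_h\wt z$. The $\Pi_h\wt z$ term is controlled by stability of $\Pi_h$ together with the Sobolev embeddings $H^2(\Omega)\hookrightarrow L^\infty(\Omega)$ for $d\le3$ (cf. Lemma \ref{lemVB:AgmonIE}) and $H^1(\Omega)\hookrightarrow L^4(\Omega)$ applied to $\nabla\wt z$ (cf. \eqref{eqPR:SobEmb}), giving $\|\Pi_h\wt z\|_{L^\infty(\Omega)}+\|\nabla(\Pi_h\wt z)\|_{L^4(\Omega)}\le C\|\wt z\|_{H^2(\Omega)}$. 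For the $V_h$-term I would use the inverse inequalities $\|v_h\|_{L^\infty(\Omega)}\le Ch^{-d/2}\|v_h\|$ and $\|\nabla v_h\|_{L^4(\Omega)}\le Ch^{-d/4}\|\nabla v_h\|$, combined with the triangle inequality, the $j=0$, $m=2$ estimate from $(a)$, and the approximation bounds for $\Pi_h$, to get
\[
\|\wt\pi_h\wt z-\Pi_h\wt z\|_{L^\infty(\Omega)}\le Ch^{\,2-d/2}\|\wt z\|_{H^2(\Omega)},\qquad \|\nabla(\wt\pi_h\wt z-\Pi_h\wt z)\|_{L^4(\Omega)}\le Ch^{\,1-d/4}\|\wt z\|_{H^2(\Omega)};
\]
since $2-d/2\ge\tfrac12$ and $1-d/4\ge\tfrac14$ for $d\in\{1,2,3\}$, these are bounded by $C\|\wt z\|_{H^2(\Omega)}$, and adding the two contributions gives $(c)$ and $(d)$.

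I expect the main obstacle to be $(c)$--$(d)$: parts $(a)$--$(b)$ are textbook, but for $(c)$--$(d)$ one must pair inverse estimates — which force the standing quasi-uniformity assumption on $\{\T_h\}_{h>0}$ — with a quasi-interpolant that is simultaneously stable in $L^\infty$ and $W^{1,4}$, and then verify that the resulting powers $2-d/2$ and $1-d/4$ of $h$ stay non-negative in every dimension $d\in\{1,2,3\}$; this numerology is precisely what keeps the bounds $h$-uniform. A minor preliminary point is to confirm that the standing hypotheses make $b$ coercive on $\Hio$ (trivial when $\nu_0\ge0$).
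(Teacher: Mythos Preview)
Your proof is correct and follows exactly the standard route one finds in the cited references: the paper gives no proof of this lemma at all but merely attributes it to \cite{Kundu20,Thomee}, and your sketch---C\'ea plus Aubin--Nitsche for $(a)$--$(b)$, then a quasi-interpolant/inverse-inequality splitting for $(c)$--$(d)$---is precisely the textbook argument (Thom\'ee) that these citations point to. One small remark: in the paper's setting \eqref{eqdefVB-nu0} forces $\nu_0>0$, so coercivity of $b$ is immediate and the G\r{a}rding detour you mention is not needed here.
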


\noindent Subtract \eqref{eqVB-err cls dwk} from \eqref{eqVB-err cls wk} to obtain
\begin{align*}
	& \langle \wt z_t -\wt z_{h_t},\phi_h\rangle + \eta \langle \nabla \wt z - \nabla \wt z_h ,\nabla\phi_h\rangle + e^{-\omega t} \langle (\wt z - \wt z_h)\textbf{v}\cdot \nabla \wt z + \wt z_h\textbf{v}\cdot \nabla (\wt z-\wt z_h) , \phi_h\rangle   \\ 
	& \, +(\nu_0-\omega)\langle \wt z- \wt z_h,\phi_h\rangle + \langle y_s \textbf{v} \cdot \nabla (\wt z -\wt z_h)+\textbf{v}\cdot \nabla y_s (\wt z -\wt z_h),\phi_h\rangle    + \langle \Bc\Bc^* \Pc \wt z - \Bc_h\Bc_h^*\Pc_h\wt z_h, \phi_h\rangle=0. 
\end{align*} 

\noindent Introduce the variables $\Xi=\wt z -\wt\pi_h\wt z$ and $\zeta=\wt z_h -\wt \pi_h \wt z.$ Note that $\wt z -\wt z_h=\Xi -\zeta.$
Now, adding and subtracting of $\wt \pi_h \wt z$ in each term of above equation, we have
{\small
	\begin{align*}
		& \langle \Xi_t - \zeta_t,\phi_h\rangle + \eta \langle \nabla \wt z - \nabla (\pi_h \wt z) ,\nabla\phi_h\rangle - \eta \langle \nabla \wt z_h - \nabla (\pi_h \wt z) ,\nabla\phi_h\rangle \\
		& \quad + e^{-\omega t} \langle (\Xi -\zeta) \textbf{v}\cdot \nabla \wt z +\wt z_h\textbf{v}\cdot \nabla(\Xi-\zeta) , \phi_h\rangle  +\nu_0\langle \wt z- \wt\pi_h\wt z,\phi_h\rangle - \nu_0\langle \wt z_h- \wt\pi_h\wt z,\phi_h\rangle \\ 
		& \quad -\omega \langle \Xi -\zeta, \phi_h\rangle + \langle y_s \textbf{v} \cdot \nabla (\Xi -\zeta)+\textbf{v}\cdot \nabla y_s (\Xi - \zeta),\phi_h\rangle  + \langle \Bc\Bc^*\Pc \wt z - \Bc_h\Bc_h^*\Pc_h\wt z_h, \phi_h\rangle=0, 
	\end{align*} 
}for all $\phi_h \in V_h.$ Now, utilizing \eqref{eqVB-proj 2 elpt}, we have 
{\small
	\begin{equation} \label{eqVB-ErrZetaPhiEqn}
		\begin{aligned}
			\langle \zeta_t , \phi_h\rangle &= -\eta \langle \nabla \zeta , \nabla \phi_h\rangle + (\omega -\nu_0 ) \langle \zeta, \phi_h\rangle   + \langle \Xi_t, \phi_h\rangle + e^{-\omega t} \langle (\Xi -\zeta) \textbf{v}\cdot \nabla \wt z ,\phi\rangle   \\
			& \quad + e^{-\omega t}\langle \wt z_h\textbf{v}\cdot \nabla(\Xi-\zeta), \phi_h\rangle -\omega \langle \Xi , \phi_h\rangle + \langle y_s \textbf{v} \cdot \nabla (\Xi -\zeta)+\textbf{v}\cdot \nabla y_s (\Xi - \zeta),\phi_h\rangle  \\
			& \quad + \langle (\Bc\Bc^*\Pc  - \Bc_h\Bc_h^*\Pc_h \pi_h ) \wt z, \phi_h\rangle  + \langle  \Bc_h\Bc_h^*\Pc_h (\pi_h \wt z - \wt z_h) , \phi_h\rangle,
		\end{aligned}
	\end{equation}
}for all $\phi \in V_h.$
\begin{Theorem} \label{thVB-zeta est}
	Let $(\mathcal{A}_3)$ holds and $M,\rho_0$ be as in Theorem \ref{thVBMain-ImpRegStab}. Let $\rho \in (0,\rho_0]$ be any number and let $z_0\in H^3(\Omega)\cap \Hio$ such that $\Ac z_0 \in \Hio$ and $\|z_0\|_{H^3(\Omega)}\le M\rho.$  Let $ \wt z_h$ be solution of \eqref{eqVB-err cls dwk} and $\wt \pi_h$ be as defined in \eqref{eqVB-proj 2 elpt}. Then $\zeta= \wt z_h -\wt \pi_h \wt z$ satisfies the following estimate:
	\begin{equation*}
		\begin{aligned}
			\|\zeta(t)\|^2 + \alpha \int_0^t \|\nabla \zeta(s)\|^2 ds \le C h^{4(1-\epsilon) }e^{-\omega_P t},
		\end{aligned}
	\end{equation*}
	for some positive constant $C=C(\rho, y_s, \textbf{v},\eta, C_a, s_0, \|z_0\|_{H^3(\Omega)})$ and for any given $0<\epsilon<1.$
\end{Theorem}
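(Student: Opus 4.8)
The plan is to run a Gronwall-type energy estimate directly on the error equation \eqref{eqVB-ErrZetaPhiEqn}, with the test function $\phi_h=\zeta$. The first step is to reorganize the right-hand side of \eqref{eqVB-ErrZetaPhiEqn}: the diffusion term $-\eta\langle\nabla\zeta,\nabla\zeta\rangle$, the reaction term $(\omega-\nu_0)\|\zeta\|^2$, the $\zeta$-parts of the convection terms $-\langle y_s\textbf{v}\cdot\nabla\zeta+\textbf{v}\cdot\nabla y_s\,\zeta,\zeta\rangle$, and the $\zeta$-part of the feedback term $-\langle\Bc_h\Bc_h^*\Pc_h\zeta,\zeta\rangle$ together assemble into $\langle(\Ac_{\omega_h}-\Bc_h\Bc_h^*\Pc_h)\zeta,\zeta\rangle$, which by Assumption $(\mathcal{A}_3)$ is bounded above by $-\omega_P\|\zeta\|^2-\alpha\|\nabla\zeta\|^2$. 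What remains are: (i) the purely nonlinear pieces $-e^{-\omega t}\langle\zeta\,\textbf{v}\cdot\nabla\wt z,\zeta\rangle$ and $-e^{-\omega t}\langle\wt z_h\,\textbf{v}\cdot\nabla\zeta,\zeta\rangle$; (ii) the elliptic-projection error pieces $\langle\Xi_t,\zeta\rangle$, $-\omega\langle\Xi,\zeta\rangle$, $e^{-\omega t}\langle\Xi\,\textbf{v}\cdot\nabla\wt z,\zeta\rangle$, $e^{-\omega t}\langle\wt z_h\,\textbf{v}\cdot\nabla\Xi,\zeta\rangle$ and $\langle y_s\textbf{v}\cdot\nabla\Xi+\textbf{v}\cdot\nabla y_s\,\Xi,\zeta\rangle$; and (iii) the Riccati mismatch $\langle(\Bc\Bc^*\Pc-\Bc_h\Bc_h^*\Pc_h\pi_h)\wt z,\zeta\rangle$ together with $\langle\Bc_h\Bc_h^*\Pc_h(\pi_h\wt z-\wt\pi_h\wt z),\zeta\rangle$.

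For group (i) I would use the antisymmetry of the Burgers term, $\langle w\,\textbf{v}\cdot\nabla w,w\rangle=0$ for $w\in\Hio$, and the integration by parts $\langle\wt z_h\,\textbf{v}\cdot\nabla\zeta,\zeta\rangle=-\tfrac12\langle(\textbf{v}\cdot\nabla\wt z_h)\zeta,\zeta\rangle$ (valid since $\zeta=0$ on $\partial\Omega$), combined with the smallness $\|\wt z\|_{L^\infty(0,\infty;H^2(\Omega))}\le\rho$ from Theorem \ref{thVBMain-ImpRegStab}, Lemma \ref{lemVB-ErrInEPr}(c)--(d), and a uniform bound $\|\wt z_h(t)\|_{L^\infty(\Omega)}\le C\rho$. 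This last bound is not given directly, so I would obtain it by a continuation (bootstrap) argument: writing $\wt z_h=\zeta+\wt\pi_h\wt z$ with $\|\wt\pi_h\wt z\|_{L^\infty(\Omega)}\le C\|\wt z\|_{H^2(\Omega)}\le C\rho$ and controlling $\|\zeta\|_{L^\infty(\Omega)}\le Ch^{-d/2}\|\zeta\|$ by an inverse estimate, the a priori bound $\|\zeta(t)\|\le 2C_0h^{2(1-\epsilon)}$ (which holds at $t=0$ since $\zeta(0)=\pi_h z_0-\wt\pi_h z_0$ is $O(h^2)$) forces $\|\zeta\|_{L^\infty(\Omega)}\to0$ for $d\le 3$ and $\epsilon$ small, hence $\|\wt z_h\|_{L^\infty(\Omega)}\le C\rho$ on the maximal such interval. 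With these in hand all terms in (i) are bounded by $C\rho(\|\zeta\|^2+\|\nabla\zeta\|^2)$ and absorbed into $-\omega_P\|\zeta\|^2-\alpha\|\nabla\zeta\|^2$ after fixing $\rho$ small; the resulting estimate then returns $\|\zeta(t)\|\le C_0h^{2(1-\epsilon)}$, strictly better than the a priori bound, so the interval is all of $[0,\infty)$.

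For group (ii) I would invoke Lemma \ref{lemVB-ErrInEPr} ($\|\Xi\|\le Ch^2\|\wt z\|_{H^2(\Omega)}$, $\|\nabla\Xi\|\le Ch\|\wt z\|_{H^2(\Omega)}$, $\|\Xi_t\|\le Ch^2\|\wt z_t\|_{H^2(\Omega)}$) together with Lemma \ref{lemPR:SobEmb} and Lemma \ref{lemVB:AgmonIE}, pairing each factor against $\|\zeta\|$, $\|\nabla\zeta\|$ or $\|\zeta\|_{L^4(\Omega)}\le s_0\|\zeta\|_{H^1(\Omega)}$ and applying Young's inequality; the $\nabla\Xi$ term, only $O(h)$, is paired with the $L^\infty$-bounded $\wt z_h$ so that it still yields an $O(h^2\rho)$ source, and the net effect of (ii) is a source $\le Ch^4\big(\|\wt z\|_{H^2(\Omega)}^2+\|\wt z\|_{H^2(\Omega)}^4+\|\wt z_t\|_{H^2(\Omega)}^2\big)$ plus small multiples of $\|\zeta\|^2$ and $\|\nabla\zeta\|^2$. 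For group (iii), Theorem \ref{thVB:main-conv-P}(c)--(d) (with $z_{0_h}=\pi_h z_0$) together with $\|\Bc\|_{\Lc(\Lt)}\le C_B$ gives $\|\Bc\Bc^*\Pc-\Bc_h\Bc_h^*\Pc_h\pi_h\|_{\Lc(\Lt)}\le Ch^{2(1-\epsilon)}$, so this piece is $\le Ch^{4(1-\epsilon)}\|\wt z\|^2+\delta\|\zeta\|^2$, while $\|\pi_h\wt z-\wt\pi_h\wt z\|\le Ch^2\|\wt z\|_{H^2(\Omega)}$ (both projections being $O(h^2)$-accurate on $H^2(\Omega)$) contributes an $O(h^2)$ source.

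Collecting everything yields $\frac{d}{dt}\|\zeta(t)\|^2+\omega_P\|\zeta(t)\|^2+\alpha\|\nabla\zeta(t)\|^2\le Ch^{4(1-\epsilon)}\Psi(t)$, where $\Psi(t)=\|\wt z(t)\|_{H^2(\Omega)}^2+\|\wt z(t)\|_{H^2(\Omega)}^4+\|\wt z_t(t)\|_{H^2(\Omega)}^2+\|\wt z(t)\|^2$. Since $\wt z$ solves the closed-loop system \eqref{eqVB-CLSforErr}, whose linear part generates an exponentially stable analytic semigroup (Theorem \ref{thVB-mainstab}(d)) and whose nonlinearity carries the factor $e^{-\omega t}$, a Duhamel argument (with analytic smoothing for the higher norms) shows $\Psi$ decays exponentially and is integrable on $(0,\infty)$; combined with $\|\zeta(0)\|\le Ch^2\|z_0\|_{H^2(\Omega)}$, integrating the differential inequality (a weighted Gronwall) gives $\|\zeta(t)\|^2+\alpha\int_0^t\|\nabla\zeta(s)\|^2\,ds\le Ch^{4(1-\epsilon)}e^{-\omega_P t}$ with $C=C(\rho,y_s,\textbf{v},\eta,C_a,s_0,\|z_0\|_{H^3(\Omega)})$. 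The main obstacle is the bootstrap control of the Burgers nonlinearity — in particular securing the uniform-in-time $L^\infty(\Omega)$ bound on the discrete solution $\wt z_h$, without which the terms $\langle\wt z_h\,\textbf{v}\cdot\nabla\zeta,\zeta\rangle$ and $\langle\wt z_h\,\textbf{v}\cdot\nabla\Xi,\zeta\rangle$ cannot be absorbed; the rest is routine Sobolev/Agmon estimation together with the previously established Riccati error bounds.
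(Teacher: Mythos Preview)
Your overall architecture matches the paper's: test \eqref{eqVB-ErrZetaPhiEqn} with $\phi_h=\zeta$, assemble the $\zeta$--pieces into $\langle(\Ac_{\omega_h}-\Bc_h\Bc_h^*\Pc_h)\zeta,\zeta\rangle$, invoke $(\mathcal{A}_3)$, bound the remaining terms, and close by Gronwall. Two points, however, separate your plan from the paper's argument.

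First, the bootstrap/continuation device for $\|\wt z_h\|_{L^\infty(\Omega)}$ is entirely unnecessary. The paper writes $\wt z_h=\zeta+\wt\pi_h\wt z$ once and for all. Since $\langle\zeta\,\textbf{v}\cdot\nabla\zeta,\zeta\rangle=0$, the piece $\langle\wt z_h\,\textbf{v}\cdot\nabla\zeta,\zeta\rangle$ reduces to $\langle\wt\pi_h\wt z\,\textbf{v}\cdot\nabla\zeta,\zeta\rangle$, and Lemma~\ref{lemVB-ErrInEPr}(c) gives $\|\wt\pi_h\wt z\|_{L^\infty(\Omega)}\le C\|\wt z\|_{H^2(\Omega)}$ directly, with no inverse estimate or continuation needed. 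The resulting coefficient $C\|\wt z(t)\|_{H^2(\Omega)}^2$ on $\|\zeta\|^2$ is not absorbed by smallness of $\rho$; it is kept in the Gronwall kernel, and $\int_0^\infty\|\wt z\|_{H^2}^2\le\rho^2$ (Theorem~\ref{thVBMain-ImpRegStab}) makes the exponential factor harmless.

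Second --- and this is a genuine gap --- your treatment of $\langle\wt z_h\,\textbf{v}\cdot\nabla\Xi,\zeta\rangle$ loses an order of $h$. Pairing $\|\wt z_h\|_{L^\infty}$ with $\|\nabla\Xi\|\le Ch\|\wt z\|_{H^2}$ and $\|\zeta\|$ gives, after Young, a source $Ch^{2}\|\wt z\|_{H^2}^2$ in the inequality for $\|\zeta\|^2$, hence only $\|\zeta\|=O(h)$; your claimed ``$O(h^2\rho)$ source'' and the subsequent ``net effect $\le Ch^4(\cdots)$'' are inconsistent with this estimate. The paper recovers the full rate by again splitting $\wt z_h=\zeta+\wt\pi_h\wt z$: for the $\zeta$--part it uses $\|\nabla\Xi\|_{L^4(\Omega)}\le C\|\wt z\|_{H^2(\Omega)}$ (a bounded quantity with \emph{no} $h$--factor, from Lemma~\ref{lemVB-ErrInEPr}(d) and Sobolev), producing a term $C\|\wt z\|_{H^2}^2\|\zeta\|^2$ that goes into the Gronwall kernel; for the $\wt\pi_h\wt z$--part it integrates by parts to move the gradient off $\Xi$, leaving only $\|\Xi\|=O(h^2)$ paired against bounded quantities. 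Without one of these two manoeuvres the quadratic rate cannot be reached.
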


\begin{proof}
	Choose $\phi_h=\zeta$ in \eqref{eqVB-ErrZetaPhiEqn} and use the definition of $\Ac_h$ in \eqref{eqVB-DisBil} and $\Ac_{\omega_h}$ in \eqref{eqdefVB-Awh} to obtain 
	\begin{equation*} 
		\begin{aligned}
			\frac{1}{2}  \frac{d}{dt}\|\zeta\|^2 &   - \langle\Xi_t, \zeta\rangle -  e^{-\omega t} \langle (\Xi -\zeta) \textbf{v}\cdot \nabla \wt z , \zeta\rangle  - e^{-\omega t} \langle \wt z_h\textbf{v}\cdot \nabla(\Xi-\zeta) , \zeta\rangle  + \omega \langle \Xi, \zeta\rangle \\
			& \quad  - \langle y_s \textbf{v} \cdot \nabla \Xi    +\textbf{v}\cdot \nabla y_s \Xi ,\zeta\rangle  + \langle (\Bc_h\Bc_h^*\Pc_h\pi_h - \Bc\Bc^*\Pc)\wt z, \zeta\rangle \\
			&  = -\eta \|\nabla \zeta\|^2 - \langle y_s \textbf{v} \cdot \nabla \zeta   +\textbf{v}\cdot \nabla y_s \zeta ,\zeta\rangle  + (\omega-\nu_0)\|\zeta\|^2- \langle \Bc_h\Bc_h^*\Pc_h \zeta, \zeta \rangle \\
			&  = \langle (\Ac_{\omega_h} -\Bc_h \Bc_h^* \Pc_h)\zeta, \zeta \rangle .
		\end{aligned}
	\end{equation*}
	Now, utilizing \eqref{eqVB-assumpt-omegaP-alpha}, we obtain 
	{\small
		\begin{equation} \label{eqVB-errest-allTerm1}
			\begin{aligned}
				\frac{1}{2}  \frac{d}{dt}\|\zeta\|^2  +\alpha \|\nabla \zeta\|^2 +\omega_P \|\zeta\|^2 & \le  \left\vert \langle\Xi_t, \zeta\rangle \right\vert + \left\vert e^{-\omega t} \langle (\Xi -\zeta) \textbf{v}\cdot \nabla \wt z , \zeta\rangle \right\vert +\left\vert  e^{-\omega t} \langle \wt z_h\textbf{v}\cdot \nabla(\Xi-\zeta) , \zeta\rangle \right\vert \\
				& \quad    +\left\vert  \omega \langle \Xi, \zeta\rangle \right\vert + \vert \langle y_s \textbf{v} \cdot \nabla \Xi   +\textbf{v}\cdot \nabla y_s \Xi ,\zeta\rangle \vert  \\ 
				& \quad +\left\vert \langle (\Bc_h\Bc_h^*\Pc_h\pi_h - \Bc\Bc^*\Pc)\wt z, \zeta\rangle \right\vert,\\
				& =:  \sum_{i=1}^6 T_i.
			\end{aligned}
		\end{equation}
	}\textbf{Estimate of $T_1$.} Use Lemma \ref{lemVB-ErrInEPr}(b) and Young's inequality to obtain
	\begin{align*}
		T_1=\left\vert \langle \Xi_t, \zeta\rangle\right\vert\le C_1(\omega_P)\|\Xi_t\|^2 +\frac{\omega_P}{6}\|\zeta\|^2 \le C_1(\omega_P) h^4 \|\wt z_t\|_{H^2(\Omega)}^2 + \frac{\omega_P}{6} \|\zeta\|^2.
	\end{align*}
	
	\medskip
	\noindent\textbf{Estimate of $T_2.$} Note that $e^{-\omega t} \langle (\Xi -\zeta) \textbf{v}\cdot \nabla \wt z , \zeta\rangle= e^{-\omega t} \langle \Xi  \textbf{v}\cdot \nabla \wt z , \zeta\rangle - e^{-\omega t} \langle \zeta  \textbf{v}\cdot \nabla \wt z , \zeta\rangle$.
	Use \eqref{lemPR:SobEmb} and Young's inequality to obtain
	\begin{equation*}
		\begin{aligned}
			T_2 & \le |\textbf{v}| \|\Xi\| \|\nabla  \wt z\|_{L^4(\Omega)} \|\zeta\|_{L^4(\Omega)} + |\textbf{v}| \|\zeta\| \|\nabla \wt z\|_{L^4(\Omega)} \|\zeta\|_{L^4(\Omega)}\\
			& \le |\textbf{v}|s_0^2 \|\Xi\| \|\wt z\|_{H^2(\Omega)} \|\nabla \zeta\| + |\textbf{v}| s_0^2\|\zeta\| \|\wt z\|_{H^2(\Omega)}\|\nabla \zeta\| \\
			& \le \frac{\alpha}{8} \|\nabla \zeta\|^2 + C_2(\textbf{v}, s_0, \alpha) \|\Xi\|^2 \|\wt z\|_{H^2(\Omega)}^2 +C_3(\textbf{v}, \alpha, s_0)\|\zeta\|^2\|\wt z\|_{H^2(\Omega)}^2.
		\end{aligned}
	\end{equation*}

	\medskip
	\noindent\textbf{Estimate of $T_3.$}
	First note that 
	\begin{equation*}
		\begin{aligned}
			\langle \wt z_h\textbf{v}\cdot \nabla(\Xi-\zeta) , \zeta\rangle & = \langle \wt z_h \textbf{v}\cdot \nabla \Xi , \zeta \rangle - \langle (\wt z_h -\wt \pi_h \wt z)\textbf{v}\cdot \nabla \zeta , \zeta \rangle - \langle \wt \pi_h \wt z\textbf{v}\cdot \nabla \zeta , \zeta \rangle \\
			& = \langle \wt z_h \textbf{v}\cdot \nabla \Xi , \zeta \rangle  - \langle \wt \pi_h \wt z\textbf{v}\cdot \nabla \zeta , \zeta \rangle,
		\end{aligned}
	\end{equation*}
	as $\langle (\wt z_h -\wt \pi_h \wt z)\textbf{v}\cdot \nabla \zeta , \zeta \rangle=\langle \zeta\textbf{v}\cdot \nabla \zeta , \zeta \rangle=0.$ Now, utilizing Lemma \ref{lemVB-ErrInEPr}(c) and Young's inequality, we obtain
	\begin{equation*}
		\begin{aligned}
			\left\vert\langle \wt \pi_h \wt z\textbf{v}\cdot \nabla \zeta , \zeta \rangle \right\vert\le |\textbf{v}|\| \|\wt \pi_h \wt z\|_{L^\infty(\Omega)} \|\nabla \zeta\| \|\zeta\| \le \frac{\alpha}{8}\|\nabla \zeta\|^2 + C_4(\alpha, \textbf{v}) \|\wt z\|_{H^2(\Omega)}^2\|\zeta\|^2.
		\end{aligned}
	\end{equation*}
	Observe that $\langle \wt z_h \textbf{v}\cdot \nabla \Xi , \zeta \rangle= \langle \zeta \textbf{v}\cdot \nabla \Xi, \zeta\rangle - \langle \wt \pi_h \wt z \textbf{v} \cdot \nabla \zeta , \Xi\rangle - \langle \Xi \textbf{v}\cdot \nabla(\wt \pi_h \wt z), \zeta \rangle$ and therefore, \eqref{lemPR:SobEmb}, Lemma \ref{lemVB-ErrInEPr}(a)-(d),  and Young's inequality lead to
	\begin{equation*}
		\begin{aligned}
			\left\vert \langle \wt z_h \textbf{v}\cdot \nabla \Xi , \zeta \rangle\right\vert & \le |\textbf{v}| \|\zeta\| \|\nabla \Xi\|_{L^4(\Omega)} \|\zeta\|_{L^4(\Omega)} + |\textbf{v}|\|\wt\pi_h \wt z\|_{L^\infty(\Omega)} \|\nabla \zeta\| \|\Xi\| \\
			& \qquad + |\textbf{v}|\|\Xi\|\|\nabla(\wt \pi_h\wt z)\|_{L^4(\Omega)}\|\zeta\|_{L^4(\Omega)}, \\
			& \le \frac{\alpha}{8} \|\nabla \zeta\|^2 +C_5(s_0, \alpha, \textbf{v}) \|\wt z\|_{H^2(\Omega)}^2\|\zeta\|^2 + C_6(\textbf{v},\alpha)\|\Xi\|^2 \|\wt z\|_{H^2(\Omega)}^2 \\
		\end{aligned}
	\end{equation*}


	\medskip
	\noindent\textbf{Estimate of $T_4.$} Young's inequality and Lemma \ref{lemVB-ErrInEPr}(a) yield
	\begin{align*}
		T_4 =\left\vert  \omega \langle \Xi, \zeta\rangle \right\vert\le C_7(\omega,\omega_P) \|\Xi\|^2 + \frac{\omega_P}{6}\|\zeta\|^2 \le C_7(\omega,\omega_P) h^4\|\wt z\|_{H^2(\Omega)}^2 + \frac{\omega_P}{6}\|\zeta\|^2 
	\end{align*}

	\medskip
	\noindent\textbf{Estimate of $T_5.$} Observe that $
	\big\langle y_s \textbf{v} \cdot \nabla \Xi  +\textbf{v}\cdot \nabla y_s \Xi ,\zeta\big\rangle  = \big\langle  \textbf{v} \cdot \nabla (y_s\Xi ) , \zeta\big\rangle  = - \big\langle y_s \Xi, \textbf{v}\cdot \nabla\zeta\big\rangle.$
	Therefore, Young's inequality and Lemma \ref{lemVB-ErrInEPr}(a) imply
	\begin{equation*}
		\begin{aligned}
			T_5 =\left\vert  - \big\langle y_s \Xi, \textbf{v}\cdot \nabla\zeta\big\rangle\right\vert  \le |\textbf{v}| \|y_s\|_{L^\infty(\Omega)} \|\Xi\|\|\nabla\zeta\| & \le \frac{\alpha}{8} \|\nabla \zeta\|^2  +  \frac{2}{\alpha}C_a |\textbf{v}|^2\|y_s\|_{H^2(\Omega)}^2\|\Xi\|^2\\
			&    \le \frac{\alpha}{8} \|\nabla \zeta\|^2  +  C_8(\alpha, C_a ,\textbf{v}, y_s) h^4 \|\wt z\|_{H^2(\Omega)}^2  .
		\end{aligned}
	\end{equation*}

	
	\medskip
	\noindent\textbf{Estimate of $T_6$.} For any $0<\epsilon<1,$ utilize Theorem \ref{thVB:main-conv-P} and the fact that $\Bc=\Bc_h$ on $V_h$ to obtain
	\begin{equation*}
		\begin{aligned}
			T_6 = \left\vert \langle (\Bc_h\Bc_h^*\Pc_h\pi_h - \Bc\Bc^*\Pc)\wt z, \zeta\rangle \right\vert & \le \left\vert \langle \Bc(\Bc_h^*\Pc_h\pi_h - \Bc^*\Pc)\wt z, \zeta\rangle \right\vert + \left\vert \langle (\Bc_h-\Bc)\Bc_h^*\Pc_h\pi_h \wt z, \zeta\rangle \right\vert \\
			& \le  \|(\Bc_h^*\Pc_h\pi_h -\Bc^*\Pc)\wt z\| \|\zeta\| \\
			& \le C h^{2(1-\epsilon)} \|\wt z\| \|\zeta\| \le C_{9}(\omega_P)h^{4(1-\epsilon)} \|\wt z\|^2 + \frac{\omega_P}{6}\|\zeta\|^2.
		\end{aligned}
	\end{equation*}
	
	\noindent Using the estimates of $T_i,$ $1\le i \le 6,$ in \eqref{eqVB-errest-allTerm1}, we obtain
	\begin{equation*}
		\begin{aligned}
			\frac{1}{2} \frac{d}{dt}\|\zeta\|^2 +\frac{\alpha}{2} \|\nabla \zeta\|^2 + \frac{\omega_P}{2}\|\zeta\|^2 & \le C_1 h^4\|\wt z_t\|_{H^2(\Omega)}^2  +  (C_3+C_4+C_5)\|\wt z\|_{H^2(\Omega)}^2 \|\zeta\|^2\\\
			& \quad  + (C_2+C_6) \|\Xi\|^2 \|\wt z\|^2_{H^2(\Omega)} +(C_7+C_8)h^{4} \|\wt z\|_{H^2(\Omega)}^2 \\
			& \quad  + C_9h^{4(1-\epsilon)} \|\wt z\|^2.
		\end{aligned}
	\end{equation*}
	Integrate the above inequality from $0$ to $t$ to obtain 
	\begin{equation*}
		\begin{aligned}
			\|\zeta(t)\|^2 + \alpha \int_0^t \|\nabla \zeta(s)\|^2 ds & \le \|\zeta(0)\|^2 + h^{4(1-\epsilon)}  \int_0^t \Big( C_1 \|z_t(s)\|_{H^2(\Omega)}^2  \\ 
			& \quad \quad +\wt C_3 \|z(s)\|_{H^2(\Omega)}^4   +\wt C_4 \|z(s)\|_{H^2(\Omega)}^2  + C_9 \|\wt z\|^2  \Big)ds \\
			& \quad \quad   + \int_0^t (\wt C_2 \|\wt z(s)\|_{H^2(\Omega)}^2 -\omega_P) \|\zeta(s)\|^2ds,
		\end{aligned}
	\end{equation*}
	where $\wt C_2=2(C_3+C_4+C_5), \wt C_3=2(C_2+C_6)$ and $\wt C_4=2(C_7+C_8).$	Note that $\|\zeta(0)\| = \|\wt z_h(0) -\wt \pi_h  z_0\| \le \|z_0- \pi_h z_0\| +\|z_0 -\wt \pi_h z_0\| \le Ch^2 \|z_0\|_{H^2(\Omega)}. $ Now, apply Gronwall's inequality (see \cite{Canon99}) along with Theorem \ref{thVBMain-ImpRegStab} to obtain 
	\begin{equation}
		\begin{aligned}
			\|\zeta(t)\|^2 +  \alpha\int_0^t  \|\nabla \zeta(s)\|^2 ds & \le  h^{4(1-\epsilon)} \left( \|z_0\|_{H^2(\Omega)}^2 + C \right) \times e^{ \int_0^t (\wt C_2 \|\wt z(s)\|_{H^2(\Omega)}^2 -\omega_P ) ds} \\
			& \le C h^{4(1-\epsilon)} e^{-\omega_P t}.
		\end{aligned}
	\end{equation}
	for some $C=C(\rho, y_s, \textbf{v},\eta, C_a, s_0, \|z_0\|_{H^3(\Omega)})>0.$
	The proof is complete.
\end{proof}

\noindent Now, we conclude this chapter by stating and proving the main result on the error estimate for the stabilized non-linear system.
\begin{Theorem}[error estimate for non-linear stabilized system] \label{thVB-mainErrNL}
	Let $M,\rho_0$ be as in Theorem \ref{thVBMain-ImpRegStab} and $(\mathcal{A}_3)$ holds. Let $\rho \in (0,\rho_0]$ be any number and let $z_0\in H^3(\Omega)\cap \Hio$ such that $\Ac z_0 \in \Hio$ and $\|z_0\|_{H^3(\Omega)}\le M\rho.$ 
	Let $\wt u(t)=-\Bc^*\Pc \wt z(t) $ and $\wt u_h(t)=-\Bc_h^*\Pc_h \wt z_h(t), $ where $\wt z$ and $\wt z_h$ are solutions of  \eqref{eqVB-CLSforErr} and \eqref{eqVB-disClNlOpForm}, respectively. Then for any $0<\epsilon<1$ and for some $C=C(\rho, y_s, \textbf{v},\eta, C_a, s_0, \|z_0\|_{H^3(\Omega)})>0,$ we have
	\begin{itemize}
		\item[$(a)$] $\|\wt z(t) -\wt z_h(t)\| \le  C h^{2(1-\epsilon)} $ for all $t>0,$ and
		\item[$(b)$] $\|\wt u(t) -\wt u_h(t)\| \le C  h^{2(1-\epsilon)} $ for all $t>0.$
	\end{itemize}	
\end{Theorem}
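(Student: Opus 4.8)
The plan is to reduce everything to the energy estimate for $\zeta=\wt z_h-\wt\pi_h\wt z$ established in Theorem~\ref{thVB-zeta est}, combined with the elliptic--projection bounds of Lemma~\ref{lemVB-ErrInEPr}, the $L^2$--projection bounds of Lemma~\ref{lem:projerr}, the regularity of $\wt z$ from Theorem~\ref{thVBMain-ImpRegStab}, and the Riccati error estimate of Theorem~\ref{thVB:main-conv-P}. Throughout, constants are independent of $h$ and $t$, and since $0<h<1$ and $0<\epsilon<1$ we freely use $h^{2}\le h^{2(1-\epsilon)}$.

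For part $(a)$, write $\wt z-\wt z_h=\Xi-\zeta$ with $\Xi=\wt z-\wt\pi_h\wt z$, so that $\|\wt z(t)-\wt z_h(t)\|\le\|\Xi(t)\|+\|\zeta(t)\|$. By Lemma~\ref{lemVB-ErrInEPr}$(a)$ with $j=0$, $m=2$ we get $\|\Xi(t)\|\le Ch^{2}\|\wt z(t)\|_{H^2(\Omega)}$, and Theorem~\ref{thVBMain-ImpRegStab} bounds $\|\wt z(t)\|_{H^2(\Omega)}\le\|\wt z\|_{L^\infty(0,\infty;H^2(\Omega))}\le\rho$ for all $t>0$; hence $\|\Xi(t)\|\le C\rho\,h^{2}\le Ch^{2(1-\epsilon)}$. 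Theorem~\ref{thVB-zeta est} gives $\|\zeta(t)\|\le Ch^{2(1-\epsilon)}e^{-\omega_P t/2}\le Ch^{2(1-\epsilon)}$. Adding the two estimates yields $(a)$.

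For part $(b)$, recall $\wt u(t)=-\Bc^*\Pc\wt z(t)$ and $\wt u_h(t)=-\Bc_h^*\Pc_h\wt z_h(t)$, and split
\[
\wt u(t)-\wt u_h(t)=-\big(\Bc^*\Pc-\Bc_h^*\Pc_h\pi_h\big)\wt z(t)-\Bc_h^*\Pc_h\big(\pi_h\wt z(t)-\wt z_h(t)\big).
\]
The first term is estimated by Theorem~\ref{thVB:main-conv-P}$(c)$ together with $\|\wt z(t)\|\le\|\wt z(t)\|_{H^2(\Omega)}\le\rho$, giving a bound $Ch^{2(1-\epsilon)}$. For the second term, $\|\Bc_h^*\Pc_h\|_{\Lc(V_h)}$ is bounded uniformly in $h$ (by Property $(\mathcal{A}_1)(a)$ and the uniform bound on $\|\Pc_h\|_{\Lc(V_h)}$ from Theorem~\ref{thVB:dro}), while $\pi_h\wt z-\wt z_h=(\pi_h\wt z-\wt\pi_h\wt z)-\zeta$ with $\|\pi_h\wt z-\wt\pi_h\wt z\|\le\|\pi_h\wt z-\wt z\|+\|\wt z-\wt\pi_h\wt z\|\le Ch^{2}\|\wt z\|_{H^2(\Omega)}\le C\rho h^{2}$ by Lemma~\ref{lem:projerr}$(c)$ and Lemma~\ref{lemVB-ErrInEPr}$(a)$, and $\|\zeta(t)\|\le Ch^{2(1-\epsilon)}$ by Theorem~\ref{thVB-zeta est}. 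Collecting terms gives $\|\wt u(t)-\wt u_h(t)\|\le Ch^{2(1-\epsilon)}$ for all $t>0$.

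The only point requiring a little care is ensuring that the bounds pulled in are genuinely uniform in both $h$ and $t$: the uniform boundedness of $\|\Pc_h\|_{\Lc(V_h)}$ (contained in Theorem~\ref{thVB:dro}) and the fact that the regularity estimate of Theorem~\ref{thVBMain-ImpRegStab} holds with $t$-independent constant, so the $h^{2}$-type contributions carry no growing factor. Since the substantive energy argument was already carried out in Theorem~\ref{thVB-zeta est}, no further obstacle arises here; the proof is a bookkeeping of triangle inequalities and the cited estimates.
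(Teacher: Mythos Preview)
Your proof is correct and follows essentially the same approach as the paper: for $(a)$ you use the splitting $\wt z-\wt z_h=\Xi-\zeta$, bound $\|\Xi\|$ via Lemma~\ref{lemVB-ErrInEPr}(a) and Theorem~\ref{thVBMain-ImpRegStab}, and bound $\|\zeta\|$ via Theorem~\ref{thVB-zeta est}; for $(b)$ the paper merely says to combine $(a)$ with Theorem~\ref{thVB:main-conv-P}(a)--(c), and your decomposition $\wt u-\wt u_h=-(\Bc^*\Pc-\Bc_h^*\Pc_h\pi_h)\wt z-\Bc_h^*\Pc_h(\pi_h\wt z-\wt z_h)$ is a natural way to make that precise. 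One small remark: the uniform bound on $\|\Pc_h\|_{\Lc(V_h)}$ is not stated explicitly in Theorem~\ref{thVB:dro}, but it follows at once from Theorem~\ref{thVB:main-conv-P}(a), since $\|\Pc_h\pi_h\|_{\Lc(\Lt)}\le\|\Pc\|_{\Lc(\Lt)}+Ch^{2(1-\epsilon)}$.
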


\begin{proof}
	Note that $\wt z -\wt z_h= \Xi-\zeta.$ From Lemma \ref{lemVB-ErrInEPr}(a) and Theorem \ref{thVBMain-ImpRegStab}, we have 
	\begin{align*}
		\|\Xi\| \le Ch^2\|\wt z\|_{H^2(\Omega)} \le C\rho h^2. 
	\end{align*}
	Using this and Theorem \ref{thVB-zeta est}, we have 
	\begin{align*}
		\|\wt z(t) -\wt z_h(t)\| \le \|\Xi\|+\|\zeta\| \le Ch^{2(1-\epsilon)}  \text{ for all } t>0,
	\end{align*}
	for some $C=C(\rho, y_s, \textbf{v},\eta, C_a, s_0, \|z_0\|_{H^2(\Omega)})>0.$
	Now, utilizing (a) and Theorem \ref{thVB:main-conv-P}(a)-(c), we obtain the estimate in (b) 
\end{proof}

\subsection{Implementation}\label{secVB-impl-NL}
In this subsection, an implementation procedure for solving \eqref{eqVB-linarnd-ys} is discussed and it is also shown that the system is stabilizable by control in feedback form where the feedback operator is as obtained in the case of linearized case. The weak formulation of \eqref{eqVB-linarnd-ys} seeks $\wt z\in \Hio$ such that
\begin{equation}  \label{eqVB-weakformNL-NI}
	\begin{aligned}
		&  \langle \wt z_t,\phi\rangle +\eta \langle\nabla \wt z ,\nabla\phi\rangle + e^{-\omega t} \langle \wt z\textbf{v}\cdot \nabla \wt z , \phi\rangle + \langle y_s \textbf{v} \cdot \nabla \wt z+\textbf{v}\cdot \nabla y_s \wt z, \phi\rangle + (\nu_0-\omega)\langle \wt z,\phi\rangle =\langle \wt u, \phi\rangle,\\
		& \langle \wt z(0),\phi\rangle = \langle z_0,\phi\rangle,
	\end{aligned}
\end{equation}
for all $\phi\in \Hio.$ 
\noindent  Let $\mathcal{T}_h$ be the triangulation of $\Omega$ and $V_h$ be the approximated subspace of $\Lt$ as mentioned in Section \ref{subVB-NI-Lin}. Let $\{\phi_h^i\}_{i=1}^{n_h}$ be a nodal basis function for $V_h$ with dim$(V_h)=n_h.$
The semi-discrete formulation corresponding to \eqref{eqVB-weakformNL-NI} seeks $\wt z_h\in V_h$ such that
\begin{equation} \label{eqVB-disweakformNL}
	\begin{aligned}
		& \langle \wt z_{h_t},\phi_h\rangle +\eta \langle\nabla \wt z_h ,\nabla\phi_h\rangle + e^{-\omega t} \langle \wt z_h\textbf{v}\cdot \nabla \wt z_h , \phi_h\rangle + \langle y_s \textbf{v} \cdot \nabla \wt z_h+\textbf{v}\cdot \nabla y_s \wt z_h, \phi_h\rangle  \\ & \quad \qquad\qquad +(\nu_0-\omega)\langle \wt z_h,\phi_h\rangle = \langle \wt u_h, \phi_h\rangle, \\
		& \langle \wt z_h(0),\phi_h\rangle = \langle \pi_h z_0,\phi_h\rangle,
	\end{aligned}
\end{equation}
for all $\phi_h\in V_h.$
Let $\displaystyle \wt z_h=\sum_{i=1}^{n_h} z_i\phi_h^i$ and $\displaystyle \wt u_h=\sum_{i=1}^{n_h} u_i\phi_h^i.$ Then, for all $1\le k\le n_h,$ we have
{\small
\begin{equation} \label{eqVB-matform-NL-I}
	\begin{aligned}
		& \sum_{i=1}^{n_h} \frac{dz_i}{d t}\left\langle \phi_h^i , \phi_h^k\right\rangle + \eta \sum_{i=1}^{n_h} z_i\left\langle\nabla \phi_h^i ,\nabla \phi_h^k\right\rangle +e^{-\omega t} \sum_{i=1}^{n_h}z_i\left\langle \phi_h^i \sum_{j=1}^{n_h} z_j \textbf{v}\cdot \nabla \phi_h^j, \phi_h^k\right\rangle  \\
		& \quad + \sum_{i=1}^{n_h}z_i\left\langle y_s \textbf{v}\cdot  \nabla\phi_h^i, \phi_h^k\right\rangle +\sum_{i=1}^{n_h}z_i\left\langle \textbf{v}\cdot\nabla y_s \phi_h^i,\phi_h^k\right\rangle +(\nu_0-\omega)\sum_{i=1}^{n_h}z_i\left\langle \phi_h^i, \phi_h^k\right\rangle=\sum_{i=1}^{n_h} u_i\langle\phi_h^i, \phi_h^k\rangle.
	\end{aligned}
\end{equation}
}For ${\Krm}_h=  (\langle \nabla \phi_h^i,\nabla\phi_h^k\rangle)_{1\le i,k\le n_h},$ ${\Mrm}_{h}=(\langle \phi_h^i,\phi_h^k\rangle)_{1\le i,k\le n_h},$ ${\Arm}^1_h=(\langle y_s \textbf{v}\cdot \nabla \phi_h^i, \phi_h^k\rangle)_{1\le i,k\le n_h},$  $ {\Arm}^2_h=(\langle \textbf{v}\cdot \nabla y_s  \phi_h^i, \phi_h^k\rangle)_{1\le i,k\le n_h} ,$  $\Nrm_h(\Zrm_h)=\left\langle \phi_h^i \sum_{j=1}^{n_h} z_j \textbf{v}\cdot \nabla \phi_h^j, \phi_h^k\right\rangle_{1\le i,k\le n_h},$ and $\Zrm_h=(z_1,\ldots, z_{n_h})^T,$ \eqref{eqVB-matform-NL-I} can be written in the following matrix formulation
\begin{equation} \label{eqnVB:matform-NL-II}
	{\Mrm}_h {\Zrm}_h'(t)=({\Arm}_h+\omega {\Mrm}_h){\Zrm}_h(t)+ e^{-\omega t}\Nrm_h(\Zrm_h)\Zrm_h(t)+\Brm_h\mathfrak{u}_h(t),
\end{equation}
where ${\Arm}_h=-\eta{\Krm}_h -\nu_0\Mrm_{h} - {\Arm}^1_h - {\Arm}^2_h$ and $\mathfrak{u}_h=(u_1,...,u_{n_h})^T\in\mathbb{R}^{n_h\times 1}$ is control we seek. Our aim is to stabilize \eqref{eqnVB:matform-NL-II} by the same feedback control obtained in the linearized case, that is, $\mathfrak{u}_h(t)=-\mathbb{S}_h\Prm_h\Zrm_h(t),$ where $\mathbb{S}_h$ is as in \eqref{eqn-mathbb-Sh} and $\Prm_h$ solves the algebraic Riccati equation \eqref{eq:ARE-mat}. Now, we discuss the implementation procedures for \eqref{eqnVB:matform-NL-II} with $\mathfrak{u}_h(t)=-\mathbb{S}_h\Prm_h\Zrm_h(t),$ that is, 
\begin{equation} \label{eqnVB:matform-NL-III}
	{\Mrm}_h {\Zrm}_h'(t)=({\Arm}_h+\omega {\Mrm}_h){\Zrm}_h(t)+ e^{-\omega t} \Nrm_h(\Zrm_h)\Zrm_h(t)-\Brm_h \mathbb{S}_h \Prm_h \Zrm_h(t).
\end{equation}

\noindent \textbf{Time solver.} A time discretization using a
backward Euler method leads to a system
\begin{align*}
	& {\Mrm}_h \frac{{\Zrm}_h^1 - {\Zrm}_h^{0}}{\Delta t}=({\Arm}_h+\omega {\Mrm}_h){\Zrm}_h^1+e^{-\omega t}\Nrm_h(\Zrm_h^1)\Zrm_h^1-\Brm_h\mathbb{S}_h \Prm_h \Zrm_h^1,\\
	& {\Zrm}_h^0=\Zrm_0,
\end{align*}
that is, 
\begin{equation} \label{eqVB-BEul-NL-2BDF}
	\begin{aligned}
		&\left( \Mrm_{h} - \Delta t \Arm_h -\Delta t \omega \Mrm_{h} +\Delta t \Brm_h\mathbb{S}_h\Prm_h - \Delta t e^{-\omega t}\Nrm_h(\Zrm_h^1) \right)\Zrm_h^1 - \Mrm_{h} \Zrm_h^{0}=0,\\
		& {\Zrm}_h^0=\Zrm_0. \\
	\end{aligned}
\end{equation}
Using Newton's method, for known ${\Zrm}_h^0=\Zrm_0,$ we solve the above equation for $\Zrm_h^1$ as below. Denote \eqref{eqVB-BEul-NL-2BDF} as 
\begin{equation} \label{eqVB-FZ_n-NL}
	\mathcal{F}(\Zrm_h^1)=0.
\end{equation}
Note that \eqref{eqVB-BEul-NL-2BDF} and \eqref{eqVB-FZ_n-NL} are non-linear equations in matrix form. Now, for a known $\Zrm_h^0,$ we want to find $\Zrm_h^1$ that satisfies \eqref{eqVB-BEul-NL-2BDF} by using Newtons' method \cite[Section 1.8 of Chapter 1]{ChenBook}. Newton's method for solving \eqref{eqVB-FZ_n-NL} can be defined as following:
\begin{align*}
	& \text{Set }{\Zrm}_h^{1,0}={\Zrm}_h^0;\\
	& \text{iterate } {\Zrm}_h^{1,p}={\Zrm}_h^{1,p-1}+d^{1,p}, \, p=1,2,\ldots,
\end{align*}
where $d^{1,p}$ solves 

\begin{equation}
	G(\Zrm_h^{1,p-1}) d^{1,p}=-\mathcal{F}(\Zrm_h^{1,p-1}) \text{ for all } p=1,2,\ldots,
\end{equation}
where $G(\Zrm_h^{1,p-1})$ is the Jacobian matrix, that is, $G(\Zrm_h^{1,p-1})=\left(\frac{\partial \mathcal{F}_k}{\partial z_l^{1,p-1}}\right)_{1\le k, l\le n_h}$ with $\mathcal{F}_k$ as the $k$-th component of $\mathcal{F}.$ \\

\noindent \textbf{Computation of Jacobian Matrix $G$.} Recall, 
\begin{align*}
	\mathcal{F}(\Zrm_h^{1})= \left( \Mrm_{h} - \Delta t \Arm_h -\Delta t \omega \Mrm_{h} +\Delta t \Brm_h\mathbb{S}_h\Prm_h - \Delta t e^{-\omega t} \Nrm_h(\Zrm_h^1) \right)\Zrm_h^1 - \Mrm_{h} \Zrm_h^{0}.
\end{align*}
To compute the $G,$ Jacobian of $\mathcal{F},$ first note that only non-linear term in $\mathcal{F}$ is $$\displaystyle \Nrm_h(\Zrm_h^{1,p-1})\Zrm_h^{1,p-1}= \left( \sum_{i=1}^{n_h} z_i^{1,p-1}\left\langle \phi_h^i \sum_{j=1}^{n_h} z_j^{1,p-1}  \textbf{v}\cdot \nabla \phi_h^j, \phi_h^k \right\rangle \right)_{1\le k\le n_h}=: \left(N_k\right)_{1\le k \le n_h}.$$ Now, 
\begin{equation} \label{eqVB-Jacob-Nk'}
	\begin{aligned}
		\frac{\partial N_k}{\partial z_l^{1,p-1}} & = \frac{\partial}{\partial z_l^{1,p-1}} \left( \sum_{i=1}^{n_h}\sum_{j=1}^{n_h} z_i^{1,p-1}  z_j^{1,p-1}\left\langle \phi_h^i  \textbf{v}\cdot \nabla \phi_h^j, \phi_h^k \right\rangle \right) \\
		& = \sum_{j=1}^{n_h} z_{j}^{1,p-1} \left\langle \phi_h^l\textbf{v}\cdot \nabla \phi_h^j,\phi_h^k\right\rangle + \sum_{i=1}^{n_h} z_{i}^{1,p-1} \left\langle \phi_h^i\textbf{v}\cdot \nabla \phi_h^l,\phi_h^k\right\rangle.
	\end{aligned}
\end{equation}
Therefore, 
\begin{align*}
	G(\Zrm_h^{1,p-1}) = \Mrm_h -\Delta t \Arm_h-\Delta t\omega \Mrm_h+\Delta t \Brm_h\mathbb{S}_h\Prm_h - \Delta t e^{-\omega t} \left( \frac{\partial N_k}{z_l^{1,p-1}} \right)_{1\le k,l\le n_h},
\end{align*}
where $\left( \frac{\partial N_k}{z_l^{1,p-1}} \right)_{1\le k,l\le n_h}$ is as in \eqref{eqVB-Jacob-Nk'}.

\noindent Now, starting from the second time step, apply the backward difference formula 2 (BDF2, \cite{MatESAIM}) to obtain
\begin{align*}
	& {\Mrm}_h \frac{1.5{\Zrm}_h^{n+1} - 2{\Zrm}_h^{n}+0.5{\Zrm}_h^{n-1}}{\Delta t}=({\Arm}_h+\omega {\Mrm}_h){\Zrm}_h^{n+1}+e^{-\omega t}\Nrm_h(\Zrm_h^{n+1})\Zrm_h^{n+1}-\Brm_h \mathbb{S}_h \Prm_h \Zrm_h^{n+1},
\end{align*}
for all $n=1,2,\ldots.$ Re-write the last two equations to obtain 
\begin{equation} \label{eqVB-BEul-NL-2BDF2}
	\begin{aligned}
		&\left( 1.5\Mrm_{h} - \Delta t \Arm_h -\Delta t \omega \Mrm_{h} +\Delta t \Brm_h \mathbb{S}_h \Prm_h - \Delta t e^{-\omega t}\Nrm_h(\Zrm_h^{n+1}) \right)\Zrm_h^{n+1} \\
		& \qquad\qquad \qquad\qquad \qquad\qquad- 2\Mrm_{h} \Zrm_h^{n}+0.5\Mrm_{h} \Zrm_h^{n-1}=0 \text{ for all }n=1,2,\ldots,\\
	\end{aligned}
\end{equation}
Now, denoting the above equation as $\mathcal{F}(\Zrm_h^{n+1})=0$ for all $n=1,2,\ldots,$ proceed as in above to solve for $\Zrm_h^{2}$ where $\Zrm_h^{0}$ and $\Zrm_h^{0}$ are known.  

\medskip
\noindent \textbf{Numerical results.}
Here, we implement two numerical examples: first one is an example having exact solution and second one is for stabilization. The data for the next two examples are tabulated in Table \ref{tab:data-NL}.

\begin{table}[ht!]
	\centering
	\begin{tabular}{|c||c|c|c|c|c|c|c|}
		\hline
		Data & $\omega$ &   $\eta$ & $\nu_0$ &$\textbf{v}$ & $y_0(x_1,x_2)$ & $y_s(x_1,x_2,t)$  \\ 
		\hline
		\hline
		& & & & & & \\
		{\bf Example 2} & $0$ & $5$ & 0 & $(1,1)$ & $\sin(\pi x_1)\sin(\pi x_2)$ & $x_1x_2(1-x_1)(1-x_2)$\\
		&  &  &  &  & $+x_1x_2(1-x_1)(1-x_2)$ & \\
		& & & & & &\\
		\hline
		& & & & & &\\
		{\bf Example 3} & 25 & $1$ & 0& (1,1) & $x_1x_2(1-x_1) (1-x_2)$ & $\sin(\pi x_1)\sin(\pi x_2)$  \\
		& & & & & &\\
		\hline
	\end{tabular}
	\caption{Data for Examples 2 \& 3.}\label{tab:data-NL}
\end{table}

\subsubsection{Example 2} In this example, we add a force function $g(x_1,x_2,t)$ on the right hand side and take $u=0,$ so that solution of the corresponding system is known. Consider the force function as 
\begin{align*}
	g(x_1,x_2,t)=& (1+2\pi^2\eta)e^t\sin(\pi x_1)\sin(\pi x_2) + (x_1x_2(1 - x_1)  (1 - x_2) + e^t\sin(\pi x_1)\sin(\pi x_2))\\
	& \quad \times e^t\pi (\cos(\pi x_1)\sin(\pi x_2)+\sin(\pi x_1)\cos(\pi x_2))+	\big(x_2(1-2x_1)(1-x_2) \\
	& \quad +x_1(1-x_1)(1-2x_2)\big) e^t\sin(\pi x_1)\sin(\pi x_2)
\end{align*}
and then the exact solution of \eqref{eqVB-weakformNL-NI} is  $z(x_1,x_2,t)=e^t \sin(\pi x_1)\sin(\pi x_2)$ with $z_0(x_1,x_2)=\sin(\pi x_1)\sin(\pi x_2).$  Errors of the computed solution are plotted on log-log scale against $h$ in Figure \ref{figVB-NL-Ex3-loglog sol} while errors and orders of convergence are tabulated in Table \ref{tab:Ex3_NL_exact_OC}.

%
%

\noindent \begin{minipage}{0.35\textwidth}
	\vspace{1cm}
	\includegraphics[width=\textwidth]{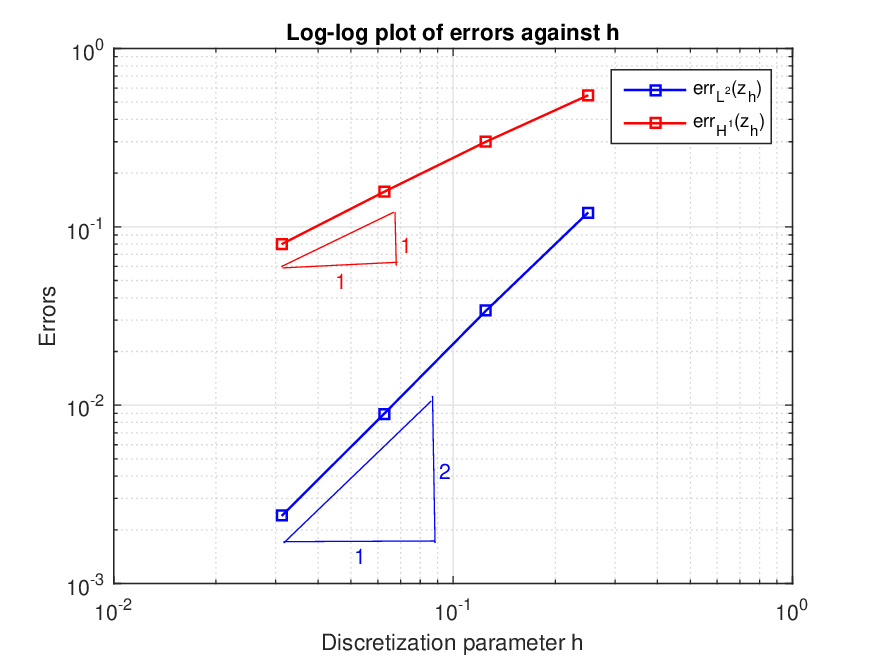}
	\captionof{figure}{ (Example 2.) Log-log plots of errors against discretization parameter $h$.}
	\label{figVB-NL-Ex3-loglog sol} 
\end{minipage}
\begin{minipage}{0.65\textwidth}
	\begin{tabular}{|c||c|c||c|c||}
		\hline
		$h$ & $err_{L^2}(\wt z_h)$ &   Order & $err_{H^1}(\wt z_h)$ & Order  \\ 
		\hline
		\hline
		$\frac{1}{2^2}$ & 1.203788e-01 & & 5.469386e-01 &  \\
		\hline
		$\frac{1}{2^3}$ & 3.386487e-02 & 1.829720 & 3.001354e-01 &  0.865765 \\
		\hline
		$\frac{1}{2^4}$ & 8.919755e-03 & 1.924713 & 1.569304e-01 & 0.935488 \\
		\hline
		$\frac{1}{2^5}$ & 2.394282e-03 & 1.897410 & 7.999054e-02 & 0.972223 \\
		\hline
	\end{tabular} \captionof{table}{(Example 2.) Errors and orders of convergence for the computed solution.}\label{tab:Ex3_NL_exact_OC}
\end{minipage}


\begin{figure}[ht!]
	\includegraphics[width=.45\textwidth]{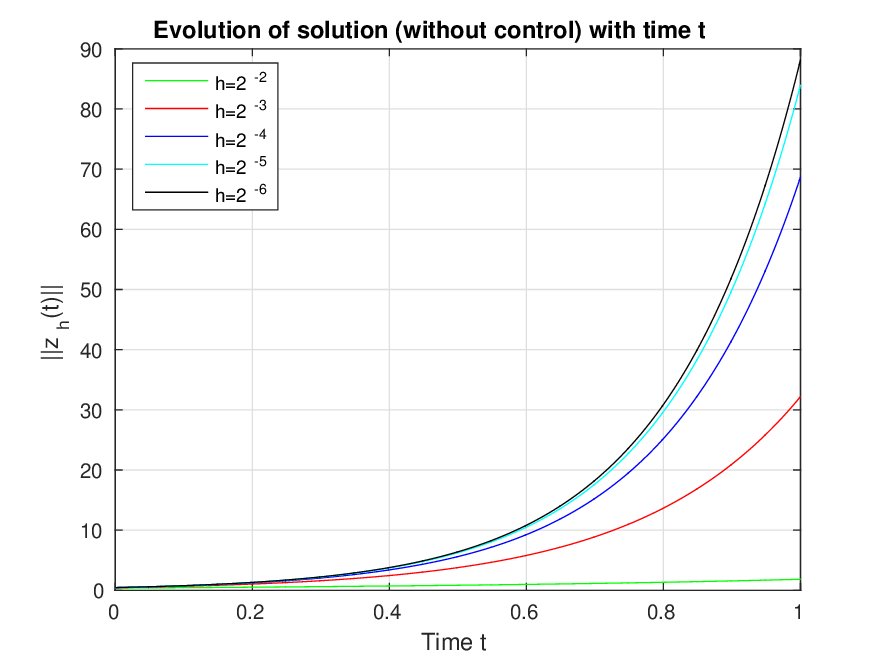}\hfill
	\includegraphics[width=.45\textwidth]{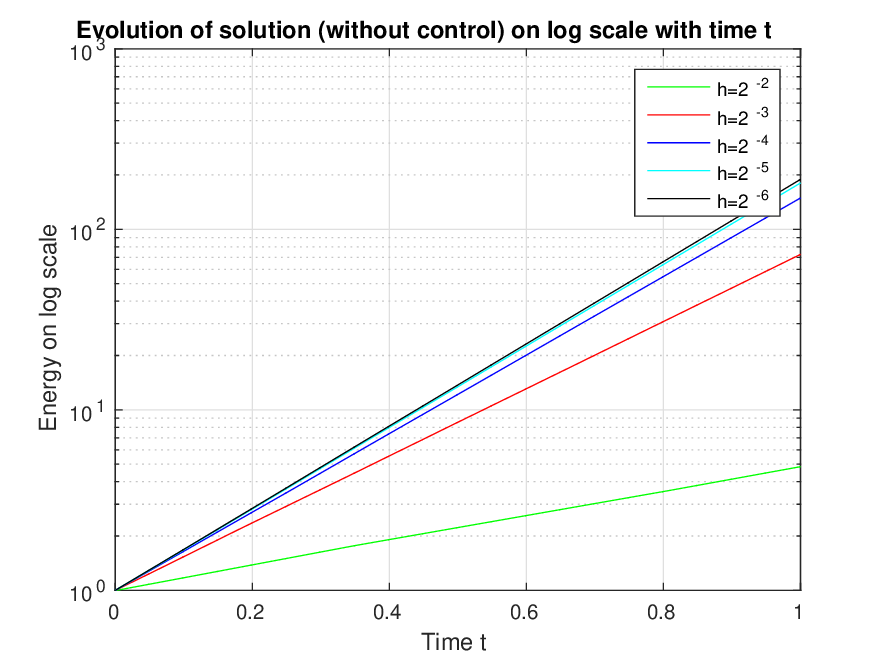}
	\caption{(Example 3.) (a) Evolution of solution without control, (b) on log scale. }     \label{figNL-ex4-WC-NL}  
\end{figure}

\noindent \begin{minipage}{0.35\textwidth}
	\vspace{1cm}
	\includegraphics[width=\textwidth]{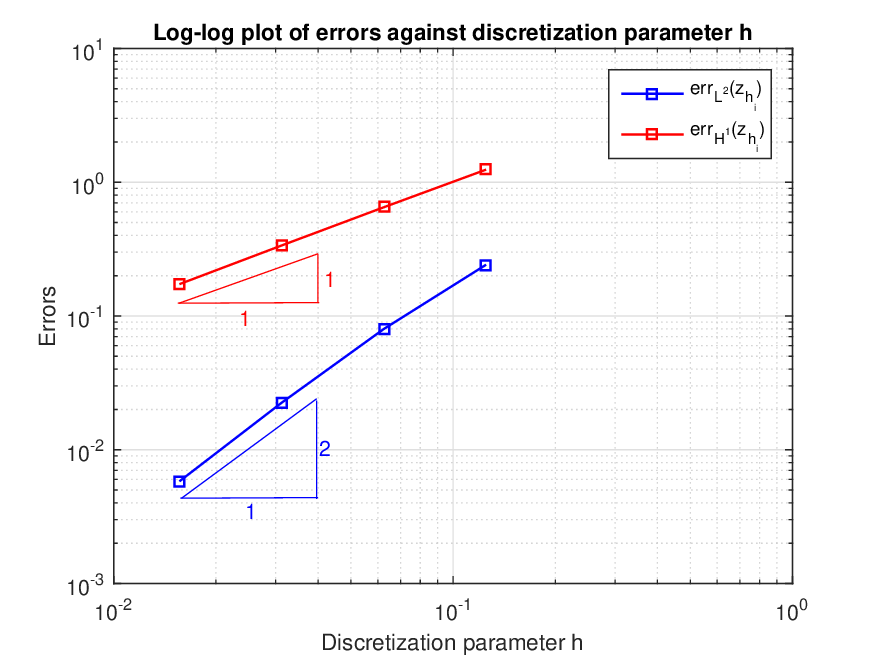}
	\captionof{figure}{ (Example 3.) Log-log plots of errors against discretization parameter $h$.}
	\label{figNL-ex4-WC-NL--1} 
\end{minipage}
\begin{minipage}{0.65\textwidth}
	{\small
	\begin{tabular}{|c||c|c||c|c||}
		\hline
		$h$ & $err_{L^2}(z_h)$ &   Order & $err_{H^1}(z_h)$ & Order  \\ 
		\hline
		\hline
		
		$\frac{1}{2^3}$ & 2.416313e-01 & --- & 1.242321 &  --- \\
		\hline
		$\frac{1}{2^4}$ & 8.000403e-02 & 1.594663 & 6.504246e-01 & 0.933584 \\
		\hline
		$\frac{1}{2^5}$ & 2.248219e-02 & 1.831290 & 3.368542e-01 & 0.949257 \\
		\hline
		$\frac{1}{2^5}$ & 5.814795e-03 & 1.950982 & 1.731318e-01 & 0.960253 \\
		\hline
	\end{tabular} \captionof{table}{(Example 3.) Errors and orders of convergence for the computed solution.}\label{tab:Ex4_NL_WC} }
\end{minipage}

\subsubsection{Example 3}
With the data as given in Table \ref{tab:data-NL}, we first compute the solution without control and observe that the energy of the solution increases as time increases. This behavior is plotted in   Figure \ref{figNL-ex4-WC-NL}(a) - (b). The computed errors are plotted on log-log scale against $h$ in Figure \ref{figNL-ex4-WC-NL--1} and tabulated in Table \ref{tab:Ex4_NL_WC} that shows a quadratic and linear rates of convergence in $L^2$ and $H^1$ norms respectively.

%

\noindent Next, we computed the feedback control and the corresponding solution is stabilized. The evolution of the stabilized solution with time $t$ is plotted in Figure \ref{figVB-ex3-StabSol-NL}. Figure \ref{figVB-ex3-StabCtrl-NL} (a) - (b) represents the evolution of stabilizing control with time $t.$ The errors of the solution and control are plotted in log-log scale in Figure \ref{figVB-ex3-StabCtrl-NL}(c) and the orders of convergence are tabulated in Table \ref{tab:Ex4_NL_Stab}.

\begin{figure}[ht!]
	\includegraphics[width=.45\textwidth]{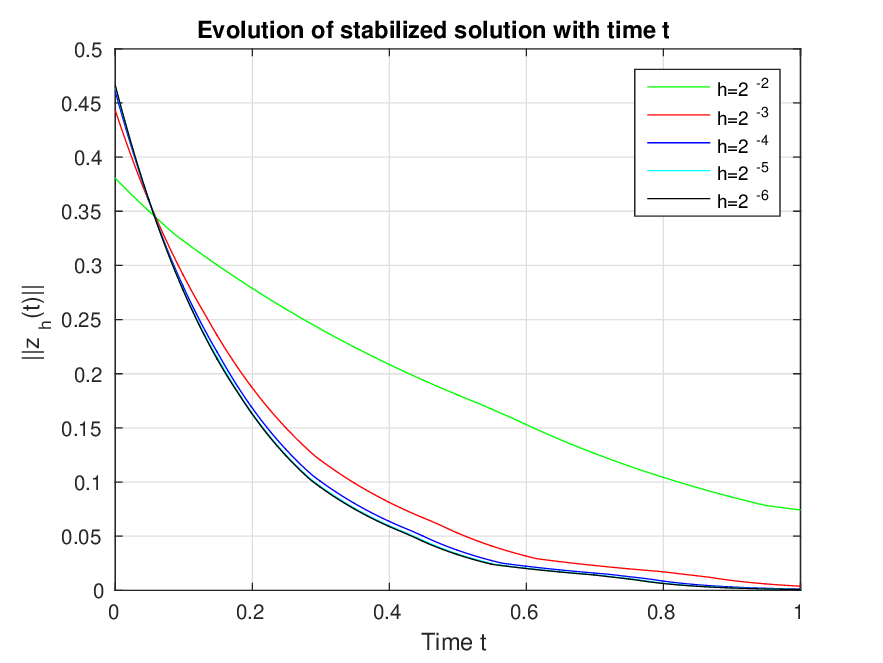}\hfill
	\includegraphics[width=.45\textwidth]{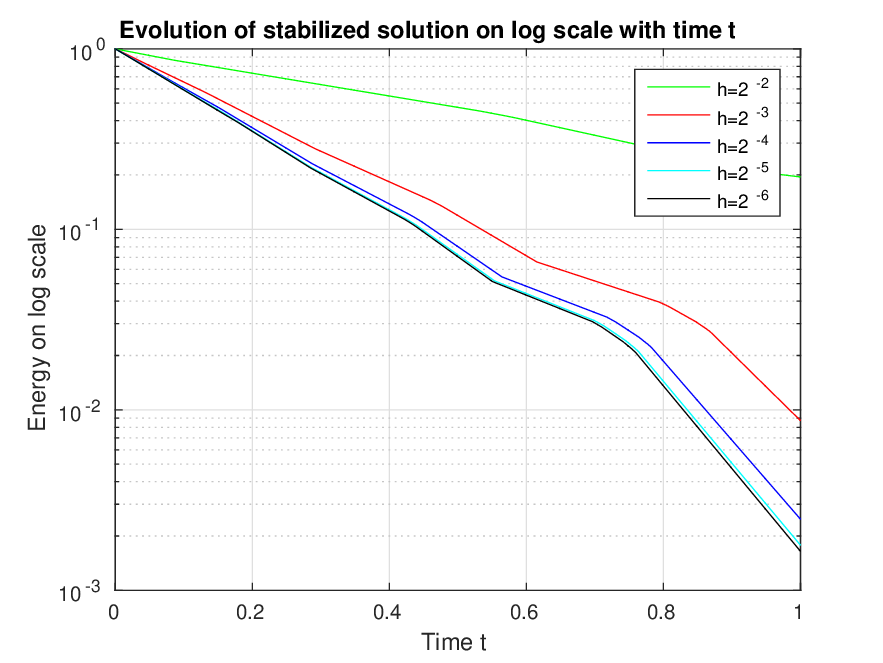}
	\caption{(Example 3.) (a) Evolution of \textbf{stabilized solution}, (b) on log scale. }  \label{figVB-ex3-StabSol-NL}    
\end{figure}

\begin{figure}[ht!]
	\includegraphics[width=.32\textwidth]{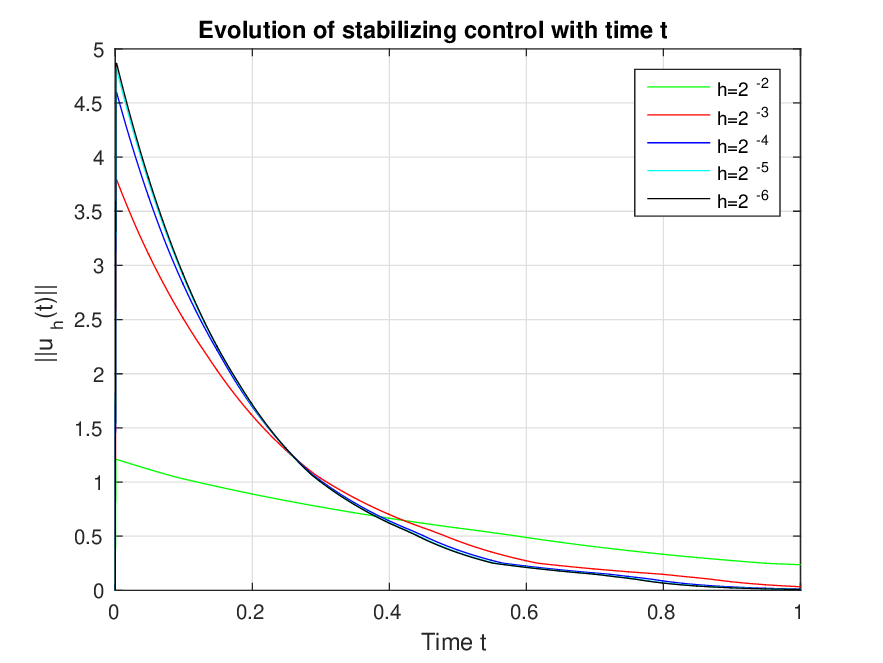}\hfill
	\includegraphics[width=.32\textwidth]{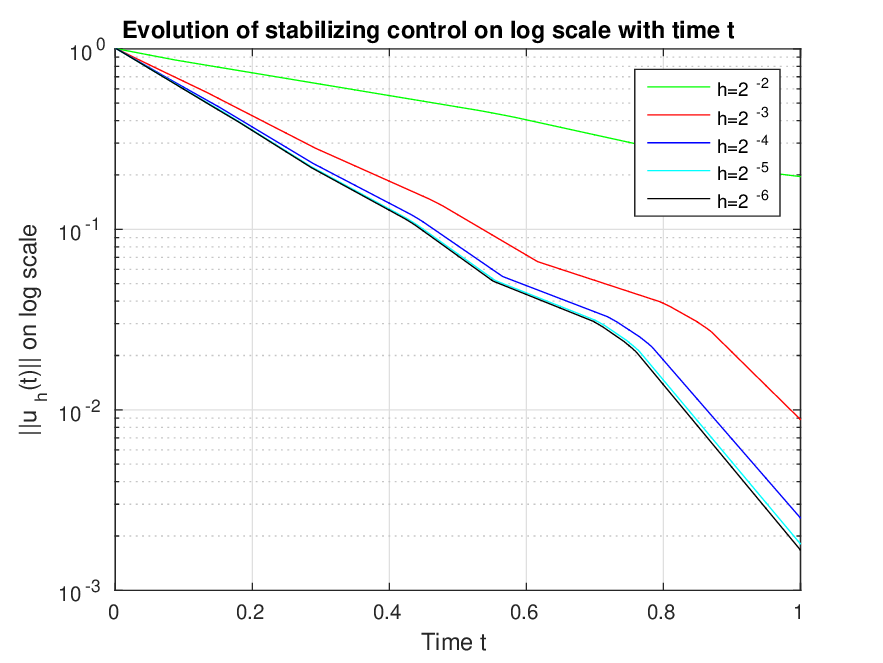}\hfill
	\includegraphics[width=.32\textwidth]{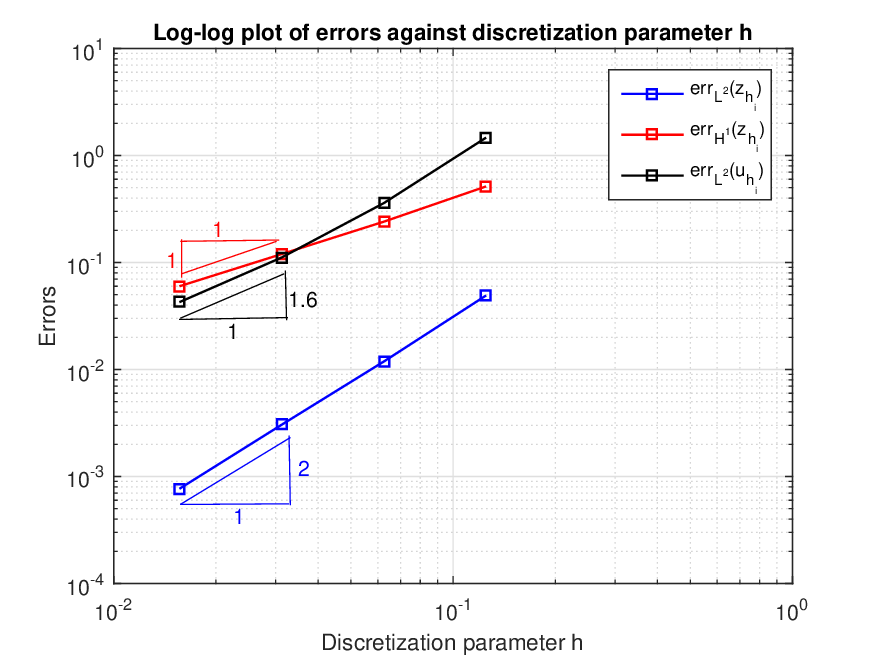}
	\caption{(Example 3.) (a) Evolution of \textbf{stabilizing control}, (b) on log scale and (c) error plot in log-log scale. }  \label{figVB-ex3-StabCtrl-NL}      
\end{figure}

\begin{table}[ht!]
	\centering
	\begin{tabular}{|c||c|c||c|c||c|c||}
		\hline
		$h$ & $err_{L^2}(\wt z_h)$ &   Order & $err_{H^1}(\wt z_h)$ & Order  & $err_{L^2}(\wt u_h)$ & Order \\ 
		\hline
		\hline
		$\frac{1}{2^3}$ & 4.914705e-02 & --- & 5.137541e-01 &  --- & 1.467911  & --- \\
		\hline
		$\frac{1}{2^4}$ & 1.187577e-02 & 2.049082 & 2.408034e-01 & 1.093222 & 3.591835e-01 & 2.030972\\
		\hline
		$\frac{1}{2^5}$ & 3.0482403e-03 & 1.961973 & 1.198641e-01 & 1.006455 & 1.114329e-01 & 1.688544\\
		\hline
		$\frac{1}{2^6}$ & 7.687377e-04 & 1.987413 & 6.002921e-02 & 0.997664 & 4.264087e-02 & 1.385867\\
		\hline
	\end{tabular}
	\caption{Errors and orders of convergence of \textbf{stabilized} solution and \textbf{stabilizing} control.}\label{tab:Ex4_NL_Stab}
\end{table}

%

\appendix

\section{}
\subsection{Well-posedness of steady state equation}
In this subsection, we study the well-posedness of \eqref{eqVB-steadystate}, that is, 
\begin{equation} \label{eqVB-steadystateAgain}
	\begin{aligned}
		-\eta \Delta y_s(x)+y_s(x) \textbf{v}.\nabla y_s(x) +\nu_0 y_s(x) & = f_s \text{ in }\Omega, 
		& y_s =0 \text{ on }\partial\Omega.
	\end{aligned}
\end{equation}
 In fact, we show that  for any given $f_s\in L^2(\Omega),$ there exists a unique solution  $y_s$ of \eqref{eqVB-steadystateAgain} in $H^2(\Omega)\cap \Hio.$ 
\begin{Proposition}
Let for any $\rho>0,$ $D_\rho=\{ y\in H^2(\Omega)\cap \Hio\,|\, \|y\|_{H^2(\Omega)}\le \rho \}$ and $\nu_0\ge 0.$ There exists $\rho_0>0,$ such that for any $0<\rho\le \rho_0$ and given $f\in \Lt$ with $\|f\|\le \frac{\rho}{2},$ there exists a unique solution $y_s \in D_\rho$ of \eqref{eqVB-steadystateAgain}.
\end{Proposition}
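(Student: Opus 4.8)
The plan is to set up a fixed-point argument in the ball $D_\rho \subset H^2(\Omega)\cap\Hio$ exactly as is done for the closed-loop nonlinear systems elsewhere in the paper, but now for the stationary equation. Given $w\in D_\rho$, I would define $y=\mathcal{S}(w)$ to be the unique solution in $H^2(\Omega)\cap\Hio$ of the linear elliptic problem
\[
-\eta\Delta y+\nu_0 y = f - w\,\mathbf{v}\cdot\nabla w \text{ in }\Omega,\qquad y=0\text{ on }\partial\Omega,
\]
i.e. the nonlinear term is frozen at $w$ and moved to the right-hand side. The first step is to check this map is well defined: since $\nu_0\ge 0$, the operator $-\eta\Delta+\nu_0 I$ with homogeneous Dirichlet data is an isomorphism from $H^2(\Omega)\cap\Hio$ onto $L^2(\Omega)$, with $\|y\|_{H^2(\Omega)}\le C_e\|f-w\,\mathbf{v}\cdot\nabla w\|$ for an elliptic-regularity constant $C_e=C_e(\eta,\nu_0,\Omega)$. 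So I need $w\,\mathbf{v}\cdot\nabla w\in L^2(\Omega)$ with a quantitative bound.

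The second step is the self-map property. Using the generalized H\"older inequality (Proposition \ref{ppsPR-GenHoldIneq}) with exponents $4,4,\infty$ or simply $L^4\times L^4$, together with Agmon's inequality (Lemma \ref{lemVB:AgmonIE}) or the Sobolev embedding $H^1(\Omega)\hookrightarrow L^4(\Omega)$ (eq. \eqref{eqPR:SobEmb}), I would estimate
\[
\|w\,\mathbf{v}\cdot\nabla w\| \le |\mathbf{v}|\,\|w\|_{L^\infty(\Omega)}\|\nabla w\| \le |\mathbf{v}|\,C_a\|w\|_{H^1(\Omega)}^{1/2}\|w\|_{H^2(\Omega)}^{1/2}\|w\|_{H^1(\Omega)} \le C_\ast\|w\|_{H^2(\Omega)}^2 \le C_\ast\rho^2,
\]
for a constant $C_\ast=C_\ast(\mathbf{v},\Omega,C_a)$. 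Hence $\|\mathcal{S}(w)\|_{H^2(\Omega)}\le C_e(\|f\|+C_\ast\rho^2)\le C_e(\rho/2+C_\ast\rho^2)$, and choosing $\rho_0$ so small that $C_e C_\ast\rho_0\le 1/2$ and (if necessary) rescaling the constant in the hypothesis $\|f\|\le\rho/2$ so that $C_e\|f\|\le\rho/2$, we get $\|\mathcal{S}(w)\|_{H^2(\Omega)}\le\rho$; thus $\mathcal{S}:D_\rho\to D_\rho$. (If the constant $C_e$ in front of $\|f\|$ does not match the $1/2$ in the statement, one absorbs it by shrinking $\rho_0$ further or notes the statement is up to renaming constants.)

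The third step is contraction. For $w_1,w_2\in D_\rho$, linearity of the elliptic solve gives that $\mathcal{S}(w_1)-\mathcal{S}(w_2)$ solves the same elliptic problem with right-hand side $-(w_1\mathbf{v}\cdot\nabla w_1-w_2\mathbf{v}\cdot\nabla w_2)$. Writing
\[
w_1\mathbf{v}\cdot\nabla w_1-w_2\mathbf{v}\cdot\nabla w_2 = (w_1-w_2)\mathbf{v}\cdot\nabla w_1 + w_2\mathbf{v}\cdot\nabla(w_1-w_2)
\]
and estimating each piece as above (Agmon/Sobolev plus H\"older), one obtains $\|\mathcal{S}(w_1)-\mathcal{S}(w_2)\|_{H^2(\Omega)}\le C_e C_\ast'(\|w_1\|_{H^2}+\|w_2\|_{H^2})\|w_1-w_2\|_{H^2}\le 2C_e C_\ast'\rho\,\|w_1-w_2\|_{H^2}$, which is a strict contraction once $\rho_0$ is chosen with $2C_e C_\ast'\rho_0<1$. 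The Banach fixed-point theorem then yields a unique $y_s\in D_\rho$ with $\mathcal{S}(y_s)=y_s$, i.e. a unique solution of \eqref{eqVB-steadystateAgain} in $D_\rho$, and $y_s\in H^2(\Omega)\cap\Hio$ by construction. I expect the only mildly delicate point to be bookkeeping the constants so that both the self-map and contraction thresholds on $\rho_0$ are met simultaneously and are consistent with the normalization $\|f\|\le\rho/2$ in the statement; the analytical estimates themselves are routine given Agmon's inequality and elliptic regularity.
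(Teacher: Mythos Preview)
Your proposal is correct and follows essentially the same route as the paper: both freeze the nonlinearity, solve the linear problem $-\eta\Delta y+\nu_0 y=f-w\,\mathbf{v}\cdot\nabla w$ via Lax--Milgram plus elliptic regularity, and then run the Banach fixed-point argument on $D_\rho$ with the same decomposition of $w_1\mathbf{v}\cdot\nabla w_1-w_2\mathbf{v}\cdot\nabla w_2$ for the contraction step. The only cosmetic difference is that the paper estimates the nonlinear term via the $L^4\times L^4$ Sobolev embedding rather than Agmon's inequality (which you also mention as an alternative), and it shares exactly the constant-bookkeeping issue you flag between the hypothesis $\|f\|\le\rho/2$ and the elliptic constant.
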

\begin{proof}
We prove the proposition using Banach fixed point theorem in the following steps. 

\noindent \textbf{Step 1.} Consider the following equation
\begin{align}\label{eq-linlaplc}
-\eta \Delta y_s + \nu_0 y_s=f \text{ in }\Omega, \quad  y_s=0 \text{ on }\partial\Omega.
\end{align}
The weak formulation corresponding to \eqref{eq-linlaplc} seeks $y_s\in \Hio$ such that
\begin{align*}
\eta \langle \nabla y_s, \nabla \phi\rangle + \nu_0\langle y_s, \phi\rangle=\langle f,\phi\rangle \quad \text{ for all }\phi\in \Hio.
\end{align*}
For $\nu_0\ge 0,$ using Poincar\'{e} inequality, it can be observed that $\eta\|\nabla \phi\|^2+\nu_0\|\phi\|^2 \ge \eta \|\nabla\phi\|^2.$ Thus, an application of Lax-Milgram theorem \cite[Theorem 3.1.4]{Kes} leads to the existence of a unique solution $y_s\in \Hio$ and a elliptic regularity \cite[Theorem 4, Chapter 5]{Eva} implies $y_s\in H^2(\Omega)$ with
\begin{align*}
\|y_s\|_{H^2(\Omega)} \le C \|f\|,
\end{align*}
for some $C>0.$ 

\medskip
\noindent \textbf{Step 2.} Now, for a given $\psi\in H^2(\Omega),$ consider
\begin{align} \label{eq-linlaplNonH}
-\eta \Delta y_s + \nu_0 y_s=f +g(\psi)\text{ in }\Omega, \quad  y_s=0 \text{ on }\partial\Omega,
\end{align}
where $g(\psi)=-\psi\textbf{v}\cdot \nabla \psi.$ Our aim is to show that \eqref{eq-linlaplNonH} has unique solution $y_s\in H^2(\Omega)\cap \Hio$ by showing $g(\psi)\in \Lt$ and using Step 1. Note that, $g(\psi)=\displaystyle \sum_{i=1}^d \psi v_i\frac{\partial \psi}{\partial x_i},$ where $\textbf{v}=(v_1,v_2,\ldots, v_d)^T\in \mathbb{R}^d.$ A H\"{o}lders inequality leads to 
{\small
\begin{align*}
\left\| \psi v_i\frac{\partial \psi}{\partial x_i}\right\|^2 = \int_\Omega \left \vert \psi v_i\frac{\partial \psi}{\partial x_i} \right\vert^2 dx \le \left( \int_\Omega |\psi|^4dx \right)^{1/2} \left( \int_\Omega \left \vert v_i\frac{\partial \psi}{\partial x_i} \right\vert^4dx\right)^{1/2} = \|\psi\|_{L^4(\Omega)}^2 \left\|v_i\frac{\partial\psi}{\partial x_i}\right\|_{L^4(\Omega)}^2.
\end{align*}
}for all $1\le i \le d.$ The Sobolev embedding \eqref{lemPR:SobEmb} for $\Omega\subset \mathbb{R}^d,\, d\in \{1,2,3\}$  implies
\begin{align} \label{eqboundfor nh term}
\|g(\psi)\| \le \|\psi\|_{L^4(\Omega)} |\textbf{v}| \|\nabla \psi\|_{L^4(\Omega)} \le s_0^2 |\textbf{v}| \|\psi\|_{H^1(\Omega)} \|\psi\|_{H^2(\Omega)}.
\end{align}
Thus, for any given $\psi \in H^2(\Omega)\cap\Hio,$ $g(\psi)\in \Lt,$ and hence using Step 1, there exists a unique solution $y_s\in H^2(\Omega)\cap\Hio$ satisfying 
\begin{align} \label{eqVB-regestStnSol}
	\|y_s\|_{H^2(\Omega)} \le C \|f\|+s_0^2|\textbf{v}|\|\psi\|_{H^1(\Omega)}\|\psi\|_{H^2(\Omega)},
\end{align}
for some $C>0$.

\noindent \textbf{Step 3.} For any $\rho>0,$ define $D_\rho=\{ y\in H^2(\Omega)\cap \Hio\, |\, \|y\|_{H^2(\Omega)}\le \rho\}.$ In this step, we find a $\rho_0>0$ such that for all $0<\rho\le \rho_0,$ the map $S:D_\rho \longrightarrow D_\rho$ defined by $S(\psi)=y_s^\psi,$ where $y_s^\psi$ is solution of \eqref{eq-linlaplNonH}, is well defined and contraction. For $f\in \Lt$ with $\|f\|_{\Lt}\le \frac{\rho}{2C}$ and $\rho \le \frac{1}{2|\textbf{v}|s_0^2},$ \eqref{eqVB-regestStnSol} implies 
$$\|S(\psi)\|_{H^2(\Omega)} =\|y_s^\psi\|_{H^2(\Omega)} \le C\|f\|_{\Lt}+s_0^2|\textbf{v}|\|\psi\|_{H^1(\Omega)}\|\psi\|_{H^2(\Omega)} \le \rho.$$
Therefore, $S$ is a self map.

\noindent Now, to show contraction, let $\psi^1,\, \psi^2\in D_\rho$ be given and let $y_s^{\psi^1}$ and $y_s^{\psi^2}$ be the corresponding solutions of \eqref{eq-linlaplNonH}. Then, $y_s^{\psi^1}-y_s^{\psi^2}$ satisfies
\begin{align*}
-\eta \Delta (y_s^{\psi^1}-y_s^{\psi^2})+\nu_0(y_s^{\psi^1}-y_s^{\psi^2})=g(\psi^1)-g(\psi^2) \text{ in }\Omega, \, y_s^{\psi^1}-y_s^{\psi^2}=0\text{ on }\partial\Omega.
\end{align*}
From Step 1, we have 
\begin{align*}
\|S(\psi^1)-S(\psi^2)\|_{H^2(\Omega)}=\|y_s^{\psi^1}-y_s^{\psi^2}\|_{H^2(\Omega)} \le C \|g(\psi^1)-g(\psi^2)\|_{\Lt}.
\end{align*}
Note that 
\begin{align*}
	g(\psi^1)-g(\psi^2)=\psi^1\textbf{v}\cdot \nabla \psi^1-\psi^2\textbf{v}\cdot \nabla \psi^2=\displaystyle \sum_{i=1}^d \left( \psi^1 v_i\left( \frac{\partial \psi^1}{\partial x_i}-\frac{\partial \psi^2}{\partial x_i}\right)  + (\psi^1-\psi^2)v_i \frac{\partial \psi^2}{\partial x_i} \right).
\end{align*}
We estimate $g(\psi^1)-g(\psi^2)$ as estimated $g(\psi)$ in \eqref{eqboundfor nh term} and obtain 
\begin{align*}
\| g(\psi^1)-g(\psi^2)\|_{\Lt} & \le s_0^2 |\textbf{v}| \left( \|\psi^1\|_{H^1(\Omega)} \|\psi^1-\psi^2\|_{H^2(\Omega)} + \|\psi^1-\psi^2\|_{H^1(\Omega)} \|\psi^2\|_{H^2(\Omega)} \right)\\
& \le 2 s_0^2 |\textbf{v}|\rho \|\psi^1-\psi^2\|_{H^2(\Omega)}.
\end{align*}
Therefore, choosing $\rho_0=\frac{1}{4s_0^2 |\textbf{v}|},$ we obtain for all $0<\rho\le \rho_0,$
\begin{align*}
\|S(\psi^1)-S(\psi^2)\|_{H^2(\Omega)} \le C \| g(\psi^1)-g(\psi^2)\|_{\Lt}  \le \frac{1}{2} \|\psi^1-\psi^2\|_{H^2(\Omega)}.
\end{align*}
Hence, by using Banach fixed point theorem, we conclude the proposition.
\end{proof}

\subsection{Proof of Theorem \ref{thVB-anasgp}} \label{prof of thVB-anasgp}
	We first determine $\theta_0\in (\frac{\pi}{2},\pi)$ such that (a) holds. Set $\theta_0=\pi-\tan^{-1}\left(\frac{\alpha_1}{\eta/2}\right),$ where $\alpha_1>0$ is as in \eqref{eqVB-BddBil}. Note that $\theta_0\in (\frac{\pi}{2},\pi).$ 
	
	\noindent (a) In the next three steps, we show that $\Sigma^c(-\nuh;\theta_0)\subset \rho(\Ac)$ and the resolvent estimate holds.
	
	\noindent \textbf{Step 1.} ($\mu\in \Sigma^c(-\nuh;\theta_0)$ with $\Re(\mu) \ge -\nuh$). Let $\mu\in \Cb$ be arbitrary with $\Re(\mu)\ge -\nuh.$ First, we show that for any given $g\in \Lt,$ there exists a unique $z\in D(\Ac)$ such that $(\mu I-\Ac)z=g.$  That is, we want to solve
	\begin{align}\label{eqVB-weakform}
		a(z,\phi)+\mu \langle z, \phi\rangle=\langle g,\phi\rangle \text{ for all }\phi\in \Hio,
	\end{align}
	for $z,$ where $a(\cdot,\cdot)$ is as defined in \eqref{eqVB-Sesq}. As $\Re(\mu)\ge -\nuh,$ \eqref{eqVB-coerc} implies $\Re\left(a(\phi,\phi)+\mu\langle \phi,\phi\rangle\right)$ $\ge \Re\left(a(\phi,\phi)\right) -\nuh \|\phi\|^2 \ge \frac{\eta}{2}\|\nabla \phi\|^2.$ Therefore, by Lax-Milgram Theorem (\cite[15, Theorem 1, Section 1, Chapter VII]{RDJLL}) for any given $g\in \Lt,$ there exists a unique solution $z\in \Hio$ of \eqref{eqVB-weakform}.
	Note that, $(\mu I-\Ac)z=g$ implies
	\begin{align} \label{eqVB-resoleq}
		-\eta \Delta z+y_s\textbf{v}\cdot\nabla z+\textbf{v}\cdot\nabla y_s z+(\mu+\nu_0)z=g \text{ in }\Omega,\, z=0 \text{ on }\partial\Omega.
	\end{align}
	Using the regularity results for elliptic equation \cite{Kes}, we have $z\in D(\Ac).$ Substitute $\phi=z$ in \eqref{eqVB-weakform}, use \eqref{eqVB-coerc} and a Cauchy-Schwarz inequality to obtain
	\begin{align*}
		\frac{\eta}{2} \|\nabla z\|^2 \le \Re\left( a(z,z)+\mu \langle z,z\rangle \right)  \le  \|g\|\,\|z\|.
	\end{align*} 
	This along with the Poincar\'{e} inequality leads to
	\begin{align}
		\| z\| \le C\|g\|,
	\end{align}
	for some $C=C(\eta,C_p)>0.$ 
	
	\medskip 
	\noindent \textbf{Step 2. Resolvent estimate for $\Re(\mu)\ge -\nuh,\, \mu\neq -\nuh$.} Let $\mu=-\nuh+\rho e^{i\theta},$ $\rho>0$ and $-\frac{\pi}{2}\le \theta\le \frac{\pi}{2}.$ Substitute $\phi=e^{i\frac{\theta}{2}}z$ in \eqref{eqVB-weakform} to obtain  
	\begin{align}\label{eqVB-weakTheta2}
		a(z,e^{i\frac{\theta}{2}}z)+(-\nuh+\rho e^{i\frac{\theta}{2}})\langle z, e^{i\frac{\theta}{2}}z\rangle =\langle g , e^{i\frac{\theta}{2}}z\rangle.
	\end{align}
	From the definition of $a(\cdot,\cdot)$ in \eqref{eqVB-Sesq} and from \eqref{eqVB-weakTheta2}, we obtain
	{\small
		\begin{align*}
			\cos(\theta/2)\eta\|\nabla z\|^2+(\nu_0-\nuh+\rho)\cos(\theta/2)\|z\|^2 &  \le \left\vert\Re\left(a(z,e^{i\frac{\theta}{2}}z)+(-\nuh+\rho e^{i\theta})\langle z, e^{i\frac{\theta}{2}}z\rangle\right)\right\vert \\
			& \qquad +\left\vert \langle y_s\textbf{v}\cdot\nabla z, e^{i\frac{\theta}{2}}z\rangle +\langle\textbf{v}\cdot\nabla y_s z, e^{i\frac{\theta}{2}}z\rangle\right\vert \\
			& \le \|g\|\,\|z\| + \left\vert \langle y_s\textbf{v}\cdot\nabla z, e^{i\frac{\theta}{2}}z\rangle +\langle\textbf{v}\cdot\nabla y_s z, e^{i\frac{\theta}{2}}z\rangle\right\vert.
		\end{align*}
	}This inequality and a similar estimate as used to obtain \eqref{eqVB-est term ys} lead to 
	{\small
		\begin{align*}
			&	\cos(\theta/2)\left(\eta\|\nabla z\|^2+(\nu_0-\nuh+\rho)\|z\|^2 \right) &   \le \|g\|\,\|z\| + \frac{\eta}{\sqrt{2}} \|\nabla z\|^2 + \frac{|\textbf{v}|^2}{\sqrt{2}\eta} (C_2^2+s_0^4) \|y_s\|_{H^2(\Omega)}^2 \| z\|^2. 
		\end{align*}
	}In view of  \eqref{eqdefVB-nu0}, and the fact that $\cos(\theta/2)\ge \cos(\pi/4)\ge \frac{1}{\sqrt{2}}$ (as $-\frac{\pi}{2}\le \theta\le \frac{\pi}{2}$), we have 
	\begin{align*}
		\rho \cos(\theta/2)\|z\|^2 \le C \|g\|\,\|z\|.
	\end{align*}
	Noting that $\rho=|\mu+\nuh|$ and $\cos(\theta/2)\ge \cos(\pi/4) \ge \frac{1}{\sqrt{2}},$ we have 
	\begin{align}
		\|R(\mu,\Ac)\|_{\Lc(\Lt)} \le \frac{C}{|\mu+\nuh|} \text{ for all } \mu (\neq -\nuh) \text{ with } \Re(\mu) \ge -\nuh,
	\end{align}
	for some positive constant $C$ independent of $\mu.$

	\noindent \textbf{Step 3. Case of any $\mu\in \Sigma^c(-\nuh;\theta_0)$ with $\Re(\mu)<-\nuh$.} The proof is analogous to Step 3 of \cite[proof of Theorem 3.4(a) in Section A.1]{WKRPCE}.

	\medskip 
	\noindent (b) - (c) The proofs of (b) and (c) follow utilizing (a) and proof of \cite[Theorem 3.4]{WKRPCE}. \qed

\section*{Declaration}
\noindent \textbf{Conflict of interest.} This work was done when I was a Ph.D. student at the Department of Mathematics, IIT Bombay, and during that time, I was supported by an institute TA fellowship. 

\medskip

\noindent \textbf{Acknowledgment.} I am thankful to Prof. Neela Nataraj, Prof. Debanjana Mitra, and Prof. Mythily Ramaswamy for their valuable insights, feedback, and encouragement during various stages of this research. Their dedications and expertise played a pivotal role in the successful completion of this study.

\bibliographystyle{amsplain}
\bibliography{References}

\end{document}